\newtheorem{thm}{Theorem}[section]
\newtheorem{lem}[thm]{Lemma}%
\newtheorem{prop}[thm]{Proposition}%
\newtheorem{cor}[thm]{Corollary}%
\newenvironment{thmbis}[1]
  {%
   \addtocounter{thm}{-1}%
   \begin{thm}}
  {\end{thm}}
\theoremstyle{definition}
\theoremstyle{remark}
\newtheorem{remark}{Remark}[section] %
\theoremstyle{plain}
\def\arsinh{\operatorname{arsinh}}
\def\vecm{{\text{\boldmath$m$}}}
\def\vecn{{\text{\boldmath$n$}}}
\def\vecv{{\text{\boldmath$v$}}}
\def\vecw{{\text{\boldmath$w$}}}
\def\vecy{{\text{\boldmath$y$}}}
\def\vecalf{{\text{\boldmath$\alpha$}}}
\def\veceta{{\text{\boldmath$\eta$}}}
\def\vecxi{{\text{\boldmath$\xi$}}}
\def\scrR{{\mathcal R}}
\def\scrU{{\mathcal U}}
\def\scrY{{\mathcal Y}}
\def\fb{{\mathfrak b}}
\def\fb{{b}}
\def\fM{{\mathfrak M}}
\def\fR{{\mathfrak R}}
\def\Ad{\operatorname{Ad}}
\def\C{\operatorname{C{}}}
\def\b{\operatorname{b{}}}
\def\L{\operatorname{L{}}}
\def\GL{\operatorname{GL}}
\def\SL{\operatorname{SL}}
\def\SO{\operatorname{SO}}
\def\PSL{\operatorname{PSL}}
\def\T{\mathbb T}
\def\sgn{\operatorname{sgn}}
\def\supp{\operatorname{supp}}
\def\W{\operatorname{W{}}}
\def\ord{\operatorname{ord}}
\def\mod{\operatorname{mod}}
\def\trans{\,^\mathrm{t}\!}
\def\Onder#1#2#3#4#5{#1 \setbox0=\hbox{$#1$}\setbox1=\hbox{$#2$}
       \dimen0=.5\wd0 \dimen1=\dimen0 \dimen2=\dp0 \dimen3=\dimen2
       \advance\dimen0 by .5\wd1 \advance\dimen0 by -#4
       \advance\dimen1 by -.5\wd1 \advance\dimen1 by -#4
       \advance\dimen2 by -#3 \advance\dimen2 by \ht1
       \advance\dimen2 by 0.3ex \advance\dimen3 by #5
        \kern-\dimen0\raisebox{-\dimen2}[0ex][\dimen3]{\box1}
       \kern\dimen1}
\newcommand{\dn}{\mathbf{d}}
\newcommand{\svl}{\text{\boldmath$\ell$}}
\newcommand{\lsl}{\mathfrak{sl}}
\newcommand{\ig}{\mathfrak{g}}
\newcommand{\GaG}{\Gamma\backslash G}
\newcommand{\wh}{\widehat}
\newcommand{\tf}{\widetilde f}
\newcommand{\Q}{\mathbb{Q}}
\newcommand{\R}{\mathbb{R}}
\newcommand{\Z}{\mathbb{Z}}
\newcommand{\kk}{\mathbf{k}}
\newcommand{\col}{\: : \:}
\newcommand{\bn}{\mathbf{0}}
\newcommand{\ve}{\varepsilon}
\newcommand{\matr}[4]{\left( \begin{matrix} #1 & #2 \\ #3 & #4 \end{matrix} \right) }
\newcommand{\smatr}[4]{\left( \begin{smallmatrix} #1 & #2 \\ #3 & #4 \end{smallmatrix} \right) }
\title{An effective Ratner equidistribution result for $\SL(2,\R)\ltimes\R^2$}
\author{Andreas Str\"ombergsson}
\address{Department of Mathematics, Box 480, Uppsala University,
SE-75106 Uppsala, Sweden\newline
\rule[0ex]{0ex}{0ex} \hspace{8pt}{\tt astrombe@math.uu.se}}
\thanks{The work was conducted while Str\"ombergsson was a Royal Swedish Academy of Sciences Research Fellow supported by
a grant from the Knut and Alice Wallenberg Foundation.}
\thanks{2010 \textit{Mathematics Subject Classification.} 
Primary 37A17, 37A45; Secondary 11K60.}
\begin{document}

\begin{abstract}
Let $G=\SL(2,\R)\ltimes\R^2$ %
be the affine special linear
group of the plane, and set $\Gamma=\SL(2,\Z)\ltimes\Z^2$. 
We prove a polynomially
effective asymptotic equidistribution result for
the orbits of a 1-dimensional, non-horospherical unipotent flow on $\GaG$. %
\end{abstract}

\maketitle

\section{Introduction}\label{INTROSEC}

In the theory of unipotent flows on homogeneous spaces,
a fundamental role is played by 
the theorems by M.\ Ratner on measure rigidity, topological rigidity, 
and orbit equidistribution, \cite{Ratner91}, \cite{mR91};
these results also appear %
as a crucial ingredient in numerous, and surprisingly diverse, applications.
See \cite{WitteMorris} and \cite{KSS} %
for expositions and references;
some %
more recent works where important use is made of Ratner's theorems are
\cite{EO}, \cite{GO}, \cite{Shah09}, \cite{Shah10}, \cite{partI}, \cite{quasicrystals},
to just mention a few.

In the last decade %
there has been an increased interest in obtaining %
\textit{effective} versions of 
Ratner's results, that is, to provide an explicit rate of density or equidistribution
for the orbits of a unipotent flow.
This problem was raised for example in \cite[Probl.\ 7]{Margulis}.
There are two general cases where it has been known for a fairly long time that effective results may be proved,
namely when the group generating the flow %
is either horospherical or ``large'' in an appropriate sense
(cf.\ \cite[\S 1.5.2]{EMV} for a discussion; compare also p.\ \pageref{MIXINGTECHNIQUE} below).
Recently, however, some new important cases have been established:
Green and Tao \cite{GreenTao} have proved effective equidistribution
of polynomial orbits on nilmanifolds;
this is an important input %
in their work on linear equations in primes
\cite{GreenTao3}, \cite{GreenTao2}.
Moreover, Einsiedler, Margulis and Venkatesh \cite{EMV} have 
proved effective equidistribution
for large closed orbits of semisimple groups 
on homogeneous spaces;
see also Mohammadi \cite{mohammadi2012} for a more explicit result in the special case of
closed $\SO(2,1)$-orbits in $\SL(3,\Z)\backslash\SL(3,\R)$.
Recently also Lindenstrauss and Margulis \cite{LM}
have obtained an effective density-type result for \textit{arbitrary} 
$\SO(2,1)$-orbits in $\SL(3,\Z)\backslash\SL(3,\R)$,  %
and used this to 
give an effective proof of a theorem of Dani and Margulis regarding the values of 
indefinite ternary quadratic forms at primitive integer vectors.

\vspace{5pt}

Our purpose in the present paper is to establish effective Ratner equidistribution in a new particular setting:
We let $G$ be the semidirect product group $G=\SL(2,\R)\ltimes\R^2$ with multiplication law
\begin{align*}
(M,\vecv)(M',\vecv')=(MM',\vecv M'+\vecv').
\end{align*}
Let $\Gamma=\SL(2,\Z)\ltimes\Z^2$ and $X=\GaG$,
and consider the flow on $X$ which is generated by right multiplication
by the (Ad-)unipotent 1-parameter subgroup $U^\R=\{U^t\col t\in\R\}$, where 
\begin{align*}
U^t=\biggl(\matr1t01,(0,0)\biggr).
\end{align*}
The Ratner measure rigidity and equidistribution for this particular flow,
and closely related ones, %
have %
found several applications in number theory and in mathematical physics;
cf.\
\cite[Remark 4]{SV},
\cite{MarklofpaircorrI}, \cite{MarklofpaircorrII}, \cite{Elkies04}, \cite{Marklofmeanssquare}, \cite{partI}, 
\cite[Thm.\ 1.10]{MM},
\cite{elbazmv1}, \cite{elbazmv2}; we discuss this further in Section~\ref{APPLSEC}. %
Note that $\{U^t\}$ on $X$ is a 
\textit{1-dimensional, non-horospherical} unipotent flow on a non-solvable homogeneous space.
As far as we are aware, there is only one previous setting of this kind where effective equidistribution has been established;
namely, the results by Venkatesh \cite[\S 3.1]{Vsparse} and Sarnak and Ubis \cite[Thm.\ 4.11]{pSaU2011} 
for orbits of the discrete horocycle flow %
can be viewed as giving effective equidistribution for the flow generated by
$U^t=(\smatr 1t01,s^{-1}t)$ (any fixed $s>0$) in $(\Gamma'\times\Z)\backslash(\SL(2,\R)\times\R)$,
with either $\Gamma'=\SL(2,\Z)$ or $\Gamma'$ a cocompact subgroup of $\SL(2,\R)$.

\vspace{5pt}

The group $G=\SL(2,\R)\ltimes\R^2$ can be viewed as the group of area and orientation preserving affine maps 
of the plane $\R^2$, with the action given by
\begin{align*}
\vecy(M,\vecv):=\vecy M+\vecv,\qquad
\forall (M,\vecv)\in G,\:\vecy\in\R^2,
\end{align*}
and a central property of $X=\GaG$ is that it can be naturally identified with the space of translates of unimodular lattices 
in $\R^2$, through $\Gamma g\mapsto \Z^2g=\{\vecm g\col\vecm\in\Z^2\}$.
Then the subspace of (non-translated) %
lattices becomes identified with 
$X'=\Gamma'\backslash G'$, where $G'=\SL(2,\R)$,
which we always view as a subgroup of $G$ through $M\mapsto(M,\bn)$, and $\Gamma'=\Gamma\cap G'=\SL(2,\Z)$.
Note that $X'$ is an embedded submanifold of $X$.
Furthermore, %
$U^\R$ is contained in $G'$,
and the flow $U^\R$ on $X'$ is the standard horocycle flow.
There is also a natural projection $D:G\to G'$ sending $(M,\vecv)$ to $M$, 
which makes $X$ into a torus fiber bundle over $X'$.
We write $D$ also for the projection map $X\to X'$.
Note that the embeddings $G'\subset G$ and $X'\subset X$ are sections of $D$.
In the language of lattice translates, the fiber %
over a lattice $L\in X'$ equals the torus $\R^2/L$ consisting of all translates of $L$.

Let $\mu$ be the (left and right invariant) Haar measure on $G$,
normalized so as to induce a probability measure on $X$,
which we also denote by $\mu$.
Then $\mu':=D_*\mu$ is the Haar measure on $G'$ which induces a probability measure on $X'$.

We will start by discussing the case of $U^\R$-orbits in $X$ which project to %
\textit{closed} orbits in $X'$;
we then turn to the case of general $U^\R$-orbits is Section \ref{INTROGENERALORBITS}.

\subsection{Lifts of pieces of closed horocycles}
\label{INTROCLOSED}

Set 
\begin{align*}
\Phi^t=\matr{e^{-t/2}}00{e^{t/2}}\in G'\subset G\qquad(t\in\R).
\end{align*}
We also write $1_2=\smatr1001$.
For given $\vecxi\in\R^2$ and $t\in\R$, we consider
pieces of the $U^\R$-orbit through the point $\Gamma\,(1_2,\vecxi)\,\Phi^t\in X$.
These are %
exactly those $U^\R$-orbits in $X$ which project to closed orbits, i.e.\ closed horocycles, in $X'$.
From the relation 
\begin{align}\label{UPHICOMM}
U^x\Phi^t=\Phi^t U^{e^tx}
\end{align}
we see that $\Gamma\,(1_2,\vecxi)\,\Phi^tU^\R=\Gamma\,(1_2,\vecxi)\,U^\R\Phi^t$,
that is, the $U^\R$-orbit through $\Gamma\,(1_2,\vecxi)\,\Phi^t$ is obtained as 
the $\Phi^t$-push-forward of the $U^\R$-orbit through $\Gamma\,(1_2,\vecxi)\,$.
It also follows from \eqref{UPHICOMM}
that the projected orbit, $x\mapsto\Gamma'\Phi^tU^x\in X'$, %
has period $e^t$ with respect to $x$.
It is well-known that these closed horocycles,
and more generally the $\Phi^t$-push-forwards of any fixed segment
$\{\Gamma' U^x\col x\in[\alpha,\beta]\}$, %
become asymptotically equidistributed in $(X',\mu')$ as $t\to\infty$.
These facts are also known with precise rates;
cf.\ \cite{pS80}, \cite{Hejhal}, \cite{Str}, \cite{FF}.
As to the orbits in $X$,
it turns out that the $\Phi^t$-push-forwards of a fixed segment
$\{\Gamma(1_2,\vecxi)U^x\col x\in[\alpha,\beta]\}$ become asymptotically equidistributed 
in $(X,\mu)$ as $t\to\infty$
\textit{if and only if $\vecxi$ is irrational}. %
We state the non-trivial direction of this implication as Theorem \ref{NONEFFECTIVETHM} below;
it is a special case of a theorem of Shah, \cite[Thm.\ 1.4]{Shah}
(cf.\ \cite[proof of Thm.\ 5.2]{partI}),
and also a special case of Elkies and McMullen, \cite[Thm.\ 2.2]{Elkies04}.
Both proofs depend crucially on Ratner's classification of invariant measures.
(See \cite[\S 3]{Eskinlecturenotes} for a discussion of the proof of Ratner's theorem in exactly 
our setting with $G=\SL(2,\R)\ltimes\R^2$, $\Gamma=\SL(2,\Z)\ltimes\Z^2$.)

\begin{thm}\label{NONEFFECTIVETHM}
(\cite{Shah} or \cite{Elkies04})
Fix any $\vecxi\in\R^2$ with at least one irrational coordinate,
i.e.\ $\vecxi\notin\Q^2$.
Then the $\Phi^t$-push-forwards of any fixed portion of the orbit
$\Gamma\,(1_2,\vecxi)\,U^\R$
become asymptotically equidistributed in $(X,\mu)$ as $t\to\infty$.
In other words, for any fixed $\alpha<\beta$ and any
bounded continuous function $f:X\to\R$,
\begin{align}\label{NONEFFECTIVETHMRES}
\lim_{t\to\infty}\frac1{\beta-\alpha}
\int_\alpha^\beta f\Bigl(\Gamma\,(1_2,\vecxi)\,U^x\Phi^t\Bigr)\,dx
=\int_X f\,d\mu. %
\end{align}
\end{thm}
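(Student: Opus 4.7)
The plan is to apply Ratner's classification of $U^\R$-invariant probability measures on $X=\Gamma\backslash G$ to weak-$*$ subsequential limits of the orbital averages. Using the commutation relation \eqref{UPHICOMM}, one first rewrites
\begin{equation*}
\frac1{\beta-\alpha}\int_\alpha^\beta f\bigl(\Gamma(1_2,\vecxi)U^x\Phi^t\bigr)\,dx = \frac1{(\beta-\alpha)e^t}\int_{e^t\alpha}^{e^t\beta} f\bigl(\Gamma g_t U^y\bigr)\,dy,
\end{equation*}
where $g_t:=(1_2,\vecxi)\Phi^t$; let $\mu_t$ denote the resulting probability measure on $X$. Tightness of $\{\mu_t\}_{t\ge 0}$ reduces, via the compact torus fibration $D:X\to X'$, to the Dani--Margulis non-divergence for the horocycle flow on $X'$; and because the orbital parameter interval $[e^t\alpha,e^t\beta]$ has length tending to $\infty$, every weak-$*$ subsequential limit $\mu_\infty$ is $U^\R$-invariant.

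Applying Ratner's measure rigidity theorem for $G,\Gamma$ (cf.\ \cite[\S3]{Eskinlecturenotes}), $\mu_\infty$ decomposes as a convex integral of algebraic measures on closed orbits $\Gamma x_0 H\subset X$, where each $H$ is a closed connected subgroup with $U^\R\subseteq H\subseteq G$. The pushforward $D_*\mu_\infty$ is the limit of $D_*\mu_t$, which equals the normalized arclength measure on the closed horocycle $\Gamma'\Phi^tU^\R\subset X'$; by the classical equidistribution of long closed horocycles (e.g.\ Sarnak \cite{pS80}), this limit is $\mu'$. It follows that $D(H)=\SL(2,\R)$ for each component, leaving only $H=\SL(2,\R)$ or $H=G$.

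The main obstacle is to rule out the components with $H=\SL(2,\R)$. A direct stabilizer computation in $G$ shows that $\Gamma(1_2,\vecxi_0)\SL(2,\R)$ is closed of finite volume in $X$ precisely when $\vecxi_0\in\Q^2$, so one must exclude the countable family of proper closed orbits $\Gamma(1_2,\vecp/q)\SL(2,\R)$ with $\vecp\in\Z^2,\,q\in\Z_{>0}$. I would carry out this step via the Dani--Margulis linearization technique: concentration of $\mu_t$ near the orbit indexed by $(\vecp,q)$ translates into a Diophantine condition on $q\vecxi\Phi^tU^y-\vecp$ in a suitable linear $G$-representation. The irrationality hypothesis $\vecxi\notin\Q^2$ ensures that, for each fixed $(\vecp,q)$, the $y$-measure of the bad set tends to zero as $t\to\infty$; a uniform-in-$(\vecp,q)$ version of this bound, obtained by summing over a dyadic decomposition of the denominators, then forces $\mu_\infty=\mu$ and yields \eqref{NONEFFECTIVETHMRES}.
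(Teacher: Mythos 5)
Your proposal takes the Ratner-theoretic route of the references cited for Theorem \ref{NONEFFECTIVETHM} (Shah, Elkies--McMullen), which is genuinely different from the paper's own treatment: here the statement is obtained as the qualitative corollary of the effective Theorem \ref{MAINABTHM} (note $\fb_{\vecxi,L}(y)\to0$ as $y\to0$ exactly when $\vecxi\notin\Q^2$), whose proof proceeds by Fourier decomposition in the torus fibre, Iwasawa coordinates and Weil's bound for Kloosterman sums, with no measure rigidity at all; what the paper's route buys is a rate, what yours buys is softness. Your skeleton --- tightness and $D_*\mu_\infty=\mu'$ from equidistribution of expanding horocycle pieces in $X'$, $U^\R$-invariance of weak-$*$ limits from the growing parameter interval, then Ratner's classification --- is the standard and correct frame. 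One small imprecision: $D_*\mu_t$ is the push-forward of a \emph{fixed sub-segment} of the closed horocycle (it equals the full closed-horocycle measure only when $\beta-\alpha\in\Z$), so you need the segment version of that equidistribution, e.g.\ \cite{Hejhal}, \cite{Str}, \cite{FF}.

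The genuine gap is in the exclusion step. From $D(H)=\SL(2,\R)$ you conclude ``only $H=\SL(2,\R)$ or $H=G$'' and then propose to exclude the countable family $\Gamma(1_2,\vecp/q)\SL(2,\R)$. But the closed connected subgroups containing $U^\R$ and projecting onto $G'$ form the one-parameter family of translation conjugates $H_\beta=(1_2,-(0,\beta))\,G'\,(1_2,(0,\beta))$, $\beta\in\R$ (each contains $U^\R$ since $(0,\beta)U^x=(0,\beta)$), and their closed finite-volume orbits are the translates $X_q(1_2,(0,\beta))$ --- an uncountable family, of which the orbits you list are only the case $\beta\in q^{-1}\Z$. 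Ratner's theorem alone permits ergodic components of $\mu_\infty$ on any of these, so a union bound over pairs $(\vecp,q)$ with a dyadic sum over denominators does not cover the singular set, and as stated this step fails. The repair is standard but must be set up for the right object: for each fixed $q$ (this is the correct countable index) one shows non-concentration of $\mu_t$ near the set $E_q=\bigcup_{\beta\in\R}X_q(1_2,(0,\beta))=\{\Gamma g\col\exists\,\vecv\in\Q^2,\ \dn(\vecv)=q,\ \vecv g\in\{0\}\times\R\}$, i.e.\ one runs the linearization for the full singular set attached to the conjugacy class of $G'$; the Diophantine input then concerns the points $(\vecv+\vecxi)U^x\Phi^t$ with $q\vecv\in\Z^2$ approaching the vertical line, and $\vecxi\notin\Q^2$ is exactly what is needed. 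This is precisely the subtlety behind the corrected Corollaries 2.11--2.12 of \cite{Elkies04} invoked in Section \ref{INTROGENERALORBITS}; your classification as written repeats the oversight that the published correction fixes.
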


To see that the assumption $\vecxi\notin\Q^2$ in Theorem \ref{NONEFFECTIVETHM} is a necessary condition,
set, %
for any positive integer $q$,
\begin{align*}
X_q:=\bigl\{\Gamma(1_2,\vecv)M\col M\in G',\:
\vecv\in\Q^2,\:\dn(\vecv)=q\bigr\},
\end{align*}
where for any vector $\vecv\in\Q^2$ we write $\dn(\vecv)$ for its denominator, 
i.e.\ the smallest positive integer $d$ such that 
$\vecv\in d^{-1}\Z^2$. %
Then $X_q$ is a closed embedded $3$-dimensional submanifold of $X$;
this is an easy consequence of the fact that %
$\{\vecv\in\Q^2\col \dn(\vecv)=q\}$ is an invariant subset for the action of $\Gamma$ on $\R^2$.
Note in particular that $X_1=X'$.
Now if $\vecxi\in\Q^2$ then $\Gamma\,(1_2,\vecxi)\,U^\R\Phi^t\subset X_{\dn(\vecxi)}$
holds for every $t$; hence the orbit certainly cannot become equidistributed in $(X,\mu)$. %

The map $G'\ni M\mapsto\Gamma\bigl(1_2,(0,q^{-1})\bigr)M\in X_q$
gives an identification of $X_q$ with the homogeneous space
$\Gamma_1(q)\backslash G'$, where $\Gamma_1(q)$ is the 
congruence subgroup
\begin{align*}
\Gamma_1(q)=\bigl\{\smatr abcd\in\Gamma'\col a\equiv d\equiv1\:
(\text{mod } q),\: c\equiv 0\:(\text{mod } q)\bigr\}.
\end{align*}
(To see this, note that 
$\Gamma$ acts transitively on $\{\vecv\in\Q^2\col \dn(\vecv)=q\}$.)
If $\vecxi\in\Q^2$ with $\dn(\vecxi)=q$ then the curves studied in
Theorem \ref{NONEFFECTIVETHM} correspond to pieces of closed horocycles
in $\Gamma_1(q)\backslash G'$, and hence
as $t\to\infty$ they go asymptotically equidistributed \textit{in $X_q$},
i.e.\ in place of \eqref{NONEFFECTIVETHMRES} we have
\begin{align*}
\lim_{t\to\infty}\frac1{\beta-\alpha}
\int_\alpha^\beta f\Bigl(\Gamma\,(1_2,\vecxi)\,U^x\Phi^t\Bigr)\,dx
=\int_{X_q} f\,d\mu_q, %
\end{align*}
where $\mu_q$ is the measure which corresponds to Haar measure on 
$G'$, normalized to give a probability measure on
$\Gamma_1(q)\backslash G'$
(cf., e.g., \cite{FF}).

\vspace{5pt}

The main result of the present paper is  Theorem~\ref{MAINABTHM} below, which is an \textit{effective} version of 
Theorem \ref{NONEFFECTIVETHM}.
It is clear from the preceding discussion that the rate of convergence
in \eqref{NONEFFECTIVETHMRES} is necessarily quite sensitive to the Diophantine properties of the vector $\vecxi$.

One should note %
that the flow $\{\Phi^t\}$ on $X$ is
Anosov, with unstable directions generated by\label{MIXINGTECHNIQUE}
the flows $U^\R$ and $(1_2,(0,\R))$
and stable directions generated by the flows
$\bigl(\smatr10{\R}1,(0,0)\bigr)$ and $(1_2,(\R,0))$.
In fact, for any fixed %
metric on $X$ coming from a left
invariant Riemannian metric on $G$,
the tangent vectors in the direction of $U^\R$ are expanded at
a rate $e^t$ by the flow $\Phi^t$ (cf.\ \eqref{UPHICOMM}),
the tangent vectors in the direction of $(1_2,(0,\R))$
are expanded at a rate $e^{t/2}$,
while %
vectors in the direction of 
$\bigl(\smatr10{\R}1,(0,0)\bigr)$ are contracted at a rate
$e^{-t}$ and those %
in the direction of $(1_2,(\R,0))$
are contracted at a rate $e^{-t/2}$.
If, in place of \textit{1-dimensional}
averages along $U^\R$-orbits, %
we would instead consider \textit{2-dimensional} averages
taken over some bounded open subset of the unstable manifold, %
then there exists a by now standard approach to establishing effective
results by using mixing properties of the flow $\Phi^\R$;
the origin of this technique can be
traced back to the thesis of Margulis, \cite{Margulisthesis},
where it was used in the context of general Anosov flows.
However, it seems %
that %
this technique cannot be carried over to the 1-dimensional averages which we consider;
instead our proof relies on Fourier analysis %
and methods from number theory, in particular Weil's bound on Kloosterman sums.

We now state Theorem \ref{MAINABTHM}.
Let $\C_{\b}^k(X)$ be the space of $k$ times continuously differentiable functions on $X$ 
whose all left invariant derivatives
up to order $k$ are bounded.
Choose, once and for all, a norm $\|\cdot\|_{\C_{\b}^k}$ on $\C_{\b}^k(X)$ involving the supremum norms 
of all these derivatives. %
(For definiteness, we fix a precise choice of $\|\cdot\|_{\C_{\b}^k}$; cf.\ \eqref{CKNORMDEF} below.)
Set
\begin{align*}
a(y)=\Phi^{-\log y}=\matr{\sqrt y}00{1/\sqrt y}\qquad\text{for }\: y>0.
\end{align*}
(As a motivation, note that  %
$U^xa(y)(i)=x+iy$, for the standard action of 
$G'$ on the Poincar\'e upper half plane model of the hyperbolic plane.)
For $x\in\R$ we write $\langle x\rangle$ for the distance %
to the nearest integer;
$\langle x\rangle=\min_{n\in\Z}|x-n|$.
For any $\vecxi=(\xi_1,\xi_2)\in\R^2$, $L>0$ and $y>0$ we set
\begin{align}\label{MYXILDEF}
\fb_{\vecxi,L}(y):=\max_{q\in\Z^+}\min\Bigl(\frac1{q^2},\frac{\sqrt y}{Lq\langle q\xi_1\rangle},
\frac{\sqrt y}{q\langle q\xi_2\rangle}\Bigr).
\end{align}
(Convention: if $\langle q\xi_1\rangle=0$ or $\langle q\xi_2\rangle=0$ then the corresponding entry
is removed from the minimum; in particular if both $\langle q\xi_1\rangle=\langle q\xi_2\rangle=0$ then
the minimum equals $1/q^2$.)
Note that the entry $1/q^2$ ensures that the maximum is attained, and
$0<\fb_{\vecxi,L}(y)\leq1$ for all $y,\vecxi,L$;
furthermore, $\fb_{\vecxi,L}(y)$ depends continuously on $y,\vecxi,L$.
\enlargethispage{15pt}

\begin{thm}\label{MAINABTHM}
Given any $\ve>0$, there exists a constant $C>0$ such that, for 
any $f\in \C_{\b}^8(X)$ and any $\alpha<\beta$, $\vecxi\in\R^2$ and $0<y<1$, 
\begin{align}\label{MAINABTHMRES}
\biggl|\frac1{\beta-\alpha}\int_\alpha^\beta f\Bigl(\Gamma\,(1_2,\vecxi)\,U^xa(y) %
\Bigr)\,dx
-\int_{\GaG} f\,d\mu\biggr|
\leq C\|f\|_{\C_{\b}^{8}}\,\frac{L}{\beta-\alpha}\bigl(\fb_{\vecxi,L}(y)+y^{\frac14}\bigr)^{1-\ve},
\end{align}
where $L=\max(1,|\alpha|,|\beta|)$. 
\end{thm}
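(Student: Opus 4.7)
The plan is to exploit Fourier analysis along the torus fibers of $D\colon X \to X'$. Parametrizing a point of $X$ as $\Gamma(M, \vecu M)$ with $M \in G'$ and $\vecu \in \R^2/\Z^2$ (so that $\vecv = \vecu M$ is the second coordinate of the representative), any $f \in C_{\b}^8(X)$ has a fiberwise Fourier expansion
\begin{align*}
f(\Gamma(M,\vecu M)) = \sum_{\vecm \in \Z^2} \hat f_{\vecm}(M)\, e(\vecm \cdot \vecu),
\end{align*}
with coefficients that transform equivariantly under the left $\SL(2,\Z)$-action on $M$, the action permuting the index $\vecm$ by the contragredient representation. The group law gives $\Gamma(1_2,\vecxi) U^x a(y) = \Gamma(U^x a(y),\, \vecxi U^x a(y))$, so the fiber coordinate $\vecu = \vecxi$ is \emph{constant} along the orbit, and hence
\begin{align*}
\frac1{\beta-\alpha} \int_\alpha^\beta f\bigl(\Gamma(1_2,\vecxi) U^x a(y)\bigr)\, dx = \sum_{\vecm \in \Z^2} e(\vecm \cdot \vecxi)\, J_{\vecm}, \qquad J_{\vecm} := \frac1{\beta-\alpha} \int_\alpha^\beta \hat f_{\vecm}(U^x a(y))\, dx.
\end{align*}

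The $\vecm = \bn$ term is a horocycle integral on $X'$ of the fiberwise mean $\hat f_{\bn} \in C_{\b}^8(X')$, whose average equals $\int_X f\,d\mu$. Classical effective equidistribution of long horocycle pieces on $X'$ (e.g.\ \cite{pS80, FF}) bounds $|J_{\bn} - \int_X f\,d\mu|$ by $\ll \|f\|_{C_{\b}^8}\bigl(y^{1/2-\ve} + L/(\beta-\alpha)\bigr)$, which is comfortably absorbed into the $y^{1/4}$ contribution of the target bound.

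The bulk of the work concerns the nonzero modes. I would group these as $\vecm = d\vecn$ with $d \in \Z_{\geq 1}$ and $\vecn$ primitive; equivariance yields $\hat f_{\vecn}(U^x a(y)) = \hat f_{(1,0)}(\gamma U^x a(y))$ for any $\gamma \in \SL(2,\Z)$ representing the coset of $\vecn$ in $\Gamma^\infty \backslash \SL(2,\Z)$, where $\Gamma^\infty$ is the unipotent stabilizer of $(1, 0)$. Parametrizing these cosets by coprime pairs $(c,d)$ (the bottom row of $\gamma$), the M\"obius action
\begin{align*}
\gamma(x + iy) = \frac{ax + b}{cx + d} + i\,\frac{y}{(cx+d)^2 + c^2 y^2}
\end{align*}
shows the integrand $\hat f_{(1,0)}(\gamma U^x a(y))$ is sharply localized near $x = -d/c$ with effective width $\sim \sqrt{y}/c$ (when $c^2 y \leq 1$). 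Expanding $\hat f_{(1,0)}$ in its $U^\R$-Fourier series at the cusp, $\hat f_{(1,0)}(U^x a(y)) = \sum_{h \in \Z} \psi_h(y)\, e(hx)$, and substituting, the overall phase $e(d\vecn \cdot \vecxi)$ combines with the induced phase $e\bigl(h \cdot \tre(\gamma(x+iy))\bigr)$ in such a way that the summation over the remaining matrix entry $a \bmod c$ produces a \emph{Kloosterman sum} $S(h, \ast\,;\, c)$. Applying Weil's bound and summing over the remaining parameters with a suitable Diophantine cutoff on $c$ then yields the target estimate.

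The principal obstacle is the delicate bookkeeping required to produce \emph{complete} Kloosterman sums, to which Weil's estimate applies, and then to sum up the resulting bounds uniformly in all parameters. The Diophantine quantity $\fb_{\vecxi,L}(y)$ emerges by optimizing a cutoff $Q$: denominators $c \leq Q$ are handled via the Weil bound, while $c > Q$ are controlled trivially using the $1/q^2$ term in \eqref{MYXILDEF}. The asymmetric appearance of $L$ in only the $\xi_1$-term reflects that the fiber direction $(1_2,(\R,0))$ is contracted at rate $e^{-t/2}$ by $\Phi^t$ while $(1_2,(0,\R))$ is expanded at rate $e^{t/2}$, so the length-$L$ horocycle piece interacts asymmetrically with the two coordinates of $\vecxi$. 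Finally, carrying out all the estimates using only the $k = 8$ Sobolev derivatives afforded by the hypothesis is a persistent technical constraint.
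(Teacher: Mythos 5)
Your setup — fiberwise Fourier expansion over the torus bundle $X\to X'$, handling the zero mode by effective horocycle equidistribution on $X'$, grouping nonzero modes by primitive vectors, localizing $\hat f_{(1,0)}(\gamma U^xa(y))$ near $x=-d/c$ in Iwasawa coordinates, and generating complete Kloosterman sums from the sum over the modular inverse $a\equiv d^*\ (\mathrm{mod}\ c)$ — coincides with the first half of the actual proof (Sections \ref{FOURIERDECSEC}--\ref{WEAKERVERSIONPFSEC}, with Lemma \ref{EXPSUM1LEM} playing exactly the role you describe; the only omission there is the routine preliminary smoothing of the sharp cutoff $\chi_{[\alpha,\beta]}$, needed to control the Fourier integrals in the Kloosterman analysis).

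However, your endgame has a genuine gap. After taking absolute values of the complete Kloosterman sums (Weil) for each fixed $c$, the only oscillation you have exploited is that of the phase $e(nd\xi_1)$ in the inner variable; the dependence on $\xi_2$ enters solely through the factor $e(-n\omega c\xi_2)$, which is \emph{constant} in $d$ (and in $a\bmod c$), so any per-$c$ absolute-value estimate — in particular your scheme ``Weil for $c\le Q$, trivial for $c>Q$, optimize $Q$'' — discards it entirely. What such an argument can yield is only the bound of Proposition \ref{MAINTHMWEAKERPROP}, governed by $\widetilde\fM_{\xi_1}(y^{-1/2})$, which depends on $\xi_1$ alone and does not decay at all when $\xi_1\in\Q$ (or is very well approximable), whereas \eqref{MAINABTHMRES} must decay in that case provided $\xi_2\notin\Q$ — indeed the common-denominator structure of $\fb_{\vecxi,L}(y)$, with the term $\frac{\sqrt y}{q\langle q\xi_2\rangle}$, cannot emerge from your cutoff optimization. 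The missing (and hardest) step is the one carried out in Section \ref{NEWBOUNDSEC}: one must \emph{not} estimate trivially the frequencies $k/c$ that are continued-fraction convergents of $n\xi_1$; instead one keeps those main terms exactly — they carry Ramanujan sums $\mu\bigl(\frac c{(c,k)}\bigr)\varphi(c)/\varphi\bigl(\frac c{(c,k)}\bigr)$ times $e(-n\omega c\xi_2)$ — and then establishes cancellation in the sum over $c$ (writing $c=kq_j$ and bounding $\sum_{k\le X}\varphi(kq_j)e(kq_j\alpha_2)$ as in Lemma \ref{EULERPHISUMLEM}, followed by a two-level continued-fraction analysis of $\xi_1$ and of $q\xi_2$). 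This is precisely where the Diophantine properties of $\xi_2$ and the specific majorant $\fb_{\vecxi,L}(y)$ enter, and without it the claimed inequality is out of reach.
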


Let us make some comments on this result.
First of all, note that for any fixed $\vecxi\in\R^2$ and $L>0$, we have $\lim_{y\to0}\fb_{\vecxi,L}(y)=0$
if (and only if) $\vecxi\notin\Q^2$.
Hence Theorem \ref{MAINABTHM} is indeed an effective version of Theorem \ref{NONEFFECTIVETHM}.

In order to discuss the rate of decay of our bound as $y\to0$, we recall the following definition:
We say that a vector $\vecxi\in\R^2$ is of \textit{(Diophantine) type} $K$ if there is some 
constant $c>0$ such that $\|\vecxi-q^{-1}\vecm\|>cq^{-K}$ for all $q\in\Z^+$ and $\vecm\in\Z^2$.
The smallest possible value for $K$ is $K=\frac32$, and it is known that
Lebesgue-almost all $\vecxi\in\R^2$ are of type $K=\frac32+\ve$ for any $\ve>0$.
In fact, by a result of Jarnik \cite{Jarnik}, for any $K\geq\frac32$,
the set of those $\vecxi\in\R^2$ which are not of type $K$ has
Hausdorff dimension $3/K$.
Now from the definition \eqref{MYXILDEF} one easily verifies that, for any fixed $\vecxi$ and $L$ and any given $\delta>0$,
we have $\fb_{\vecxi,L}(y)\ll y^\delta$ as $y\to0$
if and only if $\delta\leq\frac12$ and $\vecxi$ is of type $K=\delta^{-1}$.
Hence we get:
\begin{cor}\label{MAINABTHMCOR}
For any $\ve>0$, $f\in\C_{\b}^8(X)$, $\alpha<\beta$ and any $\vecxi\in\R^2$ of Diophantine type $K\geq\frac32$,
there is a constant $C=C(\ve,f,\alpha,\beta,\vecxi)>0$ such that
\begin{align}\label{MAINABTHMCORRES}
\biggl|\frac1{\beta-\alpha}\int_\alpha^\beta f\Bigl(\Gamma\,(1_2,\vecxi)\,U^xa(y) %
\Bigr)\,dx
-\int_{\GaG} f\,d\mu\biggr|
<C y^{\min(\frac14,\frac1K)-\ve},\qquad\forall 0<y<1.
\end{align}
\end{cor}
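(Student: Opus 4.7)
The plan is to derive this as a short deduction from Theorem \ref{MAINABTHM}. With $\alpha, \beta, \vecxi$ fixed as in the statement, set $L = \max(1, |\alpha|, |\beta|)$; then the prefactor $L/(\beta-\alpha)$ and the norm $\|f\|_{\C_{\b}^8}$ in \eqref{MAINABTHMRES} are constants depending only on $\alpha, \beta, f$. Hence it suffices to control the factor $(\fb_{\vecxi,L}(y)+y^{1/4})^{1-\ve'}$ as a function of $y$, for a suitable auxiliary parameter $\ve'>0$ to be chosen below.

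The key step is to verify the pointwise bound
\begin{align*}
\fb_{\vecxi,L}(y) \ll_{\vecxi} y^{\min(1/2,\, 1/K)} \qquad \text{for all } 0<y<1,
\end{align*}
which is exactly the ``if'' direction of the observation cited in the text just before the corollary. By the definition of Diophantine type, there exists $c=c(\vecxi)>0$ such that $\max(\langle q\xi_1\rangle,\langle q\xi_2\rangle) > c\,q^{1-K}$ for every $q\in\Z^+$ (applying the defining inequality to the sup-norm on $\R^2$). Since $L\geq 1$, bounding the minimum from above by the larger-denominator term gives
\begin{align*}
\min\!\left(\frac{\sqrt y}{Lq\langle q\xi_1\rangle},\, \frac{\sqrt y}{q\langle q\xi_2\rangle}\right) \leq \frac{\sqrt y}{q\max(\langle q\xi_1\rangle,\langle q\xi_2\rangle)} \leq c^{-1}\,\sqrt y\, q^{K-2},
\end{align*}
so each entry in the max defining $\fb_{\vecxi,L}(y)$ is at most $\min(q^{-2},\, c^{-1}\sqrt y\, q^{K-2})$. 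For $K\geq 2$ the two terms balance at $q \asymp y^{-1/(2K)}$, giving a bound $\ll y^{1/K}$; for $K<2$ both terms are decreasing in $q$, so the supremum is attained at $q=1$ and is $\ll y^{1/2}$. Either way the displayed bound holds.

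Because $y<1$, a brief case check (on whether $K\leq 4$ or $K\geq 4$) then yields $\fb_{\vecxi,L}(y)+y^{1/4} \ll_\vecxi y^{\min(1/4,\, 1/K)}$. Choosing $\ve' = \ve/\min(1/4,1/K)$ in Theorem \ref{MAINABTHM} (a finite positive quantity since $K\geq 3/2$) and raising the previous bound to the power $1-\ve'$ produces exactly the exponent $\min(1/4,\,1/K)\cdot(1-\ve') = \min(1/4,\,1/K) - \ve$ claimed in \eqref{MAINABTHMCORRES}, with a constant $C=C(\ve,f,\alpha,\beta,\vecxi)$. The whole argument is routine: there is no substantive obstacle once Theorem \ref{MAINABTHM} is in hand, the only nontrivial input being the Diophantine-approximation bookkeeping above, which is the unique place where the type hypothesis enters.
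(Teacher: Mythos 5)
Your proposal is correct and follows essentially the same route as the paper: the paper deduces the corollary directly from Theorem \ref{MAINABTHM} via the observation (stated just before the corollary, with proof left to the reader) that $\vecxi$ of type $K$ implies $\fb_{\vecxi,L}(y)\ll_\vecxi y^{\min(\frac12,\frac1K)}$, which is exactly the Diophantine bookkeeping you carry out. Your verification of that bound, and the subsequent absorption of the $\ve$-loss by shrinking the exponent, match the intended argument; the only points you gloss over (the convention when $\langle q\xi_j\rangle=0$, the sup-norm versus Euclidean-norm constant, and $y$ close to $1$) are harmless constant adjustments.
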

In particular, 
in view of Jarnik's result, we obtain the rate $y^{\frac14-\ve}$ for any fixed $\vecxi\in\R^2$
away from a set of Hausdorff dimension $\frac34$.
It seems that the exponent $\frac14$ in \eqref{MAINABTHMCORRES} is not the best possible,
and that 
optimally one might hope to %
prove that the left hand side of
\eqref{MAINABTHMRES} decays with a rate $y^{\frac12-\ve}$ as $y\to0$,
for any $\vecxi$ satisfying an appropriate Diophantine condition;
cf.\ Remark \ref{MAINTHMWEAKERPROPREM} below.

Regarding the dependence of our bound on $\alpha$, $\beta$, 
we remark that we could have chosen to state Theorem \ref{MAINABTHM} with the extra restriction
$-1\leq\alpha<\beta\leq1$ (viz.,\ $L=1$);
the general case can be deduced aposteriori from %
that case by 
using invariance under $U^n\in\Gamma$, $n\in\Z$,
and splitting $[\alpha,\beta]$ into subintervals of length $\leq1$;
this will be seen in Section~\ref{MAJORANTPROPSEC} 
where we discuss basic properties of the majorant function $\fb_{\vecxi,L}(y)$.
We have not given any special attention to the case of $\beta-\alpha$ becoming small in our proof of Theorem~\ref{MAINABTHM}, 
and there seems to be room for improvement in this direction.
(Cf.\ \cite{Str}, where the case of both $\beta-\alpha$ and $y$ being small is considered for the case 
of pieces of closed horocycles in $X'$ and other homogeneous spaces of $\SL(2,\R)$.) %
Also we have made no effort to optimize the dependence on $f$ in Theorem \ref{MAINABTHM}.

A point to note is that the orbit $\Gamma(1_2,\vecxi)U^\R$
is \textit{closed} in $X$ if and only if $\xi_1\in\Q$, and in this case its period equals %
the denominator of $\xi_1$; a corresponding fact also holds for any $\Phi^t$-push-forward of that orbit.
This is to some extent reflected in the bound \eqref{MAINABTHMRES}:
for fixed $y,\vecxi$, we have $\lim_{L\to\infty} \fb_{\vecxi,L}(y)=0$ if and only if $\xi_1\notin\Q$.

\subsection{General orbits}
\label{INTROGENERALORBITS}

We now turn to the case of arbitrary $U^\R$-orbits. %
According to Ratner's equidistribution theorem \cite{mR91}, %
\textit{every} $U^\R$-orbit in $X$ has a closure which is homogeneous.
Stated in more detail, for any given $x=\Gamma g\in X$ ($g\in G$) there exists a closed connected subgroup $H\subset G$ such
that $U^\R\subset H$, $\Gamma\cap gHg^{-1}$ is a lattice in $gHg^{-1}$, 
and the closure of $xU^\R$ in $X$ equals $xH=\Gamma\backslash\Gamma gH$.
Furthermore the orbit $xU^\R$ is then asymptotically equidistributed in $xH$ with respect to $\nu_H$,
the $H$-invariant Borel probability measure on $X$ supported on $xH$ \cite[Thm.\ B]{mR91}.

For our specific space $X$ it is fairly easy to list explicitly those subgroups $H$ which can occur, and in particular
to give a precise criterion for when %
$xU^\R=\Gamma gU^\R$ is asymptotically equidistributed in $(X,\mu)$.
Clearly a necessary condition for the latter %
is that the projected orbit $D(xU^\R)$ should be equidistributed in $X'$.
By a theorem of Dani \cite{Dani82} (a very special case of Ratner's \cite{mR91}), 
$D(xU^\R)$ is equidistributed in $X'$ unless $D(xU^\R)$ is a closed horocycle,
viz., unless the lattice $\Z^2 D(g)$ contains some point along the line $(0,\R):=\{0\}\times\R$ other than the origin.
Assuming that $D(xU^\R)$ is equidistributed in $X'$, one finds
(cf.\ the discussion in \cite[\S 2.6]{Elkies04} applied to the measure $\nu_H$;
see in particular \cite[Cor.\ 2.11 and Cor.\ 2.12, corrected]{Elkies04})
that either $H=G$ and $xH=X$, or else 
there is some $\beta\in\R$ such that $(0,\beta)g^{-1}\in\Q^2$, and then
$H=\bigl(1_2,-(0,\beta)\bigr)G'\bigl(1_2,(0,\beta)\bigr)$ and $xH=X_q\bigl(1_2,(0,\beta)\bigr)$,
where $q=\dn\bigl((0,\beta)g^{-1}\bigr)$.
(For clarity, note that in the second case, $\beta$ is uniquely determined. %
Indeed, if the point set $\Q^2g$ intersects the line $(0,\R)$ in more than one point then
by subtraction $\Q^2D(g)$ contains a non-zero point on $(0,\R)$; hence so does the lattice
$\Z^2D(g)$, contradicting our assumption that $D(xU^\R)$ is equidistributed in $X'$.)

In particular we have:
\begin{thm}\label{NONCLOSEDINEFFTHM} 
(Special case of Ratner, \cite{mR91}.)
Fix any $g\in G$ satisfying $\Z^2D(g)\cap(0,\R)=\{\bn\}$ %
and $(0,\beta)g^{-1}\notin\Q^2$ for all $\beta\in\R$.
Then the orbit $\Gamma gU^\R$ is asymptotically equidistributed in $(X,\mu)$.
In other words, for any bounded continuous function $f$ on $X$,  %
$\frac1T\int_0^T f(\Gamma gU^t)\,dt\to\int_X f\,d\mu$ as $T\to\infty$.
\end{thm}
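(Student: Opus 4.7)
The plan is to deduce this as a direct application of Ratner's orbit equidistribution theorem \cite{mR91}, combined with the classification of admissible subgroups $H$ sketched in the paragraphs immediately preceding the theorem statement. Concretely, Ratner's theorem applied to $x = \Gamma g$ produces a closed connected subgroup $H \subset G$ with $U^\R \subset H$, such that $\Gamma \cap gHg^{-1}$ is a lattice in $gHg^{-1}$, the closure of $xU^\R$ equals $xH$, and $xU^\R$ equidistributes in $xH$ with respect to the $H$-invariant probability measure $\nu_H$. Hence it suffices to show that, under the two Diophantine-type hypotheses imposed on $g$, the only possibility is $H = G$.

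First I would rule out the case that $D(xU^\R) = \Gamma' D(g) U^\R$ is a closed horocycle in $X'$. By Dani's theorem \cite{Dani82}, the only alternative to equidistribution of $D(xU^\R)$ in $X'$ is exactly this closed-horocycle case, which occurs precisely when the lattice $\Z^2 D(g)$ contains a nonzero point on the line $(0,\R)$. The first hypothesis $\Z^2 D(g) \cap (0,\R) = \{\bn\}$ excludes this, so $D(xU^\R)$ is equidistributed in $(X', \mu')$.

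Next I would use the explicit enumeration (as laid out in the preceding discussion, following \cite[\S 2.6]{Elkies04} and in particular \cite[Cor.\ 2.11--2.12]{Elkies04}) of the possible $H$ once equidistribution of the projected orbit in $X'$ is known. The only two options are (i) $H = G$, giving $xH = X$ and hence equidistribution of $\Gamma g U^\R$ in $(X,\mu)$, or (ii) $H = (1_2, -(0,\beta)) \, G' \, (1_2, (0,\beta))$ for some $\beta \in \R$ with $(0,\beta) g^{-1} \in \Q^2$, in which case $xH = X_q (1_2,(0,\beta))$ with $q = \dn((0,\beta)g^{-1})$. The second hypothesis $(0,\beta) g^{-1} \notin \Q^2$ for all $\beta \in \R$ directly rules out option (ii), leaving $H = G$, as desired.

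The hardest step is not any single calculation but the underlying structural input, namely the classification of closed connected subgroups $H \subset G$ with $U^\R \subset H$ for which $\Gamma \cap gHg^{-1}$ is a lattice; however this is exactly what the Elkies--McMullen analysis provides in our setting. Once this is in hand, together with Dani's theorem and Ratner's equidistribution theorem, the proof is a matter of observing that the two Diophantine hypotheses on $g$ are precisely the conditions that exclude the two non-generic possibilities. The convergence $\frac{1}{T}\int_0^T f(\Gamma g U^t)\,dt \to \int_X f\,d\mu$ for every bounded continuous $f$ then follows from Ratner's theorem applied with $H = G$ and $\nu_H = \mu$.
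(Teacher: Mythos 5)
Your proposal is correct and follows essentially the same route as the paper: the theorem is stated there as an immediate consequence of the preceding discussion, namely Ratner's orbit-closure and equidistribution theorem, Dani's theorem (excluded by the hypothesis $\Z^2D(g)\cap(0,\R)=\{\bn\}$), and the Elkies--McMullen classification of the possible intermediate subgroups $H$ (excluded by the hypothesis $(0,\beta)g^{-1}\notin\Q^2$ for all $\beta$), forcing $H=G$.
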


As an application of our main result, Theorem \ref{MAINABTHM}, 
and using the technique of approximating nonclosed horocycles by pieces of closed horocycles
(cf.\ %
\cite{pSaU2011}),
we will prove an effective version of 
Theorem \ref{NONCLOSEDINEFFTHM}; see Theorem \ref{NONCLOSEDTHM2} below.
Before stating it, it is useful to recall the effective equidistribution result for horocycles in $X'$ proved in \cite{iha}
(viz., an effective version of Dani's theorem \cite{Dani82});
cf.\ also \cite{Burger}, \cite{FF}, \cite{pSaU2011}.
For $g\in G'$ we write $\svl(g)>0$ for the Euclidean length of the shortest non-zero vector 
in the lattice $\Z^2g$.
Note that $\svl(\gamma g)=\svl(g)$ for all $\gamma\in\Gamma'$, i.e.\ $\svl$ is a function on $X'$;
in fact $\svl(g)$ equals the inverse square root of the invariant height function $\scrY_{\Gamma'}(g)$ 
used in \cite{iha}.
More generally for $g\in G$ we set $\svl(g)=\svl(D(g))$.
Finally for $g\in G$ and $T>0$ we set
\begin{align}\label{YGTDEF}
y_g(T):=T^{-1}\svl(g\,a(T))^{-2}.
\end{align}
\begin{thm}\label{XPEFFTHM}
(\cite[Thm.\ 1]{iha}; cf.\ also \cite{pSaU2011})
There exists an absolute constant $C>0$ such that, for any $g\in G'$, $T\geq1$, and any
$f\in\C_{\b}^4(X')$:
\begin{align}\label{XPEFFTHMRES}
\biggl|\frac1T\int_0^T f\bigl(\Gamma' gU^t\bigr)\,dt-\int_{X'}f\,d\mu'\biggr|
\leq C\|f\|_{\C_{\b}^4}\, y_g(T)^{\frac12}\log^3(2+y_g(T)^{-1}).
\end{align}
\end{thm}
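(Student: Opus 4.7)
The plan is to prove this via spectral decomposition of $L^2(X')$ under the right action of $\SL(2,\R)$. Since $X'=\SLSL$, the decomposition splits into the constant function, a discrete Maass-cusp-form part, and the Eisenstein continuous spectrum. After subtracting $\int_{X'}f\,d\mu'$, it suffices to bound the horocycle integral of each non-trivial spectral piece, and the hypothesis $f\in\C_{\b}^4(X')$ produces rapid decay of the spectral coefficients, reducing the matter to uniform bounds on single components.

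I would first reduce to a horocycle segment of length $1$ via the change of variables $U^t a(T)=a(T)U^{t/T}$ (cf.\ \eqref{UPHICOMM}), rewriting
\begin{align*}
\frac1T\int_0^T f\bigl(\Gamma' g U^t\bigr)\,dt=\int_0^1 f\Bigl(\Gamma'(g\,a(T))\,U^s\,a(T^{-1})\Bigr)\,ds;
\end{align*}
this is a unit segment at height $T^{-1}$ based at $g\,a(T)$, whose cuspidal excursion is measured exactly by $\svl(g\,a(T))$, so that $y_g(T)^{1/2}$ emerges as the natural oscillation scale. For each cuspidal component $\phi$ of Laplace eigenvalue $\tfrac14+r^2$ with Fourier expansion at the cusp $\infty$, substituting into the $s$-integral and exploiting the exponential decay of Whittaker functions gives a bound of order $y_g(T)^{1/2}$ per component, uniform in $r$ thanks to the spectral gap $\lambda_1\geq\tfrac14$ for $\SLZ$; summing over cuspidal components is controlled by the Parseval bound afforded by $f\in\C_{\b}^4(X')$. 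The Eisenstein contribution is treated analogously by shifting the line of integration from $\tre s=1$ to $\tre s=\tfrac12$, picking up the residue at $s=1$ (which reproduces $\int f\,d\mu'$ and is cancelled), while the remaining line integral contributes the dominant term whose size is bounded using $|\zeta(1+2it)|^{-1}\ll\log(2+|t|)$.

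The main obstacle is uniformity in $g$ as $g\,a(T)$ moves deep into the cusp of $X'$, where Eisenstein series grow like $y^{1/2}$; this growth is precisely what the factor $y_g(T)^{1/2}$ in the final bound absorbs. Obtaining only a polylogarithmic overhead ($\log^3$) rather than a small power of $y_g(T)$ requires a careful second-moment estimate on the Eisenstein side, combining a Parseval-type identity with a smoothing of the $s$-integral in the style of Sarnak and Flaminio--Forni. A secondary point is converting the $\C_{\b}^4$-norm hypothesis into the correct spectral Sobolev bound: for $\SL(2,\R)$ one must show that the Casimir operator together with a few invariant differential operators controls all the relevant directions, and four derivatives should just suffice to make the sum over the spectrum converge and close the argument.
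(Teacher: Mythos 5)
You should first note that the paper does not prove Theorem \ref{XPEFFTHM} at all: it is quoted directly from \cite[Thm.\ 1]{iha} (cf.\ also \cite{pSaU2011}), and the closely related closed-horocycle estimate \eqref{SL2REQUIDISTR} is likewise attributed to \cite{FF}, \cite{Burger}, \cite{iha}. So your sketch has to be measured against the proof in the cited source, which extends Burger's representation-theoretic method to cofinite groups such as $\SL(2,\Z)$; your general toolbox (spectral decomposition of $\L^2(X')$, four derivatives to make the spectral sum converge, bounds like $|\zeta(1+2it)|^{-1}\ll\log(2+|t|)$ on the continuous spectrum producing the logarithmic factors) is indeed the same family of ideas, and your rescaling identity $\frac1T\int_0^Tf(\Gamma'gU^t)\,dt=\int_0^1f(\Gamma'(ga(T))U^sa(T^{-1}))\,ds$ is correct.

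The genuine gap is at exactly the hard point of the theorem. The Fourier-expansion-at-$\infty$/Whittaker-decay computation you invoke bounds periods of automorphic forms along \emph{closed} horocycles or horizontal segments of them (the Sarnak/Hejhal/\cite{Str} setting). Here $g\in G'$ is arbitrary: after the rescaling, the integral is over a unit segment based at the arbitrary point $\Gamma'\,ga(T)$, i.e.\ a piece of a generally non-closed horocycle, and the cuspidal Fourier expansion does not align with the orbit. Obtaining bounds for each spectral component that are uniform in this base point — which is precisely where $y_g(T)=T^{-1}\svl(ga(T))^{-2}$ and the $\log^3$ enter, and which occupies most of the argument in \cite{iha} — is asserted in your sketch (``$y_g(T)^{1/2}$ emerges as the natural oscillation scale'', ``this growth is precisely what the factor absorbs'') rather than proved. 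Two further points: (i) a general $f\in\C_{\b}^4(X')$ is not $K$-invariant, so weight-zero Maass cusp forms and Eisenstein series do not exhaust the spectrum; one must handle all $K$-types, i.e.\ the full decomposition into irreducible unitary representations, which is what Burger's matrix-coefficient method (or Flaminio--Forni's invariant distributions) is built for; (ii) ``uniform in $r$ thanks to the spectral gap'' is not automatic — uniformity across the tempered spectrum requires uniform two-variable estimates for the relevant Bessel/matrix-coefficient integrals, one of the technical cores of the cited proof. As it stands, your outline reproduces the known strategy for closed horocycles but does not supply the mechanism that makes the bound uniform in $g$ with the stated dependence on $y_g(T)$.
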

Note that for given $g\in G'$, $\lim_{T\to\infty}y_g(T)=0$ holds if and only if the horocycle $\Gamma' gU^\R$ is not closed;
hence Theorem \ref{XPEFFTHM} is indeed an effective version of Dani's equidistribution result.
For given $g=\smatr abcd\in G'$, the rate of decay of $y_g(T)$ as $T\to\infty$
is directly related to the Diophantine properties of the number $\frac ac$ (assuming $c\neq0$):
If $\frac ac$ is of Diophantine type $K\geq2$ 
(viz., $\inf_{q\in\Z^+} q^{K-1}\langle q\frac ac\rangle>0$),
then there is $C=C(g,K)>0$ such that $y_g(T)\leq CT^{-2/K}$ for all $T\geq1$.
In particular, for (Haar-)almost all $g\in G'$, the right hand side of \eqref{XPEFFTHMRES} decays
more rapidly than $T^{\ve-\frac12}$ as $T\to\infty$ ($\forall \ve>0$).  \label{YGTGENERICDECAY}
The rate of decay of the right hand side in \eqref{XPEFFTHMRES} is in fact essentially optimal,
for any given $g\in G'$; cf.\ \cite[Thm.\ 2 and \S\S 4-5]{iha}.
We also remark that \cite[Thm.\ 1]{iha} is more general in that it holds for an arbitrary cofinite subgroup of $\PSL(2,\R)$
in place of $\Gamma'$ (the bound then depends on the small eigenvalues of the Laplace-Beltrami operator
on the corresponding hyperbolic surface); also the bound holds with a weaker function space norm
than the $\|\cdot\|_{\C_{\b}^4}$ used above.

\vspace{5pt}

We are now ready to state our effective version of Theorem \ref{NONCLOSEDINEFFTHM}.
For $T>0$, let $\fR_T$ be the closed rectangle
$\fR_T:=[-T^{-1},T^{-1}]\times[-1,1]\subset\R^2$.
We also use the shorthand notation $\Z^+_{\leq a}=(0,a]\cap\Z$.
Set, for $g\in G$ and $T>0$,
\begin{align}\label{BGTDEF}
\fb_g(T)=\inf\Bigl\{\delta>0\col\Bigl[\forall q\in\Z_{\leq\delta^{-1/2}}^+:\:
(q^{-1}\Z^2)g\cap \frac{1}{\delta q^2}\fR_T=\emptyset\Bigr]\Bigr\}.
\end{align}
(This can be viewed as a generalization of the notation $\fb_{\vecxi,L}(y)$ introduced previously;
cf.\ equation 
\eqref{EXILALTFORMULA2} on p.\ \pageref{EXILALTFORMULA2}.)
\begin{thm}\label{NONCLOSEDTHM2}
Given any $\ve>0$, there exists a constant $C>0$ such that, for 
any $g\in G$, $T\geq2$ and $f\in\C_{\b}^8(\GaG)$, we have
\begin{align}\label{NONCLOSEDTHM2RES}
&\biggl|\frac1T\int_0^T f(\Gamma gU^t)\,dt-\int_{\GaG} f\,d\mu\biggr|
\leq C\|f\|_{\C_{\b}^8}\bigl(y_g(T)^{\frac14}+\fb_g(T)\bigr)^{\frac12-\ve}.
\end{align}
\end{thm}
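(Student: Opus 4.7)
The plan is to follow the Sarnak--Ubis approach of approximating a piece of a non-closed horocycle by a piece of a closed-horocycle lift, and then applying Theorem~\ref{MAINABTHM} (the effective result for closed-horocycle lifts) as a black box. First, I would rescale the integral using the substitution $t=sT$ and the commutation relation $a(T)U^s a(T^{-1})=U^{sT}$ (a direct consequence of \eqref{UPHICOMM}), which converts the long-horocycle average into a unit-length average at height $T^{-1}$:
\begin{align*}
\frac{1}{T}\int_0^T f(\Gamma g U^t)\,dt=\int_0^1 f\bigl(\Gamma\, h\, U^s\, a(T^{-1})\bigr)\,ds,\qquad h:=g\,a(T).
\end{align*}
By the definition of $y_g(T)$, the lattice $\Z^2D(h)$ has shortest vector of length $(Ty_g(T))^{-1/2}$; hence by a $\Gamma$-shift I may normalize the $G'$-component of $h$ to Iwasawa form $U^{s_0}a(y_1)k_0$ with $y_1\asymp Ty_g(T)$ and $k_0\in\SO(2)$, while simultaneously modifying the translation component $\vecxi\in\R^2$ of $h$ by an arbitrary element of $\Z^2 D(h)$ (via $(1_2,\vecn)\in\Gamma$, $\vecn\in\Z^2$).

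Next I would use this freedom, together with a rational approximation step, to replace $h$ by a point of the form $(1_2,\vecxi_0)a(y^*)$, with $\vecxi_0\in\Q^2$ of controlled denominator and $y^*$ chosen as a free parameter. The best possible approximation quality is measured precisely by $\fb_g(T)$; this is the content of the alternative formula \eqref{EXILALTFORMULA2} relating $\fb_g$ to $\fb_{\vecxi,L}$. Smoothness of $f$, i.e.\ the norm $\|f\|_{\C_{\b}^8}$, then bounds the replacement error, with a factor depending linearly on the size of the perturbation and on the cost of absorbing the rotation $k_0$. Using the computation
\begin{align*}
(1_2,\vecxi_0)a(y^*)U^s a(T^{-1})=(1_2,\vecxi_0)U^{sy^*}a(y^*/T),
\end{align*}
the main integral takes the form $\tfrac{1}{y^*}\int_0^{y^*}f(\Gamma(1_2,\vecxi_0)U^x a(y^*/T))\,dx$, to which Theorem~\ref{MAINABTHM} applies and yields a bound of order $\|f\|_{\C_{\b}^8}\,(y^*)^{-1}L\bigl(\fb_{\vecxi_0,L}(y^*/T)+(y^*/T)^{1/4}\bigr)^{1-\ve}$, with $L=\max(1,y^*)$.

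Finally, one optimizes $y^*$ (equivalently, the precision of the rational approximation) so as to balance the main-term bound against the replacement error from the previous step; the optimum produces the exponent $\tfrac12-\ve$ in place of the $1-\ve$ coming from Theorem~\ref{MAINABTHM}, and matches the bound $\bigl(y_g(T)^{1/4}+\fb_g(T)\bigr)^{\frac12-\ve}$ claimed in \eqref{NONCLOSEDTHM2RES}. A routine check, using \eqref{EXILALTFORMULA2}, verifies that $\fb_{\vecxi_0,L}(y^*/T)\ll\fb_g(T)$ and $(y^*/T)^{1/4}\ll y_g(T)^{1/4}$ at the optimizer.

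The main technical obstacle will be the combinatorial bookkeeping required to track the rational-approximation quality and the height $y^*$ simultaneously through all of the reparametrizations, so as to identify the two Diophantine quantities $\fb_g(T)$ and $\fb_{\vecxi_0,L}(y^*/T)$ — this is essentially the content of the comparison formula \eqref{EXILALTFORMULA2}. A secondary obstacle is the treatment of the Iwasawa rotation $k_0\in\SO(2)$: since $k_0$ does not commute with $U^s a(T^{-1})$ in a form-preserving way, it must be absorbed using smoothness of $f$, at a cost controlled by derivatives of order up to $8$, which explains the appearance of $\|f\|_{\C_{\b}^8}$ on the right-hand side of \eqref{NONCLOSEDTHM2RES}.
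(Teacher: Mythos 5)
Your overall plan (approximate by a closed-horocycle lift and invoke Theorem \ref{MAINABTHM}) is the right general strategy, but the two central steps of your sketch fail. First, the one-shot replacement of $h=g\,a(T)$ by a single point $(1_2,\vecxi_0)a(y^*)$, with an error claimed to be ``linear in the size of the perturbation'', ignores that discrepancies are amplified along the orbit: writing $\vecdel=\vecxi-\vecxi_0$ and $M=D(g)$, one has $(1_2,\vecxi)MU^t=(1_2,\vecxi_0)MU^t\,(1_2,\vecdel MU^t)$ with $\vecdel MU^t=\bigl((\vecdel M)_1,\,t(\vecdel M)_1+(\vecdel M)_2\bigr)$, so the distance between the two orbits grows like $T\,|(\vecdel M)_1|$; likewise the Iwasawa rotation $k_0$ is not a small group element, and it cannot be ``absorbed using smoothness'' (derivative bounds only compare $f$ at points at distance $o(1)$; the $\C_{\b}^8$-norm in \eqref{NONCLOSEDTHM2RES} comes from Theorem \ref{MAINABTHM}, not from such an absorption). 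Handling exactly this is the role of Proposition \ref{SUAPPRLEM} (Sarnak--Ubis): the approximating closed-horocycle lift must be M\"obius-reparametrized and has a $t$-dependent height, so Theorem \ref{MAINABTHM} cannot be applied once; in the paper one partitions $[0,T]$ into many subintervals on which the height is essentially constant, applies Theorem \ref{MAINABTHM} on each with the \emph{true} shifted vectors $\vecxi\gamma U^{n_j}$, chooses the subinterval lengths adaptively to balance the drift error against the equidistribution error --- it is precisely this balancing that turns the exponent $1-\ve$ into $\tfrac12-\ve$ --- and sums the resulting errors using Lemma \ref{GOODMSUMLEMNEW}; finally the comparisons $\fb_{\vecxi\gamma,yT}(y)\ll\fb_g(T)$ and $y\asymp y_g(T)$ are proved separately (via \eqref{EXILALTFORMULA} and Lemma \ref{TWOYSAMELEM}, through the intermediate Theorem \ref{NONCLOSEDTHM}). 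None of these mechanisms appears in your sketch, and a single-interval optimization of $y^*$ has no source for the halving of the exponent.

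Second, and independently, replacing the translation vector by a rational $\vecxi_0\in\Q^2$ and applying Theorem \ref{MAINABTHM} there cannot yield \eqref{NONCLOSEDTHM2RES}: by the convention in \eqref{MYXILDEF} one has $\fb_{\vecxi_0,L}(y)\geq\dn(\vecxi_0)^{-2}$, and worse, $\fb_{\vecxi_0,L}(y)\asymp1$ whenever $\vecxi_0$ lies within roughly $\sqrt y$ of a low-denominator rational point, so the claimed ``routine check'' $\fb_{\vecxi_0,L}(y^*/T)\ll\fb_g(T)$ is false in general. For instance, take $\vecxi$ at distance $T^{-3/5}$ from $\Z^2$ and $M$ a fixed generic matrix: then generically $\fb_g(T)\asymp T^{-2/5}$ and $y_g(T)\asymp T^{-1}$, so \eqref{NONCLOSEDTHM2RES} gives a power saving; yet any rational point accurate enough for your replacement step lies within about $T^{-3/5}$ of an integer vector, and since $\sqrt{y^*/T}\asymp T^{-1/2}\gg T^{-3/5}$ (with $y^*\asymp Ty_g(T)\asymp1$), the $q=1$ term in \eqref{MYXILDEF} already forces $\fb_{\vecxi_0,1}(y^*/T)=1$, so Theorem \ref{MAINABTHM} returns a trivial bound no matter which approximant you pick. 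The underlying point is that $\fb_g(T)$ is anisotropic --- the rectangle $\fR_T$ in \eqref{BGTDEF} has width $T^{-1}$ in the expanding direction and width $1$ in the other --- and is not controlled by Euclidean rational approximation of $\vecxi$; accordingly, the paper never approximates $\vecxi$ by rationals, but feeds the genuine vectors into Theorem \ref{MAINABTHM} and lets the Diophantine information enter through the majorant.
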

Note that for any given $g\in G$ we have $\lim_{T\to\infty}(y_g(T)^{\frac14}+b_g(T))=0$ if (and only if)
$D(\Gamma gU^\R)$ is not a closed horocycle in $X'$ and
$\Q^2g\cap(0,\R)=\emptyset$,
viz.\ $(0,\beta)g^{-1}\notin\Q^2$ for all $\beta\in\R$.
Hence Theorem \ref{NONCLOSEDTHM2} is indeed an effective version of Theorem \ref{NONCLOSEDINEFFTHM}.
We will also see that for $\mu$-almost all $g\in G$, we have $\lim_{T\to\infty}b_g(T)T^{\delta}=0$ for all $\delta<\frac12$
(cf.\ Proposition \ref{BGTGENDECAYPROP}); 
hence, recalling the earlier discussion about $y_g(T)$,
we see that for $\mu$-almost all $g\in G$, the right hand side in \eqref{NONCLOSEDTHM2RES} decays more rapidly than
$T^{\ve-\frac18}$ as $T\to\infty$ ($\forall\ve>0$).
As we discuss in Remark \ref{NONCLOSEDOPTEXPREM} below, %
optimally one might hope to improve Theorem \ref{NONCLOSEDTHM2} so as to yield 
a rate of decay $T^{\ve-\frac12}$ for any $g$ satisfying appropriate Diophantine conditions.

\subsection{Applications and extensions} %
\label{APPLSEC}

As we have mentioned, cases of Ratner equidistribution in settings closely related to that of the present paper
have played a crucial role in the solution of several problems in number theory and in mathematical physics.
We discuss some of these here.

In \cite{MarklofpaircorrI}, \cite{MarklofpaircorrII}, Marklof proved that
the limit local pair correlation density of the sequence $\|\vecm-\vecalf\|^k$, $\vecm\in\Z^k$ ($k\geq2$)
is that of a Poisson process, under Diophantine conditions on the fixed vector $\vecalf\in\R^k$.
In particular for $k=2$ this gives a quantitative Oppenheim type statement for the inhomogeneous quadratic form
$(x_1-\alpha)^2+(x_2-\beta)^2-(x_3-\alpha)^2-(x_4-\beta)^2$.
The proof makes use of an analogue of Theorem \ref{NONEFFECTIVETHM} for $G=\SL(2,\R)\ltimes(\R^2)^{\oplus k}$
and $\Gamma$ a congruence subgroup of $\SL(2,\Z)\ltimes(\Z^2)^{\oplus k}$.
In joint work with Pankaj Vishe, \cite{effopp}, we generalize the methods of the present paper to that case,
and apply this to obtain an effective rate of convergence for the pair correlation density of 
$\|\vecm-\vecalf\|^k$.

In particular it is noted in \cite{effopp} that
the methods of the present paper can without serious difficulty be extended
to the case of $\Gamma$ being an arbitrary congruence subgroup of $\SL(2,\Z)\ltimes\Z^2$.
However, already in a case such as 
$\Gamma=\widetilde\Gamma'\ltimes\Z^2$, with $\widetilde\Gamma'$ a noncongruence subgroup of finite index of $\SL(2,\Z)$,
new ideas would be needed to extend the results of the present paper.
(We remark that \textit{every} lattice $\Gamma$ in $G=\SL(2,\R)\ltimes\R^2$
can be conjugated within %
$\GL(2,\R)\ltimes\R^2$ into a position where
$D(\Gamma)$ is a finite index subgroup of $\SL(2,\Z)$ and $(\{1_2\}\ltimes\R^2)\cap\Gamma=\{1_2\}\ltimes\Z^2$;
cf.\ \cite[Cor.\ 8.28]{Raghunathan}.
However it is not always possible to conjugate into a situation where
$\Gamma$ contains $\widetilde\Gamma'\ltimes L$ for some subgroup $\widetilde\Gamma'$ of finite index in $\SL(2,\Z)$
and a lattice $L\subset\Z^2$.
Indeed, consider for example the lattice $\Gamma$ generated by $(\smatr1201,\vecv)$, $(\smatr1021,\vecv')$,
$(1_2,(1,0))$, $(1_2,(0,1))$, for some fixed $\vecv,\vecv'\in\R^2$ such that the first coordinate of $\vecv$ is irrational.
Recall in this connection that $\smatr1201$ and $\smatr1021$ are free generators of the principal congruence subgroup
$\Gamma(2)$ in $\SL(2,\Z)$.)

Quantitative Oppenheim type results for more general inhomogeneous quadratic forms have recently been obtained by
Margulis and Mohammadi \cite{MM}, using a method different from Marklof's.
For the special case of forms of signature (2,1) %
whose homogeneous part is a split rational form
(see \cite[Thm.\ 1.10]{MM}), the proof depends on equidistribution of unipotent orbits in
homogeneous spaces of the group $\SL(2,\R)\ltimes\text{Sym}_2(\R)$.
It seems that it should be possible to extend the methods of the present paper to these
homogeneous spaces, %
and also to more general groups of the form $\SL(2,\R)\ltimes V$ where $V$ is the vector space of a 
finite dimensional linear representation of $\SL(2,\R)$.

Elkies and McMullen \cite{Elkies04}
have shown that the gaps between the fractional parts of $\sqrt n$ for $n=1,\ldots,N$,
have a limit distribution as $N$ tends to infinity, and they compute this limit distribution explicitly.
In a recent paper, El-Baz, Marklof and Vinogradov \cite{elbazmv2} also prove convergence of the
local pair-correlation and more general mixed moments.
The proofs make crucial use of an analogue of Theorem \ref{NONEFFECTIVETHM} for the flow $U_1^\R$,
with $U_1^x=\bigl(\smatr1x01,-(x/2,x^2/4)\bigr)$. Since $U_1^\R$ is not conjugate to $U^\R$, 
Theorem \ref{MAINABTHM} does not apply to this setting.
In fact \textit{any} $\Ad$-unipotent $1$-parameter subgroup in $G$ with nontrivial image in $G'$
is conjugate to either $U^\R$ or $U_1^\R$.
Recently, Browning and Vinogradov \cite{BV} have extended the methods of the present paper
so as to yield an effective equidistribution result for certain orbits of the flow $U_1^\R$,
and applied this to establish an effective rate for the convergence of the gap distribution of $\sqrt n\,\mod1$.
(Note also that Sinai \cite{Sinai} has proposed an alternative approach to the statistics of $\sqrt n\,\mod 1$.)

Another application concerns the local statistics of directions to lattice points:
Consider a fixed lattice translate $L$ in $\R^2$ and record the directions of all lattice
vectors of length at most $T$. In joint work with Marklof we proved in \cite[Thm.\ 1.3; see also Thm.\ 2.1]{partI} 
that the distribution of gaps between the lattice directions has a limit as $T$ tends to infinity;
see also El-Baz, Marklof and Vinogradov \cite{elbazmv1} regarding convergence of the 
local pair-correlation and more general mixed moments.
Assuming that $L$ is an 'irrational' translate, the limit distribution is universal and in fact coincides with
the limiting gap distribution for $\sqrt n\,\mod1$ found by Elkies and McMullen.
The proofs of these facts make use of equidistribution of expanding translates of $\SO(2)$-orbits in the same space $X=\GaG$
as we consider here.  %
By a standard approximation argument this is reduced to the equidistribution of pieces of $U^\R$-orbits 
(cf. the proof of Cor.~5.4 in \cite{partI}),
and thus using our Theorem \ref{NONCLOSEDTHM2} it should be possible to prove an effective rate of convergence in
\cite[Thm.\ 1.3]{partI}, for 'irrational' lattice translates.
However several technicalities remain to be worked out to carry this through.  %

As a final example, in \cite[Remark 4 ($n=2$)]{SV} it is noted that
the number of values modulo one of a random linear form $\omega n$ for $n=1,\ldots,N$
which fall inside a given small interval of length $c/N$ centered at a fixed irrational point $\xi\in\R/\Z$,
has a limit distribution as $N\to\infty$, which is independent of $\xi$.
The proof is an application of Theorem \ref{NONEFFECTIVETHM} in the special case $\vecxi=(0,\xi)$,
and thus using our Theorem \ref{MAINABTHM} it would be possible to prove an effective rate for the convergence
to the limit distribution, depending on the Diophantine properties of $\xi$.

We hope to return to several of the above-mentioned questions %
in later work. 

\subsection{Outline of the paper}
Sections \ref{NOTATIONSEC}--\ref{LEADINGTERMSEC} 
lay down the setup of our approach:
In Section \ref{NOTATIONSEC} we set some basic notation;
in Section \ref{SMOOTHEDERGAV} we smooth the $(\alpha,\beta)$-integral %
appearing in Theorem~\ref{MAINABTHM};
in Section \ref{FOURIERDECSEC} we discuss the Fourier decomposition of the given
test function on $X=\GaG$ with respect to the torus fiber variable;
and in Section \ref{LEADINGTERMSEC} we handle the contribution from the zeroth Fourier term;
this reduces to a known result on the effective equidistribution of horocycle orbits in $X'$.

The basic idea of our approach appears in Sections \ref{INITIALDISCSEC}--\ref{CANCELLATIONSEC};
we first rewrite the remaining terms of the Fourier decomposition in an appropriate format,
and then prove a lemma (Lemma \ref{EXPSUM1LEM})
which can be used to establish cancellation in the sum;
this lemma is nothing but a standard application of the classical Weil's bound on Kloosterman sums.

The proof of Theorem \ref{MAINABTHM} is given in Sections \ref{WEAKERVERSIONPFSEC}--\ref{NEWBOUNDSEC}:
In Section \ref{WEAKERVERSIONPFSEC} we carry out those steps %
which utilize only the irrationality properties of $\xi_1$ and not those of $\xi_2$;
the outcome is a weaker version of the theorem, Proposition \ref{MAINTHMWEAKERPROP},
which is strong enough to imply the equidistribution in Theorem \ref{NONEFFECTIVETHM} whenever $\xi_1$ is irrational, 
with the error bound decaying as a power of $y$ whenever $\xi_1$ is of Diophantine type;
however for $\xi_1$ rational it does not imply any equidistribution whatsoever.
To complete the proof of Theorem \ref{MAINABTHM}, in Section~\ref{NEWBOUNDSEC} (the longest section of the paper),
we consider more carefully those terms in the Fourier decomposition which give the largest contribution
in the treatment of Section \ref{WEAKERVERSIONPFSEC}; these correspond to good rational approximations of $\xi_1$;
we collect these terms in a way
which allows us to utilize also the irrationality properties of $\xi_2$ to establish cancellation.
The error bound which we finally arrive at in Theorem \ref{MAINABTHM}
incorporates the Diophantine properties of both $\xi_1$ and $\xi_2$, the bound being 
far from zero only if
$\xi_1$ and $\xi_2$ are well approximable by rational numbers with a \textit{common} small denominator $q$;
cf.\ the definition of the error majorant $\fb_{\vecxi,L}(y)$ in \eqref{MYXILDEF}.

The precise format of this bound plays a %
crucial role when we %
apply Theorem \ref{MAINABTHM} to
deduce the effective equidistribution of general $U^\R$-orbits, Theorem~\ref{NONCLOSEDTHM2}.
To illustrate this point, note that 
to establish a result which could be called ``an effective version of Theorem \ref{NONEFFECTIVETHM}'',
it would suffice to complement Proposition \ref{MAINTHMWEAKERPROP} with an effective equidistribution result
for $\xi_1$ \textit{rational} and $\xi_2$ irrational.
This would be quite a bit easier than what we do in Section \ref{NEWBOUNDSEC};
however it would not be sufficient for our goal of deriving a 
satisfactory effective equidistribution for general $U^\R$-orbits,
basically since our proof of %
Theorem~\ref{NONCLOSEDTHM2}
for a given $g=(1_2,\vecxi)M$ generally involves
applying Theorem \ref{MAINABTHM} with $\vecxi\gamma$ in place of $\vecxi$,
where $\gamma$ varies through more and more elements of %
$\Gamma'$ as $T\to\infty$.

In Section \ref{MAJORANTPROPSEC} we establish some important basic properties of the error majorant 
$\fb_{\vecxi,L}(y)$.
Finally in Section \ref{NONCLOSEDSECTION} we prove 
Theorem~\ref{NONCLOSEDTHM2}, by approximating 
the given $U^\R$-orbit by one or several lifts of pieces of closed horocycles in $X'$
and applying Theorem~\ref{MAINABTHM} to each of these.

\subsection{Acknowledgments}

I am grateful to 
Livio Flaminio, 
Giovanni Forni,
Han Li, 
Jens Marklof, 
Amir Mohammadi,
Hee Oh,
Wolfgang Staubach, Akshay Venkatesh and Pankaj Vishe for helpful and inspiring discussions.
I would also like to thank the referees for their valuable comments; in particular Remark \ref{METHODREMARK} below
is based on a suggestion by one of the referees.

\section{Some notation}\label{NOTATIONSEC}

We shall use the standard notation $A=O(B)$ or $A\ll B$ meaning $|A|\leq CB$ for some constant $C>0$.
We shall also write $A\asymp B$ as a substitute for $A\ll B\ll A$.
To indicate that the implicit constant $C$ may depend on some quantities or functions $f,g,h$ we will use the notation
$A\ll_{f,g,h} B$ or $A=O_{f,g,h}(B)$.
The constant $C$ will not depend on any other variable, except in a statement that contains an implication of the
kind ``if $A_1=O(B_1)$ then $A_2=O(B_2)$''; in that case the constant implicit in $O(B_2)$ may also depend on the one in
$O(B_1)$. (We will use the last convention only in Remarks \ref{NEEDALFBETANEARZEROREM} and \ref{MAYSPLITREMARK}.)

Recall from Section \ref{INTROSEC} that $G=\SL(2,\R)\ltimes\R^2$, $\Gamma=\SL(2,\Z)\ltimes\Z^2$,
$G'=\SL(2,\R)$ and $\Gamma'=\SL(2,\Z)$. We will also write
$\Gamma'_\infty:=\{\smatr 1x01\col x\in\Z\}$.

Let $\ig$ be the Lie algebra of $G$. 
We may identify $\ig$ in a natural way with the space $\lsl(2,\R)\oplus\R^2$,
with Lie bracket %
$[(X,\vecv),(Y,\vecw)]=(XY-YX,\vecv Y-\vecw X)$
(cf., e.g., \cite[Prop.\ 1.124]{Knapp}).
Using this notation, we fix the following basis of $\ig$:
\begin{align}\label{FIXEDBASIS}
X_1=\bigl(\smatr 0100,\bn\bigr);\quad
X_2=\bigl(\smatr 0010,\bn\bigr);\quad
X_3=\bigl(\smatr 100{-1},\bn\bigr);\quad
X_4=\bigl(0_2,(1,0)\bigr);\quad
X_5=\bigl(0_2,(0,1)\bigr).
\end{align}
To each $Y\in\ig$ corresponds a left invariant differential operator
on functions on $G$, and thus also a differential operator on $\GaG$,
which we will also denote by $Y$.
We let $\C_{\b}^k(\GaG)$ be the space of $k$ times continuously differentiable functions on $\GaG$
such that $\|Df\|_{\L^\infty}<\infty$ for every left invariant differential operator $D$ on $G$ of order $\leq k$.
For $f\in\C_{\b}^k(\GaG)$ we set
\begin{align}\label{CKNORMDEF}
\|f\|_{\C_{\b}^k}:=\sum_{\ord(D)\leq k}\|Df\|_{\L^\infty},
\end{align}
the sum being over all monomials in $X_1,\ldots,X_5$ of degree $\leq k$.
Note in particular that $\C_{\b}^0(\GaG)$ is the space of bounded continuous 
functions on $\GaG$, and %
$\|\cdot\|_{\C_{\b}^0}$ is the supremum norm.

We will also have occasion to use Sobolev $\L^1$-norms on functions on $\R$:
For $1\leq p<\infty$, $k$ a positive integer and $\nu\in\C^k(\R)$ we set
\begin{align*}
\|\nu\|_{\W^{k,p}}=\sum_{j=0}^k\|\nu\|_{\L^p}=\sum_{j=0}^k\Bigl(\int_\R|\nu^{(j)}(x)|^p\,dx\Bigr)^{1/p}.
\end{align*}
We will only use these for $p=1$.

We will use the standard notation $e(x)=e^{2\pi i x}$.
We write $\gcd(c,d)$, or just $(c,d)$, for the greatest common divisor of two integers $c,d$.
For $n$ a positive integer, we write $\sigma(n)$ for the number of (positive) divisors of $n$, and $\sigma_1(n)$ for
their sum: $\sigma(n)=\sum_{d\mid n}1$ and $\sigma_1(n)=\sum_{d\mid n}d$.

\section{Smoothed ergodic averages}
\label{SMOOTHEDERGAV}

As a first step in our proof of Theorem \ref{MAINABTHM} we replace the sharp cutoff ``$\int_\alpha^\beta$'' 
by a compactly supported cutoff function $\nu(x)$ satisfying a mild regularity assumption.
Basically we need control on the $\L^1$-norm of ``$1+\ve$'' derivatives of $\nu$;
in order to avoid a technical overhead we formulate the bound using a crude
interpolation between the Sobolev norms $\|\nu\|_{\W^{1,1}}$ and $\|\nu\|_{\W^{2,1}}$
(cf., e.g., \cite[Sec.\ 2]{SV}).
We will prove the following theorem.

\begin{thm}\label{MAINTHM}
Let $0<\eta<1$ and $\ve>0$ be fixed. Then for any $f\in \C_{\b}^8(\GaG)$,
any $\nu\in\C^2(\R)$ with compact support,
and any $\vecxi\in\R^2$, $0<y<1$,
\begin{align}\notag
\int_\R f\Bigl(\Gamma\,(1_2,\vecxi)\,U^xa(y)\Bigr)\,\nu(x)\,dx
=\int_{\GaG} f\,d\mu\int_\R\nu\,dx
\hspace{180pt}
\\\label{MAINTHMRES}
+O_{\eta,\ve}\biggl\{\|f\|_{\C_{\b}^{8}}\|\nu\|_{\W^{1,1}}^{1-\eta}\|\nu\|_{\W^{2,1}}^\eta\,y^{\frac14}\log(1+y^{-1})
+\|f\|_{\C_{\b}^4}L\|\nu\|_{\L^\infty}\bigl(\fb_{\vecxi,L}(y)+y^{\frac14}\bigr)^{1-\ve}\biggr\},
\end{align}
where $L$ is the smallest real number $\geq1$ such that $\supp(\nu)\subset[-L,L]$.
\end{thm}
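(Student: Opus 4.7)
The plan is to expand $f$ in Fourier modes along the torus fibers of $X=\GaG\to X'$, handle the zero mode via the known effective horocycle equidistribution result on $X'$, and control the non-zero modes via oscillatory-integral estimates combined with Weil's bound on Kloosterman sums.

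For any $M\in G'$, the map $\vecv\mapsto\Gamma(M,\vecv)$ factors through $\R^2/(\Z^2 M)$, and the product law of $G$ shows that the fiber coordinate of $\Gamma(1_2,\vecxi)(M,\bn)$ is $\vecxi M$, which is just $\vecxi$ in the dual parametrization $\vecv\mapsto\vecv M^{-1}$. Hence, writing $M=U^xa(y)$,
\begin{align*}
f\bigl(\Gamma(1_2,\vecxi)U^xa(y)\bigr)=\sum_{\vecm\in\Z^2}\hat f_\vecm\bigl(U^xa(y)\bigr)\,e(\vecm\cdot\vecxi),
\end{align*}
where each $\hat f_\vecm$ descends to a function on $X'$ and repeated differentiation in the fiber variables yields the decay $\|\hat f_\vecm\|_{\C_\b^k}\ll_{k,j}\|f\|_{\C_\b^{k+j}}(1+\|\vecm\|)^{-j}$ whenever $k+j\le8$. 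Substituting into the integral in \eqref{MAINTHMRES}, the $\vecm=\bn$ contribution produces the candidate main term while the $\vecm\ne\bn$ contributions constitute the object of study.

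For $\vecm=\bn$, the identity $U^xa(y)=a(y)U^{y^{-1}x}$ and the substitution $u=y^{-1}x$ convert the integral into an honest horocycle average on $X'$ over a segment of length $\asymp L/y$ emanating from $\Gamma'a(y)$. A direct computation yields $y_{a(y)}(L/y)\asymp y$, so Theorem \ref{XPEFFTHM} applied to $\hat f_\bn$ (whose $X'$-integral equals $\int_Xf\,d\mu$) produces the main term $\int_Xf\,d\mu\cdot\int_\R\nu\,dx$ with error $\ll\|f\|_{\C_\b^4}\|\nu\|_{\L^\infty}L\cdot y^{1/2}\log^3(y^{-1})$, which is comfortably absorbed into the second summand of the stated bound. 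For $\vecm\ne\bn$, I view $\hat f_\vecm$ as an automorphic function on $X'$, unfold the $x$-integral via $\Gamma'_\infty\backslash\Gamma'$ to a sum over coprime pairs $(c,d)$, and integrate by parts in $x$ to gain smallness, interpolating between one and two derivatives at cost $\|\nu\|_{\W^{1,1}}^{1-\eta}\|\nu\|_{\W^{2,1}}^\eta$. After reducing the resulting sum over $d\pmod c$ to Kloosterman sums of modulus $q\mid c$, the cancellation lemma of Section~\ref{CANCELLATIONSEC}---a direct consequence of Weil's bound---delivers the desired savings once the mode truncation $\|\vecm\|\lesssim Y^{1/2}$ is balanced against the tail from the $\C_\b^8$ decay; this balance produces the exponent $1/4$ in $y$ appearing in the first summand of the claim.

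The non-zero-mode estimate is assembled in two stages. Section~\ref{WEAKERVERSIONPFSEC} first derives a preliminary version that depends only on $\langle q\xi_1\rangle$ for small $q$; this is already enough to yield equidistribution whenever $\xi_1\notin\Q$, but gives nothing when $\xi_1\in\Q$. Section~\ref{NEWBOUNDSEC} then regroups the contributions associated with good rational approximations $a/q\to\xi_1$ and exploits the residual oscillation in the $a\pmod q$ variable (which produces an additional phase $e(\bar a\xi_2/q)$) so as to introduce the factor $\langle q\xi_2\rangle$, recovering the symmetric majorant $\fb_{\vecxi,L}(y)$ of \eqref{MYXILDEF}. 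This refinement is where I expect the main difficulty: the combinatorial accounting of how different triples $(\vecm,c,d)$ collapse to the same effective denominator $q$, all while preserving the explicit Sobolev-norm dependence on $\nu$ and the truncation balance responsible for the $y^{1/4}$ exponent, is delicate; summing geometrically over dyadic scales of $q\in[1,y^{-1/2}]$ then accounts for the $\log(1+y^{-1})$ factor in the final bound.
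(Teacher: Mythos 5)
Your overall architecture --- Fourier decomposition in the torus fiber variable, the zero mode handled by effective equidistribution of closed horocycles on $X'$, and Weil's bound for Kloosterman sums driving the non-zero modes with the $y^{\frac14}$ balance --- is exactly the paper's route through Sections \ref{FOURIERDECSEC}--\ref{WEAKERVERSIONPFSEC}, and up to that point the sketch is essentially sound. Two smaller caveats: for $\vecm\neq\bn$ the coefficient $\wh f(M,\vecm)$ does \emph{not} descend to a function on $X'$; it only satisfies the covariance \eqref{BASICFOURIERLEMRES1}, and it is this covariance that permits the regrouping into the $\Gamma'_\infty$-invariant functions $\tf_n$ and the unfolding over $\Gamma'_\infty\backslash\Gamma'$ you invoke. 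Also, Theorem \ref{XPEFFTHM} concerns sharp-cutoff averages, so your weighted zero-mode integral needs either the smoothed bound \eqref{SL2REQUIDISTR} (what the paper uses) or an extra summation-by-parts step; both are harmless.

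The genuine gap is the second stage, which is the actual content of Theorem \ref{MAINTHM} beyond Proposition \ref{MAINTHMWEAKERPROP}. The mechanism you propose for introducing $\langle q\xi_2\rangle$ --- ``residual oscillation in the $a\bmod q$ variable producing an additional phase $e(\bar a\xi_2/q)$'' --- does not exist in the problem: in \eqref{CONTRFROMFIXEDN} the only occurrence of $\xi_2$ is the factor $e\bigl(n\omega(d\xi_1-c\xi_2)\bigr)$, i.e.\ a phase \emph{linear in the modulus $c$}, while the inverse residue $d^*$ enters only through the argument of $\tf_n$ and is entirely consumed by the Kloosterman/Weil step. What the paper actually does in Section \ref{NEWBOUNDSEC} is to retain, in Lemma \ref{EXPSUM1LEM}, the exceptional set $N=N_c^{(n\xi_1)}$ of continued-fraction convergents of $n\xi_1$; the resulting Ramanujan-sum ``diagonal'' terms are not small individually, and the saving comes from summing them over moduli $c$ running through multiples $kq_j$ of the convergent denominators, where the factor $e(-n\omega c\xi_2)$ oscillates. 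Making this work requires the estimate for $\sum_{k\leq Y}\varphi(kq)e(k\alpha)$ (Lemma \ref{EULERPHISUMLEM}), the integration by parts in the cutoff variable leading to \eqref{PROOF1STEP7newpre}, and then a two-level continued-fraction analysis --- first in $\xi_1$, then in $q\xi_2$ --- which isolates at most one ``special denominator'' at each level; this is precisely how the joint majorant $\fb_{\vecxi,L}(y)$ of \eqref{MYXILDEF} emerges. None of this is supplied, or even correctly anticipated, by your sketch (you flag it yourself as the expected difficulty), so as it stands the proposal only yields the weaker bound depending on the Diophantine properties of $\xi_1$ alone, which fails to give any equidistribution when $\xi_1\in\Q$.
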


\begin{proof}[Proof that Theorem \ref{MAINTHM} implies Theorem \ref{MAINABTHM}]
This is a standard approximation argument.
Fix $g\in\C_c^\infty(\R)$ satisfying $g\geq0$, $\int_\R g=1$ and $\supp(g)\subset[-1,1]$.
Set $g_\delta(x)=\delta^{-1}g(\delta^{-1}x)$ for $0<\delta\leq1$;
then $\supp(g_\delta)\subset[-\delta,\delta]$ and $\int_\R g_\delta=1$.
Let $\alpha<\beta$ be given, and set $L'=\beta-\alpha$.
We apply Theorem \ref{MAINTHM} with $\nu=\chi_{[\alpha,\beta]}*g_\delta$.
Then $\|\nu\|_{\W^{1,1}}\ll L'+1$ and $\|\nu\|_{\W^{2,1}}\ll L'+\delta^{-1}$; thus
$\|\nu\|_{\W^{1,1}}^{1-\eta}\|\nu\|_{\W^{2,1}}^\eta\ll (L'+1)\delta^{-\eta}$,
and so the error term in Theorem \ref{MAINTHM} is
\begin{align*}
O_{\eta,\ve}\biggl\{\|f\|_{\C_{\b}^{8}}(L'+1)\delta^{-\eta}\,y^{\frac14(1-\ve)}
+\|f\|_{\C_{\b}^4}L\bigl(\fb_{\vecxi,L}(y)+y^{\frac14}\bigr)^{1-\ve}\biggr\},
\end{align*}
with $L=\max(1,|\alpha|+\delta,|\beta|+\delta)$.
Furthermore, using $0\leq\nu\leq1$ and $\nu(x)=\chi_{[\alpha,\beta]}(x)$ whenever $|x-\alpha|\geq\delta$ and
$|x-\beta|\geq\delta$, we see that the difference between the left hand side of \eqref{MAINTHMRES} and
$\int_\alpha^\beta f(\Gamma\,(1_2,\vecxi)\,U^xa(y))\,dx$
is $\ll\|f\|_{\C_{\b}^0}\,\delta$.
Hence, choosing $\delta=y^{\frac14}$, we obtain
\begin{align*}
\int_\alpha^\beta f\Bigl(\Gamma\,(1_2,\vecxi)\,U^x a(y)\Bigr)\,dx
=(\beta-\alpha)\int_{\GaG} f\,d\mu
\hspace{180pt}
\\ %
+O_{\eta,\ve}\biggl\{\|f\|_{\C_{\b}^{8}}(L'+1)y^{\frac14(1-\ve-\eta)}
+\|f\|_{\C_{\b}^4}L\bigl(\fb_{\vecxi,L}(y)+y^{\frac14}\bigr)^{1-\ve}\biggr\}.
\end{align*}
This implies Theorem \ref{MAINABTHM} with $\ve+\eta$ in place of $\ve$
(cf.\ also Lemma \ref{EORDERCHANGELEM} below).
\end{proof}

\begin{remark}
The proof shows that the bound in Theorem \ref{MAINABTHM} may be improved to
\begin{align*}
C\biggl(\|f\|_{\C_{\b}^{8}}\Bigl(1+\frac1{\beta-\alpha}\Bigr)y^{\frac14(1-\ve)}
+\|f\|_{\C_{\b}^4}\frac L{\beta-\alpha}\bigl(\fb_{\vecxi,L}(y)+y^{\frac14}\bigr)^{1-\ve}\biggr).
\end{align*}
\end{remark}

\section{Fourier decomposition in the torus variable}
\label{FOURIERDECSEC}

We now start with the proof of Theorem \ref{MAINTHM}.
In this section we consider the Fourier decomposition of the given
test function with respect to the torus variable,
and prove bounds on the Fourier coefficients appearing in this decomposition.

Assume that $f\in \C_{\b}^2(\GaG)$.
We view $f$ as a function on $G$ which is $\Gamma$-left invariant.
In particular we have 
$f((1_2,\vecxi)M)=f((1_2,\vecxi+\vecn)M)$ for all $\vecn\in\Z^2$,
and hence for any fixed $M\in G'$, the function
$\vecxi\mapsto f((1_2,\vecxi)M)$ is a $\C^2$-function on the torus $\T^2=\R^2/\Z^2$.
Decomposing this function as a Fourier series we have
\begin{align}\label{FOURIERSERIES}
f((1_2,\vecxi)M)=\sum_{\vecm\in\Z^2}\widehat f(M,\vecm)e(\vecm\cdot\vecxi),
\end{align}
where the Fourier coefficients $\widehat f(M,\vecm)$ are given by
\begin{align}\label{WHFDEF}
\widehat f(M,\vecm)=\int_{\T^2} f((1_2,\vecxi)M)e(-\vecm\cdot\vecxi)
\,d\vecxi.
\end{align}
Here $d\vecxi$ denotes Lebesgue measure on $\R^2$.
Note that the sum in \eqref{FOURIERSERIES} is absolutely convergent,
uniformly\footnote{This is for any fixed exhaustion of $\Z^2$ by an 
increasing sequence of finite subsets.}
over $(M,\vecxi)$ in any compact subset of $G$,
since $f\in\C_{\b}^2(\GaG)$ implies that
the function $\vecxi\mapsto f(M,\vecxi)$ is in $\C_{\b}^2(\T^2)$,
with $\|f(M,\cdot)\|_{\C_{\b}^2(\T^2)}$ depending continuously on $M\in G'$.

Now the fact that $f$ is also $\Gamma'$-left invariant 
leads to an invariance relation for $\widehat f(M,\vecm)$,
which allows us to group together terms in \eqref{FOURIERSERIES}
in a convenient way.
Let us write $\wh\Z^2$ for the set of primitive lattice points in $\Z^2$,
i.e.\ the set of integer vectors $(c,d)$ %
with $\gcd(c,d)=1$.
Recall that $\Gamma'_\infty:=\{\smatr 1x01\col x\in\Z\}$.
\begin{lem}\label{BASICFOURIERLEM}
In the above situation we have
\begin{align}\label{BASICFOURIERLEMRES1}
\wh f(TM,\vecm)=\wh f(M,\vecm\trans T^{-1}),\qquad\forall T\in\Gamma',
\: M\in G',\:\vecm\in\Z^2.
\end{align}
In particular, for each $n\in\Z_{\geq0}$, the function
\begin{align}\label{TFNDEF}
\tf_n(M):=\wh f(M,(n,0))\qquad
(M\in G')%
\end{align}
is left $\Gamma'_\infty$-invariant, and $\tf_0(M)$ is even left 
$\Gamma'$-invariant.
We have
\begin{align}\label{BASICFOURIERLEMRES2}
f((1_2,\vecxi)M)=\tf_0(M)+\sum_{n=1}^\infty
\sum_{(c,d)\in\wh\Z^2}
\tf_n\left(\matr **cd M\right)\cdot e(n(d\xi_1-c\xi_2)), \qquad\forall (M,\vecxi)\in G,
\end{align}
where $\smatr **cd$ denotes any matrix in $\Gamma'$ having lower entries 
$c$ and $d$.
The sum in \eqref{BASICFOURIERLEMRES2} is absolutely convergent, 
uniformly over $(M,\vecxi)$ in any compact subset of $G$.
\end{lem}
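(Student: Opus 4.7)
The plan is a direct computation, exploiting the semidirect product structure together with a change of variables on $\T^2$.

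For \eqref{BASICFOURIERLEMRES1}, I would first push $T$ to the left in the group element $(1_2,\vecxi)T$. Using the multiplication law and identifying $T\in\Gamma'$ with $(T,\bn)\in\Gamma$, one has
\begin{align*}
(1_2,\vecxi)(T,\bn)=(T,\vecxi T)=(T,\bn)(1_2,\vecxi T),
\end{align*}
so $(1_2,\vecxi)TM=(T,\bn)(1_2,\vecxi T)M$. Since $f$ is $\Gamma$-left invariant, $f((1_2,\vecxi)TM)=f((1_2,\vecxi T)M)$. Substituting this into \eqref{WHFDEF} and then changing variables $\vecxi\mapsto\vecxi T^{-1}$ (a Lebesgue-measure-preserving self-diffeomorphism of $\T^2$, since $T\in\SL(2,\Z)$), combined with the elementary row-vector identity $\vecm\cdot(\vecxi T^{-1})=(\vecm\trans T^{-1})\cdot\vecxi$, yields \eqref{BASICFOURIERLEMRES1}.

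The invariance statements for $\tf_n$ then follow immediately. For $\gamma=\smatr 1x01\in\Gamma'_\infty$ a direct computation gives $(n,0)\trans\gamma^{-1}=(n,0)$, so \eqref{BASICFOURIERLEMRES1} applied with $T=\gamma$ yields $\tf_n(\gamma M)=\tf_n(M)$. For $n=0$ the vector $\bn$ is fixed by every $\trans T^{-1}$, so the same relation gives full $\Gamma'$-left invariance of $\tf_0$.

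For the rearranged expansion \eqref{BASICFOURIERLEMRES2}, the plan is to reindex the Fourier series \eqref{FOURIERSERIES}. Every $\vecm\in\Z^2\setminus\{\bn\}$ can be written uniquely as $\vecm=n(d,-c)$ with $n=\gcd(m_1,m_2)\in\Z^+$ and $(c,d)\in\wh\Z^2$, since $(c,d)\mapsto(d,-c)$ is a bijection on $\wh\Z^2$. Choosing any $T=\smatr **cd\in\Gamma'$ (which exists by B\'ezout), \eqref{BASICFOURIERLEMRES1} gives
\begin{align*}
\wh f(M,n(d,-c))=\wh f(M,(n,0)\trans T^{-1})=\tf_n(TM),
\end{align*}
and this value is independent of the choice of $T$ because any two such choices differ by left multiplication by an element of $\Gamma'_\infty$, under which $\tf_n$ is invariant. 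The corresponding exponential simplifies to $e(\vecm\cdot\vecxi)=e(n(d\xi_1-c\xi_2))$. Absolute convergence uniform on compact subsets of $G$ carries over directly from that of \eqref{FOURIERSERIES} (noted in the paragraph preceding the lemma), since the reindexing is a bijection.

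The main obstacle here is really only bookkeeping: one must keep the row-vector-times-matrix conventions consistent throughout, and place the transpose in the right spot when computing the $\Gamma'$-action on the Fourier frequency. Once that is pinned down, no serious difficulties remain, as the whole argument is formal.
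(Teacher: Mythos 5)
Your proposal is correct and follows essentially the same route as the paper: the identity $(1_2,\vecxi)T=T(1_2,\vecxi T)$, $\Gamma$-invariance of $f$, and the measure-preserving change of variables $\vecxi\mapsto\vecxi T$ on $\T^2$ give \eqref{BASICFOURIERLEMRES1}, and the reindexing $\vecm=n(d,-c)$ with $(n,0)\trans T^{-1}=n(d,-c)$ gives \eqref{BASICFOURIERLEMRES2}, exactly as in the paper (the only difference being the inessential order in which invariance and the substitution are applied).
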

(To see that the sum in \eqref{BASICFOURIERLEMRES2} is well-defined,
note that for any $(c,d)\in\wh\Z^2$, 
the set of matrices $\smatr **cd\in\Gamma'=\SL(2,\Z)$ is a coset of the form
$\Gamma'_\infty\smatr abcd$, and since $\tf_n$ is left
$\Gamma'_\infty$-invariant, $\tf_n(TM)$ takes the same value for
every matrix $T$ in this coset.)
\begin{proof}
For any $T\in\Gamma'$ we have,
\begin{align*}
\widehat f(TM,\vecm)
&=\int_{\Z^2\backslash\R^2} f((1_2,\vecxi)TM)e(-\vecm\cdot\vecxi)\,d\vecxi
=\int_{\Z^2\backslash\R^2} f(T(1_2,\vecxi T)M)e(-\vecm\cdot\vecxi)\,d\vecxi
\\
&=\int_{\Z^2\backslash\R^2} f(T(1_2,\vecxi)M)e(-\vecm\cdot\vecxi T^{-1})\,d\vecxi
=\int_{\Z^2\backslash\R^2} f((1_2,\vecxi)M)e(-\vecm\cdot\vecxi T^{-1})\,d\vecxi,
\end{align*}
where in the third identity we used the fact that
$\vecxi\mapsto \vecxi T$ is a diffeomorphism of
$\Z^2\backslash\R^2$ preserving the area measure $d\vecxi$,
and in the last identity we used the fact that $f$ is left $\Gamma$-invariant.
Using now $\vecm\cdot\vecxi T^{-1}=\vecm \trans T^{-1}\cdot\vecxi$ we obtain \eqref{BASICFOURIERLEMRES1}.

Next, note that every non-zero vector $\vecm\in\Z^2$ can be uniquely expressed
as $n(d,-c)$ with $n\in\Z^+$ and $(c,d)\in\wh\Z^2$.
Hence the Fourier series \eqref{FOURIERSERIES} can be expressed as
\begin{align*}
f((1_2,\vecxi)M)=\wh f(M,\bn)
+\sum_{n=1}^\infty\sum_{(c,d)\in\wh\Z^2}
\wh f\bigl(M,n(d,-c)\bigr)e(n(d,-c)\cdot\vecxi).
\end{align*}
However if $T$ is any matrix of the form $\smatr **cd\in\Gamma'$ then
$n(d,-c)=(n,0)\trans T^{-1}$, and now 
by using \eqref{BASICFOURIERLEMRES1} we obtain \eqref{BASICFOURIERLEMRES2}.
The uniform absolute convergence on compacta holds since it holds in 
\eqref{FOURIERSERIES}.
\end{proof}

Note that the functions $\tf_n$ %
are well-defined for any 
$f\in\C(\GaG)$, through \eqref{TFNDEF}, %
\eqref{WHFDEF}.

\begin{lem}\label{DECAYTFNFROMCMNORMLEM}
For any $m\in\Z_{\geq0}$, $n\in\Z^+$ and $f\in\C_{\b}^m(\GaG)$, we have
\begin{align}\label{DECAYTFNFROMCMNORMLEMRES}
\left|\tf_n\left(\smatr abcd\right)\right|
\ll_m\frac{\|f\|_{\C_{\b}^m}}{n^m(c^2+d^2)^{\frac m2}},
\qquad\forall\smatr abcd\in G'.
\end{align}
\end{lem}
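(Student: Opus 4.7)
The plan is to prove the bound by $m$-fold integration by parts on the torus fiber, integrating in a direction that is \emph{aligned} with the lower row $(c,d)$ of $M$ so that each integration by parts pulls out a factor proportional to $c^2+d^2$.

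First, I would rewrite the Fourier integral
\[
\tf_n(M)=\int_{\T^2}f\bigl((1_2,\vecxi)M\bigr)\,e(-n\xi_1)\,d\vecxi
\]
by making the change of variables $\vecw=\vecxi M$. Since $M=\smatr abcd\in\SL(2,\R)$, the Jacobian is $1$, and as $\vecxi$ runs over $\Z^2\backslash\R^2$ the variable $\vecw$ runs over a fundamental domain $\F_L$ for the lattice $L=\Z^2 M$, which has covolume $1$. Using $(1_2,\vecxi)M=(M,\vecxi M)=(M,\vecw)$ and computing $\xi_1=dw_1-cw_2$ from $M^{-1}=\smatr d{-b}{-c}a$, one obtains
\[
\tf_n(M)=\int_{\F_L}F(\vecw)\,e\bigl(-n(dw_1-cw_2)\bigr)\,d\vecw,\qquad F(\vecw):=f\bigl((M,\vecw)\bigr).
\]
Because $(1_2,\vecn)\in\Gamma$ for $\vecn\in\Z^2$, the function $F$ is periodic modulo $L$, so the integrand descends to the torus $\R^2/L$.

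Next I would relate directional derivatives of $F$ to the left-invariant vector fields $X_4,X_5$ fixed in \eqref{FIXEDBASIS}. The key identity is $(M,\vecw)(1_2,tv)=(M,\vecw+tv)$, which gives
\[
\partial_v F(\vecw)=(v_1 X_4+v_2 X_5)f\,\big|_{(M,\vecw)},
\]
and since $X_4,X_5$ commute, $\partial_v^m F=(v_1 X_4+v_2 X_5)^m f\big|_{(M,\vecw)}$. Expanding by the binomial theorem gives $|\partial_v^m F|\leq(|v_1|+|v_2|)^m\|f\|_{\C_\b^m}$.

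Now I would choose $v=(d,-c)$, for which the phase satisfies $\partial_v[-n(dw_1-cw_2)]=-n(c^2+d^2)$. Integrating by parts $m$ times in the direction $v$ (boundary terms vanish by $L$-periodicity) yields
\[
\tf_n(M)=\frac{1}{(-2\pi i n(c^2+d^2))^m}\int_{\F_L}(\partial_v^m F)(\vecw)\,e\bigl(-n(dw_1-cw_2)\bigr)\,d\vecw.
\]
Combining the bound $|\partial_v^m F|\leq(|c|+|d|)^m\|f\|_{\C_\b^m}\leq 2^{m/2}(c^2+d^2)^{m/2}\|f\|_{\C_\b^m}$ with $\vol(\F_L)=|\det M|=1$ then gives the claim.

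There is no real obstacle here; the only minor subtlety is to set up the change of variables carefully so that integration by parts can be performed in the oblique direction $(d,-c)$ rather than along a coordinate axis, and to verify that translation of $\vecw$ inside $F$ corresponds to right multiplication by $(1_2,tv)=\exp\bigl(t(v_1X_4+v_2X_5)\bigr)$ so that the $\vecw$-derivatives are exactly left-invariant derivatives of $f$ controlled by $\|f\|_{\C_\b^m}$.
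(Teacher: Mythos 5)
Your proof is correct and is essentially the paper's argument: both bound $\tf_n$ by $m$-fold integration by parts in the torus fiber, using the fact that fiber translations are generated by the left-invariant fields $X_4,X_5$, so the resulting derivatives are controlled by $\|f\|_{\C_{\b}^m}$ while each integration by parts extracts a factor of size $n\cdot\|(c,d)\|$. The only cosmetic difference is that you change variables to $\vecw=\vecxi M$ and integrate by parts in the single oblique direction $(d,-c)$, pulling out $\bigl(n(c^2+d^2)\bigr)^m$ at once, whereas the paper stays in the $\vecxi$-coordinates, applies $X_4^m$ and $X_5^m$ separately to extract $(nd)^m$ and $(nc)^m$, and then takes the maximum.
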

\begin{proof}
The left invariant differential operator corresponding to $X\in\ig$ is 
given by $Xf(g)=\lim_{t\to0} (f(g\exp(tX))-f(g))/t$.
In particular, since $\exp(tX_4)=(1_2,(t,0))$ and
$\exp(tX_5)=(1_2,(0,t))$ (cf.\ \eqref{FIXEDBASIS}), we find that
if we parametrize $G$ by
$(1_2,(x_1,x_2))\smatr abcd$
then 
\begin{align*}
X_4=d\frac{\partial}{\partial x_1}-b\frac{\partial}{\partial x_2}
\qquad\text{and}\qquad
X_5=-c\frac{\partial}{\partial x_1}+a\frac{\partial}{\partial x_2}.
\end{align*}
Now 
\begin{align*}
\tf_n\left(\smatr abcd\right)=\int_{\T^2} %
f\left((1_2,(x_1,x_2))\smatr abcd\right)e(-nx_1)\,dx_2\,dx_1,
\end{align*}
and hence by repeated integration by parts we have
\begin{align*}
(2\pi i nd)^m\cdot\tf_n\left(\smatr abcd\right)
=\int_{\T^2} %
[X_4^mf]\left((1_2,(x_1,x_2))\smatr abcd\right)e(-nx_1)\,dx_2\,dx_1,
\end{align*}
and
\begin{align*}
(-2\pi i nc)^m\cdot\tf_n\left(\smatr abcd\right)
=\int_{\T^2} %
[X_5^mf]\left((1_2,(x_1,x_2))\smatr abcd\right)e(-nx_1)\,dx_2\,dx_1.
\end{align*}
Hence
\begin{align*}
\max(|c|^m,|d|^m)\cdot\bigl|\tf_n\left(\smatr abcd\right)\bigr|
\leq(2\pi n)^{-m}\|f\|_{\C_{\b}^m},
\end{align*}
and this implies that \eqref{DECAYTFNFROMCMNORMLEMRES} holds.
\end{proof}

Using Lemma \ref{DECAYTFNFROMCMNORMLEM} we immediately also obtain bounds
on \textit{derivatives} of $\tf_n$.
To make this explicit, 
let us embed $\lsl(2,\R)$ as a subalgebra of $\ig$ through
$X\mapsto(X,\bn)$ (using our notation $\ig\cong\lsl(2,\R)\oplus\R^2$).
Then each $X\in\lsl(2,\R)$, and more generally any element
$D$ in the universal enveloping algebra $\scrU(\lsl(2,\R))$,
gives rise to a left invariant differential
operator both on $G'$ and on $G$.

\begin{lem}\label{DERDECAYTFNFROMCMNORMLEM}
For any $m\in\Z_{\geq0}$, $n\in\Z^+$, 
any $D\in\scrU(\lsl(2,\R))$ of order $\leq k$,
and any $f\in\C_{\b}^{m+k}(\GaG)$, we have
\begin{align}\label{DERDECAYTFNFROMCMNORMLEMRES}
\left|D\tf_n\left(\smatr abcd\right)\right|
\ll_m\frac{\|Df\|_{\C_{\b}^m}}{n^m(c^2+d^2)^{\frac m2}},
\qquad\forall\smatr abcd\in G'.
\end{align}
\end{lem}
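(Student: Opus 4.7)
The plan is to commute $D$ past the Fourier integration defining $\tf_n$, and then invoke Lemma~\ref{DECAYTFNFROMCMNORMLEM} applied to $Df\in\C_{\b}^m(\GaG)$ in place of $f$. The key commutation identity I would first establish is $D\tf_n=\widetilde{Df}_n$ for every $D\in\scrU(\lsl(2,\R))$ of order $\leq k$, where on the right hand side $\widetilde{Df}_n$ is defined via \eqref{TFNDEF}, \eqref{WHFDEF} applied to the function $Df$.

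By iteration it suffices to treat the case $D=X\in\lsl(2,\R)$. Under the embedding $\lsl(2,\R)\hookrightarrow\ig$, $X\mapsto(X,\bn)$, we have $\exp_G(tX)=(\exp_{G'}(tX),\bn)$, so for any $M\in G'$, $(x_1,x_2)\in\T^2$, and $t\in\R$,
\[
(1_2,(x_1,x_2))\,M\cdot\exp_G(tX)=(1_2,(x_1,x_2))\bigl(M\exp_{G'}(tX)\bigr).
\]
Consequently, differentiating $\tf_n(M\exp_{G'}(tX))$ at $t=0$ and interchanging derivative with integral---justified by dominated convergence using $\|Xf\|_{\L^\infty}\leq\|f\|_{\C_{\b}^1}<\infty$---gives
\[
X\tf_n(M)=\int_{\T^2}[Xf]\bigl((1_2,(x_1,x_2))M\bigr)e(-nx_1)\,dx_1\,dx_2=\widetilde{Xf}_n(M),
\]
as desired. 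Iterating yields $D\tf_n=\widetilde{Df}_n$ for any $D\in\scrU(\lsl(2,\R))$ of order $\leq k$.

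With this identity in hand and the observation that $f\in\C_{\b}^{m+k}(\GaG)$ implies $Df\in\C_{\b}^{m}(\GaG)$, Lemma~\ref{DECAYTFNFROMCMNORMLEM} applied to $Df$ immediately yields \eqref{DERDECAYTFNFROMCMNORMLEMRES}:
\[
\Bigl|D\tf_n\smatr abcd\Bigr|=\Bigl|\widetilde{Df}_n\smatr abcd\Bigr|\ll_m\frac{\|Df\|_{\C_{\b}^m}}{n^m(c^2+d^2)^{m/2}}.
\]
The only step requiring any care is the interchange of the $t$-derivative with the $\vecx$-integration, but this is entirely routine given the uniform boundedness of the relevant derivatives of $f$ built into the $\C_{\b}^{k}$-norm; there is no serious obstacle to overcome.
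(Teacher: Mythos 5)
Your proof is correct and follows exactly the paper's own argument: the paper likewise sets $g:=Df$, notes that differentiation under the integral sign in \eqref{WHFDEF} gives $D\tf_n=\widetilde g_n$, and then applies Lemma \ref{DECAYTFNFROMCMNORMLEM} to $g$. Your version merely spells out the reduction to a single $X\in\lsl(2,\R)$, the compatibility of the embedding $X\mapsto(X,\bn)$ with right translation, and the dominated-convergence justification, all of which the paper leaves implicit.
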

\begin{proof}
Set $g:=Df\in\C_{\b}^m(\GaG)$;
then by differentiation under the integration sign in 
\eqref{WHFDEF} we have $D\tf_n=\widetilde g_n$.
Hence the lemma follows from Lemma \ref{DECAYTFNFROMCMNORMLEM} applied to $g$.
\end{proof}

We will often consider the function $\tf_n$ in Iwasawa coordinates,
that is we write (by a slight abuse of notation)
\begin{align}\label{TFNIWASAWA}
\tf_n(u,v,\theta):=\tf_n\left(\matr 1u01\matr{\sqrt v}00{1/\sqrt v}
\matr{\cos\theta}{-\sin\theta}{\sin\theta}{\cos\theta}\right),
\end{align}
for $u\in\R$, $v>0$, $\theta\in\R/2\pi\Z$.

\label{SL2RGLIEALGDISCSEC}

\begin{lem}\label{DERDECAYTFNFROMCMNORMLEM2}
For any $m\in\Z_{\geq0}$, $n\in\Z^+$, $k_1,k_2,k_3\in\Z_{\geq0}$ and $f\in\C_{\b}^{m+k}(\GaG)$, 
where $k=k_1+k_2+k_3$, we have
\begin{align}\label{DERDECAYTFNFROMCMNORMLEM2RES1}
&\biggl|\Bigl(\frac{\partial}{\partial u}\Bigr)^{k_1}\Bigl(\frac{\partial}{\partial v}\Bigr)^{k_2}
\Bigl(\frac{\partial}{\partial \theta}\Bigr)^{k_3}\tf_n(u,v,\theta)\biggr|
\ll_{m,k}\|f\|_{\C_{\b}^{m+k}}n^{-m}v^{\frac m2-k_1-k_2}.
\end{align}
\end{lem}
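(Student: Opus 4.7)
The plan is to express the partial derivatives $\partial_u$, $\partial_v$, $\partial_\theta$ in terms of the left-invariant differential operators coming from $\lsl(2,\R)$, and then reduce to Lemma \ref{DERDECAYTFNFROMCMNORMLEM}. Write $g(u,v,\theta)=n_u a_v k_\theta$, where $n_u=\smatr1u01$, $a_v=\smatr{\sqrt v}00{1/\sqrt v}$ and $k_\theta=\smatr{\cos\theta}{-\sin\theta}{\sin\theta}{\cos\theta}$. A direct computation using $\partial_u n_u=n_u X_1$, $\partial_v a_v=(2v)^{-1}a_v X_3$ and $\partial_\theta k_\theta=k_\theta(X_2-X_1)$ yields, for any $h\in \C^1(G')$,
\begin{align*}
\tfrac{\partial}{\partial u}h(g)&=v^{-1}\bigl[\Ad(k_\theta^{-1})X_1\,h\bigr](g),\\
\tfrac{\partial}{\partial v}h(g)&=(2v)^{-1}\bigl[\Ad(k_\theta^{-1})X_3\,h\bigr](g),\\
\tfrac{\partial}{\partial \theta}h(g)&=\bigl[(X_2-X_1)h\bigr](g),
\end{align*}
where $\Ad(k_\theta^{-1})X_j$ is a $\theta$-trigonometric (hence bounded) linear combination of $X_1,X_2,X_3\in\lsl(2,\R)$.

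Next I would show by induction on $k_1+k_2+k_3$ that
\begin{align*}
\Bigl(\tfrac{\partial}{\partial u}\Bigr)^{k_1}\Bigl(\tfrac{\partial}{\partial v}\Bigr)^{k_2}\Bigl(\tfrac{\partial}{\partial\theta}\Bigr)^{k_3}h(g(u,v,\theta))=v^{-(k_1+k_2)}\sum_{\alpha}c_\alpha(\theta)\,(D_\alpha h)(g(u,v,\theta)),
\end{align*}
where each $c_\alpha$ is bounded and each $D_\alpha\in\scrU(\lsl(2,\R))$ is a monomial of order $\leq k_1+k_2+k_3$. The inductive step is immediate from the three identities above: applying $\partial_u$ turns $v^{-j}D$ into $v^{-j-1}\Ad(k_\theta^{-1})X_1\cdot D$; applying $\partial_v$ either differentiates the factor $v^{-j}$ (raising the exponent but not the order of $D$) or produces $(2v)^{-1}\Ad(k_\theta^{-1})X_3\cdot D$; and applying $\partial_\theta$ either differentiates a $\theta$-coefficient $c_\alpha$ or yields $(X_2-X_1)D$. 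In all cases the power of $v^{-1}$ grows by exactly $1$ (for $\partial_u$ and $\partial_v$) or $0$ (for $\partial_\theta$), matching $k_1+k_2$.

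Finally, apply this with $h=\tf_n$. The bottom row of $g(u,v,\theta)=n_u a_v k_\theta$ is $(c,d)=(v^{-1/2}\sin\theta,v^{-1/2}\cos\theta)$, so $c^2+d^2=v^{-1}$. By Lemma \ref{DERDECAYTFNFROMCMNORMLEM},
\begin{align*}
|(D_\alpha\tf_n)(g)|\ll_m \frac{\|D_\alpha f\|_{\C_{\b}^m}}{n^m(c^2+d^2)^{m/2}}=\|D_\alpha f\|_{\C_{\b}^m}\,n^{-m}v^{m/2},
\end{align*}
and $\|D_\alpha f\|_{\C_{\b}^m}\ll_k\|f\|_{\C_{\b}^{m+k}}$ since $\ord(D_\alpha)\leq k$. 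Summing over $\alpha$ and multiplying by $v^{-(k_1+k_2)}$ produces exactly the bound \eqref{DERDECAYTFNFROMCMNORMLEM2RES1}. The only mildly delicate point is the bookkeeping in the induction, which is entirely mechanical; no serious obstacle arises.
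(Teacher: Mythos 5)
Your proposal is correct and follows essentially the same route as the paper: both express $\partial_u,\partial_v,\partial_\theta$ through $\Ad(\kk_{-\theta})X_1$, $\Ad(\kk_{-\theta})X_3$ and $X_2-X_1$, observe that the $\theta$-dependent coefficients are bounded, and then invoke Lemma \ref{DERDECAYTFNFROMCMNORMLEM} together with $c^2+d^2=v^{-1}$. The only difference is presentational: you track the powers of $v$ by an explicit induction, while the paper works with the operators $v\partial_u$, $2v\partial_v$, $\partial_\theta$ and rewrites $v^{k_2}\partial_v^{k_2}$ as a combination of $(v\partial_v)^j$.
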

\begin{proof}
Let $\kk_\theta:=\smatr{\cos\theta}{-\sin\theta}{\sin\theta}{\cos\theta}
\in\SL(2,\R)$,
and write $X_1,X_2,X_3$ for the $\lsl(2,\R)$-elements $\smatr0100$,
$\smatr0010$, $\smatr100{-1}$, respectively. 
This is consistent with \eqref{FIXEDBASIS} and our fixed embedding of
$\lsl(2,\R)$ in $\ig$.
Also let $\Ad:\SL(2,\R)\to\text{Aut}(\lsl(2,\R))$ be the adjoint representation.
Then we compute, in the parametrization \eqref{TFNIWASAWA},
\begin{align*}
&\Ad(\kk_{-\theta})X_1=v\frac{\partial}{\partial u};
\qquad
\Ad(\kk_{-\theta})X_3=2v\frac{\partial}{\partial v};
\qquad
X_2-X_1=\frac{\partial}{\partial \theta}.
\end{align*}
But $\Ad(\kk_{-\theta})X_1$ and $\Ad(\kk_{-\theta})X_3$ belong
to a fixed compact subset of $\lsl(2,\R)$;
in fact one checks by a quick computation that these elements
always lie in $\{c_1X_1+c_2X_2+c_3X_3\col c_1,c_2,c_3\in[-1,1]\}$.
Hence we have, %
at every point $(u,v,\theta)\in\R\times\R_{>0}\times(\R/2\pi\Z)$,
\begin{align*}
\biggl|\Bigl(v\frac{\partial}{\partial u}\Bigr)^{k_1}\Bigl(2v\frac{\partial}{\partial v}\Bigr)^{k_2}
\Bigl(\frac{\partial}{\partial \theta}\Bigr)^{k_3}\tf_n(u,v,\theta)\biggr|
\leq\sum_{\ord(D)=k}\bigl|D\tf_n(u,v,\theta)\bigr|,
\end{align*}
where the sum is taken over all the $3^k$ monomials in $X_1,X_2,X_3$ of
degree $k$.
Now the desired bound follows immediately from 
Lemma \ref{DERDECAYTFNFROMCMNORMLEM} and the preceding discussion,
if we also note that $c^2+d^2=v^{-1}$ holds
whenever $\smatr abcd=\smatr 1u01 a(v)\kk_\theta$
(the matrix in \eqref{TFNIWASAWA}),
and that $v^{k_2}\partial_v^{k_2}$ can be expressed as a linear combination
of $(v\partial_v)^j$ for $j=1,\ldots,{k_2}$.
\end{proof}

\section{The leading term; horocycle equidistribution in $X'=\SL(2,\Z)\backslash\SL(2,\R)$}
\label{LEADINGTERMSEC}

Our task is to study the integral
\begin{align*}
\int_\R f\Bigl(\Gamma\,(1_2,\vecxi)\,U^xa(y)\Bigr)\nu(x)\,dx
=\int_\R f\left(\Gamma\,(1_2,\vecxi)\matr{\sqrt y}{x/\sqrt y}0{1/\sqrt y}\right)\nu(x)\,dx.
\end{align*}
Decomposing $f$ as in Lemma \ref{BASICFOURIERLEM} we get
\begin{align}\label{MAINSTEP1}
=\int_\R \tf_0\left(\matr{\sqrt y}{x/\sqrt y}0{1/\sqrt y}\right)\nu(x)\,dx
\hspace{220pt}
\\\notag
+\sum_{n=1}^\infty
\sum_{(c,d)\in\wh\Z^2}e(n(d\xi_1-c\xi_2))
\int_\R
\tf_n\left(\matr**cd\matr{\sqrt y}{x/\sqrt y}0{1/\sqrt y}\right)\nu(x)\,dx.
\end{align}

Recall that $\tf_0$ is invariant under $\Gamma'$;
hence the first integral in \eqref{MAINSTEP1}
is simply a weighted average along a closed horocycle
in $X'=\Gamma'\backslash G'=\SL(2,\Z)\backslash\SL(2,\R)$,
a case which has been thoroughly studied in the literature.
One can prove, either through a careful study of the cohomological
equation and invariant distributions for the horocycle flow,
as in Flaminio and Forni, \cite{FF},
or more directly from the representation theory of $\SL(2,\R)$
as in Burger \cite{Burger}, that
\begin{align}\label{SL2REQUIDISTR}
\int_\R \tf_0\left(\matr{\sqrt y}{x/\sqrt y}0{1/\sqrt y}\right)\nu(x)\,dx
=\int_{X'}\tf_0\,d\mu'\int_\R\nu\,dx
+O\Bigl(\|\tf_0\|_{\C_{\b}^4}\,\|\nu\|_{\W^{1,1}}\,y^{\frac12}\log^3(2+1/y)\Bigr).
\end{align}
(See \cite{iha} for how to extend \cite{Burger} to the case of
a non-cocompact but cofinite group such as $\SL(2,\Z)$.
In particular \eqref{SL2REQUIDISTR} follows easily from
\cite[Thm.\ 1, Rem.\ 3.4]{iha}.)
In \eqref{SL2REQUIDISTR}, note that
\begin{align}\label{MAINTERMEXPL}
\int_{X'}\tf_0\,d\mu'
=\int_{\Gamma'\backslash G'}\int_{\T^2}f((1_2,\vecxi)M)\,d\vecxi\,d\mu'(M)
=\int_{\GaG} f\,d\mu.
\end{align}
Hence \eqref{SL2REQUIDISTR} accounts for the leading term in
\eqref{MAINTHMRES} in Theorem \ref{MAINTHM}.
We also note that the error term in \eqref{SL2REQUIDISTR} 
is subsumed by the error term in \eqref{MAINTHMRES},
since $\|\tf_0\|_{\C_{\b}^4}\leq\|f\|_{\C_{\b}^4}\leq\|f\|_{\C_{\b}^8}$.

\section{Initial discussion of the main error contribution}
\label{INITIALDISCSEC}

It now remains to treat the sum over $n\in\Z^+$ in \eqref{MAINSTEP1}.

The contribution from the terms with $c=0$ can be bounded easily.
Indeed, for each $n$ there are two such terms,
for which we can take $\smatr**cd$ to be $1_2$ and $-1_2$, respectively,
and by Lemma~\ref{DECAYTFNFROMCMNORMLEM} we have
\begin{align}\label{C0BOUND}
\int_\R\tf_n\left(\pm\matr{\sqrt y}{x/\sqrt y}0{1/\sqrt y}\right)\nu(x)
\,dx
\ll\|\nu\|_{\L^1}\|f\|_{\C_{\b}^2}\frac y{n^2}.
\end{align}
Adding this over all $n\in\Z^+$ we conclude that the
contribution from all terms with $c=0$ in \eqref{MAINSTEP1}
is $O(\|\nu\|_{\L^1}\|f\|_{\C_{\b}^2}y)$, which is clearly subsumed by
the error term in \eqref{MAINTHMRES}.

Hence from now on we focus on the terms with $c\neq0$.
The following lemma expresses the integral appearing in the
second line of \eqref{MAINSTEP1} in the Iwasawa parametrization (cf.\ \eqref{TFNIWASAWA}).
Note that in this notation, 
the fact that $\tf_n$ is left $\Gamma'_\infty$-invariant
(cf.\ Lemma \ref{BASICFOURIERLEM})
means that $\tf_n(u+1,v,\theta)\equiv\tf_n(u,v,\theta)$.
\begin{lem}\label{IWASAWACOORDSLEM1}
For any $\smatr abcd\in\Gamma'$ with $c>0$, 
and any $n\in\Z^+$, $y>0$, we have
\begin{align}\notag
\int_\R
\tf_n\left(\matr abcd\matr{\sqrt y}{x/\sqrt y}0{1/\sqrt y}\right)
\nu(x)\,dx
\hspace{100pt}
\\\label{IWASAWACOORDSLEM1RES}
=\int_0^\pi 
\tf_n\biggl(\frac ac-\frac{\sin2\theta}{2c^2y},\frac{\sin^2\theta}{c^2y},
\theta\biggr)\,
\nu\biggl(-\frac dc+y\cot\theta\biggr)\,
\frac{y\,d\theta}{\sin^2\theta}.
\end{align}
\end{lem}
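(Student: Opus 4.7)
The task is to rewrite the left-invariant integrand $\tf_n(\smatr abcd\, n_x a(y))$ in the Iwasawa parametrization \eqref{TFNIWASAWA} and then change variables from $x$ to the Iwasawa angle $\theta$. I would proceed as follows.

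First, compute the Iwasawa decomposition of $M_x := \smatr abcd\, n_x a(y)$ using the standard trick of evaluating the M\"obius action on $i \in \HH$. Since $n_x a(y)\cdot i = x + iy$, we have
\[
M_x \cdot i = \frac{a(x+iy) + b}{c(x+iy) + d},
\]
and multiplying numerator and denominator by the conjugate of the denominator (while using $ad - bc = 1$) gives an explicit formula of the form $u(x) + i v(x)$, with
\[
v(x) = \frac{y}{(cx+d)^2 + c^2y^2}, \qquad u(x) = \frac{(ax+b)(cx+d) + acy^2}{(cx+d)^2 + c^2y^2}.
\]
Because $M_x \cdot i = u(x) + i v(x)$ and the rotation $\kk_\theta$ fixes $i$, reading off the bottom row of $M_x$ (namely $(c\sqrt y,\,(cx+d)/\sqrt y)$) identifies the Iwasawa angle $\theta \in (0,\pi)$ by
\[
\sin\theta = c\sqrt y \cdot \sqrt{v(x)}, \qquad \cos\theta = \frac{cx+d}{\sqrt y}\cdot\sqrt{v(x)}.
\]

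Second, I would change variables by setting $\cot\theta = (cx+d)/(cy)$, i.e.
\[
x = -\frac{d}{c} + y\cot\theta,\qquad dx = -\frac{y\, d\theta}{\sin^2\theta},
\]
so that as $\theta$ runs over $(0,\pi)$, $x$ runs over all of $\R$ (with orientation reversed, which cancels the minus sign in $dx$). Substituting this $x$ into the explicit formulas derived above, together with $ad - bc = 1$ (which yields the tidy identity $ax + b = -1/c + ay\cot\theta$), collapses the expressions to
\[
v = \frac{\sin^2\theta}{c^2 y},\qquad u = \frac{a}{c} - \frac{\sin 2\theta}{2c^2y},
\]
exactly the arguments appearing on the right-hand side of \eqref{IWASAWACOORDSLEM1RES}.

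Finally, since the lower row of $M_x$ is positively proportional to $(\sin\theta,\cos\theta)$ (note $c > 0$), the Iwasawa $\theta$ indeed lies in $(0,\pi)$, and the change of variables covers $\R$ exactly once. Combining all pieces yields \eqref{IWASAWACOORDSLEM1RES}. There is no genuine obstacle here; the only point requiring a little care is the simplification of $u(x)$, where one must exploit $ad - bc = 1$ (through $bc - ad = -1$) to reduce the numerator to the clean form $-y\cot\theta + acy^2/\sin^2\theta$. Everything else is a direct manipulation of the M\"obius / Iwasawa picture together with the identity $(cx+d)^2 + c^2y^2 = c^2y^2/\sin^2\theta$.
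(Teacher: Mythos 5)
Your proposal is correct and follows essentially the same route as the paper's proof: one computes the Iwasawa coordinates $u,v,\theta$ of $\smatr abcd U^x a(y)$ explicitly (the paper does this by identifying matrix entries, you do it via the M\"obius action on $i$ plus the bottom row, which is only a cosmetic difference), and then changes variables via $x=-\frac dc+y\cot\theta$ with Jacobian $\frac{y}{\sin^2\theta}$, noting that $c>0$ forces $\theta\in(0,\pi)$ and that $\theta$ decreases monotonically as $x$ runs over $\R$. All the key simplifications you cite (using $ad-bc=1$, and $(cx+d)^2+c^2y^2=c^2y^2/\sin^2\theta$) check out, so the argument is complete.
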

\begin{remark}\label{IWASAWACOORDSLEMREM2}
In the case $c<0$ one obtains exactly the same formula, except that
$\int_0^\pi$ is replaced by $\int_{-\pi}^0$
in the right hand side of \eqref{IWASAWACOORDSLEM1RES}.
\end{remark}
\begin{proof}
By a quick computation identifying matrix entries, we find that
for any $x\in\R$, the unique $u\in\R$, $v>0$, $\theta\in\R/2\pi\Z$
satisfying
\begin{align*}
\matr abcd\matr{\sqrt y}{x/\sqrt y}0{1/\sqrt y}
=\matr 1u01\matr{\sqrt v}00{1/\sqrt v}
\matr{\cos\theta}{-\sin\theta}{\sin\theta}{\cos\theta},
\end{align*}
are given by
\begin{align*}
u=\frac ac-\frac{cx+d}{c((cx+d)^2+(cy)^2)};
\qquad
v=\frac{y}{(cx+d)^2+(cy)^2};
\qquad
\theta=\arg(cx+d+i(cy)).
\end{align*}
(Thus $\cos\theta=\frac{cx+d}{\sqrt{(cx+d)^2+(cy)^2}}$ and
$\sin\theta=\frac{cy}{\sqrt{(cx+d)^2+(cy)^2}}$.)
In particular $\theta$ is a smooth and strictly decreasing function
of $x\in\R$, with $\theta\to\pi$ as $x\to-\infty$ and $\theta\to0$
as $x\to\infty$.
We may thus take $\theta$ as a new variable of integration.
Then
\begin{align*}
\cot\theta=\frac{cx+d}{cy},\qquad\text{so that }\quad
x=-\frac dc+y\cot\theta,
\end{align*}
and furthermore
\begin{align*}
u=\frac ac-\frac{(\sin\theta)(\cos\theta)}{c^2y}
=\frac ac-\frac{\sin2\theta}{2c^2y},
\qquad
v=\frac{\sin^2\theta}{c^2y}.
\end{align*}
Hence we obtain the stated identity.
\end{proof}

Note that the map $T\mapsto -T$ gives a bijection from
$\{\smatr abcd\in\Gamma'\col c>0\}$ onto $\{\smatr abcd\in\Gamma'\col c<0\}$.
Also note that for any matrix $\smatr abcd\in\Gamma'$ with $c\neq0$
we have $a\equiv d^*\mod c$, where $d^*\in\Z$
denotes a multiplicative inverse of $d$ modulo $c$.
Hence, by Lemma \ref{IWASAWACOORDSLEM1} and Remark \ref{IWASAWACOORDSLEMREM2},
the sum in the second line of \eqref{MAINSTEP1}, excluding all
terms with $c=0$, can be expressed as
\begin{align}\label{CONTRFROMFIXEDN}
\sum_{n=1}^\infty\sum_{c=1}^\infty 
\int_{-\pi}^\pi \sum_{\substack{d\in\Z\\(c,d)=1}}
\nu\biggl(-\frac dc+y\cot\theta\biggr)\,
\tf_n\biggl(\frac{d^*}c-\frac{\sin2\theta}{2c^2y},\frac{\sin^2\theta}{c^2y},
\theta\biggr)
e\bigl(n(\sgn\theta)(d\xi_1-c\xi_2)\bigr)\,\frac{y\,d\theta}{\sin^2\theta}.
\end{align}
Of course, $\tf_n(\frac{d^*}c-\frac{\sin2\theta}{2c^2y},
\frac{\sin^2\theta}{c^2y},\theta)$ is independent of the choice of
$d^*$ since $\tf_n$ is periodic with period $1$ in its first variable.

It is clear from the way in which we have obtained \eqref{CONTRFROMFIXEDN},
and also easy to check directly,
that if we try to bound \eqref{CONTRFROMFIXEDN} by simply inserting
absolute values and using our bounds on $\tf_n$ proved in Section \ref{FOURIERDECSEC}
together with the fact that $\nu$ has compact support and bounded $\L^\infty$-norm,
we obtain that \eqref{CONTRFROMFIXEDN} stays \textit{bounded} as $y\to0$
(for fixed $f,\nu,\vecxi$).
Hence to reach our goal of proving that \eqref{CONTRFROMFIXEDN}
tends to zero as $y\to0$, it suffices to establish \textit{any}
systematic cancellation in this expression.

\begin{remark}\label{METHODREMARK}
Our approach, working with the sum in \eqref{CONTRFROMFIXEDN},
has close similarities to the following method of proving equidistribution of pieces of
closed horocycles in $X'$.   %

Let $f$ be a function on $X'$, which for simplicity we assume to be 
smooth and compactly supported, i.e.\ $f\in\C_c^\infty(X')$.
Any such $f$ can be expressed as 
\begin{align}\label{FASAUTOSUM}
f(\Gamma'g)=\sum_{\gamma\in\Gamma'}\eta(\gamma g)\qquad (\forall g\in G'),
\end{align}
for some $\eta\in\C_c^\infty(G')$. %
We wish to study the weighted average of $f$ along a closed horocycle in $X'$, 
$\int_\R f(\Gamma' U^x a(y))\nu(x)\,dx$, in the limit $y\to0$.
To do so we use \eqref{FASAUTOSUM}, and change order of summation and integration.
The contribution from all $\gamma=\smatr abcd$ with $c=0$ is seen to vanish for $y$ small, since $\eta$ has compact support.
The remaining terms are handled by expressing $\eta$ in Iwasawa coordinates (cf.\ \eqref{TFNIWASAWA}),
applying an analogue of Lemma \ref{IWASAWACOORDSLEM1}, 
and then introducing $\widetilde\eta(u,v,\theta):=\sum_{n\in\Z}\eta(u+n,v,\theta)$,
a function on $(\R/\Z)\times\R_{>0}\times(\R/2\pi\Z)$:
\begin{align}\notag
\int_\R  & f(\Gamma' U^x a(y))\nu(x)\,dx
=\sum_{c=1}^\infty\sum_{\substack{a,d\in\Z\\ad\equiv1\mod c}}
\int_{-\pi}^{\pi}
\nu\Bigl(-\frac dc+y\cot\theta\Bigr)\eta\left(\frac ac-\frac{\sin2\theta}{2c^2y},\frac{\sin^2\theta}{c^2y},\theta\right)
\,\frac{y\,d\theta}{\sin^2\theta}
\\\notag
&=\int_{-\pi}^{\pi}\sum_{c=1}^\infty\sum_{\substack{d\in\Z\\(d,c)=1}}
\nu\Bigl(-\frac dc+y\cot\theta\Bigr)
\widetilde\eta\left(\frac{d^*}c-\frac{\sin2\theta}{2c^2y},\frac{\sin^2\theta}{c^2y},\theta\right)
\,\frac{y\,d\theta}{\sin^2\theta}
\\\notag
&\approx\int_{-\pi}^{\pi}\sum_{c=1}^\infty\varphi(c)\int_{\R}\nu(x)\,dx
\int_{\R/\Z}\widetilde\eta\Bigl(u,\frac{\sin^2\theta}{c^2y},\theta\Bigr)\,du
\,\frac{y\,d\theta}{\sin^2\theta}
\\\notag
&\approx\int_{-\pi}^{\pi}\int_0^\infty\frac6{\pi^2}r
\int_{\R/\Z}\widetilde\eta\Bigl(u,\frac{\sin^2\theta}{r^2y},\theta\Bigr)\,du
\,dr\,\frac{y\,d\theta}{\sin^2\theta}
\:\int_{\R}\nu\,dx
\\\notag %
&=\frac3{\pi^2}\int_{-\pi}^{\pi}\int_\R\int_0^\infty 
\eta(u,v,\theta)\,\frac{dv\,du\,d\theta}{v^2}\:\int_{\R}\nu\,dx
=\int_{G'}\eta\,d\mu'\int_{\R}\nu\,dx
=\int_{X'}f\,d\mu'\int_{\R}\nu\,dx.
\end{align}
Here the approximate equality between the second and third lines holds since, for any large $c$,
as $d$ varies through a not too small interval of integers,
the multiplicative inverse $d^*$ becomes approximately equidistributed $\Z/c\Z$.
The next approximate equality holds since $\varphi(c)$ for large $c$ behaves like $\frac6{\pi^2}c$ on average.
The last equality in the above computation %
follows using \eqref{FASAUTOSUM} and standard unfolding.
The errors in the %
approximations %
can be bounded using Lemma \ref{EXPSUM1LEM} below
and \cite[Thm.\ 330]{HW} (together with summation by parts);
in this way one obtains, with some work, that the total 
difference between the horocycle average
$\int_\R f(\Gamma' U^x a(y))\nu(x)\,dx$ and the volume average $\int_{X'}f\,d\mu'\int_{\R}\nu\,dx$ is bounded by
$O_{f,\nu,\ve}(y^{\frac14-\ve})$ as $y\to0$.
This falls short of the optimal error bound $y^{\frac12-\ve}$ which we pointed out in Section \ref{LEADINGTERMSEC};
but it is comparable with %
the ``non-Diophantine'' part of the error bound in Theorem \ref{MAINTHM}.

The main difference between the above computation and our proof of Theorem \ref{MAINTHM}
is that we will establish \textit{cancellation} in \eqref{CONTRFROMFIXEDN}, 
caused by the oscillating factor $e(n(\sgn\theta)(d\xi_1-c\xi_2))$.
For $\xi_1$ nicely Diophantine, cancellation can be established already in the inner sum over $d$
(cf.\ Section \ref{WEAKERVERSIONPFSEC});
however when $\xi_1$ is well-approximable by rational numbers we will collect certain main contributions
from the inner sum and establish cancellation when these are added over $c$
(cf.\ Section \ref{NEWBOUNDSEC}).
\end{remark}

\section{Cancellation in an exponential sum}
\label{CANCELLATIONSEC}

The following lemma is a standard application of Weil's bound on Kloosterman sums.

\begin{lem}\label{EXPSUM1LEM}
Let $0<\eta<1$, $\alpha\in\R$, $c\in\Z^+$,
let $g_1\in\C^2(\R)$ with compact support and $g_2\in \C^2(\R/\Z)$,
and let $N$ be an arbitrary subset of $\Z$. Then
\begin{align}\notag
\sum_{\substack{d\in\Z\\(c,d)=1}}g_1\Bigl(\frac dc\Bigr)e(d\alpha)
g_2\Bigl(\frac {d^*}c\Bigr)
\hspace{300pt}
\\\label{EXPSUM1LEMRES}
=\sum_{k\in N}\biggl(\int_{\R} g_1(x) e\bigl((c\alpha-k)x\bigr)\,dx\biggr)
\Bigl(\int_{\R/\Z} g_2\Bigr)
\mu\Bigl(\frac c{(c,k)}\Bigr)
\frac{\varphi(c)}{\varphi(c/(c,k))}
\hspace{100pt}
\\\notag
+O_\eta\biggl(\|g_1\|_{\W^{1,1}}^{1-\eta}\|g_1\|_{\W^{2,1}}^\eta\biggr)
\biggl\{\|g_2\|_{\L^1(\R/\Z)}\,\sum_{k\in\Z\setminus N}
\frac{(c,k)}{1+|k-c\alpha|^{1+\eta}}
+\|g_2''\|_{\L^1(\R/\Z)}\,\sigma(c)\sqrt c\biggr\}.
\end{align}
\end{lem}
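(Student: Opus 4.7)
The plan is to isolate the stated main term by Poisson summation in the $d$-variable combined with Fourier expansion of $g_2$ on $\R/\Z$, and then to control the remaining oscillating sums via Weil's bound on Kloosterman sums.

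First I would decouple the summation over $d$ from the arithmetic input $d^*/c$ by writing $d=r+mc$ with $0\le r<c$, $(r,c)=1$, $m\in\Z$. Since $d^*\equiv r^*\pmod c$ and $g_2$ has period one, the left hand side of \eqref{EXPSUM1LEMRES} becomes
\begin{align*}
\sum_{\substack{0\le r<c\\(r,c)=1}}e(r\alpha)\,g_2(r^*/c)\sum_{m\in\Z}g_1\!\left(\tfrac rc+m\right)e(mc\alpha).
\end{align*}
Poisson summation on the $m$-sum, with the convention $\wh{G}_1(s):=\int_\R g_1(x)\,e(xs)\,dx$, turns the inner sum into $\sum_{j\in\Z}e(-(r/c)(c\alpha-j))\,\wh{G}_1(c\alpha-j)$; combining with the $e(r\alpha)$ factor collapses the exponent to $e(rj/c)$. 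After interchanging the sums (justified by the rapid decay of $\wh{G}_1$ coming from $g_1\in\C^2$ of compact support) I would expand $g_2(y)=\sum_{l\in\Z}\wh{g_2}(l)e(ly)$ and recognise the resulting $r$-sum as the classical Kloosterman sum $S(j,l;c):=\sum_{(r,c)=1}e((jr+lr^*)/c)$, which yields
\begin{align*}
\sum_{j,l\in\Z}\wh{G}_1(c\alpha-j)\,\wh{g_2}(l)\,S(j,l;c).
\end{align*}
Isolating $l=0$, where $S(j,0;c)$ equals the Ramanujan sum $\mu(c/(c,j))\,\varphi(c)/\varphi(c/(c,j))$, produces the stated main term of \eqref{EXPSUM1LEMRES} indexed by $j=k\in N$, together with Error 1 coming from $j\notin N$ (with $l=0$) and Error 2 coming from all $l\ne 0$ terms.

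To bound Error 1 I would use $|S(j,0;c)|\le(c,j)$ together with the interpolation estimate
\begin{align*}
|\wh{G}_1(s)|\ll\|g_1\|_{\W^{1,1}}^{1-\eta}\|g_1\|_{\W^{2,1}}^\eta(1+|s|)^{-1-\eta},
\end{align*}
derived from the trivial bound $\|g_1\|_{\L^1}$ and from $k$-fold integration by parts ($k=1,2$) giving $\|g_1^{(k)}\|_{\L^1}/(2\pi|s|)^k$; this reproduces the first tail in \eqref{EXPSUM1LEMRES}. For Error 2 I would apply Weil's bound $|S(j,l;c)|\le\sigma(c)\sqrt{c(j,l,c)}$ and factor the estimate as
\begin{align*}
\sigma(c)\sqrt{c}\,\Bigl(\sum_{j\in\Z}|\wh{G}_1(c\alpha-j)|\Bigr)\Bigl(\sum_{l\ne 0}|\wh{g_2}(l)|\sqrt{(l,c)}\Bigr).
\end{align*}
The first parenthesis is $O_\eta(\|g_1\|_{\W^{1,1}}^{1-\eta}\|g_1\|_{\W^{2,1}}^\eta)$ by the interpolation estimate, and for the second the elementary bound $|\wh{g_2}(l)|\ll\|g_2''\|_{\L^1}/l^2$ together with the grouping $\sum_{l\ne 0}\sqrt{(l,c)}\,l^{-2}\ll\sum_{d\mid c}d^{-3/2}\ll 1$ yields $\ll\|g_2''\|_{\L^1}$ uniformly in $c$, producing the claimed $\sigma(c)\sqrt{c}$ dependence.

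The only nontrivial input is Weil's bound on Kloosterman sums; everything else is a routine combination of Poisson summation, Fourier expansion on $\R/\Z$, and integration-by-parts decay of Fourier transforms. A minor but necessary point is the absolute convergence of the double sum in $(j,l)$ before separating it, which is guaranteed by the $\C^2$-regularity hypotheses on $g_1,g_2$ together with the interpolation estimate above.
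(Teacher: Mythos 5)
Your proposal is correct and is essentially the paper's own argument: the paper twists and periodizes $g_1$ (the function $h(x)=\sum_m g_1(x+m)e(c\alpha(x+m))$) and expands it in a Fourier series, which is exactly your $d=r+mc$ splitting followed by Poisson summation, and then both arguments reduce to the double sum $\sum_{n,m}a_nb_mS(n,m;c)$, treat $m=0$ via Ramanujan sums (main term over $N$ plus the first error) and $m\neq0$ via Weil's bound. The only cosmetic difference is in the Weil step, where you keep $\sqrt{(l,c)}$ and sum over divisors of $c$ while the paper simply uses $\gcd(n,m,c)^{1/2}\leq|m|^{1/2}$; both yield the stated $\sigma(c)\sqrt c$ term.
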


\begin{proof}
Set
\begin{align*}
h(x)=\sum_{m\in\Z}g_1(x+m)e(c\alpha(x+m)).
\end{align*}
Then $h\in\C^2(\R)$ and $h(x+n)=h(x)$ for all $n\in\Z$; hence
we may view $h$ as a function in $\C^2(\R/\Z)$.
Let the Fourier expansions of $h(x)$ and $g_2(x)$ be
\begin{align*}
h(x)=\sum_{n\in\Z}a_ne(nx)
\qquad\text{and}\qquad
g_2(x)=\sum_{m\in\Z}b_me(mx).
\end{align*}
Here 
\begin{align}
a_n=\int_{\R/\Z}h(x)e(-nx)\,dx
&=\int_\R g_1(x)e((c\alpha-n)x)\,dx,
\end{align}
and thus, by integration by parts,
$|a_n|\leq\|g_1^{(j)}\|_{\L^1}(2\pi|c\alpha-n|)^{-j}$ for $j=0,1,2$.
Hence, making use of the general inequality
$\min(A,By)\leq A^{1-\eta}B^\eta y^\eta$ (true for all $A,B,y\geq0$)
with $A=\|g_1'\|_{\L^1}$, $B=\|g_1''\|_{\L^1}$ and $y=(2\pi|c\alpha-n|)^{-1}$,
we conclude:
\begin{align*}
|a_n|\ll\frac{\|g_1\|_{\W^{1,1}}^{1-\eta}\|g_1\|_{\W^{2,1}}^\eta}{1+|c\alpha-n|^{1+\eta}},
\qquad\forall n\in\Z.
\end{align*}
Similarly, using $b_m=\int_{\R/\Z}g_2(x)e(-mx)\,dx$, %
we have $|b_0|\leq\|g_2\|_{\L^1(\R/\Z)}$ and $|b_m|\leq \|g_2''\|_{\L^1(\R/\Z)}\,|m|^{-2}$ for $m\neq0$.
Now the sum in the left hand side of \eqref{EXPSUM1LEMRES}
can be expressed as
\begin{align}\label{IWASAWACOORDSLEM1PF1}
\sum_{d\in(\Z/c\Z)^\times}h\Bigl(\frac dc\Bigr)g_2\Bigl(\frac {d^*}c\Bigr)
=\sum_{n\in\Z}\sum_{m\in\Z}a_nb_m S(n,m;c),
\end{align}
where we use standard notation for Kloosterman sums;
$S(n,m;c):=\sum_{d\in(\Z/c\Z)^\times}e(n\frac dc+m\frac{d^*}c)$.

For $m=0$, $S(n,m;c)$ is a Ramanujan sum;
\begin{align*}
S(n,0;c)=\sum_{d\in(\Z/c\Z)^\times}e\Bigl(n\frac dc\Bigr)
=\mu\Bigl(\frac c{(c,n)}\Bigr)\frac{\varphi(c)}{\varphi(c/(c,n))},
\end{align*}
and in particular $|S(n,0;c)|\leq(c,n)$
(cf., e.g., \cite[Sec.\ 3.2]{IK}).
Hence the contribution from all terms with $m=0$ in 
\eqref{IWASAWACOORDSLEM1PF1} is
\begin{align*}
&=\sum_{n\in N}a_{n}b_0S(n,0;c)+
O\biggl(\|g_1\|_{\W^{1,1}}^{1-\eta}\|g_1\|_{\W^{2,1}}^\eta
\|g_2\|_{\L^1}\sum_{n\in\Z\setminus N}\frac{(c,n)}{1+|c\alpha-n|^{1+\eta}}\biggr),
\end{align*}
and the sum over $n\in N$ expands to give the first line in the right hand side of \eqref{EXPSUM1LEMRES}.

Next for $m\neq0$ we use Weil's bound, 
$|S(n,m;c)|\leq \sigma(c)\gcd(n,m,c)^{1/2}\sqrt c$
(cf.\ \cite{Weil}, and \cite[Ch.\ 11.7]{IK}),
and $\gcd(n,m,c)^{1/2}\leq |m|^{1/2}$, %
to see that the contribution from all terms with $m\neq0$ in 
\eqref{IWASAWACOORDSLEM1PF1} is
\begin{align*}
\ll \|g_1\|_{\W^{1,1}}^{1-\eta}\|g_1\|_{\W^{2,1}}^\eta\|g_2''\|_{\L^1}\sigma(c)\sqrt c\sum_{n\in\Z}\frac1{1+|c\alpha-n|^{1+\eta}}
\sum_{m\in\Z\setminus\{0\}}|m|^{-3/2}
\\
\ll_\eta  \|g_1\|_{\W^{1,1}}^{1-\eta}\|g_1\|_{\W^{2,1}}^\eta\|g_2''\|_{\L^1}\sigma(c)\sqrt c.
\end{align*}
This completes the proof of the lemma.
\end{proof}

\section{Proof of a weaker version of Theorem \ref*{MAINTHM}}
\label{WEAKERVERSIONPFSEC}

In this section we go through the first steps of the proof of Theorem \ref{MAINTHM};
the outcome of this is a 
version of Theorem \ref{MAINTHM} which only involves the Diophantine properties of $\xi_1$
and not those of $\xi_2$; see Proposition \ref{MAINTHMWEAKERPROP} below.
This result is strong enough to imply that the error term in Theorem \ref{MAINTHM}
(as well as the left hand side in Theorem \ref{MAINABTHM})
decays like $y^{\min(\frac14,\frac1K)-\ve}$ in the case of $\xi_1$ irrational of Diophantine type $K$
(see Remark \ref{MAINTHMWEAKERPROPREM});
however for $\vecxi$ with $\xi_1\in\Q$ but $\xi_2\notin\Q$, Proposition \ref{MAINTHMWEAKERPROP} %
does not imply any equidistribution whatsoever.

Recall that our task is to bound the sum in \eqref{CONTRFROMFIXEDN}.
We write $\omega=\sgn(\theta)$, where we always assume $\theta\neq0$ so that $\omega\in\{-1,1\}$.
Applying Lemma \ref{EXPSUM1LEM} and replacing $k$ by $\omega k$ we get the following
estimate valid for any $\theta\in(-\pi,\pi)\setminus\{0\}$ and $n,c\in\Z^+$:
\begin{align}\notag
\sum_{\substack{d\in\Z\\(c,d)=1}}
\nu\biggl(-\frac dc+y\cot\theta\biggr)\,
\tf_n\biggl(\frac{d^*}c-\frac{\sin2\theta}{2c^2y},\frac{\sin^2\theta}{c^2y},
\theta\biggr)
e\bigl(n\omega %
(d\xi_1-c\xi_2)\bigr)
\hspace{140pt}
\\\notag
=\sum_{k\in N}\frac{\mu(\frac c{(c,k)})\varphi(c)}{\varphi(\frac c{(c,k)})}e\bigl(-n\omega c\xi_2\bigr)
\int_\R\nu\bigl(-x+y\cot\theta\bigr)e\bigl((cn\xi_1-k)\omega x\bigr)\,dx
\int_{\R/\Z}\tf_n\Bigl(u,\frac{\sin^2\theta}{c^2y},\theta\Bigr)\,du 
\hspace{20pt}
\\\label{PROOF1STEP0new}
+O_\eta\Bigl(\|\nu\|_{\W^{1,1}}^{1-\eta}\|\nu\|_{\W^{2,1}}^\eta\Bigr)
\Biggl\{\biggl(\int_{\R/\Z}
\biggl|\tf_n\Bigl(u,\frac{\sin^2\theta}{c^2y},\theta\Bigr)\biggr|\,du\biggr)
\sum_{k\in\Z\setminus N}\frac{(c,k)}{1+|k-cn\xi_1|^{1+\eta}}
\hspace{20pt}
\\\notag
+\biggl(\int_{\R/\Z}
\biggl|\frac{\partial^2}{\partial u^2}
\tf_n\Bigl(u,\frac{\sin^2\theta}{c^2y},\theta\Bigr)\biggr|\,du\biggr)
\sigma(c)\sqrt c\Biggr\}. 
\hspace{10pt}
\end{align}
Here $N$ is a subset of $\Z$ which we are free to choose (it may depend on $n$, $c$, $\theta$).
In the present section, we will in fact make the simple choice $N=\emptyset$!
Thus the first row in the right hand side of \eqref{PROOF1STEP0new} \textit{vanishes.}
In order to bound the remaining expressions,  note that by Lemma \ref{DECAYTFNFROMCMNORMLEM}, for any $m\in\Z_{\geq0}$ we have
\begin{align}\label{DECAYTFNFROMCMNORMLEMAPPL1}
\int_{\R/\Z}
\biggl|\tf_n\Bigl(u,\frac{\sin^2\theta}{c^2y},\theta\Bigr)\biggr|\,du
\ll_m\|f\|_{\C_{\b}^m}\Bigl(\frac{|\sin\theta|}{nc\sqrt y}\Bigr)^{m}.
\end{align}
Using this bound for both $m=0$ and a general $m\in\Z_{\geq0}$ gives 
\begin{align}
\int_{\R/\Z}
\biggl|\tf_n\Bigl(u,\frac{\sin^2\theta}{c^2y},\theta\Bigr)\biggr|\,du
\ll_m\|f\|_{\C_{\b}^m}\min\Bigl(1,\Bigl(\frac{|\sin\theta|}{nc\sqrt y}\Bigr)^m
\Bigr).
\end{align}
Similarly, by Lemma \ref{DERDECAYTFNFROMCMNORMLEM2},
we have for any $\ell\in\Z_{\geq4}$:
\begin{align}\label{DECAYTFNFROMCMNORMLEMAPPL2}
\int_{\R/\Z}\biggl|\frac{\partial^2}{\partial u^2}
\tf_n\Bigl(u,\frac{\sin^2\theta}{c^2y},\theta\Bigr)\biggr|\,du
\ll_\ell\|f\|_{\C_{\b}^{\ell+2}}\,n^{-4}
\min\Bigl(1,  %
\Bigl(\frac{|\sin\theta|}{nc\sqrt y}\Bigr)^{\ell-4}\Bigr).
\end{align}
We also compute that, for $m\geq2$ and any real $a>0$,
\begin{align}\label{PROOF1STEP2new}
\int_{-\pi}^\pi\min\Bigl(1,\bigl(a^{-1}|\sin\theta|\bigr)^m\Bigr)\,
\frac{d\theta}{\sin^2\theta}\asymp_m a^{-1}(1+a)^{1-m}.
\end{align}
Using the bounds \eqref{DECAYTFNFROMCMNORMLEMAPPL1}--\eqref{DECAYTFNFROMCMNORMLEMAPPL2} in \eqref{PROOF1STEP0new}
and then applying \eqref{PROOF1STEP2new} with $a=nc\sqrt y$,
\textit{assuming from now on that $m\geq2$ and $\ell\geq6$,}
we conclude that \eqref{CONTRFROMFIXEDN} is 
\begin{align}\notag
\ll_{m,\ell,\eta}
\|\nu\|_{\W^{1,1}}^{1-\eta}\|\nu\|_{\W^{2,1}}^\eta
\biggl\{\|f\|_{\C_{\b}^m}\,y^{1-\frac m2}\sum_{n=1}^{\infty}n^{-m}
\sum_{c=1}^{\infty} c^{-1}\bigl((n\sqrt y)^{-1}+c\bigr)^{1-m}
\sum_{k\in\Z}\frac{(c,k)}{1+|k-cn\xi_1|^{1+\eta}}
\hspace{5pt}
\\\label{PROOF1STEP3new}
+\|f\|_{\C_{\b}^{\ell+2}}\,y^{3-\frac\ell2}\sum_{n=1}^{\infty}n^{-\ell}\sum_{c=1}^{\infty}
\bigl((n\sqrt y)^{-1}+c\bigr)^{5-\ell}\,\frac{\sigma(c)}{\sqrt c}
\biggr\}.
\end{align}

\begin{lem}\label{SIGMASUMLEM}
For any $X>0$ and $m\in\Z^+$ we have
\begin{align*}
\sum_{c=1}^\infty(X+c)^{-m}\,\frac{\sigma(c)}{\sqrt c}\ll_m
\begin{cases}
X^{\frac12-m}\log(1+X)&\text{if }X\geq1
\\
1&\text{if }\: X<1.
\end{cases}
\end{align*}
\end{lem}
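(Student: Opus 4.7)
The plan is to invoke the classical divisor-sum estimate $\sum_{c \le N}\sigma(c) = O(N \log(N+1))$ and derive the two companion bounds
\begin{align*}
\sum_{c \le N} \frac{\sigma(c)}{\sqrt c} \ll \sqrt N\,\log(N+1), \qquad
\sum_{c > N} \frac{\sigma(c)}{c^{m+1/2}} \ll_m N^{1/2-m}\log(N+1) \quad (m \ge 1),
\end{align*}
by a single application of partial summation in each case (and, for the tail, one integration by parts to evaluate $\int_N^\infty (\log t)\,t^{-m-1/2}\,dt$). Once these are in hand, the lemma is a short splitting argument.

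For the range $X \ge 1$, I would split the sum at $c = X$. In the range $c \le X$, I bound $(X+c)^{-m} \le X^{-m}$ and apply the first partial-sum estimate to get $O(X^{-m} \cdot \sqrt X \log(1+X)) = O(X^{1/2-m}\log(1+X))$. In the range $c > X$, I bound $(X+c)^{-m} \le c^{-m}$ and apply the tail estimate, which again gives $O(X^{1/2-m}\log(1+X))$. The convergence of the integral at infinity is exactly what requires $m \ge 1$, consistent with the hypothesis $m \in \mathbb{Z}^+$.

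For the range $X < 1$, the estimate $(X+c)^{-m} \le c^{-m}$ reduces the sum to $\sum_{c \ge 1} \sigma(c)\, c^{-m-1/2}$, which is $O_m(1)$ for $m \ge 1$ (absolutely convergent, e.g.\ by $\sigma(c) = O_\varepsilon(c^\varepsilon)$).

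There is essentially no obstacle here; the result is a routine analytic number theory estimate. The only point that requires a little care is that the upper bound is stated uniformly with $\log(1+X)$ rather than $\log X$, so that the formula for $X \ge 1$ remains bounded as $X \to 1^+$ and matches the $X < 1$ case.
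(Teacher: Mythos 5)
Your proof is correct and follows essentially the same route as the paper's: the divisor-sum estimate $\sum_{c\leq x}\sigma(c)\ll x\log(1+x)$ combined with partial summation (integration by parts), with the splitting at $c=X$ and the trivial bounds $(X+c)^{-m}\leq X^{-m}$, $c^{-m}$ being the routine details the paper leaves implicit.
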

\begin{proof}
This follows by using $\sum_{1\leq c\leq x}\sigma(c)\ll x\log(1+x)$,
$\forall x\geq1$ (cf., e.g., \cite[(1.75)]{IK}),
and integration by parts.
\end{proof}

It follows from Lemma \ref{SIGMASUMLEM} and a simple summation over $n$
that the expression in the second line of \eqref{PROOF1STEP3new} is
$\ll \|f\|_{\C_{\b}^{\ell+2}}y^{\frac14}\log(1+y^{-1})$.

When bounding the double sum over $c$ and $k$ appearing in the first line of \eqref{PROOF1STEP3new},
it is natural to introduce the following majorant function.
\begin{align}\label{FMDEF}
\fM_\alpha(X):=\sum_{\ell=1}^\infty\min\Bigl(\frac 1{\ell^2},\frac1{X\ell\langle\ell\alpha\rangle }\Bigr)
\qquad (X>0, \:\alpha\in\R).
\end{align}
Clearly this is a decreasing function of $X$ for fixed $\alpha$, but it is never very rapidly decreasing;
in short we have
\begin{align}\label{FMDECAY}
\fM_\alpha(X_2)\leq\fM_\alpha(X_1)\leq\min\Bigl(\frac{X_2}{X_1}\fM_\alpha(X_2),\zeta(2)\Bigr),
\qquad\forall 0<X_1\leq X_2.
\end{align}
The proof is immediate, using \eqref{FMDEF}.

\begin{lem}\label{PROOF1LEM1}
Fix $\eta>0$ and $m\in\Z_{\geq3}$. Then for any $\alpha\in\R$ and $X>0$ we have
\begin{align}\label{PROOF1LEM1RES}
\sum_{c=1}^{\infty} c^{-1}(X+c)^{1-m}
\sum_{k\in\Z}\frac{(c,k)}{1+|k-c\alpha|^{1+\eta}}
\ll_{\eta,m}  \begin{cases} 
X^{2-m}\,\fM_\alpha(X)&\text{if }\: X\geq1
\\
1&\text{if }\:X<1.
\end{cases}
\end{align}
\end{lem}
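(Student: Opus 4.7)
The strategy is to group the inner sum over $k$ by the value of $d=(c,k)$, bound the resulting $j$-sum uniformly and sharply, and then exchange summation orders so that the surviving sum over $\ell=c/d$ matches the definition of $\fM_\alpha(X)$ term by term.

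Writing $k=dj$ with $d=(c,k)$ so that $(c/d,j)=1$, dropping the coprimality condition for an upper bound, setting $\ell=c/d$ and $\beta_\ell:=\langle\ell\alpha\rangle$, one obtains
\begin{equation*}
T_c := \sum_{k\in\Z}\frac{(c,k)}{1+|k-c\alpha|^{1+\eta}} \le \sum_{\ell\mid c}\frac{c}{\ell}\sum_{j\in\Z}\frac{1}{1+(c/\ell)^{1+\eta}|j-\ell\alpha|^{1+\eta}}.
\end{equation*}
The plan is to bound the inner $j$-sum uniformly by $O_\eta\bigl((1+((c/\ell)\beta_\ell)^{1+\eta})^{-1}\bigr)$: the term at the nearest integer $j_0$ to $\ell\alpha$ equals exactly this, while for all other $j$ one has $|j-\ell\alpha|\ge 1/2$, making the tail $O_\eta((c/\ell)^{-1-\eta})$, which (since $c/\ell\ge 1$ and $\beta_\ell\le 1/2$) is itself dominated by the first term. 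Substituting this into $S$, writing $c=\ell r$ with $r\ge 1$, and interchanging sums yields
\begin{equation*}
S := \sum_{c=1}^\infty c^{-1}(X+c)^{1-m}T_c \ll_\eta \sum_{\ell=1}^\infty\frac{1}{\ell}\sum_{r=1}^\infty \frac{(X+\ell r)^{1-m}}{1+(r\beta_\ell)^{1+\eta}}.
\end{equation*}

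The inner $r$-sum would be evaluated by a standard split at the transition points $r=X/\ell$ and $r=1/\beta_\ell$, using $m\ge 3$ to ensure the tails converge. The outcome is that, for $\ell\le X$, the $r$-sum is $\asymp_{\eta,m}\min(X^{2-m}/\ell,\,X^{1-m}/\beta_\ell)$, while for $\ell>X$ it is $\ll_{\eta,m}\ell^{1-m}$. Consequently
\begin{equation*}
S \ll_{\eta,m} X^{2-m}\!\sum_{\ell\le X}\min\Bigl(\frac{1}{\ell^2},\,\frac{1}{X\ell\beta_\ell}\Bigr) + \sum_{\ell>X}\ell^{-m}.
\end{equation*}
The first sum is $\le\fM_\alpha(X)$ by \eqref{FMDEF}. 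For the second, I would use $\ell^{-m}\le X^{2-m}\ell^{-2}$ (valid for $m\ge 2$ and $\ell>X$) and observe that for any $\ell>X$ one has $X\beta_\ell\le X/2<\ell$, so $\min(1/\ell^2,1/(X\ell\beta_\ell))=1/\ell^2$; hence $\sum_{\ell>X}\ell^{-2}\le\fM_\alpha(X)$, and altogether $S\ll_{\eta,m}X^{2-m}\fM_\alpha(X)$, as required. For $X<1$, one has $(X+c)^{1-m}\asymp_m c^{1-m}$, and the crude bound $T_c\ll_\eta c$ (obtained from $(c,k)\le c$ together with $\sum_k(1+|k-c\alpha|^{1+\eta})^{-1}\ll_\eta 1$) combined with $m\ge 3$ gives $S\ll_{\eta,m}\sum_c c^{1-m}\ll_m 1$.

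The main subtlety is securing the uniform bound on the $j$-sum in the format above: a naive decomposition into ``main term plus $O(r^{-1-\eta})$ tail'', treated as separate contributions to be summed independently, would produce a spurious $\log X$ factor on the small-$\ell$ side upon summing over $\ell\le X$. The key observation is that since $\beta_\ell\le 1/2$ the tail is automatically subsumed in the dominant term up to a constant depending only on $\eta$, which allows the eventual estimate to reproduce the definition of $\fM_\alpha(X)$ exactly, without any logarithmic loss.
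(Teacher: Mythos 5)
Your proof is correct and follows essentially the same route as the paper: group the inner sum by $d=(c,k)$, bound it by the term at the nearest multiple of $d$ to $c\alpha$ (with the tail absorbed into that term since $\langle\ell\alpha\rangle\le\tfrac12$), write $c=\ell d$ and interchange the sums, then evaluate the inner sum by splitting at $X/\ell$ and $\langle\ell\alpha\rangle^{-1}$ to recover $\fM_\alpha(X)$. The only cosmetic difference is that you treat $X<1$ by a separate crude bound, whereas the paper handles it inside the same case analysis.
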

\begin{proof}
For given positive integers $d$ and $c={\ell}d$ (${\ell}\in\Z^+$),
we will bound the sum over $k$ in \eqref{PROOF1LEM1RES} when further restricted by the condition
$(c,k)=d$.
Denote by $k_0$ the unique integer in the interval
$c\alpha-\frac12d<k_0\leq c\alpha+\frac12d$ which is divisible by $d$.
Then $|k_0-c\alpha|$ equals the distance from $c\alpha$ to the point set $d\Z$, viz.\ 
$|k_0-c\alpha|=d\langle\frac cd\alpha\rangle$.
Note that the set $\{k\in\Z\col (c,k)=d\}$ is contained in $d\Z=k_0+d\Z$.
This gives
\begin{align*} %
\sum_{\substack{k\in\Z\\((c,k)=d)}}
\frac{(c,k)}{1+|k-c\alpha|^{1+\eta}}
\leq\sum_{k\in k_0+d\Z}\frac d{1+|k-c\alpha|^{1+\eta}}
\ll_\eta\frac d{1+(d\langle\frac cd\alpha\rangle)^{1+\eta}},
\end{align*}
since the contribution from all terms with $k\neq k_0$ is $\ll d^{-\eta}\sum_{v=1}^\infty v^{-1-\eta}\ll_\eta d^{-\eta}$.
Hence the left hand side of \eqref{PROOF1LEM1RES} is
\begin{align*}
\ll_\eta\sum_{\ell=1}^\infty \ell^{-1}\sum_{d=1}^\infty
\frac{(X+\ell d)^{1-m}}{1+(d\langle\ell\alpha\rangle)^{1+\eta}}
\asymp_{m,\eta}\sum_{\ell=1}^\infty \ell^{-1}\left.\begin{cases}
X^{2-m}\ell^{-1}&\text{if }\:1\leq X/\ell\leq\langle\ell\alpha\rangle^{-1}
\\
X^{1-m}\langle\ell\alpha\rangle^{-1}&\text{if }\:\langle\ell\alpha\rangle^{-1}<X/\ell
\\
\ell^{1-m}&\text{if }\:X/\ell<1
\end{cases}\right\},
\end{align*}
where we used $m\geq3$ in the last step.
If $X<1$ then the above sum is $\sum_{\ell=1}^\infty \ell^{-m}\ll1$.
On the other hand if $X\geq1$ then we get
\begin{align*}
=X^{2-m}\sum_{1\leq \ell\leq X}\min\Bigl(\frac 1{\ell^2},\frac1{X\ell\langle\ell\alpha\rangle}\Bigr)
+\sum_{\ell>X}\ell^{-m}
\ll X^{2-m}\,\fM_\alpha(X).
\end{align*}
\end{proof}

In Lemma \ref{PROOF1LEM1}, of course the bound $X^{2-m}\fM_\alpha(X)$ is valid also when $X<1$, albeit wasteful.
We now get in \eqref{PROOF1STEP3new},
\textit{assuming from now on $m\geq3$,}
\begin{align*}
&y^{1-\frac m2}\sum_{n=1}^{\infty}n^{-m}
\sum_{c=1}^{\infty} c^{-1}\bigl((n\sqrt y)^{-1}+c\bigr)^{1-m}
\sum_{k\in\Z}\frac{(c,k)}{1+|k-cn\xi_1|^{1+\eta}}
\hspace{100pt}
\\
&\ll_{m,\eta}
y^{1-\frac m2}\sum_{n=1}^{\infty}n^{-m}(n\sqrt y)^{m-2}\fM_{n\xi_1}\Bigl(\frac1{n\sqrt y}\Bigr)
=\sum_{n=1}^\infty\sum_{\ell=1}^\infty\min\Bigl(\frac1{(n\ell)^2},\frac{\sqrt y}{n\ell\langle n\ell\xi_1\rangle}\Bigr)
=\widetilde\fM_{\xi_1}(y^{-1/2}),
\end{align*}
where
\begin{align}\label{TILDEFMDEF}
\widetilde\fM_{\xi_1}(X):=\sum_{k=1}^\infty\sigma(k)\min\Bigl(\frac1{k^2},\frac{1}{Xk\langle k\xi_1\rangle}\Bigr).
\end{align}
We have now proved:
\begin{prop}\label{MAINTHMWEAKERPROP}
Let $0<\eta<1$ be fixed.
Then for any $f\in \C_{\b}^8(\GaG)$,
any $\nu\in\C^2(\R)$ with compact support,
and any $\vecxi\in\R^2$, $0<y<1$,
\begin{align}\notag
\int_\R f\Bigl(\Gamma\,(1_2,\vecxi)\,U^xa(y)\Bigr)\,\nu(x)\,dx
=\int_{\GaG} f\,d\mu\int_\R\nu\,dx
\hspace{180pt}
\\\label{MAINTHMWEAKERPROPRES}
+O_\eta\Bigl(\|\nu\|_{\W^{1,1}}^{1-\eta}\|\nu\|_{\W^{2,1}}^\eta\Bigr)
\biggl\{\|f\|_{\C_{\b}^{8}}\,y^{\frac14}\log(1+y^{-1})+\|f\|_{\C_{\b}^3}\widetilde\fM_{\xi_1}(y^{-1/2})\biggr\}.
\end{align}
\end{prop}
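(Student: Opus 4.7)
The plan is to start from the formula \eqref{MAINSTEP1} produced by the Fourier decomposition of Section \ref{FOURIERDECSEC}. The contribution of the zeroth Fourier term $\tf_0$ is handled by the horocycle equidistribution result \eqref{SL2REQUIDISTR}: it produces the main term $\int_{\GaG} f\,d\mu \int_\R \nu\,dx$ (via \eqref{MAINTERMEXPL}) together with an error $O(\|f\|_{\C_{\b}^4} \|\nu\|_{\W^{1,1}} y^{1/2} \log^3(2+y^{-1}))$, which is absorbed into the first term on the right-hand side of \eqref{MAINTHMWEAKERPROPRES}. Next, the contribution from the terms with $c=0$ in the sum over $(c,d) \in \wh\Z^2$ is immediately bounded by $O(\|\nu\|_{\L^1} \|f\|_{\C_{\b}^2} y)$ using \eqref{C0BOUND}. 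After rewriting the remaining terms in Iwasawa coordinates via Lemma \ref{IWASAWACOORDSLEM1}, the task reduces to estimating the triple sum/integral \eqref{CONTRFROMFIXEDN}.

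The main step, and the conceptual heart of this proposition, is the application of Lemma \ref{EXPSUM1LEM} to the inner sum over $d$ with $\alpha = n\xi_1$. The crucial choice in this section is to take $N = \emptyset$ in \eqref{EXPSUM1LEMRES}, so that no ``main term'' is extracted from the Kloosterman sum expansion; everything is estimated via the error terms of that lemma. This is what makes the final bound depend only on the Diophantine properties of $\xi_1$ (through the factor $|k - cn\xi_1|^{1+\eta}$) and not at all on $\xi_2$. After inserting the decay bounds \eqref{DECAYTFNFROMCMNORMLEMAPPL1}--\eqref{DECAYTFNFROMCMNORMLEMAPPL2} for $\tf_n$ and its derivatives (supplied by Lemmas \ref{DECAYTFNFROMCMNORMLEM}--\ref{DERDECAYTFNFROMCMNORMLEM2}), the $\theta$-integration is carried out by the elementary estimate \eqref{PROOF1STEP2new}.

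It then remains to sum in $n$ and $c$. The Kloosterman (Weil bound) contribution involves $\sigma(c)/\sqrt{c}$ times a geometric-type factor, and is handled by Lemma \ref{SIGMASUMLEM}; this produces the $\|f\|_{\C_{\b}^8}\, y^{1/4} \log(1 + y^{-1})$ term in \eqref{MAINTHMWEAKERPROPRES}. The Ramanujan-type contribution, involving $\sum_{k\in\Z} (c,k)/(1 + |k - cn\xi_1|^{1+\eta})$, is bounded by Lemma \ref{PROOF1LEM1}, yielding a factor $\fM_{n\xi_1}((n\sqrt{y})^{-1})$; summing this in $n$ and grouping by $k = n\ell$ produces exactly $\widetilde{\fM}_{\xi_1}(y^{-1/2})$ as defined in \eqref{TILDEFMDEF}.

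The main obstacle to overcome along the way is the bookkeeping needed to produce a single clean bound in terms of $\widetilde{\fM}_{\xi_1}$: one must track the interplay between the size of the Fourier coefficients (which forces $m \geq 3$ and eventually $\ell \geq 6$ derivatives of $f$), the Weil-bound error, the Ramanujan-sum error, and the $\theta$-integration, simultaneously matching the various powers of $y$, $n$, $c$, and $\sin\theta$ so that the double summation converges and reassembles into $\widetilde{\fM}_{\xi_1}$. No new ideas beyond those already introduced in Sections \ref{NOTATIONSEC}--\ref{CANCELLATIONSEC} are needed, and no cancellation from the oscillating factor $e(n\omega(d\xi_1 - c\xi_2))$ in the $c$-variable is exploited in this section; exploiting such cancellation, and thereby also using the Diophantine properties of $\xi_2$, will be the content of Section \ref{NEWBOUNDSEC}.
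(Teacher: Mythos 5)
Your proposal is correct and follows essentially the same route as the paper: the same reduction via \eqref{MAINSTEP1}, \eqref{C0BOUND} and Lemma \ref{IWASAWACOORDSLEM1}, the same key choice $N=\emptyset$ in Lemma \ref{EXPSUM1LEM}, and the same assembly of the Weil contribution via Lemma \ref{SIGMASUMLEM} and of the Ramanujan contribution via Lemma \ref{PROOF1LEM1} followed by the substitution $k=n\ell$ giving $\widetilde\fM_{\xi_1}(y^{-1/2})$. No substantive differences to report.
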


\begin{remark}\label{MAINTHMWEAKERPROPREM}
Note that for fixed $\xi_1$, $\lim_{y\to0}\widetilde\fM_{\xi_1}(y^{-1/2})=0$ if and only if $\xi_1\notin\Q$.
Hence Proposition \ref{MAINTHMWEAKERPROP} gives an effective version of Theorem \ref{NONEFFECTIVETHM}
in the special case of $\xi_1$ irrational. 

In order to compare Proposition \ref{MAINTHMWEAKERPROP} and Theorem \ref{MAINABTHM} (or Theorem \ref{MAINTHM}) in the $y$-aspect,
we point out that
\begin{align}\label{FMXI2BND}
\widetilde\fM_{\xi_1}(y^{-1/2})\ll_\ve
\bigl(\fb^1_{\xi_1}(y)+y^{\frac14}\bigr)^{1-\ve},
\qquad\text{where }\:
\fb^1_{\xi_1}(y):=
\max_{q\in\Z^+}\min\Bigl(\frac 1{q^2},\frac{\sqrt y}{q\langle q\xi_1\rangle}\Bigr).
\end{align}
This can be proved by following the same argument as we will use later below \eqref{AAA2},
and again below \eqref{AAA4}.
Note that $\widetilde\fM_{\xi_1}(y^{-1/2})\geq\fb^1_{\xi_1}(y)$ holds trivially from the definition
\eqref{TILDEFMDEF}, and so the bound \eqref{FMXI2BND} is essentially sharp whenever 
$\fb^1_{\xi_1}(y)\gg y^{1/4}$.

On the other hand for $\xi_1$ Diophantine of type $K\geq2$, 
$\widetilde\fM_{\xi_1}(y^{-1/2})\ll_{\ve,\xi_1} y^{\frac1K-\ve}$ (cf.\ Lemma~\ref{MXIBOUNDLEM});
in particular if $K<4$ then $\widetilde\fM_{\xi_1}(y^{-1/2})$ decays more rapidly than $y^{\frac14}$ as $y\to0$.
We expect that for $\vecxi$ satisfying an appropriate Diophantine condition, the error bound in \eqref{MAINTHMWEAKERPROPRES}
can be improved to $O_{f,\nu,\vecxi,\ve}(y^{\frac12-\ve})$.
This is the rate which one obtains when $f$ is a lift of a function on $X'$ (cf.\ \cite{FF}, \cite{iha});
furthermore the exponent $\frac12$ corresponds to the exponential rate of mixing for the flow $\Phi^\R$ on $X$,
when acting on sufficiently smooth vectors in $\L^2(X)$
(cf.\ \cite[Thm.\ 3.3.10]{HOWETAN} as well as \cite{edwards}).
In our approach we are stuck at the exponent $\frac14$ %
since in \eqref{CONTRFROMFIXEDN}
we bound the absolute value of the sum over $d$ individually for each $c$ using the Weil bound; cf.\ Lemma~\ref{EXPSUM1LEM}.
\end{remark}

\begin{lem}\label{MXIBOUNDLEM}
Let $\ve>0$, $X\geq1$ and $\xi\in\R$, and assume $\langle n\xi\rangle\geq cn^{1-K}$ for all 
$n\in\Z^+$ and some fixed $c>0$ and $K\geq2$. Then
\begin{align}\label{MXIBOUNDLEMRES1}
\widetilde\fM_\xi(X)\ll_\ve c^{-\frac2K}X^{\ve-\frac2K}.
\end{align}
\end{lem}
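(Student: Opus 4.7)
The plan is to bound $\widetilde\fM_\xi(X)$ via a dyadic decomposition of the integers by the size of $\langle k\xi\rangle$, using a spacing bound on the level sets derived from the Diophantine hypothesis. First, since $\sigma(k)$ counts the divisors of $k$ in this paper's notation, the standard divisor bound yields $\sigma(k)\ll_\delta k^\delta$ for any $\delta>0$; I will fix $\delta$ to be a small positive multiple of $\ve$ at the end. Writing
\begin{align*}
\min\Bigl(\frac{1}{k^2},\frac{1}{Xk\langle k\xi\rangle}\Bigr)=\frac{1}{k\,\max(k,X\langle k\xi\rangle)},
\end{align*}
it suffices to bound $S:=\sum_{k=1}^\infty k^{\delta-1}/\max(k,X\langle k\xi\rangle)$ by $c^{-2/K}X^{-2/K+O(\delta)}$.

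For each $j\in\Z_{\geq0}$ I will set $A_j:=\{k\geq1\col 2^{-j-1}<\langle k\xi\rangle\leq2^{-j}\}$ and $L_j:=X/2^{j+1}$; here $L_j$ is the scale of $k$ at which $\max(k,X\langle k\xi\rangle)$ transitions between its two members on $A_j$. The Diophantine hypothesis will supply a spacing bound on $A_j$: if $k_1<k_2$ both lie in $A_j$, then $\langle(k_2-k_1)\xi\rangle\leq 2^{1-j}$, so the hypothesis forces $(k_2-k_1)^{K-1}\geq c\,2^{j-1}$. Hence consecutive elements of $A_j$ differ by at least $\Delta_j:=(c\,2^{j-1})^{1/(K-1)}$, and the same argument applied to a single element of $A_j$ shows $\min A_j\asymp\Delta_j$; in particular $|A_j\cap[M,2M]|\ll M/\Delta_j$ for all dyadic $M\geq\Delta_j$.

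A routine dyadic summation in $k$, splitting $A_j$ into the ranges $k\leq L_j$ (where $\max=L_j$) and $k>L_j$ (where $\max=k$), will produce
\begin{align*}
\sum_{k\in A_j}\frac{k^{\delta-1}}{\max(k,L_j)}\ll\begin{cases}L_j^{\delta-1}/\Delta_j&\text{if }L_j\geq\Delta_j,\\ \Delta_j^{\delta-2}&\text{if }L_j<\Delta_j.\end{cases}
\end{align*}
The critical index $j_*$ at which $L_{j_*}=\Delta_{j_*}$ satisfies $\Delta_{j_*}\asymp(cX)^{1/K}$. The sum over $j$ is a geometric series in each regime and is dominated by terms near $j_*$ (for $K=2$ the first regime is instead dominated by $j=0$, but this yields the same overall order), yielding $S\ll c^{-2/K}X^{-2/K+O(\delta)}$. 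Choosing $\delta$ a small multiple of $\ve$, and using that the Diophantine hypothesis at $k=1$ forces $c\leq\tfrac12$, one arrives at $\widetilde\fM_\xi(X)\ll_\ve c^{-2/K}X^{\ve-2/K}$, as claimed.

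The hard part is obtaining the exponent $2/K$ rather than the weaker $1/K$: substituting $\langle k\xi\rangle\geq ck^{1-K}$ pointwise into a H\"older-type interpolation $\min(a,b)\leq a^{1-\alpha}b^\alpha$ falls short by a factor of $X^{1/K}$. Gaining this factor requires exploiting the Diophantine hypothesis only in an averaged form, and the spacing bound for $A_j$ is precisely the mechanism that supplies this.
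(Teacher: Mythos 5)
Your route is genuinely different from the paper's, and its core mechanism is sound. The paper works with the continued-fraction convergents $p_j/q_j$ of $\xi$, uses the Diophantine hypothesis only at the denominators (through $q_{j+1}<c^{-1}q_j^{K-1}$), imports Nathanson's bounds for $\sum 1/\langle n\xi\rangle$ over blocks tied to the convergents, and — crucially — keeps the number of ``levels'' at $O(\log X)$, independently of $c$, via the Fibonacci lower bound on $q_j$. You instead apply the hypothesis to \emph{differences} of elements of the level sets $A_j=\{k\col 2^{-j-1}<\langle k\xi\rangle\le2^{-j}\}$ to get the spacing $\Delta_j=(c2^{j-1})^{1/(K-1)}$, which is exactly the averaged use of the hypothesis needed to reach the exponent $2/K$; your critical scale $\Delta_{j_*}\asymp(cX)^{1/K}$ matches the paper's choice $q_{\ell-1}\le(cX)^{1/K}<q_\ell$, and your per-level bounds are correct (only $\min A_j\gg\Delta_j$ is available, not $\asymp$, but that is all you use). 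This buys a self-contained argument with no continued-fraction machinery.

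The step that is not yet right is the final summation over $j$: ``a geometric series dominated by terms near $j_*$ (or by $j=0$ when $K=2$)'' is not uniform in $K$, and the lemma demands a constant depending on $\ve$ only, not on $K$, $c$ or $\xi$. In your first regime the $j$-th bound is proportional to $2^{j\theta}$ with $\theta=(1-\delta)-\frac1{K-1}$, and for $K$ near $1+\frac1{1-\delta}$ (a window of width $\asymp\delta$ just above $2$) the series is essentially flat; there the trivial bound is the number of levels, $\asymp\log(X/c)$, times the common value, and a stray $\log(1/c)$ is not admissible. It can be absorbed, but only by an argument you have not made: in that window $K-2\asymp\delta$ and $K\le3$, so each term beats the target $c^{-2/K}X^{-2/K}$ by a factor $c^{\kappa\delta}$ for an absolute $\kappa>0$ (coming from the gap $\frac2K-\frac1{K-1}=\frac{K-2}{K(K-1)}\gg\delta$, resp.\ from the factor $(cX)^{\delta/K}$ at the $j_*$ end), and $c^{\kappa\delta}\log(1/c)\ll_\delta1$ kills the logarithm; outside the window $|\theta|\gg\delta$ and genuine geometric domination holds. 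Separately, in your second regime the ratio is $2^{-(2-\delta)/(K-1)}$, which tends to $1$ as $K\to\infty$, so the geometric summation costs a factor $\asymp K$; this is harmless only after first disposing of $K\ge2/\ve$ trivially (there $\widetilde\fM_\xi(X)\ll1\le c^{-2/K}X^{\ve-2/K}$), so that $K\ll_\ve1$ in the remaining range. With these two points supplied your proof is complete; note the paper avoids the first issue altogether because its levels are the convergents, of which there are only $O(\log X)$ up to height $X$ regardless of $c$.
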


(This bound is essentially optimal. Indeed,
if $\langle n\xi\rangle\leq cn^{1-K}$ holds for some $n$ then already the single term
$\sigma(n)\min(\frac1{n^2},\frac1{Xn\langle n\xi\rangle})$ equals
$\sigma(n)(cX)^{-\frac2K}$ when $X=n^K/c$.)

\begin{proof}
We assume $cX>1$ since otherwise the stated bound is trivial.
Let $\frac{p_j}{q_j}$ for $j\in\Z_{\geq0}$ be the $j$th convergent of the
(simple) continued fraction expansion of $\xi$
(cf., e.g., \cite[Ch.\ X]{HW}).
Thus $1=q_0\leq q_1<q_2<\cdots$.
Now for any $\ell\geq1$ we have
\begin{align*}
\sum_{1\leq n\leq q_\ell/2}\frac{\sigma(n)}{n\langle n\xi\rangle}
\ll_\ve q_\ell^\ve\sum_{j=1}^\ell\sum_{q_{j-1}/2<n\leq q_j/2}\frac1{n\langle n\xi\rangle}
\ll q_\ell^\ve\sum_{j=1}^\ell q_{j-1}^{-1}\sum_{1\leq n\leq q_j/2}
\frac1{\langle n\xi\rangle}
\ll  q_\ell^\ve\sum_{j=1}^\ell \frac{q_j\log q_j}{q_{j-1}},
\end{align*}
where the last bound follows from \cite[Lemma 4.8]{Nathanson},
since $|\xi-\frac{p_j}{q_j}|<\frac1{q_jq_{j+1}}$ (\cite[Thm.\ 171]{HW}).
But for every $j\geq1$ we have 
$cq_{j-1}^{1-K}\leq\langle q_{j-1}\xi\rangle<q_j^{-1}$, i.e.\ $q_j<c^{-1}q_{j-1}^{K-1}$.
Hence
\begin{align}\label{MXIBOUNDLEMPF4}
\sum_{1\leq n\leq q_\ell/2}\frac{\sigma(n)}{Xn\langle n\xi\rangle}
&\ll_\ve (cX)^{-1}q_\ell^\ve\sum_{j=0}^{\ell-1} q_{j}^{K-2}\log q_{j+1}
\ll_\ve (cX)^{-1}q_\ell^{2\ve}\ell q_{\ell-1}^{K-2}
\ll_\ve (cX)^{-1}q_{\ell}^{3\ve}q_{\ell-1}^{K-2},
\end{align}
where we used the fact that $q_\ell$ is bounded below by the $\ell$th Fibonacci number.

Next note that for any $\ell\geq1$ and $h\geq1$,
by \cite[Lemma 4.9]{Nathanson},
\begin{align}\label{MXIBOUNDLEMPF1}
\sum_{hq_{\ell}+1\leq n\leq (h+1)q_{\ell}}\sigma(n)
\min\Bigl(\frac1{n^2},\frac1{Xn\langle n\xi\rangle}\Bigr)
\ll_\ve X^{-1}(hq_{\ell})^{\ve-1}
\sum_{r=1}^{q_{\ell}}\min\Bigl(\frac X{hq_{\ell}},\frac1{\langle (hq_{\ell}+r)\xi\rangle}\Bigr)
\\\notag
\ll(hq_{\ell})^\ve\Bigl(\frac1{(hq_\ell)^2}+\frac{\log q_{\ell}}{Xh}\Bigr).
\end{align}
Similarly also
\begin{align}\label{MXIBOUNDLEMPF2}
\sum_{q_\ell/2<n\leq q_\ell}\sigma(n)
\min\Bigl(\frac1{n^2},\frac1{Xn\langle n\xi\rangle}\Bigr)
\ll_\ve X^{-1}q_\ell^{\ve-1}\sum_{r=1}^{q_{\ell}}\min\Bigl(\frac X{q_{\ell}},\frac1{\langle r\xi\rangle}\Bigr)
\ll q_\ell^\ve\Bigl(\frac1{q_{\ell}^2}+\frac{\log q_{\ell}}{X}\Bigr).
\end{align}
Adding \eqref{MXIBOUNDLEMPF2} and \eqref{MXIBOUNDLEMPF1} for all $h\leq X/q_\ell$ we obtain
\begin{align}\label{MXIBOUNDLEMPF3}
\sum_{q_\ell/2<n\leq X}\sigma(n)\min\Bigl(\frac1{n^2},\frac{1}{Xn\langle n\xi\rangle}\Bigr)
\ll_\ve X^{\ve}\Bigl(\frac1{q_\ell^2}+\frac{\log q_\ell}X\Bigr).
\end{align}
(This is valid, trivially, also if $X\leq q_\ell/2$.)
Now choose $\ell\geq1$ so that $q_{\ell-1}\leq(cX)^{\frac1K}<q_\ell$.
Then
$q_\ell<c^{-1}q_{\ell-1}^{K-1}\leq (cX)^{-\frac1K}X<X$.
Now \eqref{MXIBOUNDLEMRES1} follows by adding
\eqref{MXIBOUNDLEMPF4}, \eqref{MXIBOUNDLEMPF3}
and the bound $\sum_{n>X}\sigma(n)n^{-2}\ll_\ve X^{\ve-1}$,
replacing $\ve$ by $\frac13\ve$,
and using $(cX)^{-\frac 2K}>X^{-\frac2K}\geq X^{-1}$. %
\end{proof}

\section{Proof of Theorem \ref*{MAINTHM}}
\label{NEWBOUNDSEC}

We will now make a choice of the set $N$ in \eqref{PROOF1STEP0new} which will allow us to reach a reasonable bound also when
$\xi_1$ is rational or well-approximable by rational numbers, provided that $\xi_2$ has good Diophantine properties.
Given any irrational %
number $\alpha$, let $\frac{p_j}{q_j}$ ($j\in\Z_{\geq0}$) be the $j$th convergent of the
(simple) continued fraction expansion of $\alpha$ (cf., e.g., \cite[Ch.\ X]{HW}; thus $1=q_0\leq q_1<q_2<\cdots$),
and set, for each $c\in\Z^+$,
\begin{align}\label{NCALFDEF}
N_c^{(\alpha)}:=\Bigl\{k\in\Z\col\frac kc\in\Bigl\{\frac{p_0}{q_0},\frac{p_1}{q_1},\ldots\Bigr\}\Bigr\}.
\end{align}

We will choose $N=N_c^{(n\xi_1)}$ in \eqref{PROOF1STEP0new}.
In order for this to make sense %
we have to assume that \textit{$\xi_1$ is irrational}.\label{XI2IRRASS}
This assumption is made merely for notational convenience,
to ensure that the continued fraction expansion of $n\xi_1$ is not finite. %
Note that the assumption can be made without loss of generality:  %
if \eqref{MAINTHMRES} holds whenever $\xi_1$ is irrational then it
must also hold when $\xi_1$ is rational, because all expressions involved depend continuously on $\xi_1$.
(There is some flexibility in the possible choices of the set $N$ in \eqref{PROOF1STEP0new} which make the proof work; 
cf.\ Remark~\ref{NCHOICEREM} below; however the choice made here is notationally convenient.)

We will use the following lemma to bound the contribution from the sum over $k\in\Z\setminus N$ in \eqref{PROOF1STEP0new}
to the expression in \eqref{CONTRFROMFIXEDN}.
\begin{lem}\label{PROOF1LEM1TEST1}
Fix $\eta>0$ and $m\in\Z_{\geq3}$.
Then for any irrational $\alpha\in\R$, and any $X>0$,
\begin{align}\label{PROOF1LEM1TEST1RES}
\sum_{c=1}^\infty\sum_{k\in\Z\setminus {N_c^{(\alpha)}}} c^{-1}(X+c)^{1-m}
\frac{(c,k)}{1+|k-c\alpha|^{1+\eta}}
\ll_{m,\eta}  \begin{cases} 
{\displaystyle X^{\frac32-m}}
&\text{if }\: X\geq1
\\
1&\text{if }\:X<1.
\end{cases}
\end{align}
\end{lem}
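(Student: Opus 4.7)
My plan is to follow the decomposition used in the proof of Lemma \ref{PROOF1LEM1}: write $c=\ell d$ with $d=(c,k)$, so that $\{k\in\Z:(c,k)=d\}$ corresponds to $\{dm:m\in\Z,\ \gcd(m,\ell)=1\}$. The key structural observation is that $k=dm\in N_c^{(\alpha)}$ forces $m/\ell=p_j/q_j$ for some $j$, and since $m/\ell$ is already in lowest terms (because $\gcd(m,\ell)=1$), this is only possible when $\ell=q_j$ and $m=p_j$. I therefore split the outer sum according to whether $\ell=q_j$ for some $j\geq 0$ (\emph{Case B}) or $\ell\neq q_j$ for every $j$ (\emph{Case A}).

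In Case B the excluded $k_0=dp_j$ is precisely the multiple of $d$ closest to $c\alpha=dq_j\alpha$; every remaining $k=dp_j+dv$ with $v\neq 0$ satisfies $|k-c\alpha|\geq d(|v|-\langle q_j\alpha\rangle)\geq d|v|/2$, so the inner sum is $\ll_\eta d^{-\eta}$. Summing over $j\geq 0$ and $d\geq 1$, and using the (at least) Fibonacci growth of the $q_j$ to ensure $\sum_{j\geq 0}q_j^{-1}=O(1)$, yields Case B total $O_{m,\eta}(X^{1-m})\ll X^{3/2-m}$. I then subdivide Case A by whether the nearest integer $m_0$ to $\ell\alpha$ satisfies $\gcd(m_0,\ell)>1$ (\emph{Case A2}) or $=1$ (\emph{Case A1}). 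In Case A2 no $m$ with $\gcd(m,\ell)=1$ can lie within distance $1/2$ of $\ell\alpha$ — any such $m$ differs from $m_0$ by at least $1$ while $\langle\ell\alpha\rangle\leq 1/2$ — so the inner sum is again $\ll_\eta d^{-\eta}$, contributing $\ll X^{1-m}\log X\ll X^{3/2-m}$ for $X\geq 1$ (since $\log X\ll\sqrt{X}$).

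The crucial case is A1, and this is where I expect the main conceptual obstacle to lie. Since $\gcd(m_0,\ell)=1$ and $\ell\neq q_j$ for any $j$, the fraction $m_0/\ell$ is in lowest terms and is \emph{not} a convergent of $\alpha$; Legendre's criterion therefore forces $|m_0/\ell-\alpha|\geq 1/(2\ell^2)$, i.e.\ $\langle\ell\alpha\rangle\geq 1/(2\ell)$. Absent this input, one only recovers the bound $X^{2-m}\fM_\alpha(X)$ from Lemma~\ref{PROOF1LEM1}, which fails to be $O(X^{3/2-m})$ when $\alpha$ is well-approximable; recognising that exclusion of $N_c^{(\alpha)}$ together with the coprimality assumption brings Legendre's theorem into play is the essential new ingredient. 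Plugging $\langle\ell\alpha\rangle\geq 1/(2\ell)$ into the three-regime $d$-sum from the proof of Lemma~\ref{PROOF1LEM1} bounds the per-$\ell$ contribution by $X^{2-m}\min\bigl(1/\ell^2,\,1/(X\ell\langle\ell\alpha\rangle)\bigr)\leq X^{2-m}\min(1/\ell^2,\,2/X)$, and splitting the sum at $\ell=\sqrt{X/2}$ gives $\sum_{\ell}\min(1/\ell^2,2/X)\asymp X^{-1/2}$, producing the desired Case A1 total $\ll X^{3/2-m}$. The case $X<1$ then follows by replacing $(X+c)^{1-m}$ by $c^{1-m}$ and using the trivial bound $\sum_{k}(c,k)/(1+|k-c\alpha|^{1+\eta})\ll c$, giving $\sum_c c^{1-m}=O_m(1)$.
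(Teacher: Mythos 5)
Your argument is correct in substance, and it reaches the stated bound, but it is organized rather differently from the paper's proof even though both rest on the same arithmetic input, namely the Legendre criterion \cite[Thm.\ 184]{HW}. The paper keeps the full coset $\{k\col (c,k)=d\}\subset d\Z$ (no coprimality refinement) and applies Legendre directly to the nearest multiple $k_0$ of $d$ to $c\alpha$: if $|k_0-c\alpha|<\frac d{2\ell}$ then $k_0/c$ is a convergent, hence $k_0\in N_c^{(\alpha)}$ and is excluded, and otherwise $|k_0-c\alpha|\geq\frac d{2\ell}$; either way the inner sum is $\ll_\eta d/(1+(d/\ell)^{1+\eta})\ll d\ell/(d+\ell)$, uniformly in $\alpha$, and the lemma follows from the single double sum $\sum_{d,\ell}(X+\ell d)^{1-m}/(d+\ell)\ll X^{\frac32-m}$. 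You instead parametrize by coprime $m$ and split according to whether $\ell$ is a convergent denominator, using Legendre in the contrapositive direction ($\langle\ell\alpha\rangle\geq\frac1{2\ell}$ when $\ell$ is not a $q_j$ and the nearest integer is coprime to $\ell$) and recycling the three-regime estimate from the proof of Lemma \ref{PROOF1LEM1} in your main case A1; your case A2 is a wrinkle created by the coprimality refinement, which the paper's over-counting simply avoids, and your case B needs the Fibonacci growth of the $q_j$, which the paper never uses. What your route buys is a sharper inner bound ($\ll_\eta d^{-\eta}$) in cases B and A2 and a transparent identification of where the exponent $\frac32-m$ comes from (the non-denominator $\ell$ with $\langle\ell\alpha\rangle\gg1/\ell$); what the paper's route buys is a shorter, case-free computation. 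Two minor points to patch in your write-up: in case B with $j=0$ one can have $|\alpha-p_0|=\{\alpha\}>\frac12$, so $p_0$ need not be the nearest integer to $q_0\alpha$ and your separation inequality as written fails for $v=1$ --- but then $a_1=1$, $q_1=1$ and $p_1=p_0+1$ is also excluded from the sum, so the conclusion survives; and for $X<1$ the trivial inner bound is $\ll_\eta\sigma_1(c)$ rather than $\ll c$ (each divisor $d\mid c$ can contribute $\asymp d$), which still yields $O_{m,\eta}(1)$ since $m\geq3$.
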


\begin{proof}
Introduce $d$, $\ell$, $k_0$ as in the proof of Lemma \ref{PROOF1LEM1}.
Note that if $|k_0-c\alpha|<\frac d{2{\ell}}$
then $|\frac{k_0/d}{\ell}-\alpha|<\frac1{2{\ell}^2}$,
which implies that $\frac{k_0}c=\frac{k_0/d}{\ell}$ is a convergent of the continued fraction expansion of $\alpha$
\cite[\href{http://file://T:hardy/hardywright1938.pdf:169}{Thm.\ 184}]{HW}, viz.\  $k_0\in N_c^{(\alpha)}$.
Hence
\begin{align*}
\sum_{\substack{k\in\Z\setminus N_c^{(\alpha)}\\((c,k)=d)}}
\frac{(c,k)}{1+|k-c\alpha|^{1+\eta}}
\leq\sum_{\substack{v\in\Z\\ v\neq0\text{ or }|k_0-c\alpha|\geq d/2\ell}}\frac d{1+|k_0+dv-c\alpha|^{1+\eta}}
\hspace{100pt}
\\
\ll\left.\begin{cases}\frac d{1+|k_0-c\alpha|^{1+\eta}}&\text{if }|k_0-c\alpha|\geq d/2\ell
\\ 0&\text{otherwise}\end{cases}\right\}+d^{-\eta}\sum_{v=1}^\infty v^{-1-\eta}
\ll_\eta\frac d{1+(d/{\ell})^{1+\eta}}
\ll\frac{d\ell}{d+\ell}.
\end{align*}
Hence the left hand side of \eqref{PROOF1LEM1TEST1RES} is (using $m\geq3$):
\begin{align*}
\ll_\eta\sum_{d=1}^\infty\sum_{\ell=1}^{\infty}\frac{(X+\ell d)^{1-m}}{d+\ell}
\ll\sum_{d=1}^\infty\sum_{\ell=d}^{\infty}\frac{(X+\ell d)^{1-m}}\ell
\ll\begin{cases} 
{\displaystyle X^{\frac32-m}}
&\text{if }\: X\geq1
\\
1&\text{if }\:X<1.
\end{cases}
\end{align*}
\end{proof}

By following the steps leading to \eqref{PROOF1STEP3new}, with $m=3$ and $\ell=6$,
and also using 
Lemma \ref{PROOF1LEM1TEST1} and Lemma \ref{SIGMASUMLEM},
it follows that the contribution from the
last two lines of \eqref{PROOF1STEP0new} to the expression in \eqref{CONTRFROMFIXEDN} is:
\begin{align*}
\ll\|\nu\|_{\W^{1,1}}^{1-\eta}\|\nu\|_{\W^{2,1}}^\eta\|f\|_{\C_{\b}^8}\,y^{\frac14}\log(1+y^{-1}).
\end{align*}

\begin{remark}
The simple bound in Lemma \ref{PROOF1LEM1TEST1} is wasteful in the $X$-aspect for any $\alpha$ of
Diophantine type $K<4$; cf.\ Lemma \ref{PROOF1LEM1} and Lemma \ref{MXIBOUNDLEM};
however this does not matter for us, since %
the end result is anyway subsumed by the $y^{\frac14}\log(1+y^{-1})$ bound
coming from the last line of \eqref{PROOF1STEP0new}.
The fact that, in this paper, we are not aiming to get below the exponent $\frac14$,
will also be convenient at certain steps later in our discussion; cf.\ pp.\ \pageref{AAA4}--\pageref{LASTBOUND}.
\end{remark}

It remains to bound the contribution from the first line in the right hand side of \eqref{PROOF1STEP0new} 
to the expression in \eqref{CONTRFROMFIXEDN}.
This contribution equals:
\begin{align}\notag
=\sum_{n=1}^\infty\sum_{c=1}^\infty\int_{-\pi}^\pi
\sum_{k\in N_c^{(n\xi_1)}}\biggl(\int_\R\nu\bigl(-x+y\cot\theta\bigr)
e\bigl((cn\xi_1-k)\omega x\bigr)\,dx
\int_{\R/\Z}\tf_n\Bigl(u,\frac{\sin^2\theta}{c^2y},\theta\Bigr)\,du\biggr)
\hspace{20pt}
\\\label{PROOF1STEP4new}
\times\mu\Bigl(\frac c{(c,k)}\Bigr)
\frac{\varphi(c)}{\varphi(c/(c,k))}
e\bigl(-n\omega c\xi_2\bigr)
\frac{y\,d\theta}{\sin^2\theta}.
\end{align}
In order to bound this, we first fix $n\in\Z^+$ and $\theta\in(-\pi,\pi)$
(assuming $\theta\neq0$), %
and write $a=y\cot\theta$, $\alpha_1:=n\xi_1$ and $\alpha_2:=-n\omega\xi_2$
($\omega=\sgn(\theta)$ as before).
Now
\begin{align}\notag
&\sum_{c=1}^\infty\sum_{k\in N_c^{(\alpha_1)}}\biggl(\int_\R\nu\bigl(a-x\bigr)
e\bigl((c\alpha_1-k)\omega x\bigr)\,dx
\int_{\R/\Z}\tf_n\Bigl(u,\frac{\sin^2\theta}{c^2y},\theta\Bigr)\,du
\biggr)
\mu\Bigl(\frac c{(c,k)}\Bigr)
\frac{\varphi(c)}{\varphi(c/(c,k))}e(c\alpha_2)
\\\label{PROOF1STEP4new_part}
&=\sum_{j=0}^\infty\frac{\mu(q_j)}{\varphi(q_j)}\sum_{k=1}^\infty
\varphi(kq_j)e(kq_j\alpha_2)
\biggl(\int_\R\nu\bigl(a-x\bigr)e\bigl(k(q_j\alpha_1-p_j)\omega x\bigr)\,dx
\int_{\R/\Z}\tf_n\Bigl(u,\frac{\sin^2\theta}{(kq_j)^2y},\theta\Bigr)\,du\biggr),
\end{align}
where $\frac{p_j}{q_j}$ are the convergents of $\alpha_1$.
We will treat this double sum using integration by parts (cf.\ \eqref{PROOF1STEP7newpre} below), and the key task then is to
bound the following sum:
\begin{align}\label{BNTHDEF}
&B_{n,\theta}(X):=\sum_{j=0}^\infty\frac{\mu(q_j)}{\varphi(q_j)}
\sum_{1\leq k\leq X/q_j}
\varphi(kq_j)e(kq_j\alpha_2)
\int_\R\nu\bigl(a-x\bigr)e\bigl(k(q_j\alpha_1-p_j)\omega x\bigr)\,dx.
\end{align}

\begin{lem}\label{EULERPHISUMLEM}
For any $q\in\Z^+$, $\alpha\in\R$ and $X\geq1$,
\begin{align}\label{EULERPHISUMLEMRES}
\sum_{1\leq k\leq X}\varphi(kq)e(k\alpha)
\ll \sigma_1(q)X^2\sum_{1\leq j\leq X}
\min\Bigl(\frac 1{j^2},\frac1{Xj\langle j\alpha\rangle }\Bigr).
\end{align}
\end{lem}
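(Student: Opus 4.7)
\textbf{Proof plan for Lemma \ref{EULERPHISUMLEM}.}

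The plan is to combine a multiplicative identity for $\varphi(kq)$ that separates the $q$-dependence from the $k$-dependence with the standard Abel-summation bound for the exponential sum $\sum_{j\leq M} j\,e(j\beta)$.

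First, I would factor $\varphi(kq)$ in a way that pulls $\varphi(q)$ out. Starting from $\varphi(n)=n\prod_{p\mid n}(1-1/p)$, separate the primes dividing $q$ from those dividing $k$ but not $q$, which gives
\begin{align*}
\varphi(kq)=k\,\varphi(q)\prod_{\substack{p\mid k\\p\nmid q}}\!\!\Bigl(1-\tfrac1p\Bigr)=k\,\varphi(q)\sum_{\substack{d\mid k\\(d,q)=1}}\frac{\mu(d)}{d}.
\end{align*}
Substituting this into the left-hand side of \eqref{EULERPHISUMLEMRES} and swapping the order of summation (writing $k=jd$) yields
\begin{align*}
\sum_{1\leq k\leq X}\varphi(kq)e(k\alpha)=\varphi(q)\sum_{\substack{d\leq X\\(d,q)=1}}\mu(d)\sum_{1\leq j\leq X/d}j\,e(jd\alpha).
\end{align*}

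Next I would estimate the inner sum via Abel summation. Using the standard bound $|\sum_{j\leq n}e(j\beta)|\leq\min(n,\tfrac12\langle\beta\rangle^{-1})$ together with partial summation gives
\begin{align*}
\Bigl|\sum_{1\leq j\leq M}j\,e(j\beta)\Bigr|\ll M\,\min\!\Bigl(M,\frac1{\langle\beta\rangle}\Bigr),
\end{align*}
which, applied with $M=X/d$ and $\beta=d\alpha$, produces the pointwise estimate
\begin{align*}
\Bigl|\sum_{1\leq j\leq X/d}j\,e(jd\alpha)\Bigr|\ll\frac Xd\min\!\Bigl(\frac Xd,\frac1{\langle d\alpha\rangle}\Bigr)=X^2\min\!\Bigl(\frac1{d^2},\frac1{Xd\langle d\alpha\rangle}\Bigr).
\end{align*}

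Finally, inserting this bound, dropping the restriction $(d,q)=1$ (all summands are non-negative), using $|\mu(d)|\leq1$, and replacing $\varphi(q)$ by the larger quantity $\sigma_1(q)\geq q\geq\varphi(q)$, we arrive at
\begin{align*}
\Bigl|\sum_{1\leq k\leq X}\varphi(kq)e(k\alpha)\Bigr|\ll\sigma_1(q)X^2\sum_{1\leq d\leq X}\min\!\Bigl(\frac1{d^2},\frac1{Xd\langle d\alpha\rangle}\Bigr),
\end{align*}
which is \eqref{EULERPHISUMLEMRES} (up to renaming $d$ as $j$). The proof is essentially mechanical once the factorization identity is in hand; the only minor obstacle is spotting that this particular splitting of $\varphi(kq)$ is what makes the $e(k\alpha)$-sum collapse neatly onto exponential sums of the form $\sum j\,e(jd\alpha)$ amenable to Abel summation.
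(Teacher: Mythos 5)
Your proof is correct, and it even yields a slightly sharper constant than the lemma asserts. The paper's own argument runs along the same general lines (open up $\varphi(kq)$ via M\"obius, swap the order of summation, and bound the resulting linear exponential sum), but the decomposition is different: the paper expands $\varphi(kq)=\sum_{d\mid kq}\mu(kq/d)\,d$ and splits divisors as $d_1d_2$ with $d_1\mid k$, $d_2\mid q$ under coprimality constraints, which is exactly how the factor $\sigma_1(q)=\sum_{d_2\mid q}d_2$ arises in \eqref{EULERPHISUMLEMRES}; it then bounds $\sum_{d_1\le n}d_1e(d_1 j\alpha)$ by writing out the closed-form expression for that sum. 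Your factorization $\varphi(kq)=k\,\varphi(q)\sum_{d\mid k,\,(d,q)=1}\mu(d)/d$ cleanly pulls out $\varphi(q)$ instead, and after the substitution $k=jd$ you reach the same key estimate $\bigl|\sum_{j\le M}j\,e(j\beta)\bigr|\ll M\min\bigl(M,\langle\beta\rangle^{-1}\bigr)$ via Abel summation rather than the explicit formula; since $\varphi(q)\le q\le\sigma_1(q)$, your bound is at least as strong as the stated one. The only point worth making explicit is the degenerate case $d\alpha\in\Z$ (i.e.\ $\langle d\alpha\rangle=0$), where the bound is to be read as the trivial estimate $\ll M^2$; the paper notes this case separately, and your argument covers it in the same way because the minimum then reduces to its first entry.
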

\begin{proof}
For any $k\in\Z^+$ we have
\begin{align*}
\varphi(kq)=\sum_{d\mid kq}\mu\Bigl(\frac{kq}d\Bigr)d
=\sum_{d_1\mid k}\sum_{\substack{d_2\mid q\\(d_2,k/d_1)=1}}
\mu\Bigl(\frac{kq}{d_1d_2}\Bigr)d_1d_2
=\sum_{\substack{d_1\mid k\\(q,k/d_1)=1}}\sum_{d_2\mid q}
\mu\Bigl(\frac{kq}{d_1d_2}\Bigr)d_1d_2,
\end{align*}
where the third equality holds since the conditions $d_1\mid k$, $d_2\mid q$, $(d_2,k/d_1)=1$ and $\mu(\frac{kq}{d_1d_2})\neq0$
together imply $(q,k/d_1)=1$.
Using this formula and then substituting $k=jd_1$, we get
\begin{align}\label{EULERPHISUMLEMPF1}
\sum_{1\leq k\leq X}\varphi(kq)e(k\alpha)
&=\sum_{d_2\mid q}d_2\sum_{\substack{1\leq j\leq X\\(q,j)=1}}
\mu\Bigl(\frac{jq}{d_2}\Bigr)\sum_{1\leq d_1\leq X/j}d_1e(d_1j\alpha).
\end{align}
But for any $j,n\in\Z^+$ with $j\alpha\notin\Z$ we have
\begin{align*}
\sum_{d_1=1}^n  d_1e(d_1j\alpha) 
= \frac{n e((n+2)j \alpha) - (n+1) e((n+1)j \alpha)+ e(j \alpha)}{(e(j \alpha)-1)^2}
\ll \min\Bigl(n^2, \frac{n}{\langle j \alpha\rangle}+\frac{1}{\langle j \alpha\rangle^2}\Bigr)
\\
\ll\min\Bigl(n^2, \frac{n}{\langle j \alpha\rangle}\Bigr),
\end{align*}
and the last %
bound is valid also when $j\alpha\in\Z$.
Using this bound in \eqref{EULERPHISUMLEMPF1} we obtain \eqref{EULERPHISUMLEMRES}.
\end{proof}

\begin{lem}\label{EULERPHISUMAPPLLEM}
Let $q\in\Z^+$, $\alpha,\beta\in\R$, $Y\geq1$ and $g\in\C_c(\R)$.
Let $L$ and $L'$ be positive real numbers such that
$\supp(g)\subset[A,A+L]\subset[-L',L']$, for some $A\in\R$.
Then
\begin{align}\notag
&\sum_{1\leq k\leq Y}\varphi(kq)e(k\alpha)\int_\R g(x)e(k\beta x)\,dx
\\\label{EULERPHISUMAPPLLEMRES}
&\ll
\|g\|_{\L^\infty}\,\sigma_1(q)
\begin{cases}Y\min\bigl(LY,|\beta|^{-1}\bigr)
\bigl(1+\log^+\bigl(LY|\beta|\bigr)\bigr)^2\,
\fM_{\alpha}\bigl(\frac{\min(LY,|\beta|^{-1})}{L'}\bigr)
&\text{if } \:L|\beta|\leq10;
\\
LY(1+\log Y)^2&\text{if } \:L|\beta|\geq\frac1{10}.
\end{cases}
\end{align}
(Thus both bounds are valid when $\frac1{10}\leq L|\beta|\leq10$.)
\end{lem}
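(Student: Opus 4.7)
The plan is to swap the summation and integration in $S$ and then bound the resulting quantity using Lemma \ref{EULERPHISUMLEM} pointwise in the twisted variable. Concretely I would set
$$F_Y(\theta) := \sum_{k=1}^{Y} \varphi(kq)\, e(k\theta), \qquad S = \int_A^{A+L} g(x)\, F_Y(\alpha+\beta x)\,dx,$$
and apply Lemma \ref{EULERPHISUMLEM} with $X=Y$ to the character $\theta = \alpha+\beta x$ for every $x$, which yields
$$|S| \ll \|g\|_\infty\,\sigma_1(q)\, Y^2 \sum_{j=1}^{Y} J_j, \qquad J_j := \int_A^{A+L} \min\!\Bigl(\tfrac{1}{j^2},\,\tfrac{1}{Yj\,\langle j(\alpha+\beta x)\rangle}\Bigr)\,dx.$$

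For each $j$, the substitution $u = j\beta(x-A)$ converts $J_j$ into $(j|\beta|)^{-1}$ times the integral of a $1$-periodic function over a $u$-interval of length $M_j := jL|\beta|$. A direct computation of the one-period integral gives $\int_0^1\min(1/j^2,1/(Yj\langle u\rangle))\,du\asymp (1+\log(Y/j))/(Yj)$ for $j\leq Y$; hence in the \emph{periodic-averaging regime} $M_j\geq 1$ one has $J_j \ll L(1+\log Y)/(Yj)$, while in the \emph{short-interval regime} $M_j<1$ the trivial bound $J_j\leq L/j^2$ is always available. When $L|\beta|\geq 1/10$ every $j\geq 1$ is in the periodic-averaging regime, so $\sum_{j=1}^Y Y^2 J_j \ll LY(1+\log Y)^2$, which is exactly the second bound claimed in the lemma.

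In the case $L|\beta|\leq 10$ the periodic-averaging argument handles the range $j\geq j_0:=\lceil 1/(L|\beta|)\rceil$, where its contribution is $\ll LY(1+\log^+(LY|\beta|))(1+\log Y)$. For the remaining range $j<j_0$ (so $M_j<1$), I plan to return to the original sum and use Abel summation of $\sum_{k=1}^{Y} \varphi(kq)\,e(k\alpha)\,I(k\beta)$ with $I(t):=\int g(x)e(tx)\,dx$, combining the partial-sum estimate $|T(k)|\ll \sigma_1(q)\,k^2\fM_\alpha(k)$ from Lemma \ref{EULERPHISUMLEM} with the Lipschitz bound
$$|I((k+1)\beta)-I(k\beta)|\leq L\|g\|_\infty\min(2\pi|\beta|L',2),$$
which is the direct consequence of the support containment $[A,A+L]\subset[-L',L']$. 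Using the comparison $\fM_\alpha(X_1)\leq (X_2/X_1)\,\fM_\alpha(X_2)$ from \eqref{FMDECAY} with the threshold $X_2=Z/L'$, where $Z=\min(LY,|\beta|^{-1})$, to handle $\sum_k k^2\fM_\alpha(k)$, I would then arrange the total bound into the claimed form $\sigma_1(q)\,YZ\,(1+\log^+(LY|\beta|))^2\,\fM_\alpha(Z/L')$.

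The hardest step, as I see it, will be the matching of the two regimes in the case $L|\beta|\leq 10$: one must verify that the boundary contribution at $j\asymp j_0$ does not spoil either the log-exponent $2$ or the correct scale $Z/L'$ appearing inside $\fM_\alpha$. The role of $L'$ (rather than $L$) in the argument of $\fM_\alpha$ comes specifically from the Lipschitz estimate on $I$, which sees the support \emph{location} of $g$ and not just its length; this loss is unavoidable in the present approach and motivates the stated shape of the bound.
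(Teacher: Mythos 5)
Your opening steps --- interchanging sum and integral, applying Lemma \ref{EULERPHISUMLEM} pointwise in $x$, and periodic averaging of $J_j$ when $jL|\beta|$ is not small --- are exactly the paper's, and (after relaxing your periodic-regime threshold from $M_j\geq1$ to $M_j\geq\frac1{10}$, which you need when $\frac1{10}\leq L|\beta|<1$) they do yield the second bound. The genuine gap is your plan for the range $j<j_0$ when $L|\beta|\leq10$. First, the plan does not parse: $j$ is only a summation variable inside the single majorant $\sigma_1(q)Y^2\sum_{j\leq Y}J_j$, which bounds the \emph{whole} $k$-sum, so there is no piece of $\sum_{k\leq Y}\varphi(kq)e(k\alpha)\int g(x)e(k\beta x)\,dx$ that ``corresponds to $j<j_0$'' and to which you could return and apply Abel summation; you must either bound every $J_j$, or abandon the swapped form and run a different global argument on the $k$-sum. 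Second, read as a global argument, Abel summation with the ingredients you name --- the partial-sum bound $|T(k)|\ll\sigma_1(q)k^2\fM_\alpha(k)$ and the Lipschitz bound $L\|g\|_{\L^\infty}\min(2\pi|\beta|L',2)$ on the increments of $I(t)=\int g(x)e(tx)\,dx$ --- is quantitatively too weak: $g$ is merely continuous, so $I(k\beta)$ has no usable decay in $k$, and taking absolute values of $T(k)\bigl(I(k\beta)-I((k+1)\beta)\bigr)$ destroys the remaining cancellation. Concretely, take $q=1$, $\alpha=0$, $\beta=1/Y$, $L=1$, $L'=Y$, and $g$ an approximate indicator of $[Y-1,Y]$: the right-hand side of \eqref{EULERPHISUMAPPLLEMRES} is $\asymp Y^2$, while your Abel main term is $\asymp\sum_{k\leq Y}k^2\asymp Y^3$, and \eqref{FMDECAY} cannot repair this since $\fM_0\asymp1$ at every scale (even the sharp increment sizes leave $\sum_k|T(k)|\,|I((k+1)\beta)-I(k\beta)|\asymp Y^3$ here).

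The missing idea is the estimate the paper uses precisely for these small $j$, and it is where both $L'$ and the factor $\fM_\alpha$ actually enter: if $\langle j\alpha\rangle\geq2jL'|\beta|$, then $|j\beta x|\leq\frac12\langle j\alpha\rangle$ for all $x\in\supp(g)\subset[-L',L']$, hence $\langle j(\alpha+\beta x)\rangle\geq\frac12\langle j\alpha\rangle$ on the support, giving the ``frozen phase'' bound $J_j\ll\|g\|_{\L^\infty}L\min\bigl(\frac1{j^2},\frac1{Yj\langle j\alpha\rangle}\bigr)$ (this is \eqref{EULERPHISUMAPPLLEMPF2}). Taking the minimum of this and the refined short-interval bound \eqref{EULERPHISUMAPPLLEMPF3} --- which carries the factor $\min\bigl(L,\frac1{Y|\beta|}\bigr)$ that your trivial bound $J_j\leq L/j^2$ lacks --- produces for $j<j_0$ terms of the shape $\min\bigl(\frac1{j^2},\frac{L'|\beta|}{j\langle j\alpha\rangle}\bigr)$ (if $LY|\beta|\geq1$) or $L\min\bigl(\frac1{j^2},\frac{L'/L}{Yj\langle j\alpha\rangle}\bigr)$ (if $LY|\beta|<1$), whose sums are exactly $\fM_\alpha\bigl(\frac1{L'|\beta|}\bigr)$ resp.\ $\fM_\alpha\bigl(\frac{LY}{L'}\bigr)$, i.e.\ $\fM_\alpha\bigl(\min(LY,|\beta|^{-1})/L'\bigr)$ with the correct prefactors. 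So your instinct that $L'$ must enter through the location, not just the length, of the support is right, but the mechanism is this comparison inside the $x$-integral, not a Lipschitz estimate fed into Abel summation. (Also, your periodic-range total should be $LY\bigl(1+\log^+(LY|\beta|)\bigr)^2$, which is what evaluating $\sum_{j_0\leq j\leq Y}j^{-1}(1+\log(Y/j))$ actually gives; the extra factor $(1+\log Y)$ you wrote would not be absorbable into the claimed bound.)
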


\begin{proof}
Changing order of summation and integration and applying
Lemma \ref{EULERPHISUMLEM}, we have
\begin{align}\label{EULERPHISUMAPPLLEMPF5}
\sum_{1\leq k\leq Y}\varphi(kq)e(k\alpha)\int_\R g(x)e(k\beta x)\,dx
\ll\sigma_1(q)Y^2\sum_{1\leq j\leq Y}
\int_\R\bigl|g(x)\bigr|
\min\Bigl(\frac1{j^2},\frac1{Yj\langle j(\alpha+\beta x)\rangle}\Bigr)\,dx.
\end{align}
Here
\begin{align}\notag
\int_\R\bigl|g(x)\bigr|
\min\Bigl(\frac1{j^2},\frac1{Yj\langle j(\alpha+\beta x)\rangle}\Bigr)\,dx
\leq\|g\|_{\L^\infty}\int_A^{A+L}
\min\Bigl(\frac1{j^2},\frac1{Yj\langle j(\alpha+\beta x)\rangle}\Bigr)\,dx
\\\label{EULERPHISUMAPPLLEMPF1}
\leq\|g\|_{\L^\infty}\int_{-L}^{L}
\min\Bigl(\frac1{j^2},\frac1{Yj\langle j\beta x\rangle}\Bigr)\,dx,
\end{align}
where the last inequality is holds since %
the function $x\mapsto\min(\frac1{j^2},\frac1{Yj\langle j\beta x\rangle})$ is even,
periodic with period $\frac1{j|\beta|}$,
and decreasing in $[0,\frac1{2j|\beta|}]$ (if $\beta\neq0$).
If $Lj|\beta|\geq\frac1{10}$ then \eqref{EULERPHISUMAPPLLEMPF1} is (using $1\leq j\leq Y$):
\begin{align}\label{EULERPHISUMAPPLLEMPF4}
\ll\|g\|_{\L^\infty}\, L\int_{\R/\Z}\min\Bigl(\frac1{j^2},\frac1{Yj\langle y\rangle}\Bigr)
\,dy
\ll\frac{\|g\|_{\L^\infty}\, L}{jY}\Bigl(1+\log(Y/j)\Bigr).
\end{align}
On the other hand if $Lj|\beta|\leq1$ then \eqref{EULERPHISUMAPPLLEMPF1} is
\begin{align}\label{EULERPHISUMAPPLLEMPF3}
\ll\|g\|_{\L^\infty} \frac1{j|\beta|}\int_0^{Lj|\beta|}
\min\Bigl(\frac1{j^2},\frac1{Yjy}\Bigr)\,dy
\asymp
\frac{\|g\|_{\L^\infty}}{j^2}\min\Bigl(L,\frac1{Y|\beta|}\Bigr)
\Bigl(1+\log^+\bigl(LY|\beta|\bigr)\Bigr).
\end{align}

We also note an alternative bound in a special case:
If $\langle j\alpha\rangle\geq 2jL'|\beta|$ then
for all $x$ in the support of $g$ we have
$|j\beta x|\leq\frac12\langle j\alpha\rangle$ and thus
$\langle j(\alpha+\beta x)\rangle\geq\frac12\langle j\alpha\rangle$; therefore
\begin{align}\label{EULERPHISUMAPPLLEMPF2}
\int_\R\bigl|g(x)\bigr|
\min\Bigl(\frac1{j^2},\frac1{Yj\langle j(\alpha+\beta x)\rangle}\Bigr)\,dx
\ll\|g\|_{\L^\infty}\,L\min\Bigl(\frac1{j^2},\frac1{Yj\langle j\alpha\rangle}\Bigr).
\end{align}

If $L|\beta|\geq\frac1{10}$ then \eqref{EULERPHISUMAPPLLEMPF5} and \eqref{EULERPHISUMAPPLLEMPF4}
immediately imply that the second bound in \eqref{EULERPHISUMAPPLLEMRES} holds.
Hence from now on we assume $L|\beta|\leq10$, and our task is to prove the first bound in \eqref{EULERPHISUMAPPLLEMRES}.

\vspace{3pt}

\textbf{Case I: $LY|\beta|\geq1$.}
In this case %
we split our sum over $1\leq j\leq Y$ into two parts
corresponding to $j<(L|\beta|)^{-1}$ and $j\geq(L|\beta|)^{-1}$, respectively.
For $1\leq j<(L|\beta|)^{-1}$ we have, by 
\eqref{EULERPHISUMAPPLLEMPF3} and \eqref{EULERPHISUMAPPLLEMPF2}:
\begin{align*}
\int_\R\bigl|g(x)\bigr|
\min\Bigl(\frac1{j^2},\frac1{Yj\langle j(\alpha+\beta x)\rangle}\Bigr)\,dx
\ll
\|g\|_{\L^\infty}  %
\left.\begin{cases}
\frac1{Y|\beta|j^2}(1+\log(LY|\beta|))\quad\:\:\text{(in general)}
\\
\frac L{Yj\langle j\alpha\rangle}\qquad\qquad\quad
(\text{if }\: \langle j\alpha\rangle\geq2jL'|\beta|)
\end{cases}\right\}
\\
\ll\|g\|_{\L^\infty}\frac1{Y|\beta|}
\min\Bigl(\frac1{j^2},\frac{L'|\beta|}{j\langle j\alpha\rangle}\Bigr)
\Bigl(1+\log(LY|\beta|)\Bigr),
\end{align*}
where we used $L\leq2L'$.
Hence
\begin{align*}
\sum_{1\leq j<(L|\beta|)^{-1}}
\int_\R\bigl|g(x)\bigr|
\min\Bigl(\frac1{j^2},\frac1{Yj\langle j(\alpha+\beta x)\rangle}\Bigr)\,dx
\ll
\|g\|_{\L^\infty}\frac{1+\log(LY|\beta|)}{Y|\beta|}
\sum_{j=1}^\infty
\min\Bigl(\frac1{j^2},\frac{L'|\beta|}{j\langle j\alpha\rangle}\Bigr).
\end{align*}
For the remaining sum we have, by \eqref{EULERPHISUMAPPLLEMPF4},
\begin{align*}
\sum_{(L|\beta|)^{-1}\leq j\leq Y}
\int_\R\bigl|g(x)\bigr|
\min\Bigl(\frac1{j^2},\frac1{Yj\langle j(\alpha+\beta x)\rangle}\Bigr)\,dx
\ll\frac{\|g\|_{\L^\infty}L}Y\sum_{(L|\beta|)^{-1}\leq j\leq Y}\frac1j\Bigl(1+\log(Y/j)\Bigr)
\\
\ll\|g\|_{\L^\infty}\frac LY\Bigl(1+\log(LY|\beta|)\Bigr)^2.
\end{align*}
Now \eqref{EULERPHISUMAPPLLEMRES} follows from \eqref{EULERPHISUMAPPLLEMPF5}
and the last two bounds, since we are assuming $Y^{-1}\leq L|\beta|\leq10$.

\vspace{3pt}

\textbf{Case II: $LY|\beta|<1$.}
Then %
for all $1\leq j\leq Y$,
by \eqref{EULERPHISUMAPPLLEMPF3} and \eqref{EULERPHISUMAPPLLEMPF2},
\begin{align*}
\int_\R\bigl|g(x)\bigr|
\min\Bigl(\frac1{j^2},\frac1{Yj\langle j(\alpha+\beta x)\rangle}\Bigr)\,dx
\ll
\|g\|_{\L^\infty}  %
\left.\begin{cases}
\frac L{j^2}&\text{(in general)}
\\
\frac L{Yj\langle j\alpha\rangle}&
(\text{if }\: \langle j\alpha\rangle\geq2jL'|\beta|)
\end{cases}\right\}
\\
\ll\|g\|_{\L^\infty}\,L
\min\Bigl(\frac1{j^2},\frac{L'/L}{Yj\langle j\alpha\rangle}\Bigr).
\end{align*}
Hence again \eqref{EULERPHISUMAPPLLEMRES} follows from 
\eqref{EULERPHISUMAPPLLEMPF5}.
\end{proof}

Recall that we wish to bound $B_{n,\theta}(X)$ for $X\geq1$ (cf.\ \eqref{BNTHDEF}).
For each $j\geq0$ such that $q_j\leq X$, we apply Lemma \ref{EULERPHISUMAPPLLEM} with 
$g(x)=\nu(a-x)$,
$\beta=(q_j\alpha_1-p_j)\omega$,
$\alpha=q_j\alpha_2$,
$q=q_j$ and $Y=X/q_j$.
Note that $\supp(g)\subset[a-L,a+L]$, since $\supp(\nu)\subset[-L,L]$ by assumption.
Also $(2q_{j+1})^{-1}<|\beta|<q_{j+1}^{-1}$ (cf., e.g., \cite[Ch.\ X]{HW}).
Hence Lemma \ref{EULERPHISUMAPPLLEM} implies
\begin{align*}
&\sum_{1\leq k\leq X/q_j}
\varphi(kq_j)e(kq_j\alpha_2)
\Bigl(\int_\R\nu\bigl(a-x\bigr)
e\bigl(k(q_j\alpha_1-p_j)\omega x\bigr)\,dx\Bigr)
\\
&\ll\|\nu\|_{\L^\infty}\sigma_1(q_j)
\begin{cases} \frac X{q_j}\min(\frac{LX}{q_j},q_{j+1})\bigl(1+\log^+(\frac{LX}{q_jq_{j+1}})\bigr)^2
\fM_{q_j\alpha_2}\bigl(\frac{\min(LX/q_j\, ,\, q_{j+1})}{L+|a|}\bigr)
&\text{if }\: q_{j+1}\geq L
\\[5pt]
\frac{LX}{q_j}(1+\log(\frac{X}{q_j}))^2&\text{if }\: q_{j+1}<L.
\end{cases}
\end{align*}

In order to get a bound on $B_{n,\theta}(X)$, we multiply the last bound with $|\mu(q_j)|\varphi(q_j)^{-1}$,
and then add over all $j\geq0$ for which $q_j\leq X$.
We split the set of these $j$ into three disjoint parts, according to the following conditions:
\begin{align}\label{CASESPLITTING1}
\begin{cases}
\text{(i) $q_{j+1}<L$;}
\\
\text{(ii) $q_{j+1}\geq L$ and $q_jq_{j+1}\leq LX$;}
\\
\text{(iii) $q_{j+1}\geq L$ and $q_jq_{j+1}>LX$.}
\end{cases}
\end{align}
We thus obtain
\begin{align}\notag
B_{n,\theta}(X)\ll\|\nu\|_{\L^\infty}\biggl\{
&\sum^{(i)}_j\frac{|\mu(q_j)|\sigma_1(q_j)}{\varphi(q_j)}\frac{LX}{q_j}\Bigl(1+\log\Bigl(\frac X{q_j}\Bigr)\Bigr)^2
\\\label{PROOF1STEP6new}
&+\sum^{(ii)}_j\frac{|\mu(q_j)|\sigma_1(q_j)}{\varphi(q_j)}
\frac{Xq_{j+1}}{q_j}\Bigl(1+\log\Bigl(\frac{LX}{q_jq_{j+1}}\Bigr)\Bigr)^2
\fM_{q_j\alpha_2}\Bigl(\frac{q_{j+1}}{L+|a|}\Bigr)
\\\notag
&+\sum^{(iii)}_j\frac{|\mu(q_j)|\sigma_1(q_j)}{\varphi(q_j)}
\frac{LX^2}{q_j^2}\fM_{q_j\alpha_2}\Bigl(\frac{LX}{q_j(L+|a|)}\Bigr)
\biggr\}.
\end{align}

Let us first record a trivial bound on $B_{n,\theta}(X)$.

\begin{lem}\label{SIGMA1PHILEM}
For any positive integer $q$, $\frac{|\mu(q)|\sigma_1(q)}{\varphi(q)}\ll(\log\log(q+2))^4.$
\label{LOGLOGQJ4BOUND}   %
\end{lem}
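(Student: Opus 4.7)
\medskip

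\textbf{Proof plan for Lemma \ref{SIGMA1PHILEM}.}
Since $\mu(q)=0$ whenever $q$ is not squarefree, the inequality is trivial in that case, so I may assume throughout that $q$ is squarefree. Writing $q=p_1p_2\cdots p_k$ with $p_1<p_2<\cdots<p_k$ distinct primes, multiplicativity gives
\begin{align*}
\frac{\sigma_1(q)}{\varphi(q)}=\prod_{i=1}^{k}\frac{p_i+1}{p_i-1}=\prod_{i=1}^k\Bigl(1+\frac{2}{p_i-1}\Bigr).
\end{align*}
Taking logarithms and using $\log(1+x)\le x$ together with the elementary bound $\frac{2}{p-1}\le\frac{4}{p}$ for $p\ge 3$, I obtain
\begin{align*}
\log\frac{\sigma_1(q)}{\varphi(q)}\le 2+4\sum_{p\mid q}\frac1p.
\end{align*}

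The key input is then a uniform bound on $\sum_{p\mid q}\frac1p$ in terms of $q$. The crucial observation is that among all squarefree integers $\le q$, the one maximising $\sum_{p\mid q}\frac1p$ is the product of the smallest consecutive primes (a primorial). In other words, if $q\ge P_k:=\prod_{i\le k}p_i^*$ (where $p_i^*$ denotes the $i$-th prime), then $\omega(q)\le k$. By Chebyshev's theorem $\log P_k=\theta(p_k^*)\asymp p_k^*$, so every prime divisor of $q$ is bounded by a constant multiple of $\log(q+2)$. Applying Mertens' theorem $\sum_{p\le x}\frac1p=\log\log x+O(1)$ yields
\begin{align*}
\sum_{p\mid q}\frac1p\le\sum_{p\le C\log(q+2)}\frac1p\ll\log\log\log(q+2)+O(1).
\end{align*}
Combining with the earlier inequality gives $\log(\sigma_1(q)/\varphi(q))\le 4\log\log\log(q+2)+O(1)$, i.e.\ $\sigma_1(q)/\varphi(q)\ll(\log\log(q+2))^4$, as claimed.

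\smallskip

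This argument is entirely routine; there is no real obstacle. In fact the sharper bound $(\log\log(q+2))^2$ could be obtained via Robin/Rosser--Schoenfeld style estimates $\sigma_1(q)/q\ll\log\log q$ and $q/\varphi(q)\ll\log\log q$, but the weaker $(\log\log(q+2))^4$ suffices for the applications in \eqref{PROOF1STEP6new}, so the proof above is written purely with elementary ingredients (Chebyshev and Mertens).
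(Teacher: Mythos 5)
Your strategy is sound and, at its core, the same as the paper's: the paper disposes of the lemma in one line by noting that for squarefree $q$ one has $\frac{p+1}{p-1}\leq 1+\frac4p\leq\bigl(1+\frac1p\bigr)^4$ for every prime $p\mid q$, and then quoting the classical estimate $\prod_{p\mid q}(1+p^{-1})\ll\log\log(q+2)$ from Ingham. Your proof differs only in that you take logarithms and establish the needed bound on $\sum_{p\mid q}p^{-1}$ from scratch via Chebyshev and Mertens; that is a perfectly legitimate, self-contained route to the same $(\log\log(q+2))^4$ bound (and your closing remark that $\sigma_1(q)/\varphi(q)\ll(\log\log q)^2$ is also correct, though not needed).

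Two of your intermediate assertions are, however, false as written and should be repaired. First, ``if $q\geq P_k$ then $\omega(q)\leq k$'' is backwards: $q=P_{k+1}\geq P_k$ has $k+1$ prime factors. The correct statement is that $\omega(q)=k$ forces $q\geq P_k$, whence $\theta(p_k^*)\leq\log q$ and, by Chebyshev, $p_{\omega(q)}^*\ll\log(q+2)$. Second, ``every prime divisor of $q$ is bounded by a constant multiple of $\log(q+2)$'' is simply wrong (take $q$ a large prime). What is true, and what your own primorial remark already contains, is that for an upper bound one may replace the $\omega(q)$ prime divisors of $q$ by the $\omega(q)$ smallest primes, since this only increases $\sum_{p\mid q}p^{-1}$; those primes are all at most $p_{\omega(q)}^*\ll\log(q+2)$, and Mertens then gives
\begin{align*}
\sum_{p\mid q}\frac1p\;\leq\sum_{i\leq\omega(q)}\frac1{p_i^*}\;\leq\sum_{p\leq C\log(q+2)}\frac1p\;=\;\log\log\log(q+2)+O(1),
\end{align*}
after which your conclusion follows exactly as you state it.
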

\begin{proof}
The bound is trivial unless $\mu(q)\neq0$, i.e.\ $q$ is squarefree. For squarefree $q$,
\begin{align*}
\frac{|\mu(q)|\sigma_1(q)}{\varphi(q)}=\prod_{p\mid q}\frac{p+1}{p-1}
\leq\prod_{p\mid q}(1+4p^{-1})
\leq\prod_{p\mid q}(1+p^{-1})^4
\ll(\log\log(q+2))^4,
\end{align*}
e.g. by \cite[Theorems 4 and 7]{Ingham}.
\end{proof}

\begin{lem}\label{TRIVBNTHXBOUNDLEM}
For all $X\geq1$, $B_{n,\theta}(X)\ll\|\nu\|_{\L^\infty}\, LX^2$.
\end{lem}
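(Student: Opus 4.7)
The plan is to prove the stated bound by direct application of trivial estimates to the defining sum \eqref{BNTHDEF} of $B_{n,\theta}(X)$, without invoking the more refined decomposition \eqref{PROOF1STEP6new}. Since $\nu$ is supported in $[-L,L]$, the function $x \mapsto \nu(a-x)$ is supported on an interval of length $2L$; hence the inner integral is bounded in absolute value by $2L\|\nu\|_{\L^\infty}$, uniformly in $j$ and $k$. Combining this with $|\mu(q_j)| \leq 1$, $|e(\,\cdot\,)| = 1$, and $\varphi(kq_j) \leq kq_j$ produces
\begin{align*}
|B_{n,\theta}(X)| \leq 2L\|\nu\|_{\L^\infty}\sum_{j\,:\,q_j\leq X}\frac{1}{\varphi(q_j)}\sum_{1\leq k\leq X/q_j}kq_j,
\end{align*}
where only indices $j$ with $q_j\leq X$ contribute a nontrivial inner sum.

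Next I would carry out the two sums. The inner sum over $k$ is at most $q_j\cdot(X/q_j)^2 = X^2/q_j$. For the outer sum, the classical recursion $q_{j+1} \geq q_j + q_{j-1}$ for convergent denominators forces $q_j$ to grow at least as fast as the $j$th Fibonacci number, so $\sum_{j\geq 0} 1/(\varphi(q_j)q_j) \leq \sum_{j\geq 0} 1/q_j$ is a convergent series bounded by a universal constant. Multiplying these estimates together yields $|B_{n,\theta}(X)| \ll L\|\nu\|_{\L^\infty} X^2$, which is the desired conclusion.

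There is no genuine obstacle here; the lemma is a deliberately crude, unconditional estimate, and its value lies precisely in its uniformity in $\alpha_1$, $\alpha_2$ and $a$. Its role later must be to handle regimes (for instance small $X$, or boundary terms in an integration-by-parts / summation-by-parts argument against smoother cut-offs) where the refined decomposition \eqref{PROOF1STEP6new} with its Diophantine-dependent factors $\fM_{q_j\alpha_2}(\cdot)$ would be either wasteful or inapplicable; the $X^2$ loss here is then to be recovered by derivative savings on the auxiliary weight.
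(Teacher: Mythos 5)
Your proof is correct, and it takes a genuinely more elementary route than the paper. You bound $B_{n,\theta}(X)$ directly from its definition \eqref{BNTHDEF}: the integral is $\leq\|\nu\|_{\L^1}\leq 2L\|\nu\|_{\L^\infty}$, $|\mu(q_j)|\leq1$, $\varphi(kq_j)\leq kq_j$, the inner sum is $\ll X^2/q_j$ (and is empty unless $q_j\leq X$), and $\sum_j(\varphi(q_j)q_j)^{-1}\leq\sum_j q_j^{-1}\ll1$ by the Fibonacci-type growth $q_{j+2}\geq q_{j+1}+q_j$; all of these steps check out. The paper instead bounds $B_{n,\theta}(X)$ through the refined estimate \eqref{PROOF1STEP6new} already derived from Lemma \ref{EULERPHISUMAPPLLEM}, applying Lemma \ref{SIGMA1PHILEM} to the factors $|\mu(q_j)|\sigma_1(q_j)/\varphi(q_j)$, using $u(1+\log(LX/u))^2\ll LX$ in case (ii), and then summing over the case splitting (i)--(iii) with the same geometric growth of $q_j$. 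The difference in what the two arguments buy is modest: the paper's route is natural because \eqref{PROOF1STEP6new} is already in hand and the Diophantine factors $\fM_{q_j\alpha_2}(\cdot)$ are simply discarded there, while your route is self-contained, avoids Lemmas \ref{EULERPHISUMAPPLLEM} and \ref{SIGMA1PHILEM} and the case analysis entirely, and makes transparent that the bound is uniform in $\alpha_1$, $\alpha_2$ and $a$. Your closing guess about the lemma's purpose is essentially right: it supplies the growth control needed to justify the integration by parts in \eqref{PROOF1STEP7newpre} and to handle the range of small $|\sin\theta|$ in Lemma \ref{BOUNDFORTHETANOTSMALLLEM}.
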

\begin{proof}
Using Lemma \ref{SIGMA1PHILEM} and
(for (ii)) the fact that $u(1+\log(LX/u))^2\ll LX$ for all $u\in[1,LX]$,
we obtain (for any fixed $\ve>0$)
\begin{align*}
B_{n,\theta}(X)\ll_\ve\|\nu\|_{\L^\infty}\biggl\{LX\bigl(1+\log X\bigr)^2\sum_j^{(i)}q_j^{-1+\ve}
+LX^2\sum_j^{(ii)}q_j^{\ve-2}+LX^2\sum_j^{(iii)}q_j^{\ve-2}\biggr\}.
\end{align*}
Here the first sum is bounded using the fact that the sequence $\{q_j\}$ grows geometrically
in the precise sense that $q_{j+2}\geq q_{j+1}+q_j\geq2q_j$ for all $j$ (cf.\ \cite[(10.2.2)]{HW});
the remaining two sums are bounded trivially. This gives the stated bound.
\end{proof}

Using \eqref{PROOF1STEP4new_part}, \eqref{BNTHDEF} and \eqref{DECAYTFNFROMCMNORMLEMAPPL1} with $m=3$,
together with the fact that $B_{n,\theta}(X)\ll_{\nu,L} X^2$ as $X\to\infty$, by Lemma \ref{TRIVBNTHXBOUNDLEM},
we see that the expression in \eqref{PROOF1STEP4new} can be rewritten as:
\begin{align}\label{PROOF1STEP7newpre}
-\sum_{n=1}^\infty\int_{-\pi}^{\pi}
\int_1^\infty\biggl(\frac{\partial}{\partial X}\int_{\R/\Z}
\tf_n\Bigl(u,\frac{\sin^2\theta}{X^2y},\theta\Bigr)\,du
\biggr)\,B_{n,\theta}(X)\,dX\,\frac{y\,d\theta}{\sin^2\theta}.
\end{align}
By Lemma \ref{DERDECAYTFNFROMCMNORMLEM2} we have, for any fixed $m\in\Z_{\geq0}$,
\begin{align*}
\frac{\partial}{\partial X}\int_{\R/\Z}\tf_n
\Bigl(u,\frac{\sin^2\theta}{X^2y},\theta\Bigr)\,du
\ll_m\|f\|_{\C_{\b}^{m+1}}\,X^{-1}
\min\Bigl(1,\Bigl(\frac{|\sin\theta|}{nX\sqrt y}\Bigr)^m\Bigr).
\end{align*}
Hence the expression in \eqref{PROOF1STEP4new} is
\begin{align}\label{PROOF1STEP7new}
\ll_m\|f\|_{\C_{\b}^{m+1}}\,y\sum_{n=1}^\infty\int_{-\pi}^\pi\int_1^\infty
\bigl|B_{n,\theta}(X)\bigr|\,
\min\Bigl(1,\Bigl(\frac{|\sin\theta|}{nX\sqrt y}\Bigr)^m\Bigr)\,
\frac{dX\,d\theta}{X\sin^2\theta}.
\end{align}

The following three lemmas will allow us to further simplify the bound.
\begin{lem}\label{TRIVCONTRHANDLE1LEM}
For any $m\geq 2$, $0<\sigma<1$ and $0<y<1$ we have
\begin{align*}
\sum_{n=1}^\infty\int_{-\pi}^\pi\int_1^\infty X^{1+\sigma}\,
\min\Bigl(1,\Bigl(\frac{|\sin\theta|}{nX\sqrt y}\Bigr)^m\Bigr)\,
\frac{dX\,d\theta}{X\sin^2\theta}
\ll_{m,\sigma} y^{-\frac12(1+\sigma)}.
\end{align*}
\end{lem}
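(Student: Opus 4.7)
The plan is to swap the order of integration by Fubini and reduce to a one-variable estimate using the already established inequality \eqref{PROOF1STEP2new}. Writing the left-hand side as
\begin{align*}
\sum_{n=1}^\infty\int_1^\infty X^{\sigma}\biggl(\int_{-\pi}^\pi\min\Bigl(1,\Bigl(\frac{|\sin\theta|}{nX\sqrt y}\Bigr)^m\Bigr)\,\frac{d\theta}{\sin^2\theta}\biggr)\,\frac{dX}{X},
\end{align*}
and applying \eqref{PROOF1STEP2new} to the inner integral with $a=nX\sqrt y$ (permissible since $m\geq2$), I obtain an upper bound of the form
\begin{align*}
\ll_m \sum_{n=1}^\infty\int_1^\infty X^{\sigma-1}\cdot (nX\sqrt y)^{-1}\bigl(1+nX\sqrt y\bigr)^{1-m}\,dX.
\end{align*}

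The next step is to substitute $t=nX\sqrt y$ in the $X$-integral; the contribution from a single $n$ then becomes
\begin{align*}
(n\sqrt y)^{-\sigma-1}\int_{n\sqrt y}^\infty t^{\sigma-1}(1+t)^{1-m}\,dt.
\end{align*}
Since $0<\sigma<1$ and $m\geq2$, the integrand $t^{\sigma-1}(1+t)^{1-m}$ is integrable at both $0$ and $\infty$, so the last integral is $O_{m,\sigma}(1)$ when $n\sqrt y\leq 1$ and $O_{m,\sigma}((n\sqrt y)^{\sigma-m+1})$ when $n\sqrt y\geq 1$. Summing over $n$ and splitting at $n\asymp y^{-1/2}$ gives
\begin{align*}
\sum_{n\leq y^{-1/2}}(n\sqrt y)^{-\sigma-1}+\sum_{n> y^{-1/2}}(n\sqrt y)^{-m}\ll_{m,\sigma} y^{-(\sigma+1)/2}+y^{-1/2},
\end{align*}
where the first estimate uses convergence of $\sum n^{-1-\sigma}$ (so $\sigma>0$ is essential) and the second uses $\sum_{n>N}n^{-m}\ll N^{1-m}$ (so $m\geq 2$ is essential). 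Since $y<1$ and $\sigma>0$, the first term dominates, yielding the claimed bound.

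There is no serious obstacle: every step is routine, and the two hypotheses $\sigma>0$ and $m\geq 2$ are each used exactly once, to secure convergence at the two opposite ends of the sum over $n$; the passage to the $\theta$-integral is purely mechanical once \eqref{PROOF1STEP2new} has been recognized as applicable.
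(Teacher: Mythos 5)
Your argument is correct and is essentially the paper's own proof: apply \eqref{PROOF1STEP2new} with $a=nX\sqrt y$, estimate the resulting $X$-integral, and split the sum over $n$ at $n\sqrt y=1$, using $\sigma>0$ for the small-$n$ range and $m\geq2$ for the tail. The only blemish is a transcription slip in your first two displays, where a factor of $X$ is dropped (the $X$-weight coming from $X^{1+\sigma}\,\frac{dX}{X\sin^2\theta}$ is $X^{\sigma}\,dX$, not $X^{\sigma-1}\,dX$); your substitution result $(n\sqrt y)^{-\sigma-1}\int_{n\sqrt y}^\infty t^{\sigma-1}(1+t)^{1-m}\,dt$ corresponds to the correct weight, so the slip does not propagate and the conclusion stands.
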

\begin{proof}
Using \eqref{PROOF1STEP2new} we see that the given expression is
\begin{align}\notag
&\ll_m\sum_{n=1}^\infty\int_1^\infty
(nX\sqrt y)^{-1}(1+nX\sqrt y)^{1-m}\, X^\sigma\,dX
\ll_m\sum_{n=1}^\infty
\begin{cases} 
(n\sqrt y)^{-m}&\text{if }\:n\sqrt y\geq1
\\
(n\sqrt y)^{-1-\sigma}&\text{if }\:n\sqrt y\leq1,
\end{cases}%
\end{align}
and this gives the stated bound.
\end{proof}

The next lemma generalizes \eqref{PROOF1STEP2new}.
\begin{lem}\label{THETAINTEGRALBOUNDLEM}
For $m\geq2$, $a>0$, and $0\leq\delta\leq1$, we have
\begin{align*}%
\int_{\substack{\theta\in(-\pi,\pi)\\|\sin\theta|<\delta}}
\min\Bigl(1,\bigl(a^{-1}|\sin\theta|\bigr)^m\Bigr)\,
\frac{d\theta}{\sin^2\theta}\asymp_m 
\left.\begin{cases} a^{-m}\delta^{m-1}&\text{if }\: \delta\leq a
\\ a^{-1}&\text{if }\:\delta\geq a\end{cases}\right\}
=a^{-1}\min(1,(\delta/a)^{m-1}).
\end{align*}
\end{lem}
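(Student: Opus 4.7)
\smallskip

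The plan is to reduce the $\theta$-integral to a one-variable integral in $t = |\sin\theta|$ and then do the resulting elementary computation. First, the integrand depends only on $|\sin\theta|$, and the set $\{\theta\in(-\pi,\pi) : |\sin\theta|<\delta\}$ decomposes, by the symmetries $\theta\mapsto-\theta$ and $\theta\mapsto\pi-\theta$ (and $\theta\mapsto-\pi-\theta$), into four pieces all carrying the same contribution. Hence
\begin{align*}
\int_{\substack{\theta\in(-\pi,\pi)\\|\sin\theta|<\delta}}
\min\Bigl(1,\bigl(a^{-1}|\sin\theta|\bigr)^m\Bigr)\,\frac{d\theta}{\sin^2\theta}
=4\int_0^{\arcsin\delta}\min\bigl(1,(a^{-1}\sin\theta)^m\bigr)\,\frac{d\theta}{\sin^2\theta}.
\end{align*}

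Next I substitute $t=\sin\theta$, so $d\theta=dt/\sqrt{1-t^2}$. The Jacobian $1/\sqrt{1-t^2}$ is bounded by a constant on $t\in[0,\tfrac12]$, so up to an absolute constant the piece of the integral over $t\in[0,\min(\delta,\tfrac12)]$ equals $\int_0^{\min(\delta,1/2)}\min(1,(a^{-1}t)^m)\,dt/t^2$. When $\delta>\tfrac12$ there is an additional contribution from $t\in[\tfrac12,\delta]$; on that range the integrand (back in the $\theta$-variable) is pointwise $\leq\min(1,a^{-m})$, and the $\theta$-interval has length $<\pi/2$, so this piece contributes at most $O_m(\min(1,a^{-m}))$, which will be absorbed into the main term.

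It remains to evaluate $J:=\int_0^{L}\min(1,(a^{-1}t)^m)\,dt/t^2$ with $L=\min(\delta,\tfrac12)$. Split at $t=a$: the integrand equals $a^{-m}t^{m-2}$ for $t\leq a$ (using $m\geq2$ so the integral is finite at $0$) and $t^{-2}$ for $t\geq a$. If $L\leq a$, only the first regime appears and $J\asymp_m a^{-m}L^{m-1}$. If $L>a$, one gets $J\asymp_m a^{-1}$. These two cases combine into $J\asymp_m a^{-1}\min(1,(L/a)^{m-1})$.

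The final step is to check that the bookkeeping with $L=\min(\delta,\tfrac12)$ and the $O_m(\min(1,a^{-m}))$ remainder still produces the claimed bound $a^{-1}\min(1,(\delta/a)^{m-1})$ in all regimes of $(\delta,a)$. When $\delta\leq\tfrac12$ this is immediate. When $\delta>\tfrac12$, one observes that $\delta^{m-1}\asymp_m1$ on $(\tfrac12,1]$, that $\min(1,a^{-m})\asymp_m\min(1,a^{-1})\cdot\min(1,a^{-(m-1)})$, and that the claimed bound in this range reduces to $\asymp_m a^{-1}\min(1,a^{-(m-1)})$, which matches both $J$ and the error term. I expect no real obstacle here — the only mild subtlety is the verification in the last paragraph that the cutoff at $\tfrac12$ is harmless, which is handled by the case analysis just sketched.
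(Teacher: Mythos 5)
Your computation is correct, and it is exactly the ``direct computation'' that the paper leaves to the reader: symmetry reduction to $\theta\in(0,\arcsin\delta)$, the substitution $t=\sin\theta$ with the harmless cutoff at $t=\tfrac12$, and the elementary evaluation of $\int_0^L\min(1,(a^{-1}t)^m)\,t^{-2}dt$ split at $t=a$. The bookkeeping in your final paragraph (in particular that the $O_m(\min(1,a^{-m}))$ remainder is dominated by $a^{-1}\min(1,(\delta/a)^{m-1})$ when $\delta>\tfrac12$) checks out, so nothing is missing.
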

\begin{proof}
This is seen by a direct computation.
\end{proof}

\begin{lem}\label{BOUNDFORTHETANOTSMALLLEM}
Let $\sigma\in[0,\frac12]$, $m\geq3$ and $0<y<1$.
If the integral over $\theta$ in \eqref{PROOF1STEP7new} is restricted by the condition $|\sin\theta|\leq y^{\sigma}$,
the resulting expression is
$\ll_m\|f\|_{\C_{\b}^{m+1}}\|\nu\|_{\L^\infty}\,Ly^\sigma.$
\end{lem}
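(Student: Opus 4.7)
The plan is to combine Lemma \ref{TRIVBNTHXBOUNDLEM} (the trivial bound on $B_{n,\theta}(X)$) with Lemma \ref{THETAINTEGRALBOUNDLEM} (the restricted $\theta$-integral), and then analyze the resulting sum over $n$ and integral over $X$ carefully, paying attention to uniformity in $\sigma$.

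First I would insert $|B_{n,\theta}(X)|\ll L\|\nu\|_{\L^\infty}X^2$ from Lemma \ref{TRIVBNTHXBOUNDLEM} into \eqref{PROOF1STEP7new} restricted to $|\sin\theta|\leq y^\sigma$, reducing the task to bounding
\begin{align*}
\|f\|_{\C_\b^{m+1}}\|\nu\|_{\L^\infty}L\,y\sum_{n=1}^\infty\int_1^\infty X\biggl(\int_{\substack{\theta\in(-\pi,\pi)\\|\sin\theta|\leq y^\sigma}}\!\!\min\Bigl(1,\Bigl(\tfrac{|\sin\theta|}{nX\sqrt y}\Bigr)^m\Bigr)\frac{d\theta}{\sin^2\theta}\biggr)\frac{dX}{X}.
\end{align*}
Applying Lemma \ref{THETAINTEGRALBOUNDLEM} with $a=nX\sqrt y$ and $\delta=y^\sigma$, the inner $\theta$-integral is $\asymp_m (nX\sqrt y)^{-1}\min(1,(y^{\sigma-1/2}/(nX))^{m-1})$, and after cancellation this reduces the expression to
\begin{align*}
\ll_m \|f\|_{\C_\b^{m+1}}\|\nu\|_{\L^\infty}L\sqrt{y}\sum_{n=1}^\infty\frac{1}{n}\int_1^\infty\frac{1}{X}\min\Bigl(1,\Bigl(\tfrac{A_n}{X}\Bigr)^{m-1}\Bigr)dX,\qquad A_n:=\frac{y^{\sigma-1/2}}{n}.
\end{align*}

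Next I would evaluate the inner $X$-integral explicitly. Setting $N:=y^{\sigma-1/2}\geq 1$ (which holds since $\sigma\leq\tfrac12$ and $y<1$), the integral equals $\log A_n+O_m(1)$ when $A_n\geq 1$ (i.e.\ $n\leq N$) and is $\ll_m A_n^{m-1}$ when $A_n<1$ (i.e.\ $n>N$). Summing over $n>N$ gives $N^{m-1}\sum_{n>N}n^{-m}\ll_m 1$, while summing over $n\leq N$ yields
\begin{align*}
\sum_{n\leq N}\frac{1+\log(N/n)}{n}\ll(1+\log N)^2\asymp(1+(\tfrac12-\sigma)\log(1/y))^2.
\end{align*}
Altogether this gives
\begin{align*}
\ll_m \|f\|_{\C_\b^{m+1}}\|\nu\|_{\L^\infty}L\sqrt{y}\,\bigl(1+(\tfrac12-\sigma)^2\log^2(1/y)\bigr).
\end{align*}

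The final step, which is also the main (mild) obstacle, is to absorb the $\sqrt{y}$ and the polylog factor into $y^\sigma$ uniformly for $\sigma\in[0,\tfrac12]$. Setting $\beta:=\tfrac12-\sigma\in[0,\tfrac12]$ and $u:=\beta\log(1/y)\geq 0$, one has $y^\beta\beta^2\log^2(1/y)=u^2e^{-u}$, which is bounded by $4/e^2$ for all $u\geq 0$ by elementary calculus. Hence $\sqrt{y}\,(1+(\tfrac12-\sigma)^2\log^2(1/y))\ll y^\sigma$ uniformly in $\sigma\in[0,\tfrac12]$ and $y\in(0,1)$, completing the proof. The potential pitfall here is the endpoint $\sigma=\tfrac12$ (where $N=1$ and the logarithmic contribution disappears) and the interior (where $\beta>0$ and the $u^2e^{-u}$ bound is essential); both are handled by the same elementary inequality.
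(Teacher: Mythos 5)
Your overall strategy---insert the trivial bound of Lemma \ref{TRIVBNTHXBOUNDLEM} and then perform the $\theta$-integral via Lemma \ref{THETAINTEGRALBOUNDLEM} with $a=nX\sqrt y$, $\delta=y^\sigma$---is exactly the one used in the paper, and the application of Lemma \ref{THETAINTEGRALBOUNDLEM} is legitimate (indeed $y^\sigma\leq1$). However, there is a bookkeeping slip in your first reduction which invalidates the argument as written: the integrand in \eqref{PROOF1STEP7new} carries $|B_{n,\theta}(X)|\,\frac{dX}{X}$, so inserting $|B_{n,\theta}(X)|\ll\|\nu\|_{\L^\infty}LX^2$ should leave $X\,dX$, whereas you wrote $X\,\frac{dX}{X}=dX$. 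Equivalently, your second display contains a spurious factor $\frac1X$; the correct reduction is
\begin{align*}
\ll_m\;\|f\|_{\C_{\b}^{m+1}}\|\nu\|_{\L^\infty}\,L\sqrt y\sum_{n=1}^\infty\frac1n\int_1^\infty\min\Bigl(1,\Bigl(\frac{A_n}{X}\Bigr)^{m-1}\Bigr)\,dX,
\qquad A_n=\frac{y^{\sigma-1/2}}{n},
\end{align*}
with no $\frac1X$ inside. Since you dropped a factor $X\geq1$, the quantity you then estimate so carefully (the $\log^2$ count and the $u^2e^{-u}$ absorption) is strictly smaller than the expression of the lemma---for instance at $\sigma=0$ your bound $\sqrt y\log^2(1/y)$ is genuinely smaller than the true size---so your final inequality does not prove the stated bound.

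The repair is immediate and in fact simplifies the argument: with the factor restored, $\int_1^\infty\min(1,(A_n/X)^{m-1})\,dX\asymp_m A_n$ when $A_n\geq1$ (i.e.\ $n\leq y^{\sigma-1/2}$) and $\ll_m A_n^{m-1}$ otherwise, so the range $n\leq y^{\sigma-1/2}$ contributes $\ll\sqrt y\,y^{\sigma-1/2}\sum_n n^{-2}\ll y^{\sigma}$ and the tail contributes $\ll\sqrt y\leq y^{\sigma}$ (using $m\geq3$ and $\sigma\leq\frac12$). This corrected computation is precisely the paper's proof; no logarithmic factors arise, and your $u^2e^{-u}\leq 4/e^2$ absorption step---which is correct as a piece of calculus---becomes unnecessary.
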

\begin{proof}
Using Lemma \ref{THETAINTEGRALBOUNDLEM} and Lemma \ref{TRIVBNTHXBOUNDLEM} we see that the expression in question is
\begin{align*}
&\ll\|f\|_{\C_{\b}^{m+1}}\|\nu\|_{\L^\infty}\,Ly
\sum_{n=1}^\infty\frac1{n\sqrt y}\int_1^\infty
\min\Bigl(1,\Bigl(\frac{y^\sigma}{nX\sqrt y}\Bigr)^{m-1}\Bigr)\,dX
\\
&\ll\|f\|_{\C_{\b}^{m+1}}\|\nu\|_{\L^\infty}\,Ly
\sum_{n=1}^\infty\frac1{n\sqrt y}\left.\begin{cases}
y^{\sigma-\frac12}/n & \text{if }\: n\leq y^{\sigma-\frac12}
\\
(y^{\sigma-\frac12}/n)^{m-1} &
\text{if }\: n\geq y^{\sigma-\frac12}
\end{cases}\right\}
\ll\|f\|_{\C_{\b}^{m+1}}\|\nu\|_{\L^\infty}\,Ly^\sigma.
\end{align*}
\end{proof}

We saw in the proof of Lemma \ref{TRIVBNTHXBOUNDLEM} that the
$\sum^{(i)}$-sum in \eqref{PROOF1STEP6new} is $\ll_\ve LX^{1+\ve}$;
hence by Lemma \ref{TRIVCONTRHANDLE1LEM} the total contribution from the
$\sum^{(i)}$-sum in \eqref{PROOF1STEP6new} to the expression in \eqref{PROOF1STEP7new} is
$\ll_{m,\ve}\|f\|_{\C_{\b}^{m+1}}\|\nu\|_{\L^\infty} Ly^{\frac12-\ve}$.
We also use Lemma \ref{BOUNDFORTHETANOTSMALLLEM} with $\sigma=\frac12$,
and note that for $|\sin\theta|>y^{\frac12}$ we have $|a|=y|\cot\theta|<y^{\frac12}<1$ and $L+|a|\leq2L$
in \eqref{PROOF1STEP6new};
hence we conclude that the whole expression in \eqref{PROOF1STEP7new} is
\begin{align}\label{PROOF1STEP7newA}
\ll_{m,\ve} 
\|f\|_{\C_{\b}^{m+1}}\|\nu\|_{\L^\infty}\biggl\{Ly^{\frac12-\ve}+y\sum_{n=1}^\infty\int_1^\infty
B_n(X)\int_{-\pi}^\pi\min\Bigl(1,\Bigl(\frac{|\sin\theta|}{nX\sqrt y}\Bigr)^m\Bigr)\,
\frac{d\theta}{\sin^2\theta}\,\frac{dX}X\biggr\},
\end{align}
where (keeping from now on $\alpha_2:=n\xi_2$, and  using the fact that $\fM_\alpha(X)\equiv\fM_{-\alpha}(X)$)
\begin{align}\label{BNDEF}
B_n(X)=\sum^{(ii)}_j\frac{|\mu(q_j)|\sigma_1(q_j)}{\varphi(q_j)}
\frac{Xq_{j+1}}{q_j}\Bigl(1+\log\Bigl(\frac{LX}{q_jq_{j+1}}\Bigr)\Bigr)^2
\fM_{q_j\alpha_2}\Bigl(\frac{q_{j+1}}{2L}\Bigr)
\\\notag
+\sum^{(iii)}_j\frac{|\mu(q_j)|\sigma_1(q_j)}{\varphi(q_j)}
\frac{LX^2}{q_j^2}\fM_{q_j\alpha_2}\Bigl(\frac{X}{2q_j}\Bigr).
\end{align}
Using \eqref{PROOF1STEP2new} to bound the integral over $\theta$ %
we conclude that the expression in \eqref{PROOF1STEP7newA} is
\begin{align}\label{PROOF1STEP7newB}
\ll_{m,\ve}\|f\|_{\C_{\b}^{m+1}}\|\nu\|_{\L^\infty}\biggl\{Ly^{\frac12-\ve}+y^{\frac12}\sum_{n=1}^\infty\frac1n
\int_1^\infty\frac{B_n(X)}{(1+n\sqrt yX)^{m-1}X^2}\,dX\biggr\}.
\end{align}

Now note that 
\begin{align}\notag
\int_1^\infty\frac{B_n(X)}{(1+n\sqrt yX)^{m-1}X^2}\,dX
=\sum_{\substack{j\geq0\\(q_{j+1}\geq L)}}\frac{|\mu(q_j)|\sigma_1(q_j)}{\varphi(q_j)}
\Biggl\{\frac L{q_j^2}\int_{1}^{q_jq_{j+1}/L}\frac{\fM_{q_j\alpha_2}(\frac{X}{2q_j})}{(1+n\sqrt yX)^{m-1}}\,dX
\\\label{KEYBNINTEGRAL1}
+\frac{q_{j+1}}{q_j}\fM_{q_j\alpha_2}\Bigl(\frac{q_{j+1}}{2L}\Bigr)
\int_{q_jq_{j+1}/L}^\infty\frac{(1+\log(\frac{LX}{q_jq_{j+1}}))^2}{(1+n\sqrt yX)^{m-1}X}\,dX
\Biggr\}.
\end{align}
Let us write $\fM_\alpha^1(X)$ for the integral of $\fM_\alpha(X)$:
\begin{align}\label{FM1DEF}
\fM_\alpha^1(X):=\int_0^X\fM_\alpha(Y)\,dY=\sum_{n=1}^\infty\min\Bigl(\frac X{n^2},\frac1{n\langle n\alpha\rangle }\Bigr)
\biggl(1+\log^+\Bigl(\frac{X\langle n\alpha\rangle }n\Bigr)\biggr).
\end{align}
We have the bound %
\begin{align*}
\int_{1}^{q_jq_{j+1}/L}\frac{\fM_{q_j\alpha_2}(\frac{X}{2q_j})}{(1+n\sqrt yX)^{m-1}}\,dX
\ll\int_0^{q_jq_{j+1}/L}\fM_{q_j\alpha_2}\Bigl(\frac{X}{2q_j}\Bigr)\,dX
=2q_j\fM_{q_j\alpha_2}^1\Bigl(\frac{q_{j+1}}{2L}\Bigr).
\end{align*}
If $q_jq_{j+1}/L>(n\sqrt y)^{-1}$ then the same integral can also be bounded more sharply as
(assuming $m\geq3$, and using the fact that $\fM_\alpha(X)\leq X^{-1}\fM_\alpha^1(X)$ for all $X$)
\begin{align*}
&\ll\int_0^{(n\sqrt y)^{-1}}\fM_{q_j\alpha_2}\Bigl(\frac{X}{2q_j}\Bigr)\,dX
+\int_{(n\sqrt y)^{-1}}^\infty \frac{\fM_{q_j\alpha_2}(\frac1{2q_jn\sqrt y})}{(n\sqrt yX)^{m-1}}\,dX
\ll 2q_j\fM_{q_j\alpha_2}^1\Bigl(\frac{1}{2q_jn\sqrt y}\Bigr).
\end{align*}
Also by an easy computation, 
\begin{align}\label{KEYBNINTEGRAL3}
\int_{q_jq_{j+1}/L}^\infty\frac{(1+\log(\frac{LX}{q_jq_{j+1}}))^2}{(1+n\sqrt yX)^{m-1}X}\,dX
\ll\begin{cases}
(1+\log(\frac L{q_jq_{j+1}n\sqrt y}))^3
&\text{if }\: q_jq_{j+1}/L\leq (n\sqrt y)^{-1}
\\[10pt]
\bigl(\frac L{q_jq_{j+1}n\sqrt y}\bigr)^{m-1}
&\text{if }\:(n\sqrt y)^{-1}\leq q_jq_{j+1}/L.
\end{cases}
\end{align}
Adding up the bounds
(using the fact that if $(n\sqrt y)^{-1}\leq q_jq_{j+1}/L$ then
$\fM_{q_j\alpha_2}(\frac{q_{j+1}}{2L})\leq2q_jn\sqrt y\,\,\fM_{q_j\alpha_2}^1(\frac1{2q_jn\sqrt y})$), we conclude
\begin{align}\label{KEYBNINTEGRAL4}
&\int_1^\infty\frac{B_n(X)}{(1+n\sqrt yX)^{m-1}X^2}\,dX
\\\notag
&\ll L\sum_{\substack{j\geq0\\(q_{j+1}\geq L)}}\frac{|\mu(q_j)|\sigma_1(q_j)}{q_j\varphi(q_j)}
\Bigl(1+\log^+\Bigl(\frac L{q_jq_{j+1}n\sqrt y}\Bigr)\Bigr)^3
\fM^1_{q_j\alpha_2}\Bigl(\min\Bigl(\frac{q_{j+1}}{L},\frac{1}{q_jn\sqrt y}\Bigr)\Bigr).
\end{align}
Using also $q_{j+1}<\langle q_j\alpha_1\rangle^{-1}<2q_{j+1}$ we get
\begin{align}\label{NCHOICEFLEXKEY}
\ll L\sum_{q=1}^\infty
\frac{|\mu(q)|\sigma_1(q)}{q\varphi(q)}
\Bigl(1+\log^+\Bigl(\frac {L\langle q\alpha_1\rangle}{qn\sqrt y}\Bigr)\Bigr)^3
\fM^1_{q\alpha_2}\Bigl(\min\Bigl(\frac{1}{L\langle q\alpha_1\rangle},\frac1{qn\sqrt y}\Bigr)\Bigr).
\end{align}
Adding now over $n$ (recalling $\alpha_1=n\xi_1$, $\alpha_2=n\xi_2$) we get, after substituting $k=qn$ and using
$\sum_{q\mid k}\frac{|\mu(q)|\sigma_1(q)}{\varphi(q)}\ll_\ve k^\ve$,
\begin{align}\notag
y^{\frac12}\sum_{n=1}^\infty\frac1n\int_1^\infty\frac{B_n(X)}{(1+n\sqrt yX)^{m-1}X^2}\,dX
\hspace{180pt}
\\\label{AAA2}
\ll_{\ve}
Ly^{\frac12}
\sum_{k=1}^\infty k^{\ve-1}
\Bigl(1+\log^+\Bigl(\frac {L\langle k\xi_1\rangle}{k\sqrt y}\Bigr)\Bigr)^3
\fM^1_{k\xi_2}\Bigl(\min\Bigl(\frac{1}{L\langle k\xi_1\rangle},\frac1{k\sqrt y}\Bigr)\Bigr).
\end{align}

\begin{remark}\label{NCHOICEREM}
In \eqref{NCHOICEFLEXKEY} we overestimate a sum over $\{q_j\}$ by a sum over \textit{all} $q\in\Z^+$;
the (simple) reason why this is not very wasteful will be seen in the next paragraph.
It is clear from this %
that there is some flexibility in choosing the set $N$ in \eqref{PROOF1STEP0new} so as to make our proof work.
\end{remark}

As we will see, the vast majority of the terms in the sum in \eqref{AAA2} can be bounded trivially.
First of all, using $\fM_{k\xi_2}^1(X)\ll X$ and the fact that for any $k,y>0$
the function $f(\delta)=(1+\log^+(\frac{\delta}{k\sqrt y}))^3\min(\delta^{-1},\frac1{k\sqrt y})$
satisfies $f(\delta_2)\ll %
f(\delta_1)$ for all $0<\delta_1\leq\delta_2$,
it follows that the contribution from all $k$ with $\langle k\xi_1\rangle\geq(2k)^{-1}$ in the right hand side of \eqref{AAA2} is
\begin{align*}
\ll Ly^{\frac12}\sum_{k=1}^\infty k^{\ve-1}
\Bigl(1+\log^+\Bigl(\frac {L}{k^2\sqrt y}\Bigr)\Bigr)^3
\min\Bigl(\frac{k}{L},\frac1{k\sqrt y}\Bigr)
\ll_\ve L^{\frac12+\ve}y^{\frac14-\ve}.
\end{align*}

It remains to consider the contribution from all $k$ with $\langle k\xi_1\rangle<(2k)^{-1}$.
From now on, let $\frac{p_j}{q_j}$ for $j\in\Z_{\geq0}$ be the $j$th convergent of the
continued fraction expansion of $\xi_1$.
Then each $k\geq1$ satisfying $\langle k\xi_1\rangle<(2k)^{-1}$ is known to be of the form $k=\ell q_j$ 
for some $j\in\Z_{\geq0}$ and some $\ell\in\Z^+$
which is so small that $\langle k\xi_1\rangle=\ell\langle q_j\xi_1\rangle$ and thus
$\frac{\ell}{2q_{j+1}}<\langle k\xi_1\rangle<\frac\ell{q_{j+1}}$ (cf., e.g., \cite[Thm.\ 184]{HW}).
Hence the total contribution from all such $k$ to the right hand side of \eqref{AAA2} is
\begin{align}\label{AAA5}
\ll Ly^{\frac12}\sum_{j=0}^\infty\sum_{\ell=1}^\infty (\ell q_j)^{\ve-1}
\Bigl(1+\log^+\Bigl(\frac{L}{q_jq_{j+1}\sqrt y}\Bigr)\Bigr)^3
\fM^1_{\ell q_j\xi_2}\Bigl(\ell^{-1}\min\Bigl(\frac{q_{j+1}}L,\frac1{q_j\sqrt y}\Bigr)\Bigr).
\end{align}
Recall the summation formula for $\fM_\alpha^1(X)$; cf.\ \eqref{FM1DEF}. Let us write $\fM_\alpha^{1,\ve}(X)$ for the
analogous sum with an extra factor $n^\ve$ in each term:
\begin{align}\label{FM1VEDEF}
\fM_\alpha^{1,\ve}(X):=\sum_{n=1}^\infty n^{\ve}
\min\Bigl(\frac X{n^2},\frac{1}{n\langle n\alpha\rangle}\Bigr)\Bigl(1+\log^+\Bigl(\frac{X\langle n\alpha\rangle}{n}\Bigr)\Bigr).
\end{align}
\begin{lem}\label{FM1SUMBOUNDLEM}
For any $X>0$, $\alpha\in\R$, $\ve>0$,
\begin{align}\label{FM1SUMBOUNDLEMRES}
\sum_{\ell=1}^\infty \ell^{\ve-1}\fM^1_{\ell\alpha}(\ell^{-1}X)\ll_\ve\fM_\alpha^{1,2\ve}(X),
\end{align}
\end{lem}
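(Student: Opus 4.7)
The plan is to prove the inequality by unfolding the series defining $\fM^1$, performing a substitution that collapses the double sum into a sum over divisor pairs, and then applying the standard divisor bound $d(m)\ll_\ve m^\ve$.

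First I would expand $\fM^1_{\ell\alpha}(\ell^{-1}X)$ using its series representation \eqref{FM1DEF}, so that the left-hand side becomes
\begin{align*}
\sum_{\ell=1}^\infty \ell^{\ve-1} \sum_{n=1}^\infty
\min\Bigl(\frac{X/\ell}{n^2},\frac{1}{n\langle n\ell\alpha\rangle}\Bigr)
\Bigl(1+\log^+\Bigl(\tfrac{(X/\ell)\langle n\ell\alpha\rangle}{n}\Bigr)\Bigr).
\end{align*}
Next, substitute $m=n\ell$ in the inner sum. Then $n=m/\ell$, the two entries inside the minimum become $\frac{X\ell}{m^2}$ and $\frac{\ell}{m\langle m\alpha\rangle}$ respectively, and a common factor $\ell$ can be pulled out; the logarithmic factor becomes $1+\log^+\bigl(\tfrac{X\langle m\alpha\rangle}{m}\bigr)$, which no longer depends on $\ell$. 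The $n$-sum is turned into a sum over all $m\geq 1$ with $\ell\mid m$, and the $\ell^{\ve-1}$ combines with the extracted $\ell$ to give an overall weight $\ell^\ve$.

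Now swap the order of summation:
\begin{align*}
\text{LHS}=\sum_{m=1}^\infty
\min\Bigl(\frac{X}{m^2},\frac{1}{m\langle m\alpha\rangle}\Bigr)
\Bigl(1+\log^+\Bigl(\tfrac{X\langle m\alpha\rangle}{m}\Bigr)\Bigr)
\sum_{\ell\mid m}\ell^\ve.
\end{align*}
The inner divisor sum is the generalized divisor function $\sigma_\ve(m)=\sum_{d\mid m}d^\ve$. Bounding each divisor crudely by $m^\ve$ gives $\sigma_\ve(m)\leq m^\ve\,d(m)$, and the classical bound $d(m)\ll_\ve m^\ve$ then yields $\sigma_\ve(m)\ll_\ve m^{2\ve}$. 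Inserting this factor into the $m$-sum reproduces exactly the definition \eqref{FM1VEDEF} of $\fM_\alpha^{1,2\ve}(X)$, completing the proof.

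There is no substantial obstacle: the single observation to notice is that, under the substitution $m=n\ell$, both entries inside the $\min$ scale by the same factor $\ell$, so the resulting expression depends on $\ell$ only through the outer $\ell^\ve$ weight and through the divisibility condition $\ell\mid m$. Everything else is a routine reindexing plus the standard $d(m)\ll_\ve m^\ve$ bound.
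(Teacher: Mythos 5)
Your proposal is correct and is essentially the paper's own proof: the same substitution $m=n\ell$ (the paper writes $k=\ell n$), producing the divisor weight $\sum_{\ell\mid m}\ell^{\ve}$, followed by the bound $\sum_{\ell\mid m}\ell^{\ve}\ll_\ve m^{2\ve}$ to recover $\fM_\alpha^{1,2\ve}(X)$. Nothing to add.
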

\begin{proof}
Using \eqref{FM1DEF} and substituting $k=\ell n$ we obtain
\begin{align*}
\sum_{\ell=1}^\infty \ell^{\ve-1}\fM^1_{\ell\alpha}(\ell^{-1}X)
=\sum_{k=1}^\infty\biggl(\sum_{\ell\mid k}\ell^{\ve}\biggr)
\min\Bigl(\frac X{k^2},\frac{1}{k\langle k\alpha\rangle}\Bigr)
\Bigl(1+\log^+\Bigl(\frac{X\langle k\alpha\rangle}{k}\Bigr)\Bigr).
\end{align*}
Now the desired bound follows using $\sum_{\ell\mid k}\ell^{\ve}\ll_\ve k^{2\ve}$.
\end{proof}
\begin{lem}\label{FM1VEBOUNDLEM}
For any $X>0$, $\alpha\in\R$, $0<\ve\leq\frac12$ we have $\fM_\alpha^{1,\ve}(X)\ll X$.
\end{lem}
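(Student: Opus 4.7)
The plan is to prove the lemma by a trivial pointwise bound, discarding the Diophantine information in $\langle n\alpha\rangle$ altogether. The starting point is the term-by-term version of the identity $\fM_\alpha^1(X)=\int_0^X\fM_\alpha(Y)\,dY$ used just before \eqref{FM1DEF}: for each $n\geq 1$,
\begin{align*}
\min\Bigl(\tfrac{X}{n^2},\tfrac{1}{n\langle n\alpha\rangle}\Bigr)\Bigl(1+\log^+\tfrac{X\langle n\alpha\rangle}{n}\Bigr)
=\int_0^X\min\Bigl(\tfrac{1}{n^2},\tfrac{1}{Yn\langle n\alpha\rangle}\Bigr)\,dY,
\end{align*}
which is verified by splitting the integration range at $Y_0=n/\langle n\alpha\rangle$ (with $Y_0=\infty$ when $\langle n\alpha\rangle=0$), as in the derivation that produces \eqref{FM1DEF}.

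Multiplying by $n^\ve$, summing in $n$, and interchanging sum and integral via Tonelli (valid since the integrand is nonnegative), I would obtain
\begin{align*}
\fM_\alpha^{1,\ve}(X)=\int_0^X\sum_{n=1}^\infty n^\ve\min\Bigl(\tfrac{1}{n^2},\tfrac{1}{Yn\langle n\alpha\rangle}\Bigr)\,dY
\leq\int_0^X\sum_{n=1}^\infty n^{\ve-2}\,dY=\zeta(2-\ve)\,X.
\end{align*}
Since $\ve\leq\tfrac12<1$ we have $2-\ve>1$, so the series $\zeta(2-\ve)$ converges and depends only on $\ve$; this yields $\fM_\alpha^{1,\ve}(X)\ll_\ve X$ uniformly in $\alpha$.

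There is essentially no obstacle here: the Diophantine factor $1/\langle n\alpha\rangle$ is simply thrown away by the inequality $\min(1/n^2,\ast)\leq 1/n^2$, and the role of the hypothesis $\ve\leq\tfrac12$ is only to keep the exponent in $n^{\ve-2}$ strictly below $-1$ so that the $\zeta$-series converges. In particular, the same reasoning also yields $\fM_\alpha^1(X)\ll X$ (the $\ve=0$ case) with an absolute constant, which is what is actually needed in the estimates following \eqref{AAA5} after the divisor bound has been absorbed via Lemma \ref{FM1SUMBOUNDLEM}.
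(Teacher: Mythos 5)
Your proof is correct and is essentially the paper's: the paper's one-line argument is exactly the pointwise bound $n^{\ve}\min\bigl(\frac{X}{n^2},\frac{1}{n\langle n\alpha\rangle}\bigr)\bigl(1+\log^+\frac{X\langle n\alpha\rangle}{n}\bigr)\leq n^{\ve-2}X$ — which your term-by-term integral identity gives, and which one can also check directly from $1+\log t\leq t$ for $t\geq1$ — followed by summing $\sum_{n\geq1}n^{\ve-2}<\infty$, so your Tonelli interchange is just a slightly longer route to the same estimate. One peripheral slip: what the argument at \eqref{AAA3} actually invokes is the $\ve>0$ case (Lemma \ref{FM1SUMBOUNDLEM} is what \emph{introduces} the factor $n^{2\ve}$, producing $\fM^{1,2\ve}$ rather than $\fM^{1}$), so your closing remark that only the $\ve=0$ case is needed is backwards, though this does not affect the validity of your proof of the lemma itself.
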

\begin{proof}
Simply note $\fM_\alpha^{1,\ve}(X)\leq(\sum_{n=1}^\infty n^{\ve-2})X$.
\end{proof}

Using Lemma \ref{FM1SUMBOUNDLEM} we see that \eqref{AAA5} is
\begin{align}\label{AAA3}
\ll_\ve Ly^{\frac12}\sum_{j=0}^\infty q_j^{\ve-1}
\Bigl(1+\log^+\Bigl(\frac{L}{q_jq_{j+1}\sqrt y}\Bigr)\Bigr)^3
\fM^{1,2\ve}_{q_j\xi_2}\Bigl(\min\Bigl(\frac{q_{j+1}}L,\frac1{q_j\sqrt y}\Bigr)\Bigr).
\end{align}
Recall that $0<y<1$.
Now let $j_0\geq0$ be the unique index satisfying
\begin{align*}
q_{j_0}^4<y^{-\frac12}\leq q_{j_0+1}^4.
\end{align*}
Then the contribution from all $j<j_0$ in \eqref{AAA3} is (using Lemma \ref{FM1VEBOUNDLEM}, keeping $\ve\leq\frac14$)
\begin{align*}
\ll Ly^{\frac12}\sum_{j<j_0}(1+\log(Ly^{-1}))^3\frac{q_{j+1}}L\ll y^{\frac12}q_{j_0}(1+\log(Ly^{-1}))^3
<y^{\frac38}(1+\log(Ly^{-1}))^3.
\end{align*}
Also if $\frac{q_{j_0+1}}{Lq_{j_0}}\leq y^{-\frac14}$ then the contribution from $j=j_0$ in \eqref{AAA3} is
\begin{align*}
\ll Ly^{\frac12}q_{j_0}^{\ve}
\biggl(1+\log(y^{-1})+\log^+\Bigl(\frac{L}{q_{j_0+1}}\Bigr)\biggr)^3\frac{q_{j_0+1}}{Lq_{j_0}}
\ll_\ve Ly^{\frac14-\ve}.
\end{align*}
Finally the contribution from all $j>j_0$ to \eqref{AAA3} is
\begin{align*}
\ll Ly^{\frac12}(1+\log(y^{-1}))^3\sum_{j=j_0+1}^\infty q_j^{\ve-1}
\left.\begin{cases}
\frac{q_{j+1}}L(1+\log(\frac L{q_{j+1}}))^3&\text{if }\: q_{j+1}<L
\\[5pt]
\frac1{q_j\sqrt y}&\text{if }\: q_{j+1}\geq L
\end{cases}\right\}
\\
\ll Ly^{\frac12}(1+\log(y^{-1}))^3\biggl(1+y^{-\frac12}\sum_{j=j_0+1}^\infty q_j^{\ve-2}\biggr)
\ll_\ve Ly^{\frac14-\ve}.
\end{align*}
Now there remains \textit{at most one} $j$ to consider in the bound in \eqref{AAA3}:
namely that $j\geq0$, if any, which satisfies $q_j^4<y^{-\frac12}<(\frac{q_{j+1}}{Lq_j})^2$.
In the case when such a $j$ exists, let us write $q:=q_j$ and $q':=q_{j+1}$.
Collecting our bounds and recalling the definition of $\fM^{1,2\ve}_{q\xi_2}(X)$,
we have now proved that the right hand side of \eqref{AAA2} is
$\ll_\ve Ly^{\frac14-\ve}$ if the special denominator $q$ does not exist, and otherwise it is
\begin{align}\label{AAA4}
\ll_\ve Ly^{\frac14-\ve}+Ly^{\frac12}q^{\ve-1}\Bigl(1+\log^+\Bigl(\frac{L}{q'q\sqrt y}\Bigr)\Bigr)^3 
\sum_{n=1}^\infty n^{2\ve-1}
\Bigl(1+\log^+\Bigl(\frac{X\langle nq\xi_2\rangle}{n}\Bigr)\Bigr)\min\Bigl(\frac X{n},\frac{1}{\langle nq\xi_2\rangle}\Bigr)
\end{align}
where
\begin{align*}
X:=\min\Bigl(\frac1{q\sqrt y},\frac{q'}L\Bigr)>1.
\end{align*}

For the rest of this discussion we will assume that the special denominator $q$ exists.
In close analogy to what we have shown for \eqref{AAA2}, we will see that 
the vast majority of the terms in the sum in \eqref{AAA4} can be bounded trivially.
First, note that the total contribution to \eqref{AAA4} from all $n\in\Z^+$ with
$\langle nq\xi_2\rangle\geq(2n)^{-1}$ is 
\begin{align}\label{AAA4a}
\ll_\ve Ly^{\frac12}q^{\ve-1}\Bigl(1+\log^+\Bigl(\frac{L}{q'q\sqrt y}\Bigr)\Bigr)^3 (1+\log X)X^{\frac12+\ve}
\ll_\ve Ly^{\frac14-\ve},
\end{align}
where the last relation holds since $\log^+(\frac{L}{q'q\sqrt y})\leq\log(y^{-1})$ and $X\leq y^{-\frac12}$.
From now on we assume, without loss of generality, that \textit{$\xi_2$ is irrational}
(just as we did for $\xi_1$ on p.\ \pageref{XI2IRRASS}).
Let $\frac{r_j}{s_j}$ for $j\in\Z_{\geq0}$ be the $j$th convergent of the continued fraction expansion of $q\xi_2$.
Then each $n\in\Z^+$ satisfying $\langle nq\xi_2\rangle<(2n)^{-1}$ is of the form $n=\ell s_j$
for some $j\in\Z_{\geq0}$ and some $\ell\in\Z^+$
which is so small that $\langle nq\xi_2\rangle=\ell\langle s_jq\xi_2\rangle$ and thus
$\frac{\ell}{2s_{j+1}}<\langle nq\xi_2\rangle<\frac\ell{s_{j+1}}$.
Hence the total contribution from all these $n$ to the sum in \eqref{AAA4} is
\begin{align}\notag
\ll\sum_{j=0}^\infty\sum_{\ell=1}^\infty (\ell s_j)^{2\ve-1}\Bigl(1+\log^+\Bigl(\frac{X}{s_js_{j+1}}\Bigr)\Bigr)
\ell^{-1}\min\Bigl(\frac X{s_j},s_{j+1}\Bigr)
\\\label{AAA6}
\ll\sum_{j=0}^\infty s_j^{2\ve-1}\Bigl(1+\log^+\Bigl(\frac{X}{s_js_{j+1}}\Bigr)\Bigr)
\min\Bigl(\frac X{s_j},s_{j+1}\Bigr).
\end{align}
Next let $j_0\geq0$ be the unique index satisfying $s_{j_0}^4<X\leq s_{j_0+1}^4$.
Then the contribution from all $j<j_0$ in \eqref{AAA6} is
$\leq(1+\log X)\sum_{j<j_0}s_{j+1}\ll_\ve X^{\frac14+\ve}$.
Also if $\frac{s_{j_0+1}}{s_{j_0}}\leq\sqrt X$ then the contribution from $j=j_0$ in \eqref{AAA6} is
$\leq(1+\log X)s_{j_0}^{2\ve}\frac{s_{j_0+1}}{s_{j_0}}\ll_\ve X^{\frac12+\ve}$.
Finally the contribution from all $j>j_0$ in \eqref{AAA6} is
\begin{align*}
\leq X(1+\log X)\sum_{j=j_0+1}^\infty s_j^{2\ve-2}
\ll X(1+\log X)s_{j_0+1}^{2\ve-2}
\ll_\ve X^{\frac12+\ve}.
\end{align*}
Now there remains \textit{at most one} $j$ to consider in \eqref{AAA6}:
namely that $j\geq0$, if any, which satisfies $s_j^4<X<(\frac{s_{j+1}}{s_j})^2$.
In the case when such a $j$ exists, let us write $s:=s_j$ and $s':=s_{j+1}$.
Collecting our bounds (recalling also the last relation in \eqref{AAA4a}),
we have now proved that \eqref{AAA4} is
$\ll_\ve Ly^{\frac14-\ve}$ if the special denominator $s$ does not exist, and otherwise it is
\begin{align*}
&\ll_\ve Ly^{\frac14-\ve}+Ly^{\frac12}q^{\ve-1}\Bigl(1+\log^+\Bigl(\frac{L}{q'q\sqrt y}\Bigr)\Bigr)^3 
s^{2\ve-1}\Bigl(1+\log^+\Bigl(\frac{X}{ss'}\Bigr)\Bigr)\min\Bigl(\frac X{s},s'\Bigr).
\end{align*}
Recalling the definition of $X$, and writing 
$U=\frac{L}{q'q\sqrt y}$, $V=\frac 1{qss'\sqrt y}$, the above is seen to be
\begin{align}\notag
\leq Ly^{\frac14-\ve}+L(sq)^{2\ve-2}\frac{(1+\log(\max(1,U,V)))^4}{\max(1,U,V)}
\ll_\ve Ly^{\frac14-\ve}+L\bigl((sq)^{-2}\min(1,U^{-1},V^{-1})\bigr)^{1-\ve}
\\\label{LASTBOUND}
\leq L\bigl(y^{\frac14-\ve}+\fb_{\vecxi,L}(y)^{1-\ve}\bigr),
\end{align}
Here the last relation follows from \eqref{MYXILDEF},
since $q'<\langle q\xi_1\rangle^{-1}\leq s\langle sq\xi_1\rangle^{-1}$ and $s'<\langle sq\xi_2\rangle^{-1}$
and therefore, writing $q_0:=qs$, we have
$(qs)^{-2}U^{-1}<\frac{\sqrt y}{Lq_0\langle q_0\xi_1\rangle}$ and
$(qs)^{-2}V^{-1}<\frac{\sqrt y}{q_0\langle q_0\xi_2\rangle}$.
Taking $m=3$ it follows that \eqref{PROOF1STEP7newB} is
$\ll L\|\nu\|_{\L^\infty}\|f\|_{\C_{\b}^4}\bigl(y^{\frac14}+\fb_{\vecxi,L}(y)\bigr)^{1-4\ve}$.
Hence, replacing $\ve$ by $\frac14\ve$, we have now completed the proof of Theorem \ref{MAINTHM}.
\hfill $\square$ $\square$ $\square$ 

\begin{remark}\label{TRUEMAXREMARK}
Let us note that the last step in \eqref{LASTBOUND} is essentially sharp.
Indeed, if $\fb_{\vecxi,L}(y)>2y^{\frac14}$ (and $y<1$, $L\geq1$) then the ``special denominators'' $q,s$ introduced 
above do in fact exist, and
\begin{align}\label{TRUEMAXREMARKRES}
\fb_{\vecxi,L}(y)
=\min\Bigl(\frac1{(sq)^2},\frac{\sqrt y}{Ls^2q\langle q\xi_1\rangle},\frac{\sqrt y}{sq\langle sq\xi_2\rangle}\Bigr)
\end{align}
(and here $\frac12<q'\langle q\xi_1\rangle<1$ and $\frac12<s'\langle sq\xi_2\rangle<1$,
so that the expression in \eqref{TRUEMAXREMARKRES} is comparable with
$(sq)^{-2}\min(1,U^{-1},V^{-1})$ in the notation used in \eqref{LASTBOUND}).

To prove this claim, assume $\fb_{\vecxi,L}(y)> 2y^{\frac14}$ and let $q_0$ be a positive integer for which the maximum in
\eqref{MYXILDEF} is attained.
Then $\langle q_0\xi_1\rangle<\frac{y^{1/4}}{2Lq_0}<\frac1{2q_0}$; therefore there exist
$q,s\in\Z^+$ such that $q_0=sq$,
$q$ is a denominator of a convergent of the continuous fraction expansion of $\xi_1$ 
and $\langle q_0\xi_1\rangle=s\langle q\xi_1\rangle$.
It follows that \eqref{TRUEMAXREMARKRES} holds for these $q,s$.
Next note that also
$\langle sq\xi_2\rangle<\frac{y^{1/4}}{2sq}<\frac1{2s}$;
therefore there exist $\widetilde{s},k\in\Z^+$ such that 
$s=k\widetilde{s}$, $\widetilde{s}$ is a denominator of a convergent of 
$q\xi_2$,
and $\langle sq\xi_2\rangle=k\langle \widetilde{s}q\xi_2\rangle$;
and now the assumption that the maximum in \eqref{MYXILDEF} is attained at $q_0$ forces $k=1$,
i.e.\ $s$ itself is a denominator of a convergent of $q\xi_2$.
Finally note that since %
\eqref{TRUEMAXREMARKRES} is larger than $2y^{\frac14}$, we have $(sq)^4<\frac14y^{-\frac12}$,
$(\frac1{Lq\langle q\xi_1\rangle})^2>4\frac{s^4}{\sqrt y}$ and
$(\frac1{s\langle sq\xi_2\rangle})^2>4\frac{q^2}{\sqrt y}$,
and these inequalities imply that $q,s$ are in fact the ``special denominators'' introduced above.
\end{remark}

\section{Basic properties of the majorant $\fb_{\vecxi,L}(y)$}
\label{MAJORANTPROPSEC}

In this section we will note some basic properties of the majorant $\fb_{\vecxi,L}(y)$ appearing in the
bound in our Theorems \ref{MAINABTHM} and \ref{MAINTHM}.
This is helpful for clarifying the content of those theorems in certain parameter regimes;
we will also make use of the facts proved here %
in our treatment of general $U^\R$-orbits in the next section. %

\begin{lem}\label{EORDERCHANGELEM}
For any $\vecxi\in\R^2$ and any $L_1,L_2,y_1,y_2>0$ we have
\begin{align*}
\min\bigl(\tfrac{L_2}{L_1},1\bigr)\min\bigl(\tfrac{y_1}{y_2},1\bigr)^{\frac12}\fb_{\vecxi,L_2}(y_2)\leq \fb_{\vecxi,L_1}(y_1)\leq
\max\bigl(\tfrac{L_2}{L_1},1\bigr)\max\bigl(\tfrac{y_1}{y_2},1\bigr)^{\frac12}\fb_{\vecxi,L_2}(y_2).
\end{align*}
\end{lem}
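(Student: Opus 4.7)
The plan is to reduce the lemma to a simple termwise comparison inside the maximum defining $\fb_{\vecxi,L}(y)$, and then deduce the lower bound from the upper bound by symmetry. Set $A:=L_2/L_1$ and $B:=\sqrt{y_1/y_2}$. The crucial observation from the definition \eqref{MYXILDEF} is that only the last two entries of the minimum depend on $L$ and $y$, and they scale linearly in $1/L$ and in $\sqrt{y}$ respectively. Consequently, for each fixed $q\in\Z^+$, the three numbers
\begin{align*}
\tfrac1{q^2},\qquad \tfrac{\sqrt{y_1}}{L_1q\langle q\xi_1\rangle},\qquad \tfrac{\sqrt{y_1}}{q\langle q\xi_2\rangle}
\end{align*}
are obtained from the corresponding triple with $(L_2,y_2)$ in place of $(L_1,y_1)$ by multiplication by the factors $1$, $AB$, and $B$, respectively.

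The proof now reduces to the following elementary inequality: for all $a,b,c\geq 0$ and $A,B>0$,
\begin{align*}
\min(a,AB\,b,B\,c)\leq \max(A,1)\,\max(B,1)\,\min(a,b,c).
\end{align*}
To verify this, let $M:=\min(a,b,c)$ and consider three cases. If $M=a$, then the left-hand side is $\leq a=M\leq \max(A,1)\max(B,1)M$. If $M=b$, then the left-hand side is $\leq AB\,b=AB\,M\leq \max(A,1)\max(B,1)M$. If $M=c$, then the left-hand side is $\leq B\,c=B\,M\leq \max(A,1)\max(B,1)M$. This is the only step requiring any thought, and it is mechanical.

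Applying this inequality to each $q$ and taking the supremum over $q$ gives the upper bound
\begin{align*}
\fb_{\vecxi,L_1}(y_1)\leq \max\bigl(\tfrac{L_2}{L_1},1\bigr)\max\bigl(\tfrac{y_1}{y_2},1\bigr)^{\frac12}\,\fb_{\vecxi,L_2}(y_2).
\end{align*}
To obtain the lower bound, I will apply the already-established upper bound with the roles of $(L_1,y_1)$ and $(L_2,y_2)$ interchanged. This replaces $A$ and $B$ by their reciprocals, and rearranging the resulting inequality turns the factor $\max(L_1/L_2,1)\max(y_2/y_1,1)^{1/2}$ on the right into $\min(L_2/L_1,1)^{-1}\min(y_1/y_2,1)^{-1/2}$, which yields the required lower bound upon division. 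No obstacle arises beyond the three-case verification above.
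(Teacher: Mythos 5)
Your proof is correct and is just the spelled-out version of what the paper does: the paper's proof is simply ``immediate from the definition \eqref{MYXILDEF}'', i.e.\ the same termwise scaling comparison of the three entries of the minimum for each $q$, followed by taking the maximum over $q$ and using symmetry in $(L_1,y_1)\leftrightarrow(L_2,y_2)$ for the other inequality. No issues; the argument also goes through unchanged under the paper's convention that an entry with $\langle q\xi_i\rangle=0$ is removed from the minimum.
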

\begin{proof}
This is immediate from the definition, \eqref{MYXILDEF}.
\end{proof}
In particular replacing $L$ and/or $y$ by numbers of the same order of magnitude
does not change the order of magnitude of $\fb_{\vecxi,L}(y)$; we will use this fact several times in 
Section \ref{NONCLOSEDSECTION} %
without explicit mention.

\begin{lem}\label{SHIFTXIFREEDOMLEM}
For any $0<y<1$, $\vecxi\in\R^2$, $L\geq1$, and any integer $n\in[-L,L]$, we have
$\fb_{\vecxi U^n,L}(y)\asymp\fb_{\vecxi,L}(y)$.
\end{lem}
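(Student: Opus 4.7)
The plan is to exploit the elementary inequality
\begin{align*}
\langle q(n\xi_1+\xi_2)\rangle \leq |n|\,\langle q\xi_1\rangle + \langle q\xi_2\rangle \qquad (\forall q\in\Z^+),
\end{align*}
which follows from $\langle k\alpha\rangle\leq|k|\langle\alpha\rangle$ (for $k\in\Z$) together with the triangle inequality for $\langle\cdot\rangle$.

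First I would observe that $\vecxi U^n=(\xi_1,\,n\xi_1+\xi_2)$, so only the second coordinate changes; in particular the ``$1/q^2$'' and ``$\sqrt y/(Lq\langle q\xi_1\rangle)$'' entries in the definition \eqref{MYXILDEF} of $\fb_{\vecxi U^n,L}(y)$ are literally identical to those in $\fb_{\vecxi,L}(y)$. Next I would pick $q\in\Z^+$ attaining the max in the definition of $b:=\fb_{\vecxi,L}(y)$. By that choice, $q\leq b^{-1/2}$, $\langle q\xi_1\rangle\leq \sqrt y/(Lqb)$, and $\langle q\xi_2\rangle\leq \sqrt y/(qb)$ (with the obvious interpretation when one of these distances to $\Z$ vanishes).

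The heart of the argument is then a one-line estimate: combining the above with $|n|\leq L$ gives
\begin{align*}
\langle q(n\xi_1+\xi_2)\rangle \leq L\cdot\frac{\sqrt y}{Lqb}+\frac{\sqrt y}{qb}=\frac{2\sqrt y}{qb},
\end{align*}
so $\sqrt y/(q\langle q(n\xi_1+\xi_2)\rangle)\geq b/2$. Since the other two entries in the min defining $\fb_{\vecxi U^n,L}(y)$ at this same $q$ are $\geq b$, we conclude $\fb_{\vecxi U^n,L}(y)\geq b/2$. The reverse inequality follows by symmetry: apply the same bound with $\vecxi$ replaced by $\vecxi':=\vecxi U^n$ and $n$ replaced by $-n$, which still satisfies $|-n|\leq L$, using $\vecxi' U^{-n}=\vecxi$.

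I do not expect any serious obstacle. The only point requiring (minor) care is the degenerate case $\langle q\xi_i\rangle=0$ for some $i\in\{1,2\}$, in which the corresponding term is removed from the min by the convention stated after \eqref{MYXILDEF}; here $q\xi_i\in\Z$ and the triangle-inequality step remains valid (indeed, one of the summands on the right of the elementary inequality simply vanishes), so the argument goes through without change.
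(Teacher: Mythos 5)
Your proof is correct and is essentially the paper's own argument: both rest on the inequality $\langle q(n\xi_1+\xi_2)\rangle\le L\langle q\xi_1\rangle+\langle q\xi_2\rangle$ for $|n|\le L$, yielding the factor $\tfrac12$, followed by the symmetric application to $\vecxi U^n$ and $-n$. The only cosmetic difference is that you evaluate at a maximizing $q$ for $\fb_{\vecxi,L}(y)$, whereas the paper compares the three-term minima for every $q$ simultaneously and then takes the maximum; both routes are equally valid.
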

\begin{proof}
For every $q\in\Z^+$,
$\langle q(n\xi_1+\xi_2)\rangle\leq L\langle q\xi_1\rangle+\langle q\xi_2\rangle$; thus
$\min(\frac1{q^2},\frac{\sqrt y}{Lq\langle q\xi_1\rangle},\frac{\sqrt y}{q\langle q(n\xi_1+\xi_2)\rangle})
\geq\frac12\min(\frac1{q^2},\frac{\sqrt y}{Lq\langle q\xi_1\rangle},\frac{\sqrt y}{q\langle q\xi_2\rangle})$.
Therefore $\fb_{\vecxi U^n,L}(y)\geq\frac12\fb_{\vecxi,L}(y)$.
Similarly $\fb_{\vecxi,L}(y)\geq\frac12\fb_{\vecxi U^n,L}(y)$.
\end{proof}

The following lemma is in principle contained in the discussion on the last pages of Section~\ref{NEWBOUNDSEC}.
For clarity, we write out the short proof here.

\begin{lem}\label{EXILBASICLEM1}
Let $0<y<1$, $\vecxi\in\R^2$ and $L\geq1$, and assume that $\fb_{\vecxi,L}(y)>2y^{\frac14}$.
Let $q_0$ be a positive integer where the maximum in \eqref{MYXILDEF} is attained.
Then every positive integer $q$ such that 
$\min(\frac1{q^2},\frac{\sqrt y}{Lq\langle q\xi_1\rangle},\frac{\sqrt y}{q\langle q\xi_2\rangle})>2y^{\frac14}$
must be of the form $q=mq_0$ for some $m\in\Z^+$ which is so small that
$\langle q\xi_1\rangle=m\langle q_0\xi_1\rangle$ and $\langle q\xi_2\rangle=m\langle q_0\xi_2\rangle$
(in particular $q_0$ is uniquely determined).
\end{lem}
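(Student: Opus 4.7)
The plan is to re-run the structural analysis of Remark \ref{TRUEMAXREMARK}---which there treats only the maximizer $q_0$---on an arbitrary integer $\tilde q$ satisfying the hypothesis, and to show it produces exactly the same factorization as for $q_0$. Recall from that remark that $q_0=sq_*$, where $q_*$ is the unique convergent denominator of $\xi_1$ with $q_*^4<y^{-1/2}$ and $q_*'>Lq_*y^{-1/4}$, and $s$ is the unique convergent denominator of $q_*\xi_2$ with $s^4<X<(s'/s)^2$ for $X:=\min(y^{-1/2}/q_*,q_*'/L)$; moreover $\langle q_0\xi_1\rangle=s\langle q_*\xi_1\rangle$ and $\langle q_0\xi_2\rangle=\langle sq_*\xi_2\rangle$. (Here $q_*',s'$ denote the convergent denominators immediately following $q_*$ and $s$.)

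From the hypothesis on $\tilde q$ one extracts $\tilde q<y^{-1/8}/\sqrt2$ together with $\langle\tilde q\xi_1\rangle<y^{1/4}/(2L\tilde q)$ and $\langle\tilde q\xi_2\rangle<y^{1/4}/(2\tilde q)$, both $<1/(2\tilde q)$. Legendre's theorem, applied exactly as in Remark \ref{TRUEMAXREMARK}, then writes $\tilde q=d\,\tilde q_*$ with $\tilde q_*$ a convergent denominator of $\xi_1$ and $\langle\tilde q\xi_1\rangle=d\langle\tilde q_*\xi_1\rangle$. The bound $\tilde q_*\langle\tilde q_*\xi_1\rangle<y^{1/4}/(2L)$ combined with the classical estimate $\langle\tilde q_*\xi_1\rangle>1/(2\tilde q_*')$ forces $\tilde q_*'>L\tilde q_*y^{-1/4}$; together with $\tilde q_*<y^{-1/8}$ and the uniqueness argument recorded in the main text below \eqref{AAA3}, this yields $\tilde q_*=q_*$, so $\tilde q=dq_*$. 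Applying the same mechanism to $dq_*\xi_2$ produces a decomposition $d=\hat m\hat s$ where $\hat s$ is a convergent denominator of $q_*\xi_2$ and $\langle\tilde q\xi_2\rangle=\hat m\langle\hat sq_*\xi_2\rangle$.

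The crux is to show $\hat s=s$, which I would do by ruling out the two alternatives. If $\hat s>s$, then $\hat s\ge s'>sX^{1/2}$, so $\tilde q\ge\hat sq_*>q_0X^{1/2}$; since $q_*<y^{-1/8}$ makes $y^{-1/2}/q_*\ge q_*y^{-1/4}$ and the characterization of $q_*$ makes $q_*'/L>q_*y^{-1/4}$, we have $X\ge q_*y^{-1/4}$, whence $\tilde q^2>q_0^2q_*y^{-1/4}$; this contradicts $\tilde q^2<y^{-1/4}/2$, since $q_0^2q_*\ge1$. If instead $\hat s<s$, the next convergent denominator of $q_*\xi_2$ after $\hat s$ is at most $s$, so $\langle\hat sq_*\xi_2\rangle>1/(2s)$; substituting into $\tilde q\langle\tilde q\xi_2\rangle<y^{1/4}/2$ gives $s>\hat m^2\hat sq_*/y^{1/4}\ge q_*/y^{1/4}$, and combined with $sq_*<y^{-1/8}$ this forces $q_*^2<y^{1/8}$, again impossible for $q_*\ge1$ and $y<1$. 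Hence $\hat s=s$, so $d=\hat ms$ and $\tilde q=\hat mq_0$; setting $m:=\hat m$, the identities $\langle\tilde q\xi_i\rangle=m\langle q_0\xi_i\rangle$ ($i=1,2$) follow by unwinding the two decompositions. Uniqueness of $q_0$ is then immediate: for any other maximizer $q_0'$ the conclusion gives $q_0'=mq_0$ with matching $\langle\cdot\rangle$-identities, so each of the three entries of the minimum at $q_0'$ is $m^{-2}$ times the corresponding entry at $q_0$, forcing $m=1$. The main obstacle is the $\hat s$-versus-$s$ dichotomy, which requires combining the rather delicate description of $s$ via $X$ with the classical lower bound $\langle q_j\alpha\rangle>1/(2q_{j+1})$ for consecutive convergent denominators of both $\xi_1$ and $q_*\xi_2$.
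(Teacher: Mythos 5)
Your argument is correct, but it takes a genuinely different route from the paper's own proof. The paper does not invoke Remark \ref{TRUEMAXREMARK} at all: it applies the Legendre criterion to $\xi_1$ and $\xi_2$ \emph{separately}, noting that any admissible $q$ satisfies $q<y^{-1/8}$ and $\langle q\xi_j\rangle<\frac1{2q}$, hence $q=m_jq_j$ with $q_j$ a convergent denominator of $\xi_j$ and $\langle q\xi_j\rangle=m_j\langle q_j\xi_j\rangle$; the bounds force $q_j$ to be the largest convergent denominator of $\xi_j$ below $y^{-1/8}$, independently of $q$, and the paper then sets $q_0=\operatorname{lcm}(q_1,q_2)$, reads off the two $\langle\cdot\rangle$-identities, and deduces that the maximizer is exactly this lcm. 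You instead anchor the proof on the asymmetric description of the maximizer $q_0=sq_*$ from Remark \ref{TRUEMAXREMARK} (convergents of $\xi_1$, then of $q_*\xi_2$) and match the analogous factorization of an arbitrary admissible $\tilde q$ against it; the price is the $\hat s$-versus-$s$ dichotomy, but your two elimination steps do check out (in the case $\hat s>s$ the chain $\tilde q^2>q_0^2X\geq q_0^2q_*y^{-1/4}\geq y^{-1/4}$ contradicts $\tilde q^2<\tfrac12y^{-1/4}$, and in the case $\hat s<s$ the bounds $\langle\hat sq_*\xi_2\rangle>\frac1{2s}$, $sq_*<y^{-1/8}$ give the impossible $q_*^2<y^{1/8}$), and the identities $\langle\tilde q\xi_i\rangle=m\langle q_0\xi_i\rangle$ and the uniqueness of the maximizer follow as you say. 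The paper's route buys brevity, symmetry in $\xi_1,\xi_2$, and independence from Section \ref{NEWBOUNDSEC}; yours buys the lemma as a direct corollary of the structure already established for the maximizer. One omission to repair: like the paper's proof (which opens by assuming $\xi_1,\xi_2\notin\Q$ and disposing of the rational cases by a limit argument), your use of the next convergents $q_*'$, $s'$ and of the lower bound $\langle q_j\alpha\rangle>\frac1{2q_{j+1}}$ tacitly requires $\xi_1$ and $q_*\xi_2$ to be irrational, so you should state this reduction explicitly, since the lemma is asserted for all $\vecxi\in\R^2$.
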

\begin{proof}
We assume $\xi_1,\xi_2\notin\Q$; the cases when $\xi_1\in\Q$ or $\xi_2\in\Q$ can then be treated by
a limit argument.   %
Let $q$ be a positive integer satisfying
$\min(\frac1{q^2},\frac{\sqrt y}{Lq\langle q\xi_1\rangle},\frac{\sqrt y}{q\langle q\xi_2\rangle})>2y^{\frac14}$.
Then $\langle q\xi_1\rangle<\frac{y^{1/4}}{2Lq}<\frac1{2q}$ and $\langle q\xi_2\rangle<\frac{y^{1/4}}{2q}<\frac1{2q}$.
Therefore for both $j=1,2$, we have $q=m_jq_j$ for some $m_j,q_j\in\Z^+$ such that $q_j$ is
a denominator of a convergent of the continuous fraction expansion of $\xi_j$,
and $\langle q\xi_j\rangle=m_j\langle q_j\xi_j\rangle$.
Note that $q_j\leq q<y^{-\frac18}$ and also,
if we denote by $q_j'$ the denominator of the ``next'' convergent of $\xi_j$, then
$\frac1{2q_j'}<\langle q_j\xi_j\rangle<\frac{y^{1/4}}2$ and thus $q_j'>y^{-\frac14}>y^{-\frac18}$.
Hence $q_1$ and $q_2$ are uniquely determined for our given $\vecxi,L,y$;
namely, $q_j$ equals the largest number $<y^{-\frac18}$ among 
all the denominators of the convergents of $\xi_j$.
Let $q_0$ be the least common multiple of these two numbers $q_1,q_2$.
It then follows that any number $q$ as above must be of the form $q=mq_0$ for some $m\in\Z^+$
so small that $\langle q\xi_1\rangle=m\langle q_0\xi_1\rangle$ and $\langle q\xi_2\rangle=m\langle q_0\xi_2\rangle$;
thus also $q_0$ is the unique number at which the maximum in \eqref{MYXILDEF} is attained.
\end{proof}

Let us define
\begin{align}\label{BTILDEDEF}
\widetilde \fb_{\vecxi,L}(y)=\fb_{\vecxi,L}(y)+y^{\frac14}.
\end{align}
Note that the obvious analogues of Lemmata \ref{EORDERCHANGELEM} and \ref{SHIFTXIFREEDOMLEM} also hold for $\widetilde\fb$.

\begin{lem}\label{GOODMSUMLEMNEW}
Let $0<\eta<1$. For any $0<y<1$, $\vecxi\in\R^2$, $L\geq1$ we have
\begin{align}\label{GOODMSUMLEMNEWRES}
\sum_{|n|\leq L}\widetilde \fb_{\vecxi U^n,1}(y)^\eta\asymp_\eta L\widetilde \fb_{\vecxi,L}(y)^\eta.
\end{align}
\end{lem}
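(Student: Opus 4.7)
Plan: The asymptotic equivalence splits into a lower and an upper bound. The lower bound is a short application of Lemma~\ref{EXILBASICLEM1}, while the upper bound is the main obstacle and will require a distribution‑function argument reducing to a Diophantine counting estimate.

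For the lower bound I split into two cases. If $\fb_{\vecxi,L}(y)\leq 2y^{1/4}$, then $\widetilde\fb_{\vecxi,L}(y)\asymp y^{1/4}$, and since each $\widetilde\fb_{\vecxi U^n,1}(y)\geq y^{1/4}$ the sum is trivially at least $(2L+1)y^{\eta/4}\gg L\widetilde\fb_{\vecxi,L}(y)^\eta$. Otherwise write $b=\fb_{\vecxi,L}(y)>2y^{1/4}$ and let $q_0$ be the unique maximizer supplied by Lemma~\ref{EXILBASICLEM1}. The defining bounds $1/q_0^2\geq b$, $\langle q_0\xi_1\rangle\leq \sqrt y/(Lq_0 b)$, and $\langle q_0\xi_2\rangle\leq \sqrt y/(q_0 b)$, combined with the triangle inequality $\langle q_0(n\xi_1+\xi_2)\rangle\leq L\langle q_0\xi_1\rangle+\langle q_0\xi_2\rangle\leq 2\sqrt y/(q_0 b)$, show that evaluating the defining expression of $\fb_{\vecxi U^n,1}(y)$ at $q=q_0$ yields a value at least $b/2$ for every $|n|\leq L$. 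Summing the $\eta$-th powers gives the desired lower bound (degenerate cases in which $\langle q_0\xi_1\rangle$ or $\langle q_0\xi_2\rangle$ vanishes are handled similarly, the corresponding entries of the minimum simply being removed).

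For the upper bound I will use the layer-cake identity
\begin{align*}
\sum_{|n|\leq L}\widetilde\fb_{\vecxi U^n,1}(y)^\eta=\eta\int_0^\infty \lambda^{\eta-1}N(\lambda)\,d\lambda,\qquad N(\lambda):=\#\{|n|\leq L : \widetilde\fb_{\vecxi U^n,1}(y)>\lambda\},
\end{align*}
split at a threshold $\lambda_0:=C\widetilde\fb_{\vecxi,L}(y)$ for a sufficiently large absolute constant $C$. The contribution from $\lambda\leq\lambda_0$ is controlled by the trivial bound $N(\lambda)\leq 2L+1$, yielding $O(L\widetilde\fb_{\vecxi,L}(y)^\eta)$. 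The contribution from $\lambda>\lambda_0$ reduces, on account of $\eta<1$, to establishing the key estimate $N(\lambda)\ll L\widetilde\fb_{\vecxi,L}(y)/\lambda$; granted this, integrating $\lambda^{\eta-2}$ on $[\lambda_0,\infty)$ yields the matching bound $O(L\widetilde\fb_{\vecxi,L}(y)^\eta)$.

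The main obstacle is to prove $N(\lambda)\ll L\widetilde\fb_{\vecxi,L}(y)/\lambda$ for $\lambda>\lambda_0$; note that $C\geq 3$ already ensures $\lambda>2y^{1/4}$ and so $\fb_{\vecxi U^n,1}(y)>2y^{1/4}$ for all $n$ counted by $N(\lambda)$. For each such $n$, Lemma~\ref{EXILBASICLEM1} applied to $\vecxi U^n$ with the $L$-parameter replaced by $1$ supplies a denominator $q<\lambda^{-1/2}$ satisfying $\langle q\xi_1\rangle<\sqrt y/(q\lambda)$ and $\langle q(n\xi_1+\xi_2)\rangle<\sqrt y/(q\lambda)$; moreover $q$ must be (up to a small factor) a denominator of a convergent of $\xi_1$ smaller than $y^{-1/8}$. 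Grouping the $n$'s by this $q$ and invoking a standard Koksma-type counting estimate for $\#\{|n|\leq L : \langle n(q\xi_1)+q\xi_2\rangle<\sqrt y/(q\lambda)\}$ produces a sum of the shape $\sum_q\bigl(L\sqrt y/(q\lambda\langle q\xi_1\rangle)+L\sqrt y/(q\lambda)+1\bigr)$. Telescoping this sum using the Fibonacci-type growth of convergent denominators of $\xi_1$, and re-interpreting the surviving terms via Lemma~\ref{EXILBASICLEM1} applied to $\vecxi$ itself so as to recognise the maximizer of $\fb_{\vecxi,L}(y)$, delivers the desired factor $\widetilde\fb_{\vecxi,L}(y)$ and completes the argument.
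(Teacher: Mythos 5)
Your lower bound is correct (and close to the easy half of the paper's argument: evaluate the defining minimum for $\fb_{\vecxi U^n,1}(y)$ at the maximizer $q_0$ of $\fb_{\vecxi,L}(y)$ and use the triangle inequality). The problem is the upper bound: your entire argument rests on the endpoint weak-type estimate $N(\lambda)\ll L\widetilde\fb_{\vecxi,L}(y)/\lambda$ for $\lambda\geq C\widetilde\fb_{\vecxi,L}(y)$, and this is exactly where the real work of the lemma lies; the sketch you give does not establish it. Concretely: (i) the counting bound you quote is wrong — the number of $|n|\leq L$ with $\langle n(q\xi_1)+q\xi_2\rangle<\sqrt y/(q\lambda)$ is $\ll \frac{L\sqrt y}{q\lambda}+\frac{\sqrt y}{q\lambda\langle q\xi_1\rangle}+L\langle q\xi_1\rangle+1$, not $\frac{L\sqrt y}{q\lambda\langle q\xi_1\rangle}+\cdots$; with your extra factor $L$ the very first term, already at the single maximizer $q$ of $\fb_{\vecxi,L}(y)$, is of size $\frac{L^2\fb_{\vecxi,L}(y)}{\lambda}$ (in the situation of the paper's Case 1b one has $\fb_{\vecxi,L}(y)\asymp\frac{\sqrt y}{Lq\langle q\xi_1\rangle}$), i.e.\ it overshoots the target by a factor $L$, and no telescoping over convergents can repair that. (ii) Even with the corrected count, turning $\frac{\sqrt y}{q\langle q\xi_1\rangle}$ into $L\fb_{\vecxi,L}(y)$ is not a continued-fraction fact about $\xi_1$ at all: it requires using the standing assumption $\lambda\geq C\widetilde\fb_{\vecxi,L}(y)$ together with the existence of a counted $n$ (which forces $\langle q\xi_2\rangle\leq\sqrt y/(q\lambda)+L\langle q\xi_1\rangle$ and hence a lower bound on $\fb_{\vecxi,L}(y)$); this interplay between the two coordinates is precisely what the paper's Case 1a/1b and the covering construction of its Case 2 handle, and it is absent from your sketch. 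Moreover, for $\lambda\geq Cy^{1/4}$ (with $C$ a suitable absolute constant) all admissible $q$ are multiples of one and the same convergent denominator of $\xi_1$, so "Fibonacci-type growth of the convergents of $\xi_1$" gives you nothing to telescope; the sum is over the multiples $\ell q_*$ of a single $q_*$. (iii) The $+1$ terms accumulate over as many as $\asymp\lambda^{-1/2}/q_*$ admissible moduli — equivalently, over the possibly many distinct maximizers of $\fb_{\vecxi U^n,1}(y)$ as $n$ varies, which is exactly the difficulty the paper's Case 2 covering argument (the minimal family of intervals $[a(n),b(n)]$) is designed to control — and you give no reason why their total is $\ll L\widetilde\fb_{\vecxi,L}(y)/\lambda$.

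Be aware also that what you are claiming is a genuine endpoint statement: the corresponding strong-type bound at $\eta=1$, namely $\sum_{|n|\leq L}\widetilde\fb_{\vecxi U^n,1}(y)\ll L\widetilde\fb_{\vecxi,L}(y)$, is false in general (it fails by a factor $\log L$ already in the model case $q=1$, $\langle\xi_1\rangle\asymp\sqrt y$, $\langle\xi_2\rangle$ negligible), which is why the lemma is stated only for $\eta<1$. Your weak-type inequality may well be true, but proving it requires essentially the full strength of the paper's two-case analysis (same maximizer $q$ for all $|n|\leq L$ when $\fb_{\vecxi,L}(y)\geq8y^{1/4}$; the covering argument when $\fb_{\vecxi,L}(y)<8y^{1/4}$), so as written the proposal replaces the hard half of the lemma by an unproved assertion.
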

\begin{proof}
\textbf{Case 1.} Assume $\fb_{\vecxi,L}(y)\geq8y^{\frac14}$.
Let $q$ be the unique positive integer at which the maximum in \eqref{MYXILDEF} %
is attained (cf.\ Lemma \ref{EXILBASICLEM1}).
In particular then $q^2\leq\frac1{8y^{1/4}}$, $\langle q\xi_1\rangle\leq\frac{y^{1/4}}{8Lq}$ and
$\langle q\xi_2\rangle\leq\frac{y^{1/4}}{8q}$.
We claim that, with the same $q$, for every integer $n$ with $|n|\leq L$,
\begin{align}\label{GOODMSUMLEMNEWPF1}
\fb_{\vecxi U^n,1}(y)=\min\Bigl(\frac1{q^2},\frac{\sqrt y}{q\langle q\xi_1\rangle},
\frac{\sqrt y}{q\langle q(n\xi_1+\xi_2)\rangle}\Bigr).
\end{align}
To prove this, note that
\begin{align*}
\langle q(n\xi_1+\xi_2)\rangle\leq |n|\langle q\xi_1\rangle+\langle q\xi_2\rangle
\leq L\langle q\xi_1\rangle+\langle q\xi_2\rangle\leq\frac{y^{1/4}}{4q}.
\end{align*}
Hence the right hand side of \eqref{GOODMSUMLEMNEWPF1} is $\geq4y^{\frac14}$,
and by Lemma \ref{EXILBASICLEM1}, we have $q=mq_0$ for some $m\in\Z^+$,
where $q_0$ is the positive integer for which the maximum for $\fb_{\vecxi U^n,1}(y)$ is attained,
and $\langle q\xi_1\rangle=m\langle q_0\xi_1\rangle$
and $\langle q(n\xi_1+\xi_2)\rangle=m\langle q_0(n\xi_1+\xi_2)\rangle$.
Now
\begin{align*}
\langle q_0\xi_2\rangle=\langle q_0(n\xi_1+\xi_2-n\xi_1)\rangle
\leq \frac1m\langle q(n\xi_1+\xi_2)\rangle+\frac{|n|}m\langle q\xi_1\rangle
<\frac1{4m}+\frac L{8Lm}<\frac1{2m},
\end{align*}
so that $\langle q\xi_2\rangle=m\langle q_0\xi_2\rangle$.
Therefore
$\min(\frac1{q^2},\frac{\sqrt y}{Lq\langle q\xi_1\rangle},\frac{\sqrt y}{q\langle q\xi_2\rangle})
=m^{-2}\min(\frac1{q_0^2},\frac{\sqrt y}{Lq_0\langle q_0\xi_1\rangle},\frac{\sqrt y}{q_0\langle q_0\xi_2\rangle})$,
so that by our choice of $q$, $m=1$ must hold.
Now \eqref{GOODMSUMLEMNEWPF1} is proved.

\textbf{Case 1a: $\langle q\xi_2\rangle\geq2L\langle q\xi_1\rangle$.}
Then $\fb_{\vecxi,L}(y)=\min(\frac1{q^2},\frac{\sqrt y}{q\langle q\xi_2\rangle})$,
and also for each $|n|\leq L$ we have
$\frac12\langle q\xi_2\rangle\leq\langle q(n\xi_1+\xi_2)\rangle\leq\frac32\langle q\xi_2\rangle$
and $\frac23\fb_{\vecxi,L}(y)\leq \fb_{\vecxi U^n,1}(y)\leq2\fb_{\vecxi,L}(y)$.
Hence \eqref{GOODMSUMLEMNEWRES} holds.

\textbf{Case 1b: $\langle q\xi_2\rangle<2L\langle q\xi_1\rangle$.}
Then $\frac12\min(\frac1{q^2},\frac{\sqrt y}{Lq\langle q\xi_1\rangle})\leq
\fb_{\vecxi,L}(y)\leq\min(\frac1{q^2},\frac{\sqrt y}{Lq\langle q\xi_1\rangle})$.
Note also that there is $\omega\in\{\pm1\}$ such that
$\langle q(n\xi_1+\xi_2)\rangle=\bigl|\omega n\langle q\xi_1\rangle+\langle q\xi_2\rangle\bigr|$ for every integer $n$
with $|n|\leq L$; hence our task is to prove:
\begin{align}\label{GOODMSUMLEMNEWPF2}
\sum_{|n|\leq L}\min\Bigl(\frac1{q^2},\frac{\sqrt y}{q\langle q\xi_1\rangle},
\frac{\sqrt y}{q\bigl|n\langle q\xi_1\rangle+\langle q\xi_2\rangle\bigr|}\Bigr)^\eta
\asymp_\eta L\min\Bigl(\frac1{q^2},\frac{\sqrt y}{Lq\langle q\xi_1\rangle}\Bigr)^\eta.
\end{align}
Set 
$A=\max(0,\frac{\langle q\xi_2\rangle}{\langle q\xi_1\rangle}-L)$.
Then $0\leq A<L$, and \eqref{GOODMSUMLEMNEWPF2} is
\begin{align*}
\asymp\sum_{n=1}^L\min\Bigl(\frac1{q^2},\frac{\sqrt y}{(A+n)q\langle q\xi_1\rangle}\Bigr)^\eta
=q^{-2\eta}\sum_{n=1}^L\min\Bigl(1,\frac B{A+n}\Bigr)^\eta,
\end{align*}
with $B=\frac{\sqrt y q}{\langle q\xi_1\rangle}$.
However the last expression is $\asymp_\eta q^{-2\eta} L\min\bigl(1,(B/L)^\eta\bigr)$,
uniformly over all $A\in[0,L]$.
Hence \eqref{GOODMSUMLEMNEWPF2} holds.

\textbf{Case 2:} Assume $\fb_{\vecxi,L}(y)<8y^{\frac14}$.
Then the right hand side of \eqref{GOODMSUMLEMNEWRES} is $\asymp_\eta Ly^{\eta/4}$,
and it now suffices to prove that $\sum_{|n|\leq L}\fb_{\vecxi U^n,1}(y)^\eta\ll_\eta Ly^{\eta/4}$.
For any integer $n$ satisfying $|n|\leq L$ and
$\fb_{\vecxi U^n,1}(y)>16y^{\frac14}$, we can do the following:
Let $L'\geq1$ be the largest number for which $\fb_{\vecxi U^n,L'}(y)\geq16y^{\frac14}$.
Then in fact $L'<L$ and $\fb_{\vecxi U^n,L'}(y)=16y^{\frac14}$, 
since $\fb_{\vecxi U^n,L}(y)\leq2\fb_{\vecxi,L}(y)<16y^{\frac14}$,
by the proof of Lemma \ref{SHIFTXIFREEDOMLEM}.
Furthermore, by what we proved in Case 1,
\begin{align*}
\sum_{n-L'\leq m\leq n+L'}\fb_{\vecxi U^m,1}(y)^\eta\ll_\eta L'\fb_{\vecxi U^n,L'}(y)^\eta\asymp L'y^{\eta/4}.
\end{align*}

It follows that for \textit{every} integer $n$ with $|n|\leq L$, there exist integers
$a(n)\leq n\leq b(n)$ satisfying $b(n)-n=n-a(n)<L$, such that
$\sum_{m=a(n)}^{b(n)}\fb_{\vecxi U^m,1}(y)^\eta\ll_\eta (b(n)-a(n)+1)y^{\eta/4}$.
(Indeed, if $\fb_{\vecxi U^n,1}(y)\leq16y^{\frac14}$ then take $a(n)=b(n)=n$.)

Now fix $F$ to be any subset of $\Z\cap[-L,L]$ which is minimal with the property that
$\cup_{n\in F} [a(n),b(n)]$ contains all $\Z\cap[-L,L]$.
Let us write $F=\{n_1,n_2,\ldots,n_r\}$ where $n_1<n_2<\ldots<n_r$.
Then $a(n_1)<a(n_2)<\ldots<a(n_r)$,
for otherwise, if $a(n_j)\geq a(n_{j+1})$ for some $1\leq j<r$,
then $[a(n_j),b(n_j)]\subset[a(n_{j+1}),b(n_{j+1})]$ and so
$\Z\cap[-L,L]\subset\cup_{n\in F\setminus\{n_j\}}[a(n),b(n)]$, contradicting the minimality of $F$.
Similarly $b(n_1)<b(n_2)<\ldots<b(n_r)$.
Next note that if $1\leq j\leq r-2$ and $b(n_j)\geq a(n_{j+2})$, then
$[a({n_{j+1}}),b({n_{j+1}})]\subset[a(n_j),b(n_j)]\cup[a({n_{j+2}}),b({n_{j+2}})]$,
so that $n_{j+1}$ could be removed from $F$, a contradiction.
Hence, for every $j\in\{1,\ldots,r-2\}$ we have $b(n_j)<a(n_{j+2})$.
It follows that $\sum_{\substack{1\leq j\leq r\\ j\text{ odd}}}(b(n_j)-a(n_j)+1)\leq b(n_r)-a(n_1)+1\ll L$,
and the same bound holds for the sum over all even $j$.
Hence
\begin{align*}
\sum_{|n|\leq L}\fb_{\vecxi U^n,1}(y)^\eta\leq
\sum_{j=1}^r\sum_{m=a(n_j)}^{b(n_j)}\fb_{\vecxi U^m,1}(y)^\eta
\ll_\eta \sum_{j=1}^r(b(n_j)-a(n_j)+1)y^{\eta/4}\ll Ly^{\eta/4}.
\end{align*}
\end{proof}

\begin{remark}\label{NEEDALFBETANEARZEROREM}
For any integer $n$ we have
\begin{align*}
\int_{\alpha}^{\beta}f\bigl(\Gamma(1_2,\vecxi)U^x a(y))\,dx
=\int_{\alpha-n}^{\beta-n}f\bigl(\Gamma(1_2,\vecxi U^n)U^x a(y))\,dx,
\end{align*}
since $U^{-n}\in\Gamma$.
Hence in Theorem \ref{MAINABTHM}, %
\eqref{MAINABTHMRES} holds more generally with the right hand side replaced by
\begin{align*}
C\|f\|_{\C_{\b}^8}\frac{L_n}{\beta-\alpha}\,\widetilde \fb_{\vecxi U^n,L_n}(y)^{1-\ve},
\qquad\text{with }\:
L_n:=\max(1,|\alpha-n|,|\beta-n|),
\end{align*}
where $n$ is an arbitrary integer.   %
It follows from Lemmata \ref{EORDERCHANGELEM} and \ref{SHIFTXIFREEDOMLEM} 
that among the choices of $n$, the best %
bound (to within an absolute constant)
is obtained for any $n$ such that the point $0$ lies within distance $\ll 1+|\beta-\alpha|$
from the interval $[\alpha-n,\beta-n]$.
\end{remark}

\begin{remark}\label{MAYSPLITREMARK}
Assume now that $|\beta-\alpha|$ is large, and that $[\alpha,\beta]$ has distance $\ll|\beta-\alpha|$ from $0$, 
in line with Remark \ref{NEEDALFBETANEARZEROREM}. %
One may then consider partitioning $[\alpha,\beta]$ into subintervals, 
applying Theorem \ref{MAINABTHM} to each of these individually, and then adding the results. %
It follows from Lemma~\ref{GOODMSUMLEMNEW} (and Lemma \ref{EORDERCHANGELEM})
that the resulting error bound is never better (to within an absolute constant) than the original one, %
\eqref{MAINABTHMRES}; 
and if each subinterval has length $\gg1$ then the resulting error bound is in fact 
\textit{equally good} as \eqref{MAINABTHMRES}.
\end{remark}

\section{General orbits}
\label{NONCLOSEDSECTION}

We will now prove the effective equidistribution result for arbitrary $U^\R$-orbits in $X$, Theorem~\ref{NONCLOSEDTHM2}.
The proof uses the technique of approximating nonclosed horocycles in $X'$ by pieces of closed horocycles,
in the precise form which was worked out in Sarnak and Ubis \cite[Sec.~2]{pSaU2011}.
We fix a left invariant Riemannian metric $d_G$ on $G$.
Recall that $G'=\SL(2,\R)\subset G$ and $\Gamma'=\SL(2,\Z)$.
\begin{prop}\label{SUAPPRLEM} 
[Sarnak and Ubis \cite{pSaU2011}.]
There is an absolute constant $C_1>0$ such that the following holds.
For every $M\in G'$ and $T\geq2$ there is some $\gamma=\gamma_{M,T}\in\Gamma'$ and numbers
$\alpha=\alpha_{M,T}\in\R$, $y=y_{M,T}>0$, $W=W_{M,T}\in\R$ and $\omega=\omega_{M,T}\in\{1,-1\}$ such that
\begin{align}\label{SUAPPRLEMRES1} 
\frac1{C_1y}\leq T\leq C_1|W|
\end{align}
and such that, writing $\ell(t)\equiv t$ if $\omega=1$ and $\ell(t)\equiv T-t$ if $\omega=-1$:
\begin{align}\label{SUAPPRLEMRES2} 
d_G\Bigl(\gamma^{-1} MU^{\ell(t)}\: , \: U^{\alpha+\frac{yW}{1- \omega  t/W}} %
a\bigl({\textstyle \frac{y}{(1- \omega  t/W)^2}}\bigr)\Bigr)
\ll\frac{|W|^{-1}}{|1- \omega  t/W|},\qquad\forall t\in[0,T],
\end{align}
and
\begin{align}\label{SUAPPRLEMRES3} 
-\tfrac12<\Re\bigl(\gamma^{-1} MU^{\ell(0)}(i)\bigr)\leq\tfrac12.
\end{align}
\end{prop}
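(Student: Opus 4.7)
The proposition is the approximation lemma of Sarnak and Ubis \cite[Sec.~2]{pSaU2011}, and my plan is to follow their geometric argument. The underlying picture: writing $\gamma^{-1}M = \smatr{a}{b}{c}{d}$ with $c\neq 0$, the orbit $\gamma^{-1}MU^r(i)$ in the upper half-plane is a Euclidean circle tangent to $\R$ at $a/c$ with peak $(a/c)+i/c^2$ attained at $r=-d/c$; while the approximating element $U^{\alpha+yW/\tau}a(y/\tau^2)$ projects (via $g\mapsto g(i)$) to a point on the parabola $Y=(X-\alpha)^2/(W^2 y)$, tangent to $\R$ at $\alpha$. A circle and its osculating parabola at the common tangent point agree up to higher-order corrections, and this will furnish the desired approximation as long as the orbit stays far from the peak.

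First, by a geometry-of-numbers argument applied to the unimodular lattice $\Z^2 M\subset\R^2$, I would select a primitive vector $(c,d)\in\Z^2M$ with $c>0$, $|d|\le \sqrt{C_1 T}$, $|d|/c\ge T/C_1$, and such that $ct+d$ has constant sign on $[0,T]$ (so that the peak $r^\ast=-d/c$ lies outside $[0,T]$). The admissible region in $\R^2$ has area of order $C_1^2$, which accommodates such a vector for $C_1$ sufficiently large; primitivity comes for free here since the scale-invariant condition $|d|/c\ge T/C_1$ descends from any lattice vector in the region to its primitive generator. Having fixed $(c,d)$, I would take $\gamma\in\Gamma'$ whose second row $(-c_\gamma,a_\gamma)$ realizes $(c,d)=a_\gamma(M_{21},M_{22})-c_\gamma(M_{11},M_{12})$, and then adjust $\gamma$ by left multiplication by an element of $\Gamma'_\infty=\{U^n:n\in\Z\}$ to enforce \eqref{SUAPPRLEMRES3}.

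Next I would set $\alpha=a/c$, $y=1/d^2$, $W=-d/c$, and choose $\omega\in\{\pm1\}$ (and the corresponding $\ell$) according to the sign of $d$. Note that $W^2 y=1/c^2$ is exactly the osculating condition identified above. The bounds \eqref{SUAPPRLEMRES1} then follow at once: $1/(C_1 y)=d^2/C_1\le T$ and $T\le C_1|d|/c=C_1|W|$. For the error estimate \eqref{SUAPPRLEMRES2}, a direct computation writing $u:=c\ell(t)+d$ (and using $\tau=u/d$) gives
\begin{align*}
\gamma^{-1}MU^{\ell(t)}(i) &= \frac{a}{c}-\frac{u}{c(u^2+c^2)}+\frac{i}{u^2+c^2}, \\
U^{\alpha+yW/\tau}a(y/\tau^2)(i) &= \frac{a}{c}-\frac{1}{cu}+\frac{i}{u^2}.
\end{align*}
The discrepancies in real and imaginary parts have magnitudes $c/(|u|(u^2+c^2))$ and $c^2/(u^2(u^2+c^2))$ respectively; combined with the Iwasawa $K$-angle mismatch (since $\gamma^{-1}MU^{\ell(t)}$ has Iwasawa angle $\theta$ satisfying $\tan\theta=c/u$, while the approximating element lies in $NA$ and so has $\theta=0$, contributing $\asymp c/|u|$ to $d_G$), these combine to give $d_G\asymp c/|u|=|W|^{-1}/|\tau|$, which is \eqref{SUAPPRLEMRES2}.

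The hardest step I anticipate is the geometry-of-numbers argument in the second paragraph. The admissible region for $(c,d)$ is not convex (it excludes a cone around the $c$-axis and is also bounded away from large $|d|$), the sign condition on $ct+d$ must be arranged throughout $[0,T]$ (forcing either $d>0$ or $d\le -cT$), and all of this must be achieved simultaneously with a workable absolute constant $C_1$. Getting the precise constants while meeting these constraints — possibly by applying Minkowski to a carefully symmetrized convex subregion, or by a Dirichlet-type continued-fraction argument on the slope $M_{21}/M_{11}$ — is the substantive content behind the Sarnak--Ubis construction.
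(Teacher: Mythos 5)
The paper itself does not reprove this proposition: it is quoted from Sarnak and Ubis \cite[Sec.\ 2]{pSaU2011}, with only the remarks that $d_G|_{G'}$ is again left invariant and that \eqref{SUAPPRLEMRES3} can be arranged a posteriori by replacing $\gamma$ by $\gamma U^n$ and $\alpha$ by $\alpha-n$. So your text is a reconstruction rather than a parallel of the paper's argument. Your central computation is correct and is indeed the heart of the matter: with $(c,d)$ the lower row of $\gamma^{-1}M$ and $u=c\ell(t)+d$, the two points in $\HH$ and the discrepancies are exactly as you write, the Iwasawa angle of $\gamma^{-1}MU^{\ell(t)}$ is $\arg(u+ic)$, and with $\alpha=a/c$, $y=u(0)^{-2}$, $W=-u(0)/c$ one gets $d_G\ll c/|u|=|W|^{-1}/|1-\omega t/W|$. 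Two small corrections: the parameters must be built from $u(0)=c\ell(0)+d$ (so from $cT+d$, not $d$, when $\omega=-1$), and one must normalize the sign of $(c,d)$ (equivalently replace $\gamma$ by $-\gamma$) so that the angle stays near $0$ rather than near $\pi$, since $-1_2$ is not the identity in $G'$.

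The genuine gap is precisely the step you defer: the existence of $\gamma$ with your size constraints, and your heuristic for it is not just incomplete but false as stated. The claim that an admissible region of area $\asymp C_1^2$ ``accommodates such a vector'' fails because the region is a thin non-symmetric sliver (it is contained in $\{0<c\le C_1^{3/2}T^{-1/2}\}$), and large area alone yields nothing. Concretely, already for $M=1_2$ and $T>C_1^3$ there is \emph{no} integer vector with $c\ge1$, $|d|\le\sqrt{C_1T}$ and $|d|/c\ge T/C_1$, since the last two conditions force $T/C_1\le\sqrt{C_1T}$. After rescaling by $a(T)$, your constraints ask for a nonzero vector of the unimodular lattice $\Z^2Ma(T)$ with nonvanishing first coordinate inside a box of bounded size; such a vector exists only when $\Gamma'Ma(T)$ is not high in the cusp, i.e.\ when $y_M(T)$ as in \eqref{YGTDEF} is bounded (this is exactly the quantity appearing in Lemma \ref{TWOYSAMELEM}). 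So no soft pigeonhole or Minkowski argument can produce your vector uniformly in $M$, and the obstruction is real: for orbits that hug a closed horocycle throughout $[0,T]$ (e.g.\ $M=1_2$, or $M=a(y_0)$ with $y_0$ large, and $T$ large) the orbit has essentially constant height while the model curve's height $y(1-\omega t/W)^{-2}$ drifts by a definite factor by time $t\asymp\min(T,|W|)$, so no choice compatible with $T\le C_1|W|$ can satisfy \eqref{SUAPPRLEMRES2} with an absolute implied constant; these are exactly the cases $y_{M,T}\gg1$ in which the paper's Theorem \ref{NONCLOSEDTHM} is anyway trivial (its proof assumes $y<1$ from the start), and the Sarnak--Ubis construction has to be read with this degenerate regime in mind. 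Finally, the extra sign condition you impose (peak $-d/c\notin[0,T]$) is not needed --- the right-hand side of \eqref{SUAPPRLEMRES2} is allowed to blow up at the singular time, and the proof of Theorem \ref{NONCLOSEDTHM} sets aside an interval $I_0$ around it --- and imposing it only makes the existence problem harder (your branch $d\le -cT$ forces $c\lesssim\sqrt{C_1/T}$ and is essentially empty). In short: the approximation estimate is right, but the one step that carries the substance of the proposition is missing, and the route you sketch for it cannot work as stated.
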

\begin{proof}
This is proved in \cite[Sec.\ 2]{pSaU2011}.
(Note that the restriction of $d_G$ to $G'$ is a left invariant Riemannian metric on $G'$.
Note also that once \eqref{SUAPPRLEMRES1} and \eqref{SUAPPRLEMRES2} hold,
we can make also \eqref{SUAPPRLEMRES3} hold by replacing $\gamma$ by $\gamma U^n$ and $\alpha$ by $\alpha-n$,
for an appropriate $n\in\Z$.)
\end{proof}

Now for any $M\in G'$ and $T\geq2$ we have defined both $y_{M,T}$ (in Proposition \ref{SUAPPRLEM})
and $y_M(T)$ (in \eqref{YGTDEF}). These are in fact of the same order of magnitude:
\begin{lem}\label{TWOYSAMELEM}
$y_{M,T}\asymp y_M(T)$, uniformly over all $M\in G'$ and $T\geq2$.
\end{lem}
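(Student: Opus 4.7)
The plan is to use the Sarnak--Ubis approximation \eqref{SUAPPRLEMRES2} at the single instant $t=0$ to compare $Ma(T)$ with the explicit element $U^{\alpha+yW}a(yT)$, and then to read off $\svl(Ma(T))$ via the identification $\svl(g)^{-2}=\scrY_{\Gamma'}(g)$.

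At $t=0$, \eqref{SUAPPRLEMRES2} simplifies to $d_G(\gamma^{-1}MU^{\ell(0)},U^{\alpha+yW}a(y))\ll|W|^{-1}$. For $\omega=1$ this bounds $d_G(\gamma^{-1}M,U^{\alpha+yW}a(y))$; for $\omega=-1$ it bounds $d_G(\gamma^{-1}MU^T,U^{\alpha+yW}a(y))$. In the second case I first reduce to the first by observing
\[
MU^Ta(T)=Ma(T)\cdot a(T)^{-1}U^Ta(T)=Ma(T)\,U^1,
\]
and that right-multiplication by the fixed matrix $U^1$ changes $\svl$ by at most an absolute constant (since $v\mapsto vU^1$ has bounded operator norm on $\R^2$, and likewise $U^{-1}$), so $\svl(MU^Ta(T))\asymp\svl(Ma(T))$, hence $y_{MU^T}(T)\asymp y_M(T)$. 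Thus I may assume $\omega=1$.

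Writing $\gamma^{-1}M=U^{\alpha+yW}a(y)\,h$ with $d_G(h,I)\ll|W|^{-1}$, right-multiplication by $a(T)$ yields
\[
\gamma^{-1}Ma(T)=U^{\alpha+yW}a(yT)\,\tilde h,\qquad\tilde h:=a(T)^{-1}h\,a(T).
\]
By left-invariance of $d_G$, $d_G(\tilde h,I)=d_G(ha(T),a(T))$. The differential of right-translation by $a(T)$ on $(G',d_G)$ is identified (at every point) with $\Ad(a(T)^{-1})$ on the Lie algebra, whose operator norm equals $T$ (it scales $X_2$ by $T$, $X_1$ by $T^{-1}$, and fixes $X_3$). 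Hence $d_G(\tilde h,I)\le T\cdot d_G(h,I)\ll T/|W|\le C_1$ by \eqref{SUAPPRLEMRES1}, so $\tilde h$ lies in an absolute compact neighborhood of~$I$. Consequently $\svl(Ma(T))=\svl(\gamma^{-1}Ma(T))\asymp\svl(U^{\alpha+yW}a(yT))$, using the standard fact that $g_1=g_2 k$ with $k$ in a fixed compact set of $G'$ implies $\svl(g_1)\asymp\svl(g_2)$.

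Finally, using $\svl(g)^{-2}=\scrY_{\Gamma'}(g)=\max_{\gamma'\in\Gamma'}\Im\gamma'(g\cdot i)$, one has $\svl(U^{\alpha+yW}a(yT))^{-2}=\scrY_{\Gamma'}(z)$ for $z=(\alpha+yW)+i(yT)$. Since $\Im z=yT\ge C_1^{-1}$ by \eqref{SUAPPRLEMRES1}, the bound $|cz+d|^2\ge c^2(\Im z)^2$ for $c\ne0$ gives $\Im(\gamma' z)\le 1/(c^2\Im z)\le C_1$, while the choice $\gamma'=I$ gives $\Im z$; this yields $\scrY_{\Gamma'}(z)\asymp\max(yT,1)$, with constants depending only on $C_1$. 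Combining,
\[
y_M(T)=T^{-1}\svl(Ma(T))^{-2}\asymp T^{-1}\max(yT,1)=\max(y,T^{-1}),
\]
and since $y_{M,T}=y\ge(C_1T)^{-1}$ forces $\max(y,T^{-1})\asymp y$, we conclude $y_M(T)\asymp y_{M,T}$. The only delicate point is controlling $\tilde h=a(T)^{-1}ha(T)$: conjugation by $a(T)^{-1}$ stretches one direction by $T$, which would a priori destroy any bound, but this factor is precisely compensated by the calibration $T\le C_1|W|$ built into \eqref{SUAPPRLEMRES1}.
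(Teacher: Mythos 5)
Your proof is correct and follows essentially the same route as the paper's: both use \eqref{SUAPPRLEMRES2} at $t=0$, absorb the distortion of right-translation by $a(T)$ via the calibration $T\leq C_1|W|$ from \eqref{SUAPPRLEMRES1}, dispose of the $\omega=-1$ case through $U^Ta(T)=a(T)U^1$, and conclude by comparing $\svl(Ma(T))^{-2}=\scrY_{\Gamma'}(Ma(T))$ with $yT\geq C_1^{-1}$. The only differences are presentational (you make the $\Ad(a(T)^{-1})$ distortion and the compact-perturbation of $\svl$ explicit, where the paper argues directly with $\Im g(i)$), so there is nothing to add.
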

\begin{proof}
Let $\gamma=\gamma_{M,T}$, $\alpha=\alpha_{M,T}$, $y=y_{M,T}$ and $W=W_{M,T}$ be as in Proposition \ref{SUAPPRLEM}.
By \eqref{SUAPPRLEMRES2}, $d_G(\gamma^{-1} MU^{\ell(0)},U^{\alpha+yW}a(y))\ll|W|^{-1}$,
and therefore $d_G(\gamma^{-1} MU^{\ell(0)}a(T),U^{\alpha+yW}a(y)a(T))\ll|W|^{-1}T\leq C_1$.
Note here that $U^{\ell(0)}a(T)$ equals either $a(T)$ or $U^Ta(T)=a(T)U^1$.
It follows that, if we set $g=\gamma^{-1} Ma(T)$ and consider the standard action of 
$G'$ on the Poincar\'e upper half plane model of the hyperbolic plane,
then $\Im g(i)\asymp\Im U^{\alpha+yW}a(y)a(T)(i)=yT\geq C_1^{-1}$.
Hence the invariant height function used in \cite{iha} satisfies
$\scrY_{\Gamma'}(g)\ll\Im(g(i))\leq\scrY_{\Gamma'}(g)$ and therefore $\scrY_{\Gamma'}(g)\asymp yT$.
The lemma follows from this, since $y_M(T)=T^{-1}\svl(g)^{-2}=T^{-1}\scrY_{\Gamma'}(g)$.
\end{proof}

Using Theorem \ref{MAINABTHM} and Proposition \ref{SUAPPRLEM}  we will now prove:
\begin{thm}\label{NONCLOSEDTHM}
Let $\ve>0$ be fixed.
For any $\vecxi\in\R^2$, $M\in G'$, $T\geq2$, $f\in\C_{\b}^8(\GaG)$,
and for any $y=y_{M,T}$ and $\gamma=\gamma_{M,T}$ as in Proposition \ref{SUAPPRLEM}, we have
\begin{align}\label{NONCLOSEDTHMRES}
T^{-1}\int_0^T f(\Gamma(1_2,\vecxi)MU^t)\,dt=\int_{\GaG} f\,d\mu
+O_\ve\Bigl(\|f\|_{\C_{\b}^8}\,\widetilde \fb_{\vecxi\gamma,yT}(y)^{\frac12-\ve}\Bigr).
\end{align}
Here $\widetilde \fb_{\vecxi\gamma,yT}(y):=\fb_{\vecxi\gamma,yT}(y)+y^{\frac14}$ 
as in \eqref{BTILDEDEF}.
\end{thm}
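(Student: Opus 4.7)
The plan is to invoke Proposition \ref{SUAPPRLEM} to approximate the orbit by the Sarnak--Ubis curve, partition $[0,T]$ into $K$ pieces, apply Theorem \ref{MAINABTHM} on each after a change of variable, and balance the approximation error against the Theorem \ref{MAINABTHM} error to obtain the exponent $\frac12-\ve$. Since $\gamma\in\Gamma'\subset\Gamma$ and $(1_2,\vecxi)\gamma=\gamma(1_2,\vecxi\gamma)$ in $G$, I first rewrite $\Gamma(1_2,\vecxi)MU^{\ell(t)}=\Gamma(1_2,\vecxi\gamma)\gamma^{-1}MU^{\ell(t)}$, and use \eqref{SUAPPRLEMRES2} with left-invariance of $d_G$ to replace $\gamma^{-1}MU^{\ell(t)}$ by the Sarnak--Ubis element, which after the identity $a(b)U^x=U^{bx}a(b)$ simplifies to $U^{\alpha+yW/r(t)}a(y/r(t)^2)$ with $r(t):=1-\omega t/W$. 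Integrating the distance bound over $[0,T]$ and dividing by $T$, this step costs $\ll\|f\|_{\C_{\b}^1}/T\ll y^{1/4}\|f\|_{\C_{\b}^1}$, which is subsumed by the target. The problem thus reduces to estimating
\begin{equation*}
T^{-1}\int_0^T f\bigl(\Gamma(1_2,\vecxi\gamma)U^{\alpha+yW/r(t)}a(y/r(t)^2)\bigr)\,dt.
\end{equation*}

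Next I partition $[0,T]$ into $K$ equal subintervals $[t_k,t_{k+1}]$ of length $\tau=T/K$, and on each replace the varying height $a(y/r(t)^2)$ by the fixed $a(v_k)$ with $v_k:=y/r(t_k)^2$. By left-invariance, $d_G(a(y/r^2),a(v_k))\asymp|\log(v/v_k)|\ll\tau/|W|$, so this contributes $\ll\|f\|_{\C_{\b}^1}/K$ to the averaged error (using $T\leq C_1|W|$). The change of variable $s=\alpha+yW/r(t)$ satisfies $ds/dt=v(t)\omega=v_k\omega(1+O(1/K))$, recasting the $t$-integral over $[t_k,t_{k+1}]$ as
\begin{equation*}
\frac{\omega}{v_k}\int_{s_k}^{s_{k+1}}f\bigl(\Gamma(1_2,\vecxi\gamma)U^sa(v_k)\bigr)\,ds+O(\|f\|_{\C_{\b}^0}\tau/K),
\end{equation*}
to which I apply Theorem \ref{MAINABTHM} with $L_k:=\max(1,|s_k|,|s_{k+1}|)$: the main terms sum to $T\int_{\GaG}f\,d\mu(1+O(1/K))$, and the error per subinterval is $\ll\|f\|_{\C_{\b}^8}L_k\widetilde\fb_{\vecxi\gamma,L_k}(v_k)^{1-\ve}$.

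The key numerical observation for the error sum is that \eqref{SUAPPRLEMRES3} forces $|\alpha+yW|=O(1)$ (by comparing with \eqref{SUAPPRLEMRES2} at $t=0$), whence $|s-(\alpha+yW)|=|yW(1-r)/r|\leq CyT$ and so $L_k\leq CyT$ throughout. Lemma \ref{EORDERCHANGELEM} then gives $\widetilde\fb_{\vecxi\gamma,L_k}(v_k)\ll(yT/L_k)\widetilde\fb_{\vecxi\gamma,yT}(y)$ (using $v_k\asymp y$), so
\begin{equation*}
\sum_k L_k\widetilde\fb_{\vecxi\gamma,L_k}(v_k)^{1-\ve}\ll(yT)^{1-\ve}\widetilde\fb_{\vecxi\gamma,yT}(y)^{1-\ve}\sum_k L_k^\ve\ll yT\cdot K\widetilde\fb_{\vecxi\gamma,yT}(y)^{1-\ve},
\end{equation*}
since $\sum_k L_k^\ve\ll K(yT)^\ve$. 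Dividing by $v_kT\asymp yT$, the aggregated Theorem \ref{MAINABTHM} contribution to the averaged error is $\ll K\|f\|_{\C_{\b}^8}\widetilde\fb_{\vecxi\gamma,yT}(y)^{1-\ve}$; combining with the $\|f\|_{\C_{\b}^1}/K$ approximation error and optimizing $K\asymp\widetilde\fb_{\vecxi\gamma,yT}(y)^{-(1-\ve)/2}$ yields the total averaged error $\ll\|f\|_{\C_{\b}^8}\widetilde\fb_{\vecxi\gamma,yT}(y)^{(1-\ve)/2}$, which gives the $\frac12-\ve$ exponent after relabeling $\ve$.

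The main obstacle will be the summation bookkeeping in the last step, in particular verifying that $L_k\leq CyT$ uniformly despite $\gamma$ being potentially a very large element of $\Gamma'$; this rests on extracting the subtle normalization $\alpha\approx-yW$ from \eqref{SUAPPRLEMRES3}. A secondary technicality is handling the endpoints of $[0,T]$ where $r(t)$ may approach $0$, since both the Sarnak--Ubis approximation and the Jacobian $1/r(t)^2$ degrade there; this should be manageable by a slight restriction of the integration range combined with a crude bound on the removed piece, and by verifying that $v_k$ stays bounded so that Theorem \ref{MAINABTHM} genuinely applies.
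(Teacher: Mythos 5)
Your overall strategy is the same as the paper's (Sarnak--Ubis approximation, partition of $[0,T]$, apply Theorem \ref{MAINABTHM} on each piece after the change of variable $s=\alpha+yW/(1-\omega t/W)$, then balance), but the bookkeeping as written has a genuine gap: every one of your key numerical claims tacitly assumes that $r(t)=1-\omega t/W$ is bounded below by a positive constant on all of $[0,T]$. This is what underlies ``$v_k\asymp y$'', ``$L_k\leq CyT$'' (indeed $|s-(\alpha+yW)|=y|W|\,|1-r|/|r|\leq C\,yT/|r|$, not $CyT$), ``$d_G(a(y/r^2),a(v_k))\ll\tau/|W|$'', and even the very first step ``approximation cost $\ll 1/T$'' (the bound \eqref{SUAPPRLEMRES2} blows up like $1/|r|$). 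These hold only when $|W|\geq 2T$, say; in the complementary and entirely typical case $|W|\asymp T$ the curve passes near $r=0$, where $v_k=y/r(t_k)^2$ ranges from $\asymp y$ up to $y^{1/2}$ and beyond, $L_k$ can be as large as $\asymp y^{3/4}T\gg yT$, and within a single subinterval of your equal-length partition ($\tau=T/K$ with $K\ll y^{-1/8}$ after your optimization) $r$ can change by a factor $\gg1$ or even change sign, so the height-freezing and Jacobian-freezing steps fail outright there. Nor can you simply excise this region and crude-bound it: the set where $|r|$ is below a fixed constant has length $\asymp|W|\asymp T$, a positive proportion of the orbit, so a trivial bound there costs $O(1)$, not $O(\widetilde\fb^{1/2-\ve})$. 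To get the stated result one must run the main argument all the way down to $|r|\asymp y^{1/4}$ (only the piece of length $\ll y^{1/4}|W|$ is bounded trivially), and on the range $y^{1/4}\ll|r|\ll1$ your displayed chain of estimates is invalid as it stands.

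This is not a peripheral technicality but the heart of the paper's proof: there the partition is \emph{adaptive} (the interval lengths $\tau_j$ are chosen by an algorithm depending on $\rho_j=|1-\omega T_j/W|$ and on the local majorant value), Theorem \ref{MAINABTHM} is applied with $L_j=1$ after recentering by integers $n_j$, and the resulting shifted majorants are re-summed using Lemma \ref{GOODMSUMLEMNEW} together with a dyadic decomposition over $\rho(n)\in[2^{-k-1},2^{-k})$. A version of your route might be salvageable without Lemma \ref{GOODMSUMLEMNEW}: if you keep track of the $|r_k|$-dependence honestly, Lemma \ref{EORDERCHANGELEM} gives $\widetilde\fb_{\vecxi\gamma,L_k}(v_k)\ll|r_k|^{-1}\widetilde\fb_{\vecxi\gamma,yT}(y)$ while $L_k/v_k\ll T|r_k|$ (up to the $(yT)^{\pm\ve}$ losses), so the dangerous factors nearly cancel, and the freezing errors only pick up logarithms provided you also cut at $|r|\gtrsim1/K$ and shorten the subintervals near the cusp. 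But that is a substantial reworking of your central estimates, not the argument you wrote; as proposed, the proof does not go through whenever $|W|<2T$.
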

We will see below
that Theorem~\ref{NONCLOSEDTHM} implies Theorem \ref{NONCLOSEDTHM2}.

\begin{proof}[Proof of Theorem \ref{NONCLOSEDTHM}]
Let $\vecxi,M,T,y,\gamma,f$ be as in the statement of the theorem;
also fix corresponding numbers $\alpha=\alpha_{M,T}$, $W=W_{M,T}$, $\omega=\omega_{M,T}$ as in Proposition \ref{SUAPPRLEM},
and set $\ell(t)\equiv t$ if $\omega=1$, $\ell(t)\equiv T-t$ if $\omega=-1$.
Note that \eqref{NONCLOSEDTHMRES} is trivial when $y\geq1$ (since then $\widetilde \fb_{\vecxi\gamma,yT}(y)>1$);
hence from now on we will assume $y<1$.
We will partition the interval $[0,T]$ into smaller intervals $I_0,I_1,\ldots,I_m$,
in a way which we make precise below.
Using $\gamma^{-1}(1_2,\vecxi)M=(1_2,\vecxi\gamma)\gamma^{-1} M$ we have
\begin{align}\label{NONCLOSEDSTEP1}
\int_0^T f\bigl(\Gamma (1_2,\vecxi)MU^t\bigr)\,dt
=\sum_{j=0}^{m}\int_{I_j}f\bigl(\Gamma (1_2,\vecxi\gamma)\gamma^{-1} MU^{\ell(t)}\bigr)\,dt.
\end{align}
For each $j$ we set
$\rho_j^{\max}=\sup_{t\in I_j}|1-\omega t/W|$,
$\rho_j^{\min}=\inf_{t\in I_j}|1-\omega t/W|$.
We also set $\tau_j=|I_j|$, the length of the interval $I_j$.
Our partition will be such that %
$I_0$ contains those $t$ for which $1-\omega t/W$ are closest to $0$;
in particular we will have $\rho_j^{\min}>0$ for all $j\geq1$.
Using \eqref{SUAPPRLEMRES2} together with 
$|f(\Gamma g_1)-f(\Gamma g_2)|\ll\|f\|_{\C_{\b}^1}d_G(g_1,g_2)$ ($\forall g_1,g_2\in G$)
and the fact that $d_G$ is left invariant,
we have, for each $j\geq1$,
\begin{align}\notag
\int_{I_j}f\bigl(\Gamma (1_2,\vecxi\gamma)\gamma^{-1} MU^{\ell(t)}\bigr)\,dt
=\int_{I_j}f\Bigl(\Gamma(1_2,\vecxi\gamma)U^{\alpha+\frac{yW}{1-\omega  t/W}}
a\bigl({\textstyle \frac{y}{(1-\omega  t/W)^2}}\bigr)\Bigr)\,dt
\hspace{40pt}
\\\label{NONCLOSEDSTEP2}
+O\biggl(\|f\|_{\C_{\b}^1}\frac{\tau_j}{\rho_j^{\min}|W|}\biggr).
\end{align}
We set $y_j^*=y/(\rho_j^{\min})^2$.
Note that $a(\frac{y}{(1-\omega  t/W)^2})=a(y_j^*)a\bigl(\frac{(1-\omega  t/W)^2}{(\rho_j^{\min})^2}\bigr)^{-1}$ and
$1\leq \frac{(1-\omega  t/W)^2}{(\rho_j^{\min})^2}\leq1+\frac{2\rho_j^{\max}\tau_j}{(\rho_j^{\min})^2|W|}$
for all $t\in I_j$; therefore
$d_G\bigl(a(\frac{y}{(1-\omega  t/W)^2}),a(y_j^*)\bigr)\ll\frac{\rho_j^{\max}\tau_j}{(\rho_j^{\min})^2|W|}$
for all $t\in I_j$.
We will choose the intervals $I_0,\ldots,I_m$ so that $\rho_j^{\max}\leq2\rho_j^{\min}$ for each $j\geq1$.
Hence we may replace $a(\frac{y}{(1-\omega  t/W)^2})$ by $a(y_j^*)$ in the integral in
\eqref {NONCLOSEDSTEP2}, without changing the error term.
Next we take $s=\alpha+\frac{yW}{1-\omega  t/W}$ as a new variable of integration.
Let $S_j\subset\R$ be the $s$-interval which corresponds to $I_j$.
Note that $|S_j|=\frac{y\tau_j}{\rho_j^{\min}\rho_j^{\max}}$,
and 
$\frac{dt}{ds}=\omega y^{-1}(1-\omega t/W)^2=\omega\frac{\rho_j^{\min}\rho_j^{\max}}y+O(\frac{\rho_j^{\max}\tau_j}{y|W|})$
for $t\in I_j$. 
Hence \eqref{NONCLOSEDSTEP2} equals
\begin{align}\label{NONCLOSEDSTEP2a}
\frac{\rho_j^{\min}\rho_j^{\max}}y\int_{S_j}f\bigl(\Gamma(1_2,\vecxi\gamma)U^s\, a(y_j^*)\bigr)\,ds
+O\biggl(\|f\|_{\C_{\b}^1}\frac{1+\tau_j^2}{\rho_j^{\min}|W|}\biggr).
\end{align}
We will choose the intervals $I_0,\ldots,I_m$ so that $y_j^*<1$ for each $j\geq1$.
Take $n_j\in\Z$ so that $S_j-n_j$ intersects the interval $[0,1)$, and set $\gamma_j:=\gamma U^{n_j}$.
Applying Theorem \ref{MAINABTHM} together with Remark \ref{NEEDALFBETANEARZEROREM} (with $n=n_j$), we conclude that 
\begin{align}\label{NONCLOSEDSTEP5}
\int_{I_j}f\bigl(\Gamma (1_2,\vecxi\gamma)\gamma^{-1} MU^{\ell(t)}\bigr)\,dt
=\tau_j\int_{\GaG}f\,d\mu+O_\ve\biggl(\|f\|_{\C_{\b}^8}\frac{L_j}{y_j^*}\widetilde \fb_{\vecxi\gamma_j,L_j}(y_j^*)^{1-\ve}
+\|f\|_{\C_{\b}^1}\frac{1+\tau_j^2}{\rho_j^{\min}|W|}\biggr),
\end{align}
where $L_j=1+|S_j|$.
We have $|S_j|=\frac{y\tau_j}{\rho_j^{\min}\rho_j^{\max}}\asymp y_j^*\tau_j$, 
and we will choose $I_0,\ldots,I_m$ in such a way that 
$y_j^*\tau_j\ll1$ for all $j\geq1$; hence \eqref{NONCLOSEDSTEP5} holds with $L_j$ replaced by $1$.
We now wish to choose $I_0,\ldots,I_m$ in such a way that for each $j\geq1$, $\tau_j$ takes a value 
which essentially minimizes $\tau_j^{-1}$ times the error term in \eqref{NONCLOSEDSTEP5},
but subject to $y_j^*\tau_j\ll1$.

The precise choice of $I_0,\ldots,I_m$ is made according to the following algorithm.
Let the absolute constant $C_1>0$ be as in Proposition \ref{SUAPPRLEM}, and set $C_2=\frac1{2(1+C_1)^3}$.
\\[5pt]
1. Set $j=1$ and $T_1=0$.
\\[3pt]
2. If $1-\omega T_j/W>2y^{\frac14}$ then set $\rho_j=|1-\omega T_j/W|$,
$n_j=\lfloor\alpha+\frac{yW}{1-\omega T_j/W}\rfloor\in\Z$ and $\gamma_j=\gamma U^{n_j}$, and go to Step 3;
otherwise change the value of $T_j$ to $T_j=T$ and go to Step 4.
\\[3pt]
3. Set
${\displaystyle\tau_j=\min\Bigl(\rho_j^{\frac32}y^{-\frac12}|W|^{\frac12}\widetilde \fb_{\vecxi\gamma_j,1}(y/\rho_j^2)^{\frac12}
,C_2\rho_j^2y^{-1},T-T_j\Bigr)}$, 
$T_{j+1}=T_j+\tau_j$ and $I_j=[T_j,T_{j+1}]$. 
If $T_{j+1}=T$, set $m=j$ and $I_0=\emptyset$, and we are \textit{done}; otherwise replace $j$ by $j+1$ and go back to Step 2.
\\[3pt]
4. If $1-\omega T_j/W<-2y^{\frac14}$ then set
$\rho_j=|1-\omega T_j/W|$, $n_j=\lfloor\alpha+\frac{yW}{1-\omega T_j/W}\rfloor\in\Z$
and $\gamma_j=\gamma U^{n_j}$, and go to Step 5; otherwise 
set $m=j-1$ and $I_0=[0,T]\setminus\cup_{i=1}^{m}I_i$ (this is an interval), and we are \textit{done.}
\\[3pt]
5. Set
${\displaystyle\tau_j=\min\Bigl(\rho_j^{\frac32}y^{-\frac12}|W|^{\frac12}\widetilde \fb_{\vecxi\gamma_j,1}(y/\rho_j^2)^{\frac12}
,C_2\rho_j^2y^{-1}\Bigr)}$, 
$T_{j+1}=T_j-\tau_j$ and $I_j=[T_{j+1},T_j]$. 
Then replace $j$ by $j+1$ and go back to Step 4.

\vspace{10pt}

Note that $|1-\omega t/W|\leq 1+T/|W|\leq1+C_1$ for all $t\in[0,T]$;
hence we always get $2y^{\frac14}<\rho_j\leq1+C_1$ in Steps 2 and 4.
Using this and \eqref{SUAPPRLEMRES1}  we see that each time we set $\tau_j$ and $I_j$ in Steps 3 and 5, we get
$\tau_j\leq C_2\rho_j^2y^{-1}\leq C_2C_1^2(1+C_1)\rho_j|W|<\frac12\rho_j|W|$, and therefore
$|1-\omega t/W-\rho_j|<\frac12\rho_j$ for all $t\in I_j$.
Hence for each such interval $I_j$ we have $\rho_j^{\min}>0$ and $\rho_j^{\max}<2\rho_j^{\min}$,
and also $y_j^*=y/(\rho_j^{\min})^2<4y/\rho_j^2<y^{\frac12}<1$.
It also follows that for any interval $I_j$ obtained in Step 3 (resp. Step 5)
we have $1-\omega t/W>y^{\frac14}$ (resp.\ $1-\omega t/W<-y^{\frac14}$) for all $t\in I_j$;
therefore the intervals constructed in Steps 2--3 do not overlap with those constructed in Steps 4--5.
Hence the resulting $I_0,I_1,\ldots,I_m$ indeed form a partition of $[0,T]$
(after possibly removing one or both endpoints from some of the $I_j$'s), %
satisfying all the conditions specified earlier.

For each $j\geq1$ we have, because of the choice of $\tau_j$ in Steps 3 and 5,
\begin{align*}
\frac{1+\tau_j^2}{\rho_j^{\min}|W|}
\ll(\rho_j^{\min})^{-1}+(\rho_j^{\min})^2y^{-1}\widetilde\fb_{\vecxi\gamma_j,1}(y_j^*)
\ll {y_j^*}^{-\frac12}+{y_j^*}^{-1}\widetilde\fb_{\vecxi\gamma_j,1}(y_j^*)
\ll{y_j^*}^{-1}\widetilde\fb_{\vecxi\gamma_j,1}(y_j^*).
\end{align*}
Hence by \eqref{NONCLOSEDSTEP1} and \eqref{NONCLOSEDSTEP5} (with $L_j=1$),
we have (possibly with $m=0$):
\begin{align*}
\int_0^T f\bigl(\Gamma (1_2,\vecxi)MU^t\bigr)\,dt
=\int_{I_0} f(\cdots)\,dt
+\Bigl(\sum_{j=1}^m\tau_j\Bigr)\int_{\GaG}f\,d\mu
+O_\ve\biggl(\|f\|_{\C_{\b}^8}\sum_{j=1}^m {y_j^*}^{-1}\,\widetilde \fb_{\vecxi\gamma_j,1}(y_j^*)^{1-\ve}\biggr).
\end{align*}
Next we note that $\int_{I_0} f\,dt=\tau_0\int_{\GaG}f\,d\mu+O(\|f\|_{\C_{\b}^0}\tau_0)$
and $\tau_0\ll y^{\frac14}T$.
(Indeed, $\tau_0\leq T$; also
if $y^{\frac14}<\frac14$, say, and $I_0\neq\emptyset$, then it follows from our construction that
$|1-\omega t/W|\leq2y^{\frac14}<\frac12$ for all $t\in I_0$;
hence $|W|<2T$ and $\tau_0=|I_0|\leq4y^{\frac14}|W|<8y^{\frac14}T$.)
Therefore,
\begin{align}\label{NONCLOSEDSTEP6}
\int_0^T f\bigl(\Gamma (1_2,\vecxi)MU^t\bigr)\,dt
=T\int_{\GaG}f\,d\mu
+O_\ve\Bigl(\|f\|_{\C_{\b}^8}\Bigr)\Bigl\{Ty^{\frac14}+\sum_{j=1}^m {y_j^*}^{-1}\,\widetilde \fb_{\vecxi\gamma_j,1}(y_j^*)^{1-\ve}
\Bigr\}.
\end{align}

Now for each $n\in\Z$, let $J_n$ be the set of those $j\in\{1,\ldots,m\}$ for which $n_j=n$.
By our choice of $n_j$, for each $j\in J_n$ there is some $t\in I_j$ such that
$\frac{yW}{1-\omega t/W}\in[n-\alpha,n-\alpha+1)$.
If $|n-\alpha|\geq2$ then this forces $|1-\omega t/W|\asymp\frac{y|W|}{|n-\alpha|}$;
on the other hand if $|n-\alpha|<2$ then both $|1-\omega t/W|\asymp1$ and $y|W|\asymp1$,
since $y|W|\geq C_1^{-2}$ and $|1-\omega t/W|\leq1+C_1$ by Prop.\ \ref{SUAPPRLEM}.
It follows that $\rho_j^{\min}\asymp\rho_j^{\max}\asymp\rho(n):=\frac{y|W|}{1+|n-\alpha|}$ for each $j\in J_n$,
and thus also $y_j^*\asymp y/\rho(n)^2$ and $|S_j|\asymp y\tau_j/\rho(n)^2$.
But $\sum_{j\in J_n}|S_j|=|\cup_{j\in J_n}S_j|\ll1$, 
since for each $j\in J_n$ we have $|S_j|\ll1$ (by our choice of $\tau_j$) and 
$S_j\cap [n,n+1)\neq\emptyset$.
Hence $\sum_{j\in J_n}\tau_j\ll\rho(n)^2y^{-1}$.
However for all except at most one $j\in J_n$ (the possible exception being $j=m$)
we have
$\tau_j\gg\min(\rho(n)^{\frac32}y^{-\frac12}|W|^{\frac12}\widetilde \fb_{\vecxi\gamma U^n,1}(y/\rho(n)^2)^{\frac12},\rho(n)^2/y)$.
Hence
\begin{align}\label{NONCLOSEDSTEP8}
\# J_n\ll 1+\rho(n)^{\frac12}y^{-\frac12}|W|^{-\frac12}\,\widetilde \fb_{\vecxi\gamma U^n,1}\bigl(y/\rho(n)^2\bigr)^{-\frac12}
\ll \widetilde \fb_{\vecxi\gamma U^n,1}\bigl(y/\rho(n)^2\bigr)^{-\frac12},
\end{align}
and thus in \eqref{NONCLOSEDSTEP6} we have
\begin{align}\notag
\sum_{j=1}^m {y_j^*}^{-1}\,\widetilde \fb_{\vecxi\gamma_j,1}(y_j^*)^{1-\ve}
\ll y^{-1}\sum_{\substack{n\in\Z\\(J_n\neq\emptyset)}}
\rho(n)^2\,\widetilde \fb_{\vecxi\gamma U^n,1}\bigl(y/\rho(n)^2\bigr)^{\frac12-\ve}
\hspace{50pt}
\\\label{NONCLOSEDSTEP7}
\ll y^{-1}\sum_{\substack{n\in\Z\\(J_n\neq\emptyset)}}
\rho(n)^{\frac32}\,\widetilde \fb_{\vecxi\gamma U^n,1}(y)^{\frac12-\ve},
\end{align}
where we used Lemma \ref{EORDERCHANGELEM} and the fact that $\rho(n)\ll1$ for all $n$.

Let us first assume $|W|\geq2T$. 
Then for every $n$ with $J_n\neq\emptyset$ we have $\rho(n)\asymp1$, and there is some $t\in[0,T]$ such that
$n=\alpha+\frac{yW}{1-\omega t/W}+O(1)$; thus $n=\alpha+yW+O(yT)=O(yT)$,
since $|\alpha+yW|\ll1$ by \eqref{SUAPPRLEMRES3} and \eqref{SUAPPRLEMRES2}.
Hence by Lemma \ref{GOODMSUMLEMNEW}, \eqref{NONCLOSEDSTEP7} is
$\ll T\widetilde \fb_{\vecxi\gamma,yT}(y)^{\frac12-\ve}$.
Next assume instead $|W|<2T$. Then $|W|\asymp T$, by \eqref{SUAPPRLEMRES1}.
Given $\widetilde\rho\in(0,1]$, note that for every $n$ with $\rho(n)=\frac{y|W|}{1+|n-\alpha|}\geq\widetilde\rho$ 
we have $|n-\alpha|<\widetilde\rho^{-1}y|W|\asymp\widetilde\rho^{-1}yT$,
and since $|\alpha+yW|\ll1$ this implies $|n|\ll \widetilde\rho^{-1}yT$.
By Lemma \ref{GOODMSUMLEMNEW},
the sum of $\widetilde \fb_{\vecxi\gamma U^n,1}(y)^{\frac12-\ve}$ over all these $n$ is
$\ll \widetilde\rho^{-1}yT\widetilde \fb_{\vecxi\gamma,yT}(y)^{\frac12-\ve}$.
Hence the contribution from all $n$ with $\rho(n)\geq\frac12$ in \eqref{NONCLOSEDSTEP7}
is $\ll T\widetilde \fb_{\vecxi\gamma,yT}(y)^{\frac12-\ve}$,
and for each $k\in\Z^+$, the contribution from all $n$ with 
$\rho(n)\in[2^{-k-1},2^{-k})$ in \eqref{NONCLOSEDSTEP7}
is $\ll 2^{-\frac12k}T\widetilde \fb_{\vecxi\gamma,yT}(y)^{\frac12-\ve}$.
Adding over $k$ we again conclude that \eqref{NONCLOSEDSTEP7} is
$\ll T\widetilde \fb_{\vecxi\gamma,yT}(y)^{\frac12-\ve}$.
In view of \eqref{NONCLOSEDSTEP6}, this completes the proof of Theorem \ref{NONCLOSEDTHM}.
\end{proof}

We remark that the last step in \eqref{NONCLOSEDSTEP8} is in general wasteful, but leads to a simple result.
Working instead with the first bound in \eqref{NONCLOSEDSTEP8} one obtains a variant of Theorem \ref{NONCLOSEDTHM} with a
more complicated but generally better error term:
\begin{thmbis}{NONCLOSEDTHM}
Let $\ve>0$ be fixed.
For any $\vecxi\in\R^2$, $M\in G'$, $T\geq2$, $f\in\C_{\b}^8(\GaG)$,
and for any $y=y_{M,T}$ and $\gamma=\gamma_{M,T}$ as in Proposition \ref{SUAPPRLEM}, we have
\begin{align*}
T^{-1}\int_0^T f(\Gamma(1_2,\vecxi)MU^t)\,dt=\int_{\GaG} f\,d\mu
+O_\ve\Bigl(\|f\|_{\C_{\b}^8}\widetilde \fb_{\vecxi\gamma,yT}(y)^{\frac12-\ve}\Bigl(
\widetilde \fb_{\vecxi\gamma,yT}(y)^{\frac12}+\bigl(y|W|\bigr)^{-\frac12}\Bigr)\Bigr).
\end{align*}
\end{thmbis}
It is worth noticing that in the special case $M=U^{\alpha_0} a(y_0)$ 
with $-\frac12<\alpha_0\leq\frac12$, $y_0$ small
and $T>C_1^{-1}y_0^{-1}$, we may take $\gamma_{M,T}=1_2$ in Proposition \ref{SUAPPRLEM},
as well as $\alpha_{M,T}=\alpha_0-y_0W_{M,T}$ and
$W_{M,T}$ ``very large'' (that is, let $W_{M,T}\to\infty$ for our fixed $M,T$, so that 
\eqref{SUAPPRLEMRES2} turns into an equality between two points in $G$).
In this case, one may expect from the method of proof that Theorem \ref{NONCLOSEDTHM} should recover the statement
of Theorem \ref{MAINABTHM}, with $y=y_0$, $\alpha=\alpha_0$, $\beta=\beta_0+y_0T$.
This is indeed seen to be the case when we use the more precise error term of Theorem \ref*{NONCLOSEDTHM}$'$.
In this vein recall also Remarks \ref{NEEDALFBETANEARZEROREM}, \ref{MAYSPLITREMARK}.
In the case of $\beta-\alpha$ becoming \textit{small} as $y\to0$ in Theorem \ref{MAINABTHM},
we expect that Theorem \ref*{NONCLOSEDTHM}$'$ should typically result in a \textit{better} error term than that of
Theorem \ref{MAINABTHM}.

\vspace{5pt}

Next we will reinterprete the error term in Theorem~\ref{NONCLOSEDTHM}
and thereby deduce Theorem \ref{NONCLOSEDTHM2}.
Recall $\fR_L=[-L^{-1},L^{-1}]\times[-1,1]\subset\R^2$.
Let us first note that, for any $\vecxi\in\R^2$, $L>0$, $y>0$, 
\begin{align}\label{EXILALTFORMULA}
\fb_{\vecxi,L}(y)=\inf\Bigl\{\delta>0\col
\Bigl[\forall q\in\Z_{\leq\delta^{-1/2}}^+:\:
(q^{-1}\Z^2+\vecxi)\cap\frac{\sqrt y}{\delta q^2} \fR_L %
=\emptyset\Bigr]\Bigr\}.
\end{align}
Indeed, from the definition \eqref{MYXILDEF} we see that, 
given any $\delta>0$ we have $\fb_{\vecxi,L}(y)\geq\delta$ if and only if
there is some $q\in\Z^+$ such that $q\leq\delta^{-1/2}$,
$\langle q\xi_1\rangle\leq\frac{\sqrt y}{\delta Lq}$ and
$\langle q\xi_2\rangle\leq\frac{\sqrt y}{\delta q}$;
and the last two conditions hold if and only if 
$(\Z^2+q\vecxi)\cap\frac{\sqrt y}{\delta q}\fR_L\neq\emptyset$.

Using $\frac{\sqrt y}{\delta q^2}\fR_L=(\frac{1}{\delta q^2}\fR_{L/y})a(y)^{-1}$,
the formula \eqref{EXILALTFORMULA} may also be expressed as
\begin{align}\label{EXILALTFORMULA2}
\fb_{\vecxi,L}(y)=\fb_{(1_2,\vecxi)a(y)}(L/y),
\end{align}
where in the right hand side we use the notation introduced in \eqref{BGTDEF}.

\begin{proof}[Proof of Theorem \ref{NONCLOSEDTHM2}]
Let $g,T,f$ be as in the statement of Theorem \ref{NONCLOSEDTHM2}.
Write $g=(1_2,\vecxi)M$; 
fix corresponding numbers 
$y=y_{M,T}$, $\alpha=\alpha_{M,T}$, $W=W_{M,T}$, $\omega=\omega_{M,T}$ 
and $\gamma=\gamma_{M,T}\in\Gamma'$  as in Proposition \ref{SUAPPRLEM},
and set $\ell(t)\equiv t$ if $\omega=1$, $\ell(t)\equiv T-t$ if $\omega=-1$.
By \eqref{SUAPPRLEMRES2} we have $\gamma^{-1} MU^{\ell(0)}=U^{\alpha+yW}a(y)\eta$
for some $\eta\in G'$ in a $O(|W|^{-1})$-neighbourhood of $1_2$.
Hence for any $q\in\Z^+$,
\begin{align*}
(q^{-1}\Z^2)g
=(q^{-1}\Z^2+\vecxi)\gamma\gamma^{-1} M
=(q^{-1}\Z^2+\vecxi\gamma)U^{\alpha+yW}a(y)\eta U^{-\ell(0)}.
\end{align*}
Now assume that, for some $q\in\Z^+$ and $\delta>0$,
the lattice translate $q^{-1}\Z^2+\vecxi\gamma$ contains a point $(x_1,x_2)\in\frac{\sqrt y}{\delta q^2} \fR_{yT}$.
Then $(q^{-1}\Z^2)g$ contains the point
\begin{align*}
(x_1,x_2)U^{\alpha+yW}a(y)\eta U^{-\ell(0)}
=\Bigl(y^{1/2}x_1,y^{-1/2}((\alpha+yW)x_1+x_2)\Bigr)\eta U^{-\ell(0)}.
\end{align*}
But here $|\alpha+yW|\ll1$ by \eqref{SUAPPRLEMRES2}, \eqref{SUAPPRLEMRES3},
and $yT\gg1$ by \eqref{SUAPPRLEMRES1};
hence $|x_1|\leq\frac{1}{\delta q^2\sqrt yT}\ll\frac{\sqrt y}{\delta q^2}$, and the above point is
\begin{align*}
=\Bigl(O\Bigl(\frac1{\delta q^2T}\Bigr),O\Bigl(\frac1{\delta q^2}\Bigr)\Bigr)
\matr{1+O(|W|^{-1})}{O(|W|^{-1})}{O(|W|^{-1})}{1+O(|W|^{-1})}
\matr1{O(T)}01 
=\Bigl(O\Bigl(\frac1{\delta q^2T}\Bigr),O\Bigl(\frac1{\delta q^2}\Bigr)\Bigr),
\end{align*}
where we also used the fact that $|W|\gg T$.
We have thus proved that 
there is an absolute constant $C_2>1$ such that, %
for any $q\in\Z^+$ and $\delta>0$ for which
$(q^{-1}\Z^2+\vecxi\gamma)\cap\frac{\sqrt y}{\delta q^2} \fR_{yT}\neq\emptyset$,
we have
$(q^{-1}\Z^2)g\cap \frac{C_2}{\delta q^2}\fR_T\neq\emptyset$.
Hence by \eqref{EXILALTFORMULA},
\begin{align*}
\fb_{\vecxi\gamma,yT}(y) &=
\inf\Bigl\{\delta>0\col\Bigl[\forall q\in\Z_{\leq\delta^{-1/2}}^+:\:
(q^{-1}\Z^2+\vecxi\gamma)\cap\frac{\sqrt y}{\delta q^2} \fR_{yT}=\emptyset\Bigr]\Bigr\}
\\
&\leq
\inf\Bigl\{\delta>0\col\Bigl[\forall q\in\Z_{\leq\delta^{-1/2}}^+:\:
(q^{-1}\Z^2)g\cap \frac{C_2}{\delta q^2}\fR_T=\emptyset\Bigr]\Bigr\}
\\
&\leq C_2\inf\Bigl\{\delta>0\col\Bigl[\forall q\in\Z_{\leq\delta^{-1/2}}^+:\:
(q^{-1}\Z^2)g\cap \frac{1}{\delta q^2}\fR_T=\emptyset\Bigr]\Bigr\}
=C_2\fb_g(T).
\end{align*}
Using this bound together with
$\widetilde \fb_{\vecxi\gamma,yT}(y)=y^{\frac14}+\fb_{\vecxi\gamma,yT}(y)$ and
$y=y_{M,T}\ll y_M(T)=y_g(T)$ (cf.\ Lemma \ref{TWOYSAMELEM}),
we see that $\widetilde \fb_{\vecxi\gamma,yT}(y)\ll y_g(T)^{\frac14}+\fb_g(T)$,
so that Theorem \ref{NONCLOSEDTHM2} follows from Theorem~\ref{NONCLOSEDTHM}.
\end{proof}

Finally let us prove that, generically, the error term in Theorem \ref{NONCLOSEDTHM2}
decays like $T^{-\frac18+\ve}$ as $T\to\infty$.

\begin{prop}\label{BGTGENDECAYPROP}
Let $0<\alpha<\frac12$ and $M\in G'$ be given.
Then for Lebesgue almost all $\vecxi\in\R^2$, there is some $C>0$ such that
$b_{(M,\vecxi)}(T)<CT^{-\alpha}$ for all $T\geq1$.
\end{prop}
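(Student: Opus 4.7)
The plan is to apply the Borel--Cantelli lemma after discretizing $T$ dyadically. Since $\R^2$ is a countable union of compacts, I can fix a compact $K\subset\R^2$ and prove the bound for a.e.\ $\vecxi\in K$. First I would note that $T\mapsto\fb_g(T)$ is non-increasing: if $T_2\geq T_1$ then $\fR_{T_2}\subseteq\fR_{T_1}$, so $\frac1{\delta q^2}\fR_{T_2}\subseteq\frac1{\delta q^2}\fR_{T_1}$, and the condition defining $\fb_g$ becomes easier to satisfy. Hence it suffices to bound $\fb_{(M,\vecxi)}(2^n)$ at the dyadic times; extending to $T\in[2^n,2^{n+1}]$ costs only a factor $2^\alpha$.

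The heart of the argument is a measure estimate. Fix $\delta>0$, $T\geq 1$, $q\in\Z^+$, and set
\begin{align*}
B_{q,T,\delta}:=\bigl\{\vecxi\in K\col(q^{-1}\Z^2M+\vecxi)\cap\tfrac1{\delta q^2}\fR_T\neq\emptyset\bigr\}.
\end{align*}
By Fubini,
\begin{align*}
|B_{q,T,\delta}|\leq\int_{\frac1{\delta q^2}\fR_T}\#\bigl\{\vecv\in q^{-1}\Z^2M\col\vecy-\vecv\in K\bigr\}\,d\vecy.
\end{align*}
The inner count is $\ll_{K,M}q^2$ uniformly in $\vecy$, since $K$ is bounded and the lattice $q^{-1}\Z^2M$ has covolume $q^{-2}$ with shortest vector of length $\gg_M q^{-1}$, while $\frac1{\delta q^2}\fR_T$ has area $\frac{4}{\delta^2q^4T}$; thus $|B_{q,T,\delta}|\ll_{K,M}\delta^{-2}q^{-2}T^{-1}$. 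Since $\fb_{(M,\vecxi)}(T)>\delta$ forces $\vecxi\in\bigcup_{q\leq\delta^{-1/2}}B_{q,T,\delta}$, summing over $q$ gives
\begin{align*}
\bigl|\{\vecxi\in K\col\fb_{(M,\vecxi)}(T)>\delta\}\bigr|\ll_{K,M}\delta^{-2}T^{-1}.
\end{align*}

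Specializing to $T=2^n$ and $\delta=2^{-n\alpha}$, the right-hand side becomes $2^{-n(1-2\alpha)}$, which is summable over $n$ precisely because $\alpha<\tfrac12$. By Borel--Cantelli, for almost every $\vecxi\in K$ only finitely many $n$ satisfy $\fb_{(M,\vecxi)}(2^n)\geq 2^{-n\alpha}$; together with the monotonicity step this yields $\fb_{(M,\vecxi)}(T)<C(\vecxi)T^{-\alpha}$ for all $T\geq 1$, with $C(\vecxi)$ chosen to absorb the finitely many exceptional dyadic scales. The argument is almost entirely routine; the only mildly delicate point is the uniform lattice-point count, which relies on compactness of $K$ so that area and perimeter of $\vecy-K$ are bounded independently of $\vecy$. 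The restriction $\alpha<\tfrac12$ is sharp for this method, matching the generic decay rate of $y_g(T)$ recorded on p.~\pageref{YGTGENERICDECAY}.
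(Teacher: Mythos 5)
Your argument is correct: the monotonicity of $T\mapsto\fb_g(T)$, the Fubini count $|B_{q,T,\delta}|\ll_{K,M}\delta^{-2}q^{-2}T^{-1}$, the restriction to $q\leq\delta^{-1/2}$, and the dyadic Borel--Cantelli step all check out, and the constraint $\alpha<\frac12$ enters exactly where you say it does. The route is genuinely different in its mechanics from the one in the paper, though the core first-moment estimate (number of lattice points of $q^{-1}\Z^2M$ near a bounded set, times the area of the small rectangle) is the same. The paper fixes the constant $C$ once and for all and observes that failure of $b_{(M,\vecxi)}(T)<CT^{-\alpha}$ for \emph{some} $T\geq1$ is equivalent to the translate $\vecxi+q^{-1}\Z^2M$ meeting, for some $q$, the single explicit region
\begin{align*}
B_{Cq^2}=\bigcup_{T\geq(Cq^2)^{1/\alpha}}\frac{T^\alpha}{Cq^2}\,\fR_T
=\Bigl\{(x_1,x_2)\col |x_1|\leq(Cq^2)^{-\frac1\alpha}\min\bigl(1,|x_2|^{1-\frac1\alpha}\bigr)\Bigr\},
\end{align*}
i.e.\ the union over all $T$ is carried out geometrically inside one region, whose area is finite precisely because $\alpha<\frac12$; a single union bound over $q$ (performed on the torus $\R^2/\Z^2M$, using that the condition is periodic in $\vecxi$) then shows the exceptional set has measure $\ll_\alpha C^{-1/\alpha}$, and letting $C\to\infty$ finishes the proof with no dyadic discretization and no Borel--Cantelli. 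What each approach buys: the paper's version yields a quantitative tail bound in $C$ (the measure of $\vecxi$ mod $\Z^2M$ for which the inequality fails for a given $C$ decays like $C^{-1/\alpha}$), whereas yours produces a non-explicit constant $C(\vecxi)$; on the other hand your argument is modular (monotonicity, a per-scale measure bound, Borel--Cantelli) and avoids computing the union region explicitly, at the mild cost of the compact exhaustion of $\R^2$ and the factor $2^\alpha$ from dyadic interpolation.
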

\begin{proof}
It follows from the definition, \eqref{BGTDEF}, that for given $(M,\vecxi)\in G$ and $C\geq1$, the inequality
$b_{(M,\vecxi)}(T)<CT^{-\alpha}$ holds for all $T\geq1$ if and only if,
for every $q\in\Z^+$, the lattice translate $\vecxi+q^{-1}\Z^2M$ is disjoint from the set
\begin{align*}
B_{Cq^2}=\bigcup_{T\geq(Cq^2)^{1/\alpha}}\frac{T^\alpha}{Cq^2}\scrR_T
=\Bigl\{(x_1,x_2)\in\R^2\col |x_1|\leq(Cq^2)^{-\frac1\alpha}\min\bigl(1,|x_2|^{1-\frac1\alpha}\bigr)\Bigr\}.
\end{align*}
For given $M\in G'$ we write $L=\Z^2M$, so that the lattice translate in question is $\vecxi+q^{-1}L$.
Note that this point set %
only depends on the congruence class of $\vecxi\mod q^{-1}L$.
Now %
\begin{align*}
&\int_{\R^2/L} I\bigl((\vecxi+q^{-1}L)\cap B_{Cq^2}\neq\emptyset\bigr)\,d\vecxi
=q^{-2}\int_{\R^2/qL} I\bigl((\veceta+L)\cap qB_{Cq^2}\neq\emptyset\bigr)\,d\veceta
\\
&=\int_{\R^2/L} I\bigl((\veceta+L)\cap qB_{Cq^2}\neq\emptyset\bigr)\,d\veceta
\leq\int_{\R^2/L}\sum_{\vecm\in L} I\bigl(\veceta+\vecm\in qB_{Cq^2}\bigr)\,d\veceta
=\bigl|qB_{Cq^2}\bigr|=q^2\bigl|B_{Cq^2}\bigr|,
\end{align*}
where we substituted $\vecxi=q^{-1}\veceta$, and where $|\cdot|$ denotes Lebesgue measure on $\R^2$.
Next note that, since $1-\frac1\alpha<-1$, we have $\bigl|B_{Cq^2}\bigr|=K(Cq^2)^{-\frac1\alpha}$ where $K>0$ is a
constant which only depends on $\alpha$.
It follows that 
\begin{align*}
\int_{\R^2/L}I\Bigl(\exists T\geq1\col b_{(M,\vecxi)}(T)\geq CT^{-\alpha}\Bigr)\,d\vecxi
\leq\sum_{q=1}^\infty q^2\bigl|B_{Cq^2}\bigr|
=K\Bigl(\sum_{q=1}^\infty q^{2(1-\frac1\alpha)}\Bigr)C^{-\frac1\alpha}.
\end{align*}
The sum converges for our $\alpha$, %
and the proposition follows since the last expression tends to zero as $C\to\infty$.
\end{proof}

\begin{remark}\label{NONCLOSEDOPTEXPREM}
As we noted in the introduction, Proposition \ref{BGTGENDECAYPROP} implies that for 
$\mu$-almost all $g\in G$, the right hand side in \eqref{NONCLOSEDTHM2RES} in Theorem \ref{NONCLOSEDTHM2} 
decays more rapidly than $T^{\ve-\frac18}$ as $T\to\infty$ ($\forall\ve>0$). 
On the other hand, using the fact that the flow $\{U^t\}$ is mixing on smooth vectors in $L^2(X)$ with a rate
$t^{\ve-1}$ as $t\to\infty$
(as follows from %
\cite{edwards} combined with an argument as in 
\cite[Lemma~2.3]{Ratner87}\footnote{Note that \cite[p.\ 282, line -8]{Ratner87} should be corrected to 
``$u(t)=r(\theta_2)a(\arsinh(t/2))r(\theta_1)$''. Here Ratner's ``$a(t)$'' equals $\Phi^{2t}$ in our notation.}),
one can prove that for sufficiently nice test functions $f$ on $\GaG$,
and for $\mu$-almost all $\Gamma g\in X$, the deviation of the ergodic average in the left hand side of
\eqref{NONCLOSEDTHM2RES} decays like $T^{\ve-\frac12}$ as $T\to\infty$; cf.\ \cite{gaposhkin}.
In this last statement the $\mu$-null set of exceptional points $\Gamma g$ is
\textit{non-explicit} and depends on $f$; furthermore the implied constant in the bound depends on both $f$ and $\Gamma g$
in a non-explicit way; %
the strength of Theorem \ref{NONCLOSEDTHM2} lies of course in the fact that 
it gives a bound where all these dependencies are explicit.
Nevertheless, the discussion %
suggests that it might be possible to improve Theorem \ref{NONCLOSEDTHM2} 
so as to yield a rate of decay $T^{\ve-\frac12}$ for any $\Gamma g\in X$ satisfying an appropriate Diophantine condition.

In this vein, we note that there are \textit{two} steps in our proof 
of Theorem \ref{NONCLOSEDTHM2} which are clearly non-optimal,
each of which causes 
a \textit{halving} of the expected optimal exponent.
The first is when we bound the $d$-sums in \eqref{CONTRFROMFIXEDN} individually for each $c$ using the Weil bound,
and the second is in \eqref{NONCLOSEDSTEP2}, where we replace the integral over the given orbit
with an integral over a nearby orbit which is a lift of a piece of a closed horocycle.
We discussed the first of these in Remark \ref{MAINTHMWEAKERPROPREM}.
Regarding the second step, %
we note that a possible approach for an improved treatment might be to
\textit{rework} the proof of Theorem \ref{MAINABTHM} for the case of an arbitrary $U^t$-orbit,
choosing coordinates in a similar way as in the proofs of 
\cite[Propositions 5.1 and 5.3]{iha}.
\end{remark}

\end{document}